\newtheorem{thm}{Theorem}[section]
\newtheorem{prop}[thm]{Proposition}
\newtheorem{cor}[thm]{Corollary}
\newtheorem{lem}[thm]{Lemma}
\newtheorem{thmx}{Theorem}
\theoremstyle{definition}
\newtheorem{defn}[thm]{Definition}
\theoremstyle{remark}
\newtheorem{ex}[thm]{Example}
\newtheorem{rmk}[thm]{Remark}
\newcommand{\cat}[1]{{\mathbf{#1}}}
\newcommand{\p}{}
\newcommand{\into}{\hookrightarrow}
\newcommand{\lot}{\otimes^{\mathbb{L}}}
\newcommand{\per}{{\ensuremath{\cat{Perf}}}\kern 1pt}
\newcommand{\Mod}{\cat{Mod}\text{-}}
\newcommand{\Comod}{\cat{Comod}\text{-}}
\newcommand{\uhom}{\undertilde{\hom}}
\newcommand{\ootimes}{\widetilde{\otimes}}
\newcommand{\Ctrmod}{\cat{Ctrmod}\text{-}}
\newcommand{\comod}{\cat{Comod}\text{-}}
\newcommand{\pcMod}{\cat{pcMod}\text{-}}
\DeclareMathOperator{\id}{id}
\let\hom\relax\newcommand{\hom}{\mathrm{Hom}}
\newcommand{\cohom}{\mathrm{Cohom}}
\newcommand{\enn}{\mathrm{End}}
\DeclareMathOperator{\ext}{Ext}
\DeclareMathOperator{\map}{Hom}
\DeclareMathOperator{\gldim}{gl.dim.}
\DeclareMathOperator{\HH}{HH}
\DeclareMathOperator{\hh}{hh}
\DeclareMathOperator{\hg}{h}
\newcommand{\C}{\mathcal{C}}
\newcommand{\Q}{\mathbb{Q}}
\newcommand{\Z}{\mathbb{Z}}
\renewcommand{\P}{\mathbb{P}}
\newcommand{\R}{{\mathrm{\normalfont\mathbb{R}}}}
\newcommand{\rmap}{\R\kern -1.5pt \map}
\newcommand{\pvd}{{\ensuremath{\cat{pvd}}}\kern 1pt}
\newcommand{\thick}{{\ensuremath{\cat{thick}}}}
\newcommand{\dco}{\ensuremath{D^\mathrm{co}}}
\newcommand{\dct}{\ensuremath{D^\mathrm{ctr}}}
\newcommand{\dpc}{\ensuremath{D^\mathrm{pc}}}
\newcommand{\hqfco}{{\ensuremath{\cat{hqf}^\mathrm{co}}}}
\newcommand{\hqfct}{{\ensuremath{\cat{hqf}^\mathrm{ctr}}}}
\newcommand{\fd}{{\ensuremath{\cat{fd}}}}
\newcommand{\rf}{{\ensuremath{\cat{ref}}}\kern 1pt}
\newcommand{\bimod}{\text{-}\cat{Mod}\text{-}}
\newcommand{\bicomod}{\text{-}\cat{Comod}\text{-}}
\newcommand{\bictmod}{\text{-}\cat{Ctrmod}\text{-}}
\newcommand{\holim@}[2]{%
	\vtop{\m@th\ialign{##\cr
			\hfil$#1\operator@font holim$\hfil\cr
			\noalign{\nointerlineskip\kern1.5\ex@}#2\cr
			\noalign{\nointerlineskip\kern-\ex@}\cr}}%
}
\newcommand{\holim}{%
	\mathop{\mathpalette\holim@{\leftarrowfill@\textstyle}}\nmlimits@
}
\newcommand{\hocolim}{%
	\mathop{\mathpalette\holim@{\rightarrowfill@\textstyle}}\nmlimits@
}
\numberwithin{equation}{section}
\title[Nonsmooth Calabi--Yau structures]{Nonsmooth Calabi--Yau structures for algebras and coalgebras}
\author[Booth, Chuang, Lazarev]{M. Booth, J. Chuang, A. Lazarev}
\address{\mbox{Department of Mathematics,
Huxley Building,
Imperial College London,
SW7 2AZ}
\indent Heilbronn Institute for Mathematical Research,
Bristol, BS8 1UG}
\email{matt.booth@imperial.ac.uk}
\address{City St George's, University of London,
Northampton Square, EC1V 0HB}
\email{joseph.chuang.1@citystgeorges.ac.uk}
\address{School of Mathematical Sciences,
Lancaster University,
Lancaster,
LA1 4YF}
\email{a.lazarev@lancaster.ac.uk}
\subjclass{18N40, 16E65, 14A22, 57P10}
\keywords{Calabi--Yau algebras, Koszul duality, Poincar\'e duality spaces}
\thanks{This work was supported by EPSRC grants EP/N016505/1 and EP/T029455/1. This work was supported by the Additional Funding Programme for Mathematical Sciences, delivered by EPSRC (EP/V521917/1) and the Heilbronn Institute for Mathematical Research.}
\begin{document}

\begin{abstract}
{We show that generalised Calabi--Yau dg (co)algebras are Koszul dual to generalised symmetric dg (co)algebras, without needing to assume any smoothness or properness hypotheses. Similarly, we show that Gorenstein and Frobenius are Koszul dual properties. As an application, we give a new characterisation of Poincaré duality spaces, which extends a theorem of Félix--Halperin--Thomas to the non-simply connected setting.}
\end{abstract}

	\maketitle

\tableofcontents

\section{Introduction}

Over the past twenty years, many authors have studied various different kinds of noncommutative Calabi--Yau structures on dg categories \cite{ginzburg, KS, kellerCYC, vdb, BD, BDII, KTV, KTVII}. Loosely, a Calabi--Yau structure should be thought of as a derived noncommutative version of a trivialisation of the canonical sheaf; for homologically smooth or proper dg categories one often asks for finer data, usually in the form of a lift of a Hochschild class to (negative) cyclic homology.

\p In this paper, we define and study a very general kind of Calabi--Yau dg (co)algebra; our definition is a generalisation to the non-smooth setting of the `left Calabi--Yau' structures of \cite{BD}. We also study a version of the `right Calabi--Yau' condition for dg (co)algebras - which we call \textbf{symmetric}, since it is a natural generalisation of the corresponding discrete notion for finite dimensional algebras (also known as symmetric Frobenius) - and show that these two concepts correspond across Koszul duality.

\p Koszul duality has many different meanings, but for us in this paper it refers to the equivalence of $\infty$-categories or of model categories
$$\mathbf{dgAlg}^\mathrm{aug}_k \longleftrightarrow \mathbf{dgCog}^\mathrm{conil}_k$$between augmented dg-$k$-algebras and conilpotent dg-$k$-coalgebras, for a fixed field $k$. The functors in question are given by the bar and cobar constructions. This equivalence in particular underlies the modern approach to noncommutative derived deformation theory \cite[\S3]{DAGX}. DG coalgebras can also be thought of as pseudocompact dg algebras (or more accurately, their continuous linear duals) and we switch between these perspectives when convenient.

\p Specifically, we prove the following theorem (part of Theorem \ref{thma} below). Let $A$ be an augmented dg algebra and $C\coloneqq BA$ its Koszul dual conilpotent dg coalgebra. Then:
\begin{enumerate}
    \item $A$ is nonsmooth Calabi--Yau if and only if $C$ is symmetric.
    \item $C$ is nonsmooth Calabi--Yau if and only if $A$ is symmetric.
\end{enumerate}
We remark that one can use this theorem to give a Morita duality result, stating that if $A$ is a smooth local Calabi--Yau dg algebra then $\pvd(A)$ is a right Calabi--Yau dg category (see \ref{moritaduality} and \ref{BDmorita} for the details).

\p It will be key for us to work with nonsmooth dg algebras (and dually, non-proper dg coalgebras). For example, the dual numbers $k[\varepsilon]/\varepsilon^2$ is a nonsmooth Calabi--Yau dg algebra, which leads to the surprising conclusion that its Koszul dual, the power series ring $k\llbracket x\rrbracket$, is a pseudocompact symmetric algebra. One can also prove this directly from the definitions (\ref{funnydual}); there is no contradiction here as the `pseudocompact linear dual' is not computed as the linear dual of the underlying vector space. More generally, we prove that the pseudocompact graded algebra of power series $k\llbracket x_1,\ldots, x_n\rrbracket$, with $x_i$ in degree $d_i$, is $(n-\sum_id_i)$-symmetric (\ref{pwfrobcor}). In a loose sense this can be thought of as a derived local complete intersection property for the associated formal derived scheme. In fact, a similar argument shows that a discrete commutative complete local Gorenstein $k$-algebra also has this self-duality property (\ref{gorrmk}), which is in some sense a manifestation of Matlis duality for zero-dimensional Gorenstein rings.


\p We also develop one-sided versions of our generalised Calabi--Yau conditions; the one-sided version of nonsmooth Calabi--Yau is \textbf{Gorenstein} and the one-sided version of symmetric is simply \textbf{Frobenius}. There is a rich literature on Gorenstein dg algebras dating back many years \cite{afgor, fjgorenstein, DGI, AIgorenstein, jin, goodbody}; our definition agrees with that of Avramov and Foxby. Moreover, we show that the Gorenstein and Frobenius properties are also Koszul dual. We also provide some rigorous justification of the heuristic that `smoothness is Koszul dual to properness'. Our main theorem states that the following properties are Koszul dual:

\begin{thmx}[\ref{mainthm}, \ref{kdpropc}, \ref{cogsmooth}]\label{thma}
    The following properties of augmented dg algebras and conilpotent dg coalgebras correspond across Koszul duality:
\begin{center}
\begin{tabular}{c | c}
 Property & Koszul dual property \\ \hline 
  (nonsmooth) Calabi--Yau & Symmetric \\  
 Gorenstein & Frobenius \\
  Smooth & Strongly proper local \\
  Regular & Proper local\\
\end{tabular}
\end{center}
\end{thmx}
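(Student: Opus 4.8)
The plan is to reduce every row of the table to a single principle: each listed property is a statement about the diagonal (bi)module, expressed as a combination of a \emph{finiteness} condition and a \emph{self-duality} condition, and Koszul duality exchanges these two kinds of condition in a controlled way. Concretely, I would first upgrade the bar--cobar equivalence $\mathbf{dgAlg}^\mathrm{aug}_k \leftrightarrow \mathbf{dgCog}^\mathrm{conil}_k$ to an equivalence of the associated categories of bimodules and bicomodules, sending the diagonal bimodule $A$ to the diagonal bicomodule $C=BA$ and identifying Hochschild homology $\HH(A)$ with coHochschild homology $\HH^{\mathrm{co}}(C)$. The two features I would insist on are: (i) this equivalence is $S^1$-equivariant, so that it matches the cyclic and negative cyclic refinements on the two sides; and (ii) under it the naive $k$-linear dual $(-)^\vee$ on one side corresponds to the \emph{pseudocompact} (continuous) dual on the other. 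Point (ii) is exactly what prevents the $k[\varepsilon]\leftrightarrow k\llbracket x\rrbracket$ example from being a contradiction, and it is the reason the whole dictionary survives without smoothness or properness.

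With this dictionary in place, I would treat the one-sided rows first. For Gorenstein $\leftrightarrow$ Frobenius, unwind the Avramov--Foxby definition: $A$ is Gorenstein of dimension $n$ iff $\rmap_A(k,A)\simeq k[-n]$. Transporting $k\mapsto C$, $A\mapsto C$, and sending $\rmap$ to the appropriate co-$\map$/cotensor functor turns this into precisely the defining self-duality of a Frobenius coalgebra, and running the argument in the other variable gives the converse. For Smooth $\leftrightarrow$ strongly proper local and Regular $\leftrightarrow$ proper local the content is purely finiteness: smoothness means $A\in\per(A^e)$ and regularity means $k\in\per(A)$, and under the bimodule (resp.\ module) equivalence perfect objects correspond to \emph{proper} (finite total dimension) bicomodules (resp.\ comodules), i.e.\ strong properness (resp.\ properness) of $C$. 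Here the locality hypothesis on the coalgebra side is what lets one reverse the implication, since finite-dimensionality alone is too weak to recover perfectness of the diagonal.

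Finally I would bootstrap to the two-sided, cyclic rows. The nonsmooth Calabi--Yau structure on $A$ is to be taken as an $n$-negative-cyclic class whose underlying Hochschild class induces a bimodule equivalence $A\xrightarrow{\sim}A^\vee[-n]$ (the intrinsic, smoothness-free reformulation), while the symmetric structure on $C$ is the analogous nondegenerate cocyclic cochain inducing $C\xrightarrow{\sim}C^\vee[-n]$ of bicomodules. Applying the $S^1$-equivariant bimodule equivalence together with the dual/pseudocompact-dual matching of (ii), a nondegenerate class and its cyclic invariance on one side transport to exactly a nondegenerate class and cocyclic invariance on the other; this yields Calabi--Yau $\leftrightarrow$ symmetric in both directions and recovers statements (1) and (2). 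Assembling the three groups of correspondences then gives the full table.

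The main obstacle, and the place where the nonsmooth generalisation earns its keep, is item (ii): establishing that $k$-linear duality on bimodules matches continuous duality across Koszul duality \emph{without} any finiteness assumption. In the smooth/proper world one phrases Calabi--Yau via the inverse dualising bimodule $A^{!}=\rmap_{A^e}(A,A^e)$ and duality commutes with everything in sight; here $A^{!}$ need not be perfect, the Brav--Dyckerhoff formulation is unavailable, and the duality functors do not commute with bar and cobar on the nose. I expect the resolution to require working systematically in the pseudocompact setting, where the continuous dual is the correct duality functor, and verifying that it is exchanged with the ordinary $k$-linear dual under the equivalence. Once that compatibility is secured, the nondegeneracy conditions transport formally and every row of the table follows.
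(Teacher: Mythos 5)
Your skeleton for the finiteness rows (smooth/regular versus strongly proper/proper) and for the direction ``$A$ Gorenstein $\Rightarrow$ $C$ Frobenius'' matches the paper's: one translates $A\in\per(A^e)$, $k\in\per(A)$, and $\R\hom_A(k,A)\simeq k[-n]$ directly across (bi)module Koszul duality, using that $A\leftrightarrow k$ and $k\leftrightarrow C$ and that the one-sided dual $\R\hom_A(-,A)$ corresponds to the comodule linear dual. Your point (ii) --- that the duality on the coalgebra side is the continuous one, computed as a homotopy limit over finite-dimensional subcomodules rather than as the naive dual of the underlying vector space --- is indeed the technical heart of the matter and is exactly what makes the $k[\varepsilon]\leftrightarrow k\llbracket x\rrbracket$ example consistent.

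However, your plan for the Calabi--Yau $\leftrightarrow$ symmetric row does not work as stated. You formulate the nonsmooth CY structure as a negative-cyclic class whose underlying Hochschild class induces $A\simeq A^\vee[-n]$, and propose to transport it via an $S^1$-equivariant equivalence. But without smoothness a bimodule map $A\to A^\vee[n]$ is not classified by a Hochschild class at all ($\R\hom_{A^e}(A,A^\vee)$ is not identified with $\HH_\bullet(A)$ once $A\notin\per(A^e)$), and there is no cyclic refinement to transport: the definitions in play are the ``weak'' ones, namely bare bimodule quasi-isomorphisms $A\simeq A^\vee[n]$ on one side and $D(C)[d]\simeq C$ on the other, with no lift to (co)cyclic data. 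The route that actually works is purely module-theoretic: show that bimodule Koszul duality sends the diagonal to the diagonal and intertwines $\R\hom_{A^e}(-,A^e)$ with the bicomodule linear dual $D(-)$, then transport the quasi-isomorphism directly. Separately, the converse directions ($A$ Frobenius $\Rightarrow$ $C$ Gorenstein, and $A$ symmetric $\Leftrightarrow$ $C$ CY) are not formal transport: they pass through contramodules (the $k$-linear dual on $D(A)$ corresponds to $\R\hom^C(-,C^*)$ on $\dct(C)$, not to any functor on comodules) and require showing that the coalgebra-side hypothesis forces $A$, resp.\ $A^*$, to be reflexive, i.e.\ quasi-isomorphic to its double $k$-linear dual. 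Your proposal engages with neither of these points, and ``the nondegeneracy conditions transport formally'' is precisely where the argument would break down.
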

As before, each row of this table comprises two theorems: for each property $P$, an augmented dg algebra $A$ has $P$ if and only if $BA$ has the Koszul dual property. We also prove twisted versions: namely, twisted Calabi--Yau is Koszul dual to twisted symmetric.

\p Many of our examples come from algebraic geometry, where we show that a nonsmooth Calabi--Yau scheme is singular Calabi--Yau in the sense of Iyama--Reiten (\ref{scy}). The converse holds in the local setting, where nonsmooth Calabi--Yau and Gorenstein are equivalent (\ref{gorcor}). We moreover show that Grothendieck duality for proper schemes $X$ can be reinterpreted as a Frobenius condition, which can be upgraded to a symmetric condition precisely when $X$ is nonsmooth CY (\ref{agex}).

\p Our main application is a new characterisation of Poincar\'e duality spaces. For a fixed field $k$ and pointed topological space $X$, we let $C_\bullet X$ be the dg coalgebra of $k$-chains on $X$ and $C^\bullet X$ the dg algebra of $k$-cochains. When $Y=\Omega X$ is a loop space, recall that concatenation of loops makes $C_\bullet Y$ into a dg algebra (in fact - up to quasi-isomorphism - a dg Hopf algebra).
\begin{thmx}[\ref{pdDict} and \ref{fhtthm}]\label{thmb}
    Let $X$ be a path connected finitely dominated topological space. Then the following are equivalent:
    \begin{enumerate}
        \item $X$ is a $d$-dimensional Poincar\'e duality space.
         \item $C_\bullet \Omega X$ is a $d$-Gorenstein algebra.
        \item $C_\bullet \Omega X$ is a $d$-Calabi--Yau algebra.
        \item $C_\bullet X$ is a $d$-Frobenius coalgebra.
        \item $C_\bullet X$ is a $d$-symmetric coalgebra.
    \end{enumerate}
    If in addition $X$ is simply connected, the above conditions are equivalent to the following:        \begin{enumerate}
        \item $C^\bullet X$ is a $(-d)$-Gorenstein algebra.
        \item $C^\bullet X$ is a $(-d)$-Frobenius algebra.
        \item $C^\bullet X$ is a $(-d)$-symmetric algebra.
        \item $C^\bullet X$ is a $(-d)$-Calabi--Yau algebra.
    \end{enumerate}
\end{thmx}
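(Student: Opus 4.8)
The plan is to handle the first five conditions uniformly through the single Koszul-duality relationship between the loop-space chains and the chains on $X$, and then to append the simply connected list using Félix--Halperin--Thomas together with Theorem \ref{thma} applied to the cochain algebra. The key input making Theorem \ref{thma} applicable is the cobar equivalence of Adams, in the form extended to arbitrary path-connected spaces by Rivera--Zeinalian: this gives a quasi-isomorphism of conilpotent dg coalgebras $B(C_\bullet\Omega X)\simeq C_\bullet X$, so that $A\coloneqq C_\bullet\Omega X$ is the Koszul dual algebra of $BA=C_\bullet X$. The Gorenstein--Frobenius and Calabi--Yau--symmetric rows of Theorem \ref{thma} then immediately give $(2)\Leftrightarrow(4)$ and $(3)\Leftrightarrow(5)$. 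Since the one-sided notions are weakenings of the two-sided ones, $(3)\Rightarrow(2)$ and $(5)\Rightarrow(4)$ are formal; for the reverse upgrade I would use that $C_\bullet X$ is (homotopy) cocommutative, whence any Frobenius structure on it is automatically symmetric, giving $(4)\Rightarrow(5)$ and dually $(2)\Rightarrow(3)$. This closes $(2)\Leftrightarrow(3)\Leftrightarrow(4)\Leftrightarrow(5)$, leaving only the link to genuine Poincaré duality.

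The heart of the argument is $(1)\Leftrightarrow(2)$. I would identify the derived category of local systems on $X$ with that of $C_\bullet\Omega X$-modules, under which the constant local system corresponds to the trivial module $k$ and cohomology with coefficients is computed by $\rmap_{C_\bullet\Omega X}(k,-)$. Poincaré duality is then the assertion that the dualizing complex $p^!k$ of $p\colon X\to *$ is invertible, i.e.\ a shift by $d$ of a rank-one local system (the orientation character); testing this against the regular module shows it to be equivalent to $\rmap_{C_\bullet\Omega X}(k,C_\bullet\Omega X)$ being one-dimensional and concentrated in degree $-d$, which is precisely the $d$-Gorenstein condition. The hypothesis that $X$ is finitely dominated enters here to guarantee that $k$ is a perfect $C_\bullet\Omega X$-module, so that $\rmap_{C_\bullet\Omega X}(k,-)$ commutes with the relevant colimits and the duality verified on the regular module propagates to all local systems.

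The step I expect to be the main obstacle is exactly the non-simply-connected part of $(1)\Leftrightarrow(2)$: setting up the local-systems/module equivalence with the correct finiteness (this is where the Rivera--Zeinalian form of Adams' theorem is indispensable, simple connectivity no longer being available) and keeping track of the orientation local system. A possibly non-orientable $d$-dimensional Poincaré duality space carries its fundamental class in homology twisted by the orientation character, so the relevant Gorenstein datum is the corresponding rank-one twist; managing this bookkeeping across all four Koszul dualities is precisely what the twisted Calabi--Yau/twisted symmetric refinement of Theorem \ref{thma} is designed to do. Finally, for the simply connected addendum I would pass to the cochain algebra $C^\bullet X$. When $X$ is simply connected and finitely dominated, $C^\bullet X$ is a proper commutative augmented dg algebra with $\rmap_{C^\bullet X}(k,k)\simeq C_\bullet\Omega X$, hence Koszul dual to the loop-space algebra from the other side, the shift $-d$ being the cochain/chain dualization of the shift $d$ above. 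The original Félix--Halperin--Thomas theorem gives that $X$ is a $d$-dimensional Poincaré duality space if and only if $C^\bullet X$ is $(-d)$-Gorenstein; since $C^\bullet X$ is commutative, a Frobenius structure on it is automatically symmetric, so Theorem \ref{thma} collapses all four cochain conditions into one, and combining with the equivalences already established completes the proof.
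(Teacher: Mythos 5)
The central gap is your passage between the one-sided conditions (Gorenstein, Frobenius) and the two-sided ones (Calabi--Yau, symmetric). You propose to get $(4)\Rightarrow(5)$ from the homotopy cocommutativity of $C_\bullet X$, asserting that ``any Frobenius structure on it is automatically symmetric.'' This is exactly the heuristic that the paper explicitly declines to use (see the remark preceding \ref{hopfthm}: ``proving this heuristic would prove the theorem. However our proof will instead be a comparison of the Gorenstein and the Hochschild cohomology of $A$''). The difficulty is real: $C_\bullet X$ is only $E_\infty$-cocommutative, and upgrading a one-sided weak equivalence $C^*[d]\simeq C$ to a bicomodule equivalence $D(C)[d]\simeq C$ requires coherence data that strict cocommutativity would supply but a homotopy-cocommutative structure does not obviously provide; worse, the two-sided dual $D(C)$ is a homotopy limit computed in bicomodules whose underlying one-sided comodule need not even be $C^*$ (the paper warns about this after \ref{GalgFcog}), so even the ``formal'' implication $(5)\Rightarrow(4)$ that you wave through is not formal on the coalgebra side. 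The paper instead works on the Koszul dual side and exploits the genuine dg Hopf algebra structure on $C_\bullet G(\mathrm{Sing}\,X)$: Theorem \ref{hopfthm} shows a regular (or proper) Hopf algebra with bijective antipode is Calabi--Yau iff Gorenstein, via the auxiliary bimodules $\mathcal{R},\mathcal{L}$ of Lemma \ref{adlem} and a dg Ginzburg--Kumar comparison $\R\hom_A(k,M^{\mathrm{ad}})\simeq\R\hom_{A^e}(A,M)$. Smoothness of $C_\bullet\Omega X$ (from finite domination, \ref{fdsmprop}) is what makes this applicable. Without \ref{hopfthm} or a rigorous substitute for the cocommutativity argument, your chain of equivalences does not close.

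Two smaller points. First, your reading of $(1)\Leftrightarrow(2)$ introduces an orientation character and invertibility of $p^!k$; the paper's definition of Poincar\'e duality space is the untwisted one (cap product with a class in $H_d(X,k)$), which matches the untwisted Gorenstein condition directly via \ref{gorduprop}, so the orientation bookkeeping you anticipate is not where the work is. Second, for the simply connected addendum you again lean on (homotopy) commutativity of $C^\bullet X$ to identify Frobenius with symmetric, which has the same gap; the paper instead uses strong properness and simple connectivity of $C_\bullet X$ to transfer the (co)algebra conditions across linear duality (\ref{propLD}), then Goodbody's result \ref{gorimpfrob} on a finite dimensional local model of $C^\bullet X$ to recover Frobenius from Gorenstein, and \ref{cyimpsym} for symmetric iff Calabi--Yau. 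Citing F\'elix--Halperin--Thomas as an input is legitimate but inverts the paper's logic, which derives their theorem as a corollary (\ref{gspthm}).
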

Our notion of Poincar\'e duality space is a fully derived version that gives chain-level Poincar\'e duality for all local systems of dg-$k$-vector spaces. In particular, it is sensitive to the base field: we warn that a space $X$ could be a Poincar\'e duality space for a given choice of $k$ and not for a different choice. Theorem \ref{thmb} generalises a characterisation of simply connected Poincar\'e duality spaces due to Felix, Halperin, and Thomas \cite{FHT}; namely that they are \textbf{Gorenstein spaces}. In fact we show that a Poincar\'e duality space is always a Gorenstein space, and that the converse is true for simply connected finite CW complexes (\ref{gspthm}).
\p In order to prove Theorem \ref{thmb}, we make use of the dg Hopf algebra structure on $C_\bullet \Omega X$ (strictly, one needs to take normalised chains on the Kan loop group of $\mathrm{Sing}X$). This use of the Hopf structure is key in order to translate between the one-sided conditions (Gorenstein, Frobenius) and the two-sided conditions (Calabi--Yau, symmetric). Specifically, we show that certain dg Hopf algebras are Calabi--Yau if and only they are Gorenstein (\ref{hopfthm}); the proof rests on a computation of their Hochschild cohomology, which generalises a theorem of Ginzburg and Kumar \cite{gkumar}.  We also give an application to unimodular Lie algebras (\ref{liethm}). As applications of our results on Poincar\'e duality, we also obtain some results in string topology, rational homotopy theory, and Lie group (co)homology.

\p We remark that all of our results which do not mention the bar construction remain true for non-conilpotent coalgebras. In this setting, one needs to use derived categories of the second kind on the algebra side to get a satisfactory module-comodule Koszul duality \cite{positselskitwokinds, GLDII}. Briefly, if $A$ is a dg algebra then its derived category of the second kind $D^\mathrm{II}(A)$ is a triangulated category constructed by localising the category of $A$-modules at a class of weak equivalences finer than the quasi-isomorphisms. Most relevant for us is that if $C$ is a coaugmented (non-conilpotent) dg coalgebra and $A\coloneqq \Omega C$ is its Koszul dual dg algebra, then there is a natural triangle equivalence  $D^\mathrm{II}(A) \simeq \dco(C)$. This ensures that in the non-conilpotent setting, our main theorems remain true \textit{mutatis mutandis} by replacing $D(A)$ by $D^\mathrm{II}(A)$. For example, our methods show that a non-conilpotent coaugmented dg coalgebra $C$ is symmetric if and only if $A\coloneqq \Omega C$ is \textbf{Calabi--Yau of the second kind}, meaning that there is a weak equivalence $\R\hom_{D^\mathrm{II}(A^e)}(A,A^e)\simeq A[n]$ in $D^\mathrm{II}(A^e)$.

\p In future work we hope to provide a similar analysis of the results of \cite{kellerCYC} on Calabi--Yau completions. Following our Poincar\'e duality results, we also hope to apply similar methodology to some results in string topology and the cohomology of $p$-compact groups. It would also be interesting to compare our results more fully with those of \cite{KTV, KTVII} on pre-Calabi--Yau structures, or to obtain similar results in the global setting of \cite{GKD}.

\p While preparing this paper, we learned of the similar results obtained by Holstein and Rivera \cite{HRCY}. They obtain analogous Koszul duality statements for left and right Calabi--Yau dg categories. There are three key differences between \cite{HRCY} and the current paper: first is that we drop any smoothness/properness hypotheses completely. This means that we are unable to obtain any results about (negative) cyclic homology, since in our setting CY structures are no longer given by Hochschild classes. However it means that we are able to get wider classes of results: for example, we show that the pseudocompact algebra $k\llbracket x \rrbracket$ is Frobenius (= weak right CY), a notion which is only defined in \cite{HRCY} for proper pseudocompact algebras. For example, our Theorem \ref{thma} generalises the weak one-object cases of \cite[3.26]{HRCY} and \cite[3.33]{HRCY}.

\p The second difference is that while \cite{HRCY} work with dg categories, we work only with dg algebras (i.e.\ one-object dg categories). Although we believe that our results probably extend to the many-object setting, we do not tackle such issues in the current paper. The dg categorical approach also removes the need for (co)augmentations.

\p The third difference is that we also obtain one-sided results for Gorenstein and Frobenius (co)algebras; while \cite{HRCY} do not consider these conditions their proofs would adapt in the relevant settings.

\p A key technical difference with the approach of \cite{HRCY} is our use of contramodules. Especially important for us is that we identify the linear dual functor on modules - across Koszul duality - with a certain sort of `derived contramodule dual' functor (\ref{onesd}). Holstein and Rivera also obtain a Poincar\'e duality result similar to ours, since the relevant dg algebras used in the proof are smooth.

\p Finally, we mention also that our results generalise some of those of the recent preprint \cite{mao}, which imposes connectivity conditions in order to reduce the study of proper dg coalgebras to proper dg algebras.

\p The authors would like to thank Benjamin Briggs, Timothy De Deyn, Callum Galvin,  Isambard Goodbody, John Greenlees, Julian Holstein, Sebastian Opper, David Pauksztello, Manuel Rivera, Alex Takeda, and Michael Wemyss for helpful conversations.

\subsection{Layout of the paper}

\p Section 2 contains preliminaries on algebras and coalgebras. We review co/contraderived categories, the five functor formalism for co/contramodules, and the co/contra correspondence.

\p Section 3 is a review of Positselski's Koszul duality and Guan--Holstein--Lazarev's bimodule Koszul duality. We also develop some dual functors for co/contramodules, which may be of independent interest.

\p Section 4 is a short section which contains some material on derived Picard groups. Having shown earlier in section 3 that bimodule Koszul duality is monoidal, we use this to show that the derived Picard group of an augmented algebra agrees with the derived Picard group of its Koszul dual coalgebra. We also provide a translation into contramodules.

\p Section 5 concerns notions of smoothness and properness for algebras and coalgebras. Loosely, we show that smoothness and properness are Koszul dual properties.

\p Section 6 concerns the nonsmooth CY condition on algebras. We also explore the links with CY structures on associated dg categories.

\p In section 7 we define Gorenstein and Frobenius algebras, and compare our notion of Gorenstein to others extant in the literature. Having developed the necessary technology in section 4, we show that a dg Frobenius algebra admits a Nakayama automorphism. We also discuss CY structures on some associated dg categories, and pay special attention to proper Frobenius algebras.

\p Section 8 contains the main theorems of this paper. We show that Gorenstein and Frobenius are Koszul dual concepts, and that nonsmooth CY and symmetric are similarly Koszul dual concepts. We also study finite dimensional Frobenius coalgebras in some detail, and give some results on endomorphism algebras that strengthen our earlier discussion of CY structures on categories.

\p Section 9, the final section, concerns applications to topology. We use Gorenstein duality and some results about dg Hopf algebras to obtain a new characterisation of Poincar\'e duality spaces. We give several examples.

\subsection{Notation and conventions}

Throughout this paper we work over a fixed base field $k$; for many purposes a commutative semisimple ring will do. Everything will be linear over $k$, and in particular unadorned tensor products are to be taken over $k$.

\p We denote isomorphisms with $\cong$ and weak equivalences with $\simeq$.

\p A \textbf{dg algebra} is a monoid in the category of dg-$k$-vector spaces, and a \textbf{dg coalgebra} is a comonoid. A dg algebra $A$ is \textbf{augmented} if the unit map $k\to A$ has a retract in the category of dg algebras. Similarly, a dg coalgebra is \textbf{coaugmented} if the counit map $C\to k$ has a section in the category of dg coalgebras. 

\p We will consistently drop the adjective `dg' with the convention that it is implicit, so for example a `vector space' is a dg vector space and an `algebra' is a dg algebra. We will use the modifier `discrete' to refer to objects concentrated in degree zero.

\p If $C$ is a coaugmented coalgebra, the coaugmentation coideal $\bar C \coloneqq C/k$ acquires a reduced comultiplication $\bar\Delta$. Say that $C$ is \textbf{conilpotent} if for every $c\in \bar C$, there exists some $n$ such that $\bar\Delta^n(c)=0$. By convention, every coalgebra we consider in this paper will be conilpotent; many of our theorems remain true without this assumption.

\p If $R$ is an algebra then $R^\circ$ will denote its opposite algebra and $R^e\coloneqq R^\circ \otimes R$ its enveloping algebra. An $R$-\textbf{module} will always mean a right $R$-module. Often we will refer to left $R$-modules as $R^\circ$-modules. An $R$-\textbf{bimodule} is a module over $R^e$. We use similar notation for coalgebras. Comodules will always be right comodules, and when we wish to consider left $C$-comodules we will usually simply consider comodules over the opposite coalgebra $C^\circ$.

\p  If $U$ is a vector space, we use $U^*$ to denote its $k$-linear dual. If $U,V$ are vector spaces, there is a natural map $U^*\otimes V^* \to (U\otimes V)^*$ given by $$f\otimes g \mapsto [ u\otimes v \mapsto (-1)^{|u||g|}f(u)g(v) ]$$and in particular we do not swap the factors in the tensor product. This ensures that if $C$ is a coalgebra, $C$-comodules dualise to $C^*$-modules.

\p We use cohomological indexing as standard; when discussing applications to topology we will occasionally use homological indexing. We denote shifts by $[1]$, so that if $V$ is a complex $V[1]$ denotes the complex with $V[1]^i = V^{i+1}$. In particular if $V$ is concentrated in degree $0$ then $V[1]$ is concentrated in degree $-1$.

\p If $\mathcal{T}$ is a triangulated category (or a pretriangulated dg category) and $S$ is a set of objects of $\mathcal{T}$, then $\thick(S)$ will denote the smallest full triangulated subcategory of $\mathcal{T}$ containing $S$ and closed under direct summands. It can be obtained as the closure of $S$ under shifts, cones and direct summands. Note that $\thick(S)$ is also closed under direct sums, as (up to a shift) these can be obtained as cones on the zero morphism.

\p If $R$ is an algebra, or more generally a dg category, we use $D(R)$ to denote the derived category of $R$. This is a pretriangulated dg category; often we will forget the dg structure and consider it simply as a triangulated category. The perfect derived category of $R$ will be denoted by $\per(R) \coloneqq {\thick}_{D(R)}(R)$; it is also a pretriangulated dg category which coincides with the compact objects of $D(R)$.

\p Let $R$ be an algebra. Say that $R$ is \textbf{proper} (or \textbf{perfectly valued}) if it has finite total cohomological dimension, i.e.\ $\oplus_nH^n(R)$ is a finite dimensional $k$-vector space. We use the same terminology for $R$-modules. If $\mathcal{A}$ is a dg category, its \textbf{perfectly valued derived category}, denoted $\pvd(\mathcal{A})$, is the subcategory of $D(R)$ on those modules which factor through $\per(k)\into D(k)$. When $R$ is an algebra, then $\pvd(R)$ consists precisely of those modules whose underlying dg vector spaces have finite dimensional total cohomology.

\p Say that an $R$-module $M$ is \textbf{reflexive} if the natural map $M\to M^{**}$ is a quasi-isomorphism. Clearly $M$ is reflexive if and only if each $H^iM$ is a finite dimensional vector space. Let $\rf(R)\subseteq D(R)$ denote the full triangulated subcategory on the reflexive modules. We have an obvious inclusion $\pvd(R)\into \rf(R)$; indeed a module is perfectly valued if and only if it is reflexive and bounded.

\section{Preliminaries on algebras and coalgebras}
We review some of the theory of comodules and contramodules over coalgebras, paying special attention to coderived and contraderived categories. For a more thorough account see Positselski's paper \cite{positselskitwokinds} or book \cite{PosBook}.

\subsection{Modules}

Let $R$ be an algebra. We regard the category $\Mod R$ of all $R$-modules as a model category with its usual projective model structure; its homotopy category is $D(R)$, the derived category of $R$. The functor $\hom_R(-,R):\Mod R \to \left(\Mod R^\circ\right)^\circ$ is right Quillen and hence admits a total derived functor $\R\hom_R(-,R): D(R) \to D(R^\circ)^\circ$ which we refer to as the \textbf{one-sided dual} functor or simply the \textbf{dual} when the context is clear. Clearly $\R\hom_R(R,R)\simeq R$, so by taking thick subcategories we get induced contravariant equivalences $$\R\hom_R(-,R):\per(R) \longleftrightarrow \per(R^\circ):\R\hom_{R^\circ}(-,R).$$

\p Similarly, if $R$ is an algebra and $M$ an $R$-module, we denote by $M^*$ its $k$-linear dual $\hom_k(M,k)$. Since this functor is exact, it is its own right derived functor. Linear duality hence defines a contravariant functor $D(R) \to D(R^\circ)$ which induces contravariant equivalences between $\rf(R)$ and $\rf(R^\circ)$.

\p If $R\simeq R^\circ$ as algebras, then we may regard both the one-sided and the $k$-linear duals as contravariant endofunctors on $D(R)$. In particular, suppose that $R=A^e$ for some other algebra $A$. If $M$ is an $A$-bimodule, then its \textbf{bimodule dual} is the $A$-bimodule $M^\vee \coloneqq \R\hom_{A^e}(M,A^e)$. The bimodule dual functor induces a contravariant autoequivalence on $\per(A^e)$. Similarly, the \textbf{linear dual} of $M$ is the $A$-bimodule $A^*$ and the linear dual functor is a contravariant autoequivalence of $\rf(A^e)$.

\p The category $\Mod A^e$ of $A$-bimodules is both right and left closed monoidal under the tensor product $\otimes_A$, with its adjoints given by $\hom_A$ and $\hom_{A^\circ}$. However it is not symmetric monoidal, unless $A$ is commutative. The above functors derive to give a non-symmetric monoidal structure on $D(A^e)$. For $M$ a fixed $A$-bimodule, the functor $M\lot_{A}-$ has a right adjoint, the \textbf{left internal hom} $\R\hom_{A^\circ}(M,-)$. Similarly, the functor $-\lot_{A}M$ has a right adjoint $\R\hom_{A}(M,-)$ which we call the \textbf{right internal hom}.

\p A bimodule $M$ is \textbf{symmetric} if $am=ma$, for all $a\in A$ and $m\in M$. Note that $A$ is commutative if and only if it is a symmetric bimodule over itself. When $A$ is commutative, then so is $A^e$, and the (derived) categories of $A$-modules, $A^\circ$-modules, and symmetric $A$-bimodules all coincide, and all are closed symmetric monoidal categories under $\lot_A$. Even when $A$ is commutative, the category of $A$-bimodules need not be symmetric monoidal (take two noncommuting automorphisms of $A$ and twist the regular bimodule by them).

\p As a consequence, if $A$ is a commutative algebra, then the category of all $A$-bimodules is equivalent to the category of symmetric $A^e$-bimodules. In particular, a symmetric $A$-bimodule is not the same thing as a symmetric $A^e$-bimodule.


\subsection{Comodules and contramodules}
Let $C$ be a coalgebra. A \textbf{comodule} over $C$ is a vector space $V$ with a coaction map $V\to V\otimes C$ which is compatible with the counit and comultiplication of $C$. These form a category $\Comod C$. The forgetful functor from $C$-comodules to vector spaces admits a right adjoint which sends $V$ to the cofree comodule $V\otimes C$. We regard the category $\Comod C$ of all $C$-comodules as a model category under Positselski's model structure \cite{positselskitwokinds}, where the weak equivalences are the morphisms with coacyclic cone and the cofibrations are the injections. The homotopy category is the \textbf{coderived category} which we denote by $\dco (C)$. It is a triangulated category, with shift functor given by the usual cohomological shift. We let $\fd(C)\into \dco(C)$ denote the full subcategory on the compact objects; an object of $\dco(C)$ is compact if and only if it is weakly equivalent to a finite dimensional $C$-comodule. Since all coalgebras are by assumption conilpotent, we have $\fd(C)\simeq \thick_{\dco(C)}(k)$, which is an equivalence we will use often.

\p A \textbf{contramodule} over $C$ is a vector space $V$ with a contraaction map $\hom(C,V) \to V$ that is compatible with the counit and comultiplication maps. These form a category $\Ctrmod C$. The forgetful functor from $C$-contramodules to vector spaces admits a left adjoint which sends $V$ to the free contramodule $\hom(C,V)$. The category $\Ctrmod C$ of all contramodules is also a model category \cite{positselskitwokinds}, where the weak equivalences are the morphisms with contraacyclic cone and the fibrations are the surjections. The homotopy category is the \textbf{contraderived category} which we denote by $\dct(C)$. It is also triangulated, with the same shift functor as before.

\p If $C,D$ are two coalgebras then a $C$-$D$-\textbf{bicomodule} is defined to be a $C^\circ \otimes D$-comodule. We denote the category of such by $C \bicomod D$. Similarly, a $C$-$D$-\textbf{bicontramodule} is a $C^\circ \otimes D$-contramodule, and they form a category $C \bictmod D$.

\p If $C$ is a coalgebra, its linear dual $C^*$ is an algebra under the convolution product $f.g= m(f\otimes g)\Delta$, where $m:k\otimes k \to k$ is the multiplication isomorphism. In fact, $C^*$ is a pseudocompact algebra, and the linear dual is a contravariant equivalence between the category of coalgebras and the category of pseudocompact algebras. Similarly, the linear dual gives a contravariant equivalence between the categories of $C$-comodules and pseudocompact $C^*$-modules. One can equivalently phrase all of our theorems about coalgebras in the language of pseudocompact algebras. In this paper we will often pass between the coalgebraic language and the pseudocompact language without comment.

\p The following is a key result. In its formulation, we use the notation ${}_CM_D$ to mean that $M$ is a $C$-$D$-bicomodule, and ${}^CX^D$ to mean that $X$ is a $C$-$D$-bicontramodule, with similar notation for one-sided co/contramodules.
\begin{thm}[Positselski]\label{fivefun}
Let $C,D,E,F$ be coalgebras. There are bifunctors 
    \begin{align*}
    -\square_C-&:\quad D\text{-}\comod C \quad\times\quad C\text{-}\comod E \quad \longrightarrow\quad D\text{-}\comod E\\
    -\odot_C-&:\quad \Ctrmod C \quad\times\quad C\bicomod D   \quad \longrightarrow\quad \comod D\\
    \cohom_C(-,-)&:\quad (D\bicomod C)^\circ \quad\times\quad E\bictmod C \longrightarrow\quad E\bictmod D\\
    \hom_C(-,-)&:\quad (D\bicomod C)^\circ \quad\times\quad \comod C \longrightarrow\quad \Ctrmod D\\
    \hom^C(-,-)&:\quad (\Ctrmod C)^\circ \quad\times\quad D\bictmod C \longrightarrow\quad D\text{-}\mathbf{Ctrmod} \end{align*}
satisfying natural isomorphisms, for comodules ${}_EM_C$, ${}_CN_D$, ${}_DP_F$, $Q_D$ and contramodules $X^E$, ${}^DY^C$:
\begin{align*}
M\square_C (N\square_D P) &\cong (M\square_C N)\square_D P  &\in E\bicomod F\\
         X\odot_E (M \square_C N) &\cong (X \odot_E M)\square_C N & \in \comod D\\
        \cohom_C(M,\hom_D(N,Q)) &\cong \hom_D(M\square_C N,Q)& \in \Ctrmod E\\
        \cohom_C(X\odot_EM,Y) &\cong \hom^E(X,\cohom_C(M,Y))&\in D\text{-}\mathbf{Ctrmod}
    \end{align*}
    We refer to $\square_C$ as the \textbf{cotensor product}, $\odot_C$ as the \textbf{contratensor product}, $\hom_C$ as the \textbf{comodule Hom}, and $\hom^C$ as the \textbf{contramodule Hom} or simply the \textbf{contrahom}. 
\end{thm}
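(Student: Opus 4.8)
The plan is to construct each of the five bifunctors as appropriate (co)ends or equalisers/coequalisers built from the underlying vector-space-level operations, and then to verify the four associativity/adjunction isomorphisms by unwinding these definitions and comparing universal properties. First I would define the cotensor product $M \square_C N$ as the equaliser of the two coaction maps $M \otimes N \rightrightarrows M \otimes C \otimes N$, noting that the residual $D$- and $E$-coactions survive the equaliser (as equalisers of vector spaces are computed underlying) to make $M \square_C N$ a $D$-$E$-bicomodule. Dually, the contratensor product $X \odot_C M$ is defined as the coequaliser of the two maps $\hom(C,X) \otimes M \rightrightarrows X \otimes M$, one coming from the contraaction of $X$ and one from the $C$-coaction on $M$; the residual $D$-coaction makes it a $D$-comodule. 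The two internal-hom-type functors $\hom_C(-,-)$ and the contrahom $\hom^C(-,-)$ are defined as the appropriate equalisers of hom-spaces, and $\cohom_C(-,-)$ as a coequaliser; in each case the key point is to check that the residual one-sided (co/contra)action is well-defined on the (co)equaliser.

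Next I would establish the four displayed isomorphisms. The associativity $M \square_C(N \square_D P) \cong (M \square_C N) \square_D P$ follows because both sides are computed as the joint equaliser (limit) of a single diagram $M \otimes N \otimes P \rightrightarrows \cdots$ involving the coactions of both $C$ and $D$; since limits commute with limits this is essentially formal once the diagram is set up carefully, and I would take care that the two separate equalisers assemble into this iterated limit because equalisers of vector spaces are exact and commute with the requisite tensor factors (we work over a field, so $- \otimes V$ is exact). The mixed associativity $X \odot_E(M \square_C N) \cong (X \odot_E M) \square_C N$ is the subtlest of the four: it mixes a coequaliser with an equaliser, so I would argue that over a field the relevant equaliser in the $C$-variable commutes with the coequaliser defining $\odot_E$, again using exactness of $- \otimes V$ and $\hom(E,-) \otimes -$ on vector spaces. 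The two adjunction-type isomorphisms involving $\cohom_C$, $\hom_C$, and $\hom^C$ I would prove by checking that both sides corepresent (resp. represent) the same functor, reducing to the corresponding vector-space-level adjunction between $\otimes$ and $\hom$ and then imposing the $C$-equivariance on both sides simultaneously.

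The main obstacle I expect is the mixed associativity $X \odot_E(M \square_C N) \cong (X \odot_E M) \square_C N$, precisely because one operation is a colimit (coequaliser) and the other a limit (equaliser), and such constructions do not commute in general. The resolution must exploit that we work over a field: the functors $- \otimes V$ are exact for every vector space $V$, so the equaliser defining $\square_C$ in the central $C$-variable is preserved by tensoring and by the coequaliser in the (disjoint) $E$-variable. I would make this precise by writing the composite $X \odot_E(M \square_C N)$ as a single (co)limit over a combined diagram and checking that reordering the equaliser over $C$ past the coequaliser over $E$ is legitimate because the two structures act on independent tensor factors. A secondary, more bookkeeping-style difficulty is tracking signs and the non-swapping convention for the dual pairing $U^* \otimes V^* \to (U \otimes V)^*$ fixed in the conventions, which ensures all the bimodule/bicomodule structures are on the correct side; but this is routine once the conventions are respected throughout.
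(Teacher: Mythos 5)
Your reconstruction of the definitions is accurate and matches Positselski's: $\square_C$ as an equaliser, $\odot_C$ and $\cohom_C$ as coequalisers, $\hom_C$ and $\hom^C$ as equalisers, with the residual one-sided (co/contra)actions checked to descend to the (co)equaliser. That last verification is in fact the only part the paper proves by hand (the $E$-$D$-bicontramodule structure on $\cohom_C(T,Z)$ and the $D$-contramodule structure on $\hom^C(W,Y)$); everything else in the paper's proof is a citation to Positselski. Your argument for the pure cotensor associativity is also correct: over a field $-\otimes V$ is exact, so both bracketings are the joint equaliser inside $M\otimes N\otimes P$.

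The gap is in the mixed associativity $X\odot_E(M\square_C N)\cong(X\odot_E M)\square_C N$, which you correctly flag as the hard point but do not actually resolve. First, the two diagrams are not on independent tensor factors: the coequaliser defining $\odot_E$ uses the left $E$-coaction on $M$, and the equaliser defining $\square_C$ uses the right $C$-coaction on $M$, so both involve $M$; the bicomodule axiom only gives you a commuting square. Second, a commuting square does not permit interchanging a kernel with a cokernel. Unwinding, one side is $\coker(\ker\psi_1\to\ker\psi_2)$ and the other is $\ker(\coker\phi\to\coker\phi')$ for a single commuting square of vector spaces, and the natural comparison map between these need not be an isomorphism even over a field: take all four corners among $k$ and $0$ with $\phi=\psi_1=\mathrm{id}$ and $\psi_2=\phi'=0$, which yields $k$ on one side and $0$ on the other. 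Exactness of $-\otimes V$ is already consumed in identifying the iterated constructions with this square and gives no further help. The same hidden limit/colimit interchange occurs in the two adjunction isomorphisms, since $\hom(-,Q)$ converts the equaliser presenting $M\square_C N$ into a coequaliser. These statements are genuinely non-formal; the standard route (and the one implicit in the paper's reliance on Positselski, and in its subsequent restriction of the functors to injective comodules and projective contramodules) is to first establish the comparison maps as isomorphisms for (co)free objects and then exploit the specific split (co)bar-type presentations available over a semisimple base. As written, your plan would stall exactly at the step you identified as the main obstacle.
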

We remark that for the associativity equations to hold it is necessary that our base ring be semisimple.


\begin{proof}
The existence of $\square_C$ is \cite[1.2.4]{PosBook} and associativity is \cite[1.2.5]{PosBook}. The existence of $\odot_C$ is \cite[5.1.1, 5.1.2]{PosBook} and the cotensor-contratensor associativity is \cite[5.2, Proposition 1]{PosBook}. The existence of $\hom_C$ is \cite[5.1.2]{PosBook}.

\p For brevity we will write $(U,V)\coloneqq \hom_k(U,V)$. Let $T$ be a $C$-comodule and $Z$ a $C$-contramodule. Recall that $\cohom_C(T,Z)$ is defined as the coequaliser of the natural diagram $(C\otimes T, Z) \rightrightarrows (T,Z)$ given by the $C$-coaction on $T$ and the $C$-contraaction on $Z$ \cite[3.2.1]{PosBook}. Suppose that $T$ is a $D$-$C$-bicomodule and $Z$ is an $E$-$C$-bicontramodule. We wish to construct a map $(D\otimes E,\cohom_C(T,Z)) \to \cohom_C(T,Z)$ exhibiting $\cohom_C(T,Z)$ as an $E$-$D$-bicontramodule. Since $\hom(D\otimes E,-)$ commutes with coequalisers, it suffices to show that the coequaliser diagram defining $\cohom_C(T,Z)$ is a diagram of $D$-$E$-bicontramodules. Let $\rho: T \to D\otimes T$ and $\sigma: (E,Z)\to Z$ denote the structure maps. Then the $D$-$E$-contraaction on $(T,Z)$ is given by the composition $$(D\otimes E,(T,Z)) \cong (D\otimes T,(E,Z)) \xrightarrow{(\rho,\sigma)}(T,Z)$$and similarly for $(C\otimes T,Z)$. Moreover the two maps in the coequaliser are $D$-$E$-bicontralinear, since they only use the $C$-co/contraactions. Hence $\cohom_C(T,Z)$ is a $D$-$E$-bicontramodule, as required. The cohom-cotensor adjunction is \cite[5.2, Proposition 2]{positselskitwokinds}.




\p Let $W,Y$ be $C$-contramodules, so that we have maps $(C,W)\to W$ and $(C,Y)\to Y$. The contrahom $\hom^C(W,Y)$ is defined as the equaliser of the natural pair of maps $(W,Y) \rightrightarrows ((C,W),Y)$ given by the contraactions on $W$ and $Y$ respectively. If $Y$ is in addition a $D$-contramodule, this diagram is a diagram of $D$-contramodules: indeed if $V$ is a vector space then as before $\hom(V,Y)$ is a $D$-contramodule under the map $$(D,(V,Y)) \cong (V,(D,Y)) \to (V,Y).$$Since $(D,-)$ preserves equalisers, $\hom^C(W,Y)$ is hence a $D$-contramodule. The cohom-contratensor adjunction is \cite[5.2, Proposition 3]{positselskitwokinds}.
\end{proof}
We next show that these functors are homotopically well-behaved. First recall that a \textbf{cofree} $C$-comodule is a comodule whose underlying graded $C$-comodule is of the form $V\otimes C$, with natural coaction. An \textbf{injective} $C$-comodule is a summand of a cofree comodule. Similarly, a \textbf{free} $C$-contramodule is a contramodule whose underlying graded $C$-contramodule is of the form $\hom(C,V)$, with natural contraaction, and a \textbf{projective} $C$-contramodule is a summand of a free contramodule. We denote the full subcategory of injective $C$-comodules by $\mathbf{Inj\text{-}}C$ and the full subcategory of projective $C$-contramodules by  $\mathbf{Proj\text{-}}C$. As before, if $D$ is another coalgebra, then we will abbreviate $\mathbf{Inj\text{-}}(C^\circ \otimes D)$ by $C\mathbf{\text{-}Inj\text{-}}D$ and $\mathbf{Proj\text{-}}(C^\circ \otimes D)$ by $C\mathbf{\text{-}Proj\text{-}}D$. Clearly an injective $C$-$D$-bicomodule is injective both as a $C$- and a $D$-bicomodule, and the same is true for projective bicontramodules. 

\begin{prop}\label{fivefunproj}
    Let $C,D,E$ be coalgebras. The bifunctors of \ref{fivefun} restrict to bifunctors 
    \begin{align*}
    -\square_C-&:\quad D\mathbf{\text{-}Inj\text{-}} C \quad\times\quad C\mathbf{\text{-}Inj\text{-}} E \quad \longrightarrow\quad D\mathbf{\text{-}Inj\text{-}} E\\
    -\odot_C-&:\quad \mathbf{Proj\text{-}} C \quad\times\quad C\mathbf{\text{-}Inj\text{-}} D   \quad \longrightarrow\quad \mathbf{Inj\text{-}} D\\
    \cohom_C(-,-)&:\quad (D\mathbf{\text{-}Inj\text{-}}C)^\circ \quad\times\quad E\mathbf{\text{-}Proj\text{-}} C \longrightarrow\quad E\mathbf{\text{-}Proj\text{-}} D\\
    \hom_C(-,-)&:\quad (D\mathbf{\text{-}Inj\text{-}} C)^\circ \quad\times\quad \mathbf{Inj\text{-}} C \longrightarrow\quad \mathbf{Proj\text{-}} D\\
    \hom^C(-,-)&:\quad (\mathbf{Proj\text{-}} C)^\circ \quad\times\quad D\mathbf{\text{-}Proj\text{-}} C \longrightarrow\quad D\mathbf{\text{-}Proj}. \end{align*}
\end{prop}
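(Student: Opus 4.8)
The plan is to reduce every assertion to the generating objects — cofree comodules and free contramodules — and then compute directly. By definition an injective $C$-comodule is a retract of a cofree one and a projective $C$-contramodule is a retract of a free one, and each of the five bifunctors of \ref{fivefun} is additive in each variable. Since an additive functor carries retracts to retracts (and the contravariance in some slots is harmless, as $(-)^\circ$ preserves retracts), it suffices to prove that each functor sends (bi)cofree comodules and (bi)free contramodules into (bi)cofree comodules and (bi)free contramodules respectively: a retract of a cofree object is injective, and a retract of a free object is projective. Using the observation already recorded above, that a (bi)cofree object is cofree in each separate variable and likewise for (bi)free bicontramodules, this reduction lets us avoid any appeal to comonadicity.

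It then remains to run five short computations on generators, each combining the defining adjunction of the relevant functor from \ref{fivefun} with the (co)freeness adjunctions (the cofree functor $V\mapsto V\otimes C$ is right adjoint to the forgetful functor, and the free functor $V\mapsto \hom(C,V)$ is its left adjoint). Concretely: for bicofree comodules one has $(D\otimes V\otimes C)\square_C(C\otimes W\otimes E)\cong D\otimes V\otimes C\otimes W\otimes E$, which is bicofree; the contratensor satisfies $\hom(C,V)\odot_C M\cong V\otimes M$, so a free contramodule against a bicofree $M$ yields a cofree $D$-comodule; the cohom--cotensor and hom--cotensor adjunctions give $\cohom_C(D\otimes V\otimes C,\hom(C\otimes E,W))\cong\hom(D\otimes E,\hom(V\otimes C,W))$ and $\hom_C(M,V\otimes C)\cong\hom(M,V)$; and the contrahom satisfies $\hom^C(\hom(C,V),X)\cong\hom(V,X)$. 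In each case the output, evaluated on generators, has underlying graded object of the form $U\otimes(\text{coalgebra})$ or $\hom(\text{coalgebra},U)$, hence is (bi)cofree or (bi)free as required.

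The step I expect to be the genuine obstacle is tracking the residual $D$- and $E$-co/contramodule structures through these identifications, so as to confirm that the output is cofree or free \emph{with respect to the correct coalgebra}, rather than merely injective or projective in the abstract. For the comodule Hom, for instance, one must verify that when $M$ is cofree as a $D$-$C$-bicomodule the right $D$-contramodule structure induced on $\hom(M,V)$ by the construction in the proof of \ref{fivefun} is precisely the free structure $\hom(D,\hom(U\otimes C,V))$; the analogous bookkeeping for $\cohom$ and for the contrahom amounts to matching the structure maps against the rewriting $\hom(V,\hom(C\otimes D,W))\cong\hom(D,\hom(V\otimes C,W))$. Once these residual actions are identified with the canonical free/cofree ones, the five restrictions follow on generators, and the additivity reduction of the first paragraph completes the proof.
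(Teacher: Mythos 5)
Your proposal is correct and follows essentially the same route as the paper: reduce to cofree comodules and free contramodules via the retract/additivity argument, then verify each of the five cases by the explicit identifications $(D\otimes V\otimes C)\square_C(C\otimes W\otimes E)\cong D\otimes V\otimes C\otimes W\otimes E$, $\hom(C,V)\odot_C M\cong V\otimes M$, $\cohom_C(D\otimes V\otimes C,\hom(C\otimes E,W))\cong\hom(D\otimes E,\hom(V\otimes C,W))$, $\hom_C(M,V\otimes C)\cong\hom(M,V)$ and $\hom^C(\hom(C,V),X)\cong\hom(V,X)$. The residual-structure bookkeeping you flag is indeed the only substantive point, and it is resolved exactly as you suggest, by matching the structure maps constructed in the proof of \ref{fivefun} against the canonical free/cofree ones.
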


\begin{proof}
We begin with the cotensor product. If $M$ is an injective $D$-$C$-comodule and $N$ is an injective $C$-$E$-comodule, then $M\square_C N$ is a summand of a $D$-$E$-comodule of the form $(D\otimes W \otimes C)\square_C(C\otimes V \otimes E)\cong D\otimes W \otimes C\otimes V \otimes E$, and hence is an injective $D$-$E$-comodule. For the contratensor product, if $X$ is a projective $C$-contramodule and $M$ is an injective $C$-$D$-comodule, then $X\odot_C M$ is a summand of a $D$-comodule of the form $\hom(C,V)\odot_C M \cong V\otimes M$, which is an injective $D$-comodule. Hence $X\odot_C M$ is an injective $D$-comodule. For Cohom, if $M$ is an injective $D$-$C$-comodule and $X$ is a projective $E$-$C$-contramodule, then $\cohom_C(M,X)$ is a summand of an $E$-$D$-contramodule of the form 
\begin{align*}
    \cohom_C(D\otimes V \otimes C, \hom(E\otimes C,W))  &\cong  \cohom_C(D\otimes V \otimes C, \hom(C,\hom(E,W))) \\&\cong \hom(D\otimes V \otimes C,\hom(E,W))\\&\cong \hom(D\otimes E,\hom(V\otimes C,W))
\end{align*}which is free. Hence $\cohom_C(M,X)$ is a projective $E$-$D$-contramodule. For the comodule hom, if $M$ is an injective $D$-$C$-comodule and $N$ is an injective $C$-comodule, then $\hom_C(M,N)$ is a summand of a $D$-contramodule of the form
$$\hom_C(D\otimes W \otimes C,V\otimes C)\cong \hom(D\otimes W \otimes C,V)\cong \hom(D,\hom(W\otimes C,V))$$which is free, and hence $\hom_C(M,N)$ is projective. Finally, for the contramodule hom, if $X$ is a projective $C$-contramodule and $Y$ is a projective $D$-$C$-contramodule, then $\hom^C(X,Y)$ is a summand of a $D$-contramodule of the form $\hom^C(\hom(C,V),Y)\cong \hom(V,Y)$ which is projective, and hence $\hom^C(X,Y)$ is itself projective.
\end{proof}

\begin{thm}[Positselski]\label{derivedfive}
    All of the functors of \ref{fivefun} admit derived versions, and the derived versions of the associativity isomorphisms all hold.
\end{thm}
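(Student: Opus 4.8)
The plan is to derive each functor by resolving its comodule arguments into injective comodules and its contramodule arguments into projective contramodules, and then to read the derived functors and their associativity constraints directly off Proposition \ref{fivefunproj}. First I would recall from Positselski \cite{positselskitwokinds, PosBook} that the coderived category $\dco(C)$ is equivalent to the homotopy category of complexes of injective $C$-comodules, and dually $\dct(C)$ is equivalent to the homotopy category of complexes of projective $C$-contramodules. In this model structure every comodule admits a weak equivalence into an injective one and every contramodule admits a weak equivalence out of a projective one, and crucially a weak equivalence between two injective comodules (respectively two projective contramodules) is precisely a chain homotopy equivalence.

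To see that the derived functors are well-defined I would use one observation: each of the five underlying functors of \ref{fivefun} is additive and compatible with the dg-enrichment, hence preserves chain homotopy equivalences. Restricting each functor to injective/projective arguments as in \ref{fivefunproj}, it therefore carries weak equivalences between such arguments to weak equivalences, and so descends to a functor between the corresponding homotopy categories of injectives and projectives, i.e.\ between the appropriate co/contraderived categories. For a bifunctor I would argue one variable at a time: fixing an injective (respectively projective) object in one slot, the functor is homotopical in the other slot, and \ref{fivefunproj} guarantees that the value again lands among the injectives or projectives. Resolving both arguments thus produces a well-defined derived bifunctor independent of the chosen resolutions, giving $\rox_C$ together with the derived versions of $\odot_C$, $\cohom_C$, $\hom_C$ and $\hom^C$.

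For the associativity isomorphisms the essential point I would exploit is exactly the closure statement of \ref{fivefunproj}: the output of each functor on injective/projective input is again injective or projective. Consequently, when one computes a composite such as $M \rox_C (N \rox_D P)$ by replacing $M, N, P$ with injective resolutions, the inner cotensor product $N \square_D P$ is already injective, so no further resolution is needed before applying the outer functor; the analogous remark holds for each of the three mixed associativity constraints. Each side of a derived associativity isomorphism is therefore computed by a single choice of injective/projective resolutions, and the underived isomorphism of \ref{fivefun}, evaluated on these resolutions, is itself the required isomorphism of representing complexes, with naturality descending from that of the underived isomorphisms.

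The step I expect to be the main obstacle is the well-definedness: one must genuinely know that weak equivalences between injective comodules and between projective contramodules are chain homotopy equivalences, so that the additive functors preserve them. This is where Positselski's identification of the co/contraderived categories with the homotopy categories of injectives/projectives does the essential work. A secondary subtlety is the balancing for the bifunctors, namely that resolving both arguments agrees with resolving one at a time; I anticipate this follows formally from the injective/projective closure of \ref{fivefunproj} together with additivity, but it is the point that would need to be stated carefully.
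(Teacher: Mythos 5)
Your proposal is correct and follows essentially the same route as the paper: the existence of the derived functors is obtained by replacing arguments with injective comodules or projective contramodules (citing Positselski's identification of $\dco$ and $\dct$ with homotopy categories of injectives/projectives), and the derived associativity isomorphisms follow from the underived ones together with the closure statement of \ref{fivefunproj}. You simply spell out in more detail the well-definedness and balancing points that the paper delegates to the citation.
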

\begin{proof}
    Positselski \cite{PosBook} proves that all of the functors in question admit derived functors, defined by replacing every argument by an injective comodule or a projective contramodule as appropriate. The derived associativity isomorphisms follow from the underived versions plus \ref{fivefunproj}.
\end{proof}

\begin{rmk}
    Let $C$ be a coalgebra and $M,N$ two $C$-comodules. Then the vector space $\R\hom_C(M,N)$ is quasi-isomorphic to the dg categorical hom-complex $\dco(C)(M,N)$. Similarly, if $X,Y$ are two $C$-contramodules then $\R\hom^C(X,Y)$ is quasi-isomorphic to $\dct(C)(X,Y)$.
\end{rmk}

\begin{thm}[Co-contra correspondence {\cite{positselskitwokinds}}]
    Let $C$ be a coalgebra. Then the functors $-\odot_C C$ and $\hom_C(C,-)$ form a Quillen equivalence between $\comod C$ and $\Ctrmod C$. Moreover they are their own derived functors, and hence the same pair of functors give a triangle equivalence $\dco(C) \simeq \dct(C)$.
\end{thm}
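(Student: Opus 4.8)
The plan is to exhibit $F\coloneqq-\odot_C C$ and $G\coloneqq\hom_C(C,-)$ as a Quillen adjunction, to promote it to a Quillen equivalence by computing the two functors on (co)free objects, and finally to address the sharper claim that they are their own derived functors. First I would record the adjunction: specialising the contratensor--comodule-hom adjunction of Theorem~\ref{fivefun} to the regular $C$-$C$-bicomodule $C$ gives a natural isomorphism $\comod C(FX,N)\cong\Ctrmod C(X,GN)$, so that $F\colon\Ctrmod C\to\comod C$ is left adjoint to $G\colon\comod C\to\Ctrmod C$. To see that this is a Quillen adjunction for Positselski's model structures I would check that $G$ preserves fibrations and trivial fibrations. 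The key point is that a fibration of comodules is a surjection whose kernel is graded-injective, so that the underlying graded short exact sequence splits; applying the additive functor $G$ therefore yields a surjection, and Proposition~\ref{fivefunproj} identifies its kernel as a projective contramodule. The same additivity-plus-splitting principle disposes of the trivial fibrations.

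The heart of the argument is the behaviour of $F$ and $G$ on (co)free objects. By Proposition~\ref{fivefunproj}, $G$ carries injective comodules to projective contramodules and $F$ carries projective contramodules to injective comodules, and on generators one has the natural isomorphisms
\begin{align*}
\hom(C,V)\odot_C C &\cong V\otimes C, & \hom_C(C,V\otimes C)&\cong\hom(C,V).
\end{align*}
These say precisely that the counit $FGN\to N$ is an isomorphism for cofree, hence by additivity for all graded-injective, comodules $N$, and that the unit $X\to GFX$ is an isomorphism for free, hence for all graded-projective, contramodules $X$. Since the fibrant objects of $\comod C$ are exactly the graded-injective comodules, the cofibrant objects of $\Ctrmod C$ exactly the graded-projective contramodules, and $F$, $G$ preserve (co)fibrancy by \ref{fivefunproj}, no (co)fibrant replacement is needed: the derived counit and unit coincide with the genuine ones and are isomorphisms. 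Hence $F\dashv G$ is a Quillen equivalence and induces the triangle equivalence $\dco(C)\simeq\dct(C)$. Equivalently, $F$ and $G$ restrict to an equivalence of dg categories $\mathbf{Proj\text{-}}C\simeq\mathbf{Inj\text{-}}C$, which descends along the identifications $\dct(C)\simeq K(\mathbf{Proj\text{-}}C)$ and $\dco(C)\simeq K(\mathbf{Inj\text{-}}C)$ afforded by (co)fibrant replacement.

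It remains to show that $F$ and $G$ are their \emph{own} derived functors, that is, that they preserve weak equivalences, equivalently that $F$ sends contraacyclic contramodules to coacyclic comodules and $G$ sends coacyclic comodules to contraacyclic contramodules. This is where I expect the genuine difficulty, and it is the real content of Positselski's theorem. The obstruction is an asymmetry: coacyclicity is defined by closure under infinite \emph{direct sums} while contraacyclicity uses closure under infinite \emph{products}, and $G$ manifestly fails to commute with direct sums, since $\hom_C(C,(\bigoplus_iV_i)\otimes C)\cong\hom(C,\bigoplus_iV_i)$ is not $\bigoplus_i\hom(C,V_i)$. My plan to overcome this is to reduce to the generating objects of the (co)acyclic classes: over the semisimple base ring (whose role is already visible in Theorem~\ref{fivefun}) both functors are exact on short exact sequences of (co)free objects, so they send the totalisations of short exact sequences to contraacyclic, respectively coacyclic, objects; the remaining closure under infinite (co)products is then controlled using that over a field every comodule is cofreely coresolved. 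Rather than reprove this delicate point from scratch I would follow Positselski's analysis, which carries out exactly this reduction.
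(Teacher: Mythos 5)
The paper does not actually prove this statement: it is quoted from Positselski \cite{positselskitwokinds} with no in-house argument, so there is nothing internal to compare your proposal against. That said, the architecture of your proposal is the standard and correct one, and it is essentially Positselski's: the adjunction $-\odot_C C\dashv\hom_C(C,-)$, the check that it is Quillen for the injective and projective model structures, and the observation that the unit and counit are isomorphisms on free contramodules and cofree comodules --- hence on all graded-projective and graded-injective objects, which are exactly the cofibrant and fibrant ones --- so that no replacement is needed and the adjunction is a Quillen equivalence restricting to $\mathbf{Proj\text{-}}C\simeq\mathbf{Inj\text{-}}C$. The computations on generators you invoke are the same ones this paper records in the proof of \ref{fivefunproj}. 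Two details to tighten: the adjunction isomorphism you need, $\hom_C(X\odot_C C,N)\cong\hom^C(X,\hom_C(C,N))$, is not literally one of the displayed isomorphisms of \ref{fivefun}; and for trivial fibrations you are implicitly using that a coacyclic graded-injective comodule is contractible, so that $\hom_C(C,-)$ of it is contractible and hence contraacyclic --- this is standard in Positselski's theory but should be said.

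The genuine gap is the clause you yourself flag: that the functors are their own derived functors, i.e.\ that $-\odot_C C$ takes contraacyclic contramodules to coacyclic comodules and $\hom_C(C,-)$ takes coacyclic comodules to contraacyclic contramodules without any (co)fibrant replacement. Your proposed reduction does not close this. Sending totalisations of short exact sequences to (co/contra)acyclic objects already needs an exactness property neither functor obviously has: $\hom_C(C,-)$ is a right adjoint and hence only left exact (since $C$ need not be projective as a comodule over itself), and $-\odot_C C$ is only right exact. The closure step is worse: one must show the class of comodules $M$ with $\hom_C(C,M)$ contraacyclic is closed under arbitrary direct sums, but $\hom_C\bigl(C,\bigoplus_i M_i\bigr)$ is neither $\bigoplus_i\hom_C(C,M_i)$ nor $\prod_i\hom_C(C,M_i)$, so ``controlled using that every comodule is cofreely coresolved'' is not yet an argument. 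Since the paper itself outsources the entire theorem to \cite{positselskitwokinds}, deferring this point to Positselski is defensible as attribution; but as a self-contained proof, your proposal establishes the Quillen equivalence and the induced triangle equivalence $\dco(C)\simeq\dct(C)$ while leaving the ``own derived functors'' assertion unproved, exactly where you say the difficulty lies.
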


\begin{rmk}
    The monoidal cotensor product on $\Comod C^e$ descends to a monoidal derived cotensor product on $\dco(C^e)$, and hence a monoidal structure on $\dct(C^e)$. However, this monoidal structure does not seem to admit a simple description at the underived level.
\end{rmk}

\begin{rmk}
    All of the results of this section remain true verbatim without the assumption of conilpotency.
\end{rmk}

\section{Koszul duality}
We review Positselski's formulation of Koszul duality and Guan--Holstein--Lazarev's formulation of bimodule Koszul duality. Using this, we define a linear dual functor for comodules and one- and two-sided duality functors for contramodules.

\p In this section all (co)algebras will be (co)augmented. For brevity in our theorems we will introduce the following notion. Say that a pair $(A,C)$ consisting of an algebra and a coalgebra is a \textbf{Koszul duality pair} if $C\simeq BA$, or equivalently $A\simeq\Omega C$, where $B$ and $\Omega$ denote the bar and cobar constructions respectively.

\p If $A$ is an algebra and $C$ a coalgebra, a morphism $\Omega C \simeq A$ defines a twisting cochain $C \to A$ by composition with the natural linear map $C \to \Omega C$. In particular, if $(A,C)$ is a Koszul duality pair then the quasi-isomorphism $\Omega C \simeq A$ defines a twisting cochain $\tau:C \to A$ which we call the \textbf{associated twisting cochain}.

\subsection{One-sided duality}
\begin{thm}[Positselski]\label{posKDthm}
 Let $(A,C)$ be a Koszul duality pair. 
 \begin{enumerate}
     \item There is a Quillen equivalence between $\Mod A$ and $\Comod C$.
     \item There is a Quillen equivalence between $\Mod A$ and $\Ctrmod C$.
     \item The Quillen equivalences of (1) and (2) are compatible with the co-contra correspondence, in the sense that we obtain a commutative diagram of quasi-equivalences of pretriangulated dg categories $$\begin{tikzcd}
         & D(A) \ar[dl]\ar[dr]& \\
         \dco(C) \ar[rr]&&\ar[ll] \dct(C)
     \end{tikzcd}$$
     \item The $A$-module $A\in D(A)$ corresponds to $k\in \dco(C)$.
     \item The $A$-module $k\in D(A)$ corresponds to $C\in \dco(C)$ and $C^*\in \dct(C)$.
     \item The $A$-module $A^* \in D(A)$ corresponds to $k\in \dct(C)$.
 \end{enumerate}
\end{thm}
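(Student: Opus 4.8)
The plan is to take the Quillen equivalences (1)--(2) and their compatibility (3) directly from Positselski's work \cite{positselskitwokinds, PosBook}, and then to verify the object dictionary (4)--(6) by feeding the relevant objects into the explicit Koszul-duality functors. Recall that the associated twisting cochain $\tau\colon C\to A$ determines twisted-tensor-product functors
\[
-\otimes_\tau A\colon \Comod C \to \Mod A,\qquad -\otimes_\tau C\colon \Mod A \to \Comod C,
\]
whose underlying graded objects are $N\otimes A$ and $M\otimes C$ with differentials perturbed by $\tau$, together with a contramodule analogue $\hom_\tau(C,-)\colon \Mod A \to \Ctrmod C$ having underlying object $\hom(C,M)$. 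That these descend to the inverse triangle equivalences of (1)--(3) and fit into the co--contra triangle is exactly Positselski's Koszul duality combined with the comodule--contramodule correspondence, so for those parts I would simply assemble the references and set up the notation.

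For (4) and (5) the key point is that the $\tau$-perturbation \emph{vanishes} on trivial (co)modules, so the computations are immediate at the level of the underlying functors. Since every comodule is cofibrant and every $A$-module is fibrant in the relevant model structures, no (co)fibrant replacement is needed, and I would compute: $k\otimes_\tau A = A$, giving $k\in\dco(C)\leftrightarrow A\in D(A)$, which is (4); $k\otimes_\tau C = C$, giving $k\in D(A)\leftrightarrow C\in\dco(C)$; and $\hom_\tau(C,k)=\hom(C,k)=C^*$, the free contramodule on $k$, giving the contramodule half of (5). As a consistency check, $C$ is the cofree comodule on $k$ and $C^*$ the free contramodule on $k$, and these are interchanged by the correspondence $\hom_C(C,-)$ of the previous theorem, exactly as (3) demands.

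Part (6) is the one requiring genuine input. Here I would compute the image of $A^*$ under $\hom_\tau(C,-)$. Using $A^*=\hom(A,k)$ and the adjunction $\hom(C,\hom(A,k))\cong\hom(C\otimes A,k)$, one checks that the $\tau$-perturbed differential on $\hom_\tau(C,A^*)$ is the $k$-linear dual of the $\tau$-perturbed differential on the twisted tensor product $C\otimes_\tau A$, so that $\hom_\tau(C,A^*)\cong (C\otimes_\tau A)^*$ as $C$-contramodules. The essential fact is that $C\otimes_\tau A$ is acyclic, i.e.\ quasi-isomorphic to $k$ as an $A$-module; this is the defining property of the acyclic twisting cochain $\tau$ underlying the equivalences (1)--(2) (equivalently, it records that $C\in\dco(C)$ maps to $k\in D(A)$). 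Since linear duality is exact it preserves this quasi-isomorphism, whence $(C\otimes_\tau A)^*\simeq k^*=k$; a short check that the induced contraaction reduces to the counit-induced trivial one then yields $A^*\in D(A)\leftrightarrow k\in\dct(C)$, which is (6).

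I expect the main obstacle to be bookkeeping rather than conceptual: pinning down the precise twisted differentials (hence the identification $\hom_\tau(C,A^*)\cong(C\otimes_\tau A)^*$ together with its contramodule structure), and checking that the three object-level dictionaries (4)--(6) are mutually compatible across the co--contra triangle of (3). The one non-formal ingredient is the acyclicity of the twisted tensor products $A\otimes_\tau C$ and $C\otimes_\tau A$; I would either invoke it as part of Positselski's theorem or, if a self-contained argument is wanted, deduce it from the bar/cobar resolution, since $C=BA$ makes $\tau$ the universal acyclic twisting cochain.
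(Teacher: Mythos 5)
Your route is the same as the paper's: the paper disposes of (1)--(3) by citing \cite[\S 8]{positselskitwokinds} for the model structures and Quillen functors and \cite[\S 6.5]{positselskitwokinds} for the rest, and the explicit twisted tensor/Hom functors you use to check the object dictionary are exactly the ones recorded immediately after the theorem. Your verifications of (4) and (5) are correct as stated, since the $\tau$-twist vanishes on the trivial (co)module and no replacement is needed ($M\ootimes C$ is cofree and $\uhom(C,M)$ is free, while every object of $\Mod A$ is fibrant).

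The step that needs tightening is the end of (6). The identification $\uhom(C,A^*)\cong (C\ootimes A)^*$ is fine (it is the same hom-tensor manipulation the paper carries out for the bimodule analogue in \ref{codiagonal}), but your conclusion ``$C\ootimes A\simeq k$, linear duality is exact, hence $(C\ootimes A)^*\simeq k$'' only produces a \emph{quasi-isomorphism} of contramodules, whereas a weak equivalence in $\dct(C)$ is a map with \emph{contraacyclic} cone, a strictly stronger condition; an acyclic contramodule need not be contraacyclic, and this distinction is the whole point of working with derived categories of the second kind. Nor can you get the weak equivalence for free from Ken Brown's lemma: $\uhom(C,-)$ does send quasi-isomorphisms of $A$-modules to weak equivalences of contramodules, but the map $k\to(C\ootimes A)^*$ is not in the image of that functor, since $k$ is not a free contramodule. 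What rescues the argument is that for the canonical twisting cochain of $C=BA$ the cone of $C\ootimes A\to k$ is not merely acyclic but admits the contracting homotopy / conilpotency-filtration structure that makes its dual contraacyclic --- and this is precisely the content of Positselski's theorem in \cite[\S 6.5]{positselskitwokinds}. So either cite that result in the form ``the cone is coacyclic, hence its dual is contraacyclic,'' or simply quote (6) from \emph{loc.\ cit.} as the paper does; with that adjustment your argument is complete.
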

\begin{proof}
    The existence of the model structures and Quillen functors is \cite[\S8]{positselskitwokinds}. Everything else is contained in \cite[\S6.5]{positselskitwokinds}.
\end{proof}

The Quillen functors between $\Mod A$ and $\Comod C$ are given by twisted tensor products, and the functors between $\Mod A$ and $\Ctrmod C$ are given by twisted Homs \cite[\S6.2]{positselskitwokinds}. Specifically, let $\tau:C \to A$ denote the associated twisting cochain. If $M$ is an $A$-module then the corresponding $C$-comodule is the twisted tensor product $M\ootimes C$; the underlying graded vector space is $M\otimes C$ and the differential is twisted by the term $m\otimes c \mapsto m\tau(c_1)\otimes c_2$, where we use Sweedler notation for the comultiplication of $C$. The coaction on $M\ootimes C$ is given by the natural coaction of $C$ on itself. Similarly, if $N$ is a $C$-comodule then the corresponding $A$-module is $N\ootimes A$, where the differential is twisted by the term $n\otimes a \mapsto n_1\otimes \tau(n_2)a$, where we again use Sweedler notation for the coaction $N\to N\otimes C$. Note that $M\ootimes C$ is a cofree comodule.

\p For contramodules the twisted Hom functors are similar: if $M$ is an $A$-module, the corresponding $C$-contramodule is given by the twisted Hom $\uhom(C,M)$. As a graded vector space, this agrees with $\hom(C,M)$, and the differential is twisted by the term which sends a function $g:C \to M$ to the function $c\mapsto g(c_1)\tau(c_2)$. The contraaction on $\uhom(C,M)$ is given by $\Delta^*$. Finally, if $X$ is a $C$-contramodule, the corresponding $A$-comodule is $\uhom(A,X)$. This time the differential is twisted by the map which sends a linear function $g:A \to X$ to the function $A \to X$ which sends $a\in A$ to the element $\pi\left[c\mapsto g(\tau(c)a) \right]$, where $\pi: \hom(C,X) \to X$ is the contraaction. More abstractly, this twisting term is the map defined by the composition $$\hom(A,X) \xrightarrow{(\mu\tau)^*} \hom(C\otimes A,X) \cong \hom(A,\hom(C,X))\xrightarrow{\pi_*}\hom(A,X)$$where $\mu\tau:C\otimes A \to A$ sends $c\otimes a$ to $\tau(c)a$. Note that $\uhom(C,M)$ is a free contramodule.
\begin{cor}
    If $(A,C)$ is a Koszul duality pair then there are algebra quasi-isomorphisms
    $$A\simeq \R\enn_{\dco(C)}(k)$$ 
    $$\R\enn_A(A^*) \simeq \R\enn_{\dct(C)}(k)$$
    $$C^*\simeq \R\enn_A(k)$$ 
\end{cor}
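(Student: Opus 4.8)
The plan is to obtain all three quasi-isomorphisms by transporting endomorphism dg algebras across the quasi-equivalences of \ref{posKDthm}(3), reading off the relevant objects from parts (4)--(6). The underlying principle is that a quasi-equivalence of pretriangulated dg categories restricts to quasi-isomorphisms of hom-complexes that are compatible with composition; hence if an object $M\in D(A)$ corresponds to $N\in\dco(C)$ (respectively $N\in\dct(C)$), then $\R\enn_A(M)\simeq\R\enn_{\dco(C)}(N)$ (respectively $\R\enn_{\dct(C)}(N)$) as dg algebras, possibly through a zig-zag of dg-algebra quasi-isomorphisms. I would apply this three times, identifying the abstract endomorphism object by hand in the two cases where a concrete model is available.

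For the first isomorphism, part (4) gives that the regular module $A\in D(A)$ corresponds to $k\in\dco(C)$, so the principle yields $\R\enn_A(A)\simeq\R\enn_{\dco(C)}(k)$. As $A$ is cofibrant its derived endomorphisms are computed underived, and $\enn_A(A)\cong A$ via left multiplication (with $L_aL_b=L_{ab}$, so no opposite appears); thus $A\simeq\R\enn_{\dco(C)}(k)$. The second isomorphism is immediate: part (6) says $A^*\in D(A)$ corresponds to $k\in\dct(C)$, so the principle directly gives $\R\enn_A(A^*)\simeq\R\enn_{\dct(C)}(k)$, with nothing further to identify on either side.

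For the third isomorphism, part (5) says that $k\in D(A)$ corresponds to the cofree comodule $C=k\ootimes C\in\dco(C)$, whence $\R\enn_A(k)\simeq\R\enn_{\dco(C)}(C)$. It then remains to compute $\R\enn_{\dco(C)}(C)$. Since $C$ is cofree, hence injective, the derived endomorphisms coincide with the honest comodule endomorphisms, and the cofree--forgetful adjunction supplies a natural $k$-linear isomorphism $\enn_{\Comod C}(C)\cong\hom_k(C,k)=C^*$, sending a comodule endomorphism $\phi$ to $\epsilon\circ\phi$ and $f\in C^*$ to $(f\otimes\id)\Delta$. Identifying composition of endomorphisms with the convolution product on $C^*$ then yields the asserted $C^*\simeq\R\enn_A(k)$, subject to the bookkeeping discussed below.

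I expect the main obstacle to lie in this last identification at the level of algebra structures rather than of mere vector spaces. The delicate point is the ordering: a naive matching of composition with the convolution product $f.g=m(f\otimes g)\Delta$ produces an anti-isomorphism, so one must use the correct one-sided coaction (equivalently, keep track of the contravariant linear-duality equivalence between $C$-comodules and pseudocompact $C^*$-modules, together with Positselski's conventions for the twisted tensor product) in order to land on $C^*$ as stated, and not on its opposite. The first two isomorphisms, by contrast, are formal consequences of the transport principle together with the trivial computation $\enn_A(A)\cong A$, and require no such care.
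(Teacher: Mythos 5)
Your proposal is correct and follows essentially the same route as the paper: transport endomorphism dg algebras across the Koszul duality quasi-equivalences using the object correspondences of \ref{posKDthm}(4)--(6), and for the third isomorphism compute $\R\enn_{\dco(C)}(C)$ underived as $\hom_C(C,C)\simeq C^*$ using injectivity of the cofree comodule $C$. The extra care you take over whether one lands on $C^*$ or its opposite is a reasonable bookkeeping point that the paper's conventions (the choice of sign/order in the map $U^*\otimes V^*\to(U\otimes V)^*$ and the use of right comodules) are set up to resolve, but it does not change the argument.
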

\begin{proof}
    The first two quasi-isomorphisms are a consequence of Koszul duality being a quasi-equivalence of dg categories. The third is similar: firstly, we obtain $\R\enn_A(k)\simeq \R\enn_{\dco(C)}(C)$. Because $C$ is an injective $C$-comodule, we may compute the derived endomorphism algebra as $\hom_C(C,C)\simeq C^*$.
\end{proof}

\begin{rmk}
    In particular, the hom-tensor adjunction gives us a vector space quasi-isomorphism $\R\enn_{\dct(C)}(k)\simeq A^{**}$.
\end{rmk}

\subsection{Bimodule duality}




Guan, Holstein and Lazarev showed that Koszul duality holds for bimodules:

\begin{thm}[\cite{bimodKD}]
Let $(A,C)$ and $(A',C')$ be Koszul duality pairs. Then $(A\otimes A',C\otimes C')$ is also a Koszul duality pair. 
\end{thm}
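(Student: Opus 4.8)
The plan is to work at the level of twisting cochains. Since $(A,C)$ and $(A',C')$ are Koszul duality pairs, each comes equipped with an associated acyclic twisting cochain $\tau\colon C\to A$ and $\tau'\colon C'\to A'$. Recall the standard fact from twisting cochain theory: a degree-one map $\sigma\colon D\to B$ from a conilpotent coalgebra to an augmented algebra is a \emph{twisting cochain} if it satisfies the Maurer--Cartan equation $d\sigma+\sigma\star\sigma=0$ in the convolution algebra $\hom(D,B)$, the adjoint of such a $\sigma$ is an algebra map $\Omega D\to B$ (equivalently a coalgebra map $D\to BD$), and this map is a quasi-isomorphism precisely when $\sigma$ is \textbf{acyclic}, i.e.\ when the twisted complex $B\ootimes D$ is quasi-isomorphic to $k$. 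Thus $(B,D)$ is a Koszul duality pair if and only if $\sigma$ is acyclic, and it suffices to exhibit an acyclic twisting cochain $C\otimes C'\to A\otimes A'$.

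First I would write down the candidate. Using the units/augmentations $\eta_A,\epsilon_C$ and $\eta_{A'},\epsilon_{C'}$, set
\[
T \;=\; (\tau\otimes \eta_{A'}\epsilon_{C'}) \;+\; (\eta_A\epsilon_C \otimes \tau')\colon\; C\otimes C' \longrightarrow A\otimes A',
\]
with the appropriate Koszul signs. Verifying that $T$ is a twisting cochain is a direct computation: the lax monoidal comparison $\hom(C,A)\otimes\hom(C',A')\to\hom(C\otimes C',A\otimes A')$ is compatible with convolution products and differentials, so the Maurer--Cartan equation for $T$ reduces to those for $\tau$ and $\tau'$, together with the counit identities $\epsilon_C\star\tau=\tau=\tau\star\epsilon_C$ (and the dual ones for the units), which ensure that the two summands of $T$ do not interfere.

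The substantive step is to prove that $T$ is acyclic, for which I would use a Künneth (Eilenberg--Zilber) argument. The complex $(A\otimes A')\ootimes(C\otimes C')$ has underlying space $A\otimes A'\otimes C\otimes C'$; reordering the factors by the Koszul symmetry gives $A\otimes C\otimes A'\otimes C'$, and the claim is that under this isomorphism the $T$-twisted differential is identified with the differential of the external tensor product $(A\ootimes C)\otimes(A'\ootimes C')$. This is exactly where the shape of $T$ pays off: expanding the twist using the comultiplication $\Delta_{C\otimes C'}$ and the counit identity $\epsilon(c_1)c_2=c$, the summand $\tau\otimes\eta_{A'}\epsilon_{C'}$ contributes precisely the $\tau$-twist on the $A\otimes C$ factor (with the $C'$-factor collapsed through its counit and reconstituted), while $\eta_A\epsilon_C\otimes\tau'$ contributes the $\tau'$-twist on $A'\otimes C'$. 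Granting this identification of complexes, acyclicity of $\tau$ and $\tau'$ gives $A\ootimes C\simeq k$ and $A'\ootimes C'\simeq k$, whence by Künneth $(A\otimes A')\ootimes(C\otimes C')\simeq k\otimes k=k$. Hence $T$ is acyclic and $(A\otimes A',C\otimes C')$ is a Koszul duality pair.

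The main obstacle is precisely this last identification of differentials: one must carefully track the Koszul signs introduced by the reordering isomorphism and confirm that the ``cross terms'', in which the $\tau$-twist would act on the $A'$ or $C'$ factor, vanish because of the counit/coaugmentation projections built into $T$. An equivalent and perhaps more structural route, which avoids writing $T$ explicitly, is to invoke the shuffle (Eilenberg--Zilber) map $BA\otimes BA'\to B(A\otimes A')$: this is a quasi-isomorphism of conilpotent coalgebras, so composing with the given weak equivalences $C\simeq BA$ and $C'\simeq BA'$ produces a coalgebra weak equivalence $C\otimes C'\simeq B(A\otimes A')$, which is exactly the assertion. Either way, the heart of the matter is the (lax) monoidality of the bar--cobar adjunction, encoded in the shuffles and the attendant sign bookkeeping.
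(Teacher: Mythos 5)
The paper does not give its own proof of this statement — it is quoted from \cite{bimodKD} — so there is nothing internal to compare against; your twisting-cochain argument is correct and is, as far as I can tell, essentially the argument of the cited source. You form $T=\tau\otimes\eta_{A'}\epsilon_{C'}+\eta_A\epsilon_C\otimes\tau'$, verify Maurer--Cartan using that the convolution algebra $\hom(C\otimes C',A\otimes A')$ receives a map from $\hom(C,A)\otimes\hom(C',A')$, and then identify $(A\otimes A')\ootimes(C\otimes C')$ with $(A\ootimes C)\otimes(A'\ootimes C')$, so that acyclicity follows from K\"unneth; combined with the Lefèvre-Hasegawa/Positselski criterion that an acyclic twisting cochain out of a conilpotent coalgebra induces a quasi-isomorphism $\Omega(C\otimes C')\to A\otimes A'$, this gives the claim. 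Two small points. First, in the Maurer--Cartan check the cross terms do not ``not interfere'' individually: the counit identities reduce each of $(\tau\otimes\eta_{A'}\epsilon_{C'})\star(\eta_A\epsilon_C\otimes\tau')$ and $(\eta_A\epsilon_C\otimes\tau')\star(\tau\otimes\eta_{A'}\epsilon_{C'})$ to $\pm\,\tau\otimes\tau'$, and it is the Koszul sign $(-1)^{|\tau||\tau'|}=-1$ that makes the two cancel; this is the one place the computation could genuinely fail, so it deserves to be written out. Second, your ``more structural'' alternative via the shuffle map $BA\otimes BA'\to B(A\otimes A')$ is not quite a shortcut: knowing that this map is a quasi-isomorphism of conilpotent coalgebras is not enough, since in the relevant model structure quasi-isomorphisms of coalgebras are strictly weaker than weak equivalences. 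One must check that the shuffle map is a filtered quasi-isomorphism (equivalently, that its cobar is a quasi-isomorphism), and verifying that amounts to the same acyclicity computation as in your main argument.
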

In particular, with notation as above there are induced triangle equivalences $D(A\bimod A'^\circ)\simeq \dco(C \bicomod C'^\circ) \simeq \dct(C \bictmod C'^\circ)$. This theorem is not a simple consequence of one-sided Koszul duality, since the bar and cobar constructions do not in general preserve the tensor product. 

\p Putting $C'=C^\circ$ in the above theorem we obtain:

\begin{cor}
If $(A,C)$ is a Koszul duality pair then so is $(A^e,C^e)$.
\end{cor}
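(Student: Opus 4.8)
The plan is to deduce this directly from the bimodule Koszul duality theorem above, taking the second pair to be $(A',C')=(A^\circ,C^\circ)$; this yields that $(A^\circ\otimes A,\,C^\circ\otimes C)=(A^e,C^e)$ is a Koszul duality pair, matching the conventions $A^e=A^\circ\otimes A$ and $C^e=C^\circ\otimes C$. The only thing that needs checking before the theorem can be invoked is that $(A^\circ,C^\circ)$ is itself a Koszul duality pair, i.e.\ that $C^\circ\simeq B(A^\circ)$.

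The key point is that the bar construction is compatible with passing to opposites: there is a natural isomorphism $B(A^\circ)\cong (BA)^\circ$ of conilpotent coalgebras. Recall that $BA$ is the cofree conilpotent coalgebra on $\bar A[1]$ equipped with a differential built from the multiplication of $A$, while $(BA)^\circ$ has the same underlying complex but with comultiplication twisted by the symmetry. On the underlying cofree coalgebra, reversing the order of the tensor factors (with the appropriate Koszul signs) gives an isomorphism from the deconcatenation coalgebra to its opposite; one then checks that this reversal intertwines the bar differential of $A$ with the bar differential built from the opposite multiplication $\mu^\circ$ of $A^\circ$. This exhibits the desired isomorphism $B(A^\circ)\cong (BA)^\circ$. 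Passing to opposites also preserves coaugmentations and conilpotency, since it only twists the reduced comultiplication, so everything stays within the relevant categories.

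With this in hand I conclude as follows. Since $(A,C)$ is a Koszul duality pair we have a weak equivalence $C\simeq BA$. The assignment $(-)^\circ$ is an involutive self-equivalence of conilpotent coalgebras and commutes with bar and cobar, so it preserves weak equivalences; applying it to $C\simeq BA$ and composing with the isomorphism above gives $C^\circ\simeq (BA)^\circ\cong B(A^\circ)$. Hence $(A^\circ,C^\circ)$ is a Koszul duality pair, and the bimodule theorem applied to the pairs $(A^\circ,C^\circ)$ and $(A,C)$ shows that $(A^\circ\otimes A,\,C^\circ\otimes C)=(A^e,C^e)$ is a Koszul duality pair.

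The only genuine work is the sign-sensitive verification that the tensor-factor reversal intertwines the two bar differentials; this is routine but requires care with the Koszul sign rule, and is the one step where I would slow down to match the conventions fixed in the paper. Everything else is a formal application of the already-established bimodule Koszul duality theorem.
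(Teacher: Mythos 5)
Your proof is correct and is essentially the paper's own argument: the corollary is obtained by specialising the Guan--Holstein--Lazarev bimodule theorem to the pairs $(A,C)$ and $(A^\circ,C^\circ)$. The only extra content you supply is the (standard, and correctly argued) verification that $B(A^\circ)\cong (BA)^\circ$, which the paper leaves implicit.
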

In particular, if $(A,C)$ is a Koszul duality pair then there are triangle equivalences
$D(A^e) \simeq \dco(C^e)\simeq \dct(C^e)$.
\begin{rmk}
The above triangle equivalences enhance to quasi-equivalences of pretriangulated dg categories, and in \cite{bimodKD} this is used to compare the Hochschild theory of $A$ with that of the pseudocompact algebra $C^*$.
\end{rmk}

If $(A,C)$ is a Koszul duality pair and $M$ is an $A$-module then we denote by $M^!$ the corresponding $C$-comodule across the Koszul duality equivalence $D(A)\simeq \dco(C)$. Similarly if $N$ is a $C$-comodule then we let $N^!$ denote the corresponding $A$-module. We abuse this notation to also apply to bimodule Koszul duality.

\begin{prop}[Monoidality of bimodule Koszul duality]\label{kdmon}
Let $(A,C)$, $(A',C')$ and $(A'',C'')$ be Koszul duality pairs. Let $M$ be an $A'$-$A$-bimodule and let $N$ be an $A$-$A''$-bimodule. There is a natural weak equivalence
$$\left(M\lot_AN\right)^! \simeq M^! \square_C^{\mathbb{L}}N^!$$of $C'$-$C''$-bicomodules.
\end{prop}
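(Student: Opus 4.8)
The plan is to compute both sides explicitly using the cofree models for the Koszul dual bicomodules, exploiting that the Koszul duality functor $(-)^!$ produces cofree (hence injective) comodules by construction. First I would recall the explicit form of $(-)^!$ on bimodules, following the twisted tensor product formulas given after Theorem \ref{posKDthm}: applying bimodule Koszul duality to the pair $(A'^\circ \otimes A, C'^\circ \otimes C)$, the bicomodule $M^!$ is a twisted tensor product whose underlying graded bicomodule is $C' \otimes M \otimes C$ (up to a harmless reordering of tensor factors), cofree over $C'^\circ \otimes C$, with differential twisted by the associated twisting cochains $\tau'\colon C' \to A'$ and $\tau\colon C \to A$ on the two sides; likewise $N^!$ has underlying graded $C \otimes N \otimes C''$. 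The first key observation is that, being cofree over $C'^\circ \otimes C$, the bicomodule $M^!$ is cofree as a right $C$-comodule (with cofree part $C' \otimes M$), hence injective; symmetrically $N^!$ is injective as a left $C$-comodule (with cofree part $N \otimes C''$). By Proposition \ref{fivefunproj} and Theorem \ref{derivedfive}, no derivation is then necessary, so $M^! \square_C^{\mathbb{L}} N^! \simeq M^! \square_C N^!$, the underived cotensor product, which by \ref{fivefunproj} is again an injective $C'$-$C''$-bicomodule.

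Next I would compute $M^! \square_C N^!$ on underlying graded bicomodules. Using that $C$ is the unit for the cotensor product, $C \square_C(-) \cong \id$, together with the fact that the cotensor only equalises the middle $C$-coactions and so commutes with the cofree parts, I obtain a natural isomorphism
\[ M^! \square_C N^! \cong (C' \otimes M)\square_C C \square_C (N\otimes C'') \cong C' \otimes M \otimes C \otimes N \otimes C''. \]
On the other side, I would model $M \lot_A N$ by the two-sided twisted tensor product $M \ootimes C \ootimes N$, with underlying graded space $M \otimes C \otimes N$ and differential twisted by the middle cochain $\tau$ through the module actions $m \otimes c \mapsto m\tau(c_1)\otimes c_2$ and $c \otimes n \mapsto c_1 \otimes \tau(c_2)n$. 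Since $C = BA$, this is precisely the (normalised) two-sided bar complex, which computes $M \lot_A N$ because the canonical twisting cochain of a Koszul duality pair is acyclic; crucially it retains its outer $A'$-$A''$-bimodule structure from the untwisted actions on $M$ and $N$. Applying $(-)^!$ for the pair $(A'^\circ \otimes A'', C'^\circ \otimes C'')$ to this genuine bimodule then produces a cofree model for $(M \lot_A N)^!$ with underlying graded space $C' \otimes (M \otimes C \otimes N) \otimes C''$ and differential additionally twisted by the outer cochains $\tau'$ and $\tau''$. Since $(-)^!$ preserves weak equivalences, $(M\lot_A N)^!$ is weakly equivalent to this model.

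Comparing the two computations, the underlying graded $C'$-$C''$-bicomodules agree on the nose, so it remains to match the differentials and the outer bicomodule structures. This is the heart of the argument and the step I expect to be the main obstacle: I would verify that in $M^! \square_C N^!$ the two middle twisting terms — the right $C$-twist of $M^!$ by $\tau$ and the left $C$-twist of $N^!$ by $\tau$ — combine, under the equaliser identification $C \square_C C \cong C$, to reproduce exactly the bar differential of $M \ootimes C \ootimes N$, while the outer twists of $M^!$ by $\tau'$ and of $N^!$ by $\tau''$ survive untouched and give precisely the outer twisted differential and the $C'$-$C''$-bicomodule structure arising from applying $(-)^!$ to $M \ootimes C \ootimes N$. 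Careful bookkeeping of the Sweedler calculus and of Koszul signs will be required here, and one must check that the cotensor equaliser condition is compatible with all of the twisting terms simultaneously. Naturality in $M$ and $N$ is then automatic, as every construction used (twisted tensor products, restriction to one-sided comodules, and the cotensor product) is functorial and the comparison isomorphism is the fixed reordering of tensor factors.
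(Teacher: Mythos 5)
Your proposal is correct, but it runs in the opposite direction to the paper's argument. The paper first uses that bimodule Koszul duality is a triangle equivalence to reduce to the inverse statement $(M\square_C^{\mathbb{L}}N)^!\simeq M^!\lot_A N^!$ for bicomodules $M,N$, then resolves both by cofree bicomodules of the split form $C'\otimes U\otimes C$ and $C\otimes V\otimes C''$; after that reduction the whole proposition collapses to the single quasi-isomorphism $C\simeq (C\otimes_\tau A)\lot_A (A\otimes_\tau C)$, which follows from the contractibility of $C\otimes_\tau A$. You instead stay on the module side and exploit that the twisted tensor products $M^!$ and $N^!$ are already cofree, so the derived cotensor product is computed underived, and you then compare the resulting complex $C'\otimes M\otimes C\otimes N\otimes C''$ with the Koszul dual of the two-sided bar resolution $M\ootimes C\ootimes N$. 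What your route buys is explicitness: no resolution of $M$ and $N$ is needed and the comparison is an actual isomorphism of complexes rather than a zig-zag; what the paper's route buys is that the only differential one ever has to inspect is that of $C\otimes_\tau A\otimes_\tau C$. The verification you defer does go through: under the standard identification of $(V\otimes C)\square_C(C\otimes W)$ with the image of $v\otimes c\otimes w\mapsto v\otimes c_1\otimes c_2\otimes w$, coassociativity shows that the right $\tau$-twist of $M^!$ and the left $\tau$-twist of $N^!$ restrict exactly to the two bar-differential terms $m\otimes c\mapsto m\tau(c_1)\otimes c_2$ and $c\otimes n\mapsto c_1\otimes\tau(c_2)n$, while the outer $\tau'$- and $\tau''$-twists and the cofree outer coactions pass through unchanged, so your two models of the right- and left-hand sides are literally isomorphic.
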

\begin{proof}
Since bimodule Koszul duality is an equivalence of triangulated categories, it suffices to prove the equivalent statement that if $M,N$ are two bicomodules then there is a natural weak equivalence $(M\square_C^{\mathbb{L}}N)^!\simeq M^!\lot_A N^!$. To prove this, we may first assume that both $M$ and $N$ are injective as $C$-comodules. Since injective comodules are retracts of cofree comodules, it actually suffices to assume that both $M$ and $N$ are cofree $C$-comodules. A cofree bicomodule is in particular cofree on either side, so we may in fact assume that both $M$ and $N$ are cofree bicomodules. So put $M=C'\otimes U\otimes C$ and $N=C\otimes V\otimes C''$ for $U,V$ two vector spaces. We then have $M\square_C^{\mathbb{L}}N\simeq C'\otimes W\otimes C''$, where $W$ is the vector space $U\otimes C \otimes V$. Hence $(M\square_C^{\mathbb{L}}N)^!$ is the bimodule $C'^! \otimes W \otimes C''^!$. On the other hand, $M^! \otimes N^!$ is $C'^!\otimes U\otimes C^! \lot_A C^! \otimes V \otimes C''^!$. Hence it suffices to prove that the two vector spaces $W$ and $U\otimes C^! \lot_A C^! \otimes V$ are quasi-isomorphic. But to prove this it is enough to show that the vector spaces $C$ and $C^! \lot_A C^!$ are quasi-isomorphic. If $\tau$ denotes the associated twisting cochain for the pair $(A,C)$, then the derived tensor product is $C\otimes_\tau A \lot_A A\otimes_\tau C$. As a vector space, this simplifies to $C\otimes_\tau A \otimes C$, but since $C\otimes_\tau A \simeq k$, we obtain the desired quasi-isomorphism.
\end{proof}

\begin{cor}
    Let $(A,C)$ be a Koszul duality pair. Then there are quasi-isomorphisms $A\simeq k\square_C^{\mathbb{L}} k$ and $C\simeq k\lot_A k$ of vector spaces.
\end{cor}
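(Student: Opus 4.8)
The plan is to recognise both quasi-isomorphisms as instances of the classical fact that the bar and cobar constructions compute, respectively, the derived tensor product $k\lot_A k$ and the derived cotensor product $k\square_C^{\mathbb{L}}k$. Rather than invoke the monoidality statement \ref{kdmon} abstractly, I would extract the two identifications directly from the explicit twisted (co)tensor resolutions furnished by \ref{posKDthm}, since those keep track of the underlying complexes, which is precisely what the statement is about. The two claims are formally dual, so I would prove the tensor statement in detail and then run the cotensor statement in parallel.

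For $C\simeq k\lot_A k$: by \ref{posKDthm}(5) the trivial $A$-module $k$ is Koszul dual to the comodule $C$, and explicitly the $A$-module realising $C$ is the twisted tensor product $C\ootimes A$, whose underlying graded module is the free module $C\otimes A$. Because the associated twisting cochain $\tau$ is acyclic, the augmentation $C\ootimes A\to k$ is a quasi-isomorphism, so $C\ootimes A$ is a resolution of $k$ by free $A$-modules and may be used to compute $\lot_A$. I would then form $k\lot_A k\simeq (C\ootimes A)\otimes_A k$ and observe that its underlying complex is $(C\otimes_\tau A)\otimes_A k\cong C$, with the twisting differential collapsing onto $d_C$ because $\tau$ lands in the augmentation ideal $\bar A$, so that $\varepsilon\tau=0$. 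This is exactly the computation already carried out at the end of the proof of \ref{kdmon}, where $C^!\lot_A C^!\simeq C$ was established together with $C^!\simeq k$.

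For $A\simeq k\square_C^{\mathbb{L}}k$: I would argue dually. By \ref{posKDthm}(4) the trivial $C$-comodule $k$ is Koszul dual to the module $A$, realised explicitly by the cofree comodule $A\ootimes C$ with underlying $A\otimes C$; acyclicity of $\tau$ again makes $A\ootimes C\to k$ a weak equivalence, so it is a cofree (hence injective) resolution suitable for computing $\square_C^{\mathbb{L}}$. Then $k\square_C^{\mathbb{L}}k\simeq (A\ootimes C)\square_C k$, whose underlying complex is $(A\otimes C)\square_C k\cong A$ (cotensoring the cofree comodule over $C$ with $k$), once more with the twisting term dying and leaving only $d_A$.

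The main obstacle to guard against is the temptation to read the result off from \ref{kdmon} at the level of (bi)comodules: applying monoidality with $M=N=k$ only yields an equivalence of the Koszul-dual objects, for instance $(k\lot_A k)^!\simeq C\square_C^{\mathbb{L}}C\simeq C$ as bicomodules, and since the Koszul duality functors do not preserve underlying vector spaces one cannot deduce the stated vector-space quasi-isomorphism this way. The genuine content is therefore the explicit resolution together with the verification that the twisting differential degenerates after (co)tensoring with $k$; both of these are routine given \ref{posKDthm}.
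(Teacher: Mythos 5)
Your two explicit computations are correct, but they are not the paper's argument, and your closing paragraph misdiagnoses the situation. The paper deduces the corollary in one line from \ref{kdmon} by taking $A'=A''=k$ and $C'=C''=k$. Under that substitution $M=k$ is a $k$-$A$-bimodule, i.e.\ a plain right $A$-module, so $M^!$ is the right $C$-comodule $C$ by \ref{posKDthm}(5), and the output $(k\lot_A k)^!$ lives in $\dco(k\otimes k)\simeq D(k)$, where the Koszul duality functor for the pair $(k,k)$ is the identity (the twisting cochain is zero, so $V\ootimes k=V$). Hence \ref{kdmon} literally yields $k\lot_A k\simeq C\square_C^{\mathbb{L}}C\simeq C$ as vector spaces, and the substitution $M=N=A$ gives $A\simeq A\lot_AA\simeq k\square_C^{\mathbb{L}}k$ likewise. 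The failure you warn against would occur only if one regarded $k$ as an $A$-bimodule (taking $A'=A''=A$), which is not the substitution the paper makes; your claim that one ``cannot deduce the stated vector-space quasi-isomorphism this way'' is therefore incorrect.

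That said, your direct route through the twisted (co)tensor resolutions is valid, and is essentially the calculation the paper performs inside the proof of \ref{kdmon} itself (the identification $C^!\lot_AC^!\simeq C$ via $C\otimes_\tau A\simeq k$, with the twist dying against the (co)augmentation because $\tau$ lands in $\bar A$ and vanishes on the coaugmentation). What your version buys is transparency about the underlying complexes; what it costs is that you must separately justify that $C\ootimes A$ is a semifree (not merely free) resolution of $k$ — this uses the conilpotency filtration on $C$ — and that cotensoring against the single cofree, hence injective, comodule $A\ootimes C$ already computes $\square_C^{\mathbb{L}}$, since \ref{derivedfive} nominally asks for every argument to be replaced by an injective. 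Both points are standard, but they are exactly the kind of thing the appeal to \ref{kdmon} lets the paper avoid restating.
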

\begin{proof}
    This follows from \ref{kdmon} by putting $A'=A''=k$, $C'=C''=k$, and $M=N=A$ and $M=N=k$ respectively.
\end{proof}

The following is a key computation:

\begin{prop}\label{diagonal}
    Let $(A,C)$ be a Koszul duality pair. Across the derived equivalence $D(A^e)\simeq \dco(C^e)$, the diagonal bimodule $A$ corresponds to the diagonal bicomodule $C$.
\end{prop}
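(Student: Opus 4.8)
The plan is to deduce this from the monoidality of bimodule Koszul duality (Proposition \ref{kdmon}), using the observation that the diagonal bimodule $A$ is the monoidal unit for $\lot_A$ on $D(A^e)$, while the diagonal bicomodule $C$ is the monoidal unit for $\square_C^{\mathbb{L}}$ on $\dco(C^e)$. Since a monoidal equivalence must carry units to units, $A^!\simeq C$ should follow formally; the only real task is to extract this cleanly from the specific natural weak equivalence recorded in \ref{kdmon}, and to verify the two unit axioms at the derived level.

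Concretely, I would first specialise \ref{kdmon} to the case $A'=A''=A$ (so that $C'=C''=C$), so that for $A$-bimodules $M$ and $N$ it reads $(M\lot_A N)^!\simeq M^!\square_C^{\mathbb{L}} N^!$ as $C$-bicomodules. Next I would set $N=A$, the diagonal bimodule. Using the unit isomorphism $M\lot_A A\simeq M$ on the left-hand side, this yields a natural weak equivalence $M^!\simeq M^!\square_C^{\mathbb{L}} A^!$, valid for every $A$-bimodule $M$. Because the Koszul duality functor $(-)^!\colon D(A^e)\to\dco(C^e)$ is an equivalence, hence essentially surjective, this is the same as saying that $X\simeq X\square_C^{\mathbb{L}} A^!$ for every $C$-bicomodule $X$. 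Taking $X=C$, the diagonal bicomodule, and invoking the unit isomorphism $C\square_C^{\mathbb{L}} A^!\simeq A^!$ for the cotensor product gives $C\simeq A^!$, which is exactly the claim.

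The only points that need care are the two unit identifications, and these are really the sole possible obstacle, since the substantive compatibility with tensor and cotensor products is already contained in \ref{kdmon}. On the algebra side, $A\lot_A M\simeq M$ is the standard unit axiom for $\otimes_A$ and passes to $D(A^e)$. On the coalgebra side I would verify $C\square_C^{\mathbb{L}} X\simeq X$ by noting that $C$ is an injective (indeed cofree) $C$-comodule, so by \ref{derivedfive} the derived cotensor product $C\square_C^{\mathbb{L}} X$ may be computed without coresolving the $C$-argument, whereupon it reduces to the underived unit isomorphism $C\square_C X\cong X$.

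As an alternative, more hands-on route, one could instead identify $A^!$ directly: the inverse Koszul duality functor sends the diagonal $C$ to the twisted tensor product $C\ootimes A^e$, whose underlying complex is a form of the two-sided bar construction $B(A,A,A)$, and one checks via the usual contracting homotopy that this is quasi-isomorphic to the diagonal bimodule $A$. I would present the monoidality argument as primary, since it isolates the genuine content in \ref{kdmon} and reduces everything else to unit axioms, and mention the bar-resolution computation only as a sanity check.
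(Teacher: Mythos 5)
Your argument is correct and is essentially the paper's own proof: the paper likewise specialises \ref{kdmon} to deduce that cotensoring with $A^!$ acts as the identity on $C$-bicomodules and then evaluates at the unit $N=C$ (it just places $A^!$ on the left rather than the right). Your extra remarks on verifying the unit axiom for $\square_C^{\mathbb{L}}$ via injectivity of $C$ are a sensible elaboration of a step the paper leaves implicit.
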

\begin{proof}
    By \ref{kdmon}, if $N$ is a $C$-bicomodule then there is a natural weak equivalence $A^!\square_C^{\mathbb{L}}N\simeq N$. The claim follows by putting $N=C$.
\end{proof}

\begin{cor}\label{codiagonal}
    Let $(A,C)$ be a Koszul duality pair. Across the derived equivalence $D(A^e)\simeq \dct(C^e)$, the bimodule $A^*$ corresponds to the diagonal bicontramodule $C^*$.
\end{cor}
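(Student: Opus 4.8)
The plan is to deduce this from Proposition \ref{diagonal} by checking that $k$-linear duality intertwines the coderived and contraderived Koszul duality functors. Write $\Phi\colon D(A^e)\to\dco(C^e)$ and $\Psi\colon D(A^e)\to\dct(C^e)$ for the two bimodule Koszul duality equivalences. By Proposition \ref{diagonal} we have $\Phi(A)\simeq C$, the diagonal bicomodule, whose $k$-linear dual is exactly the diagonal bicontramodule $C^*$ (the linear dual of a comodule is naturally a contramodule via the adjunction $\hom(C^e,V^*)\cong(C^e\otimes V)^*$, and for the diagonal this recovers $C^*$). Thus it suffices to produce a natural equivalence $\Psi(M^{*})\simeq(\Phi(M))^{*}$ for an $A$-bimodule $M$; specialising to $M=A$ then yields $\Psi(A^{*})\simeq(\Phi(A))^{*}\simeq C^{*}$, which is the assertion. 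Note that this exactly parallels the passage from Theorem \ref{posKDthm}(4) to (6) in the one-sided setting.

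To build this compatibility I would use the explicit twisted-complex models recalled after Theorem \ref{posKDthm}, applied to the Koszul duality pair $(A^e,C^e)$ with associated twisting cochain $\tau\colon C^e\to A^e$. For an $A$-bimodule $M$, the comodule $\Phi(M)$ is modelled by the cofree twisted tensor product $M\ootimes C^e$, with underlying graded space $M\otimes C^e$, while $\Psi(M^{*})$ is modelled by the free twisted Hom $\uhom(C^e,M^{*})$, with underlying graded space $\hom(C^e,M^{*})$. The crux is the natural isomorphism $(M\otimes C^e)^{*}\cong\hom(C^e,M^{*})$ of graded vector spaces, under which the coaction on the cofree comodule $M\ootimes C^e$ dualises to the contraaction defining the free contramodule $\uhom(C^e,M^{*})$, and the twisting term $m\otimes c\mapsto m\tau(c_{1})\otimes c_{2}$ dualises to the twisting term $g\mapsto[\,c\mapsto g(c_{1})\tau(c_{2})\,]$. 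Granting this, one obtains a natural isomorphism $(\Phi(M))^{*}\cong\Psi(M^{*})$ of $C^e$-bicontramodules.

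The main obstacle is precisely this last identification: one must verify that the contravariant functor $\hom_k(-,k)$ carries the cofree coaction on $M\ootimes C^e$ to the free contraaction on $\uhom(C^e,M^{*})$, and that the two twisting differentials correspond under the sign conventions fixed in the notation. These are formal manipulations with $\Delta^{*}$, $\tau$, and the evident adjunction $\hom(M\otimes C^e,k)\cong\hom(C^e,\hom(M,k))$, using that $\hom_k(-,k)$ is exact and hence equals its own derived functor. The only genuinely homotopical input is that linear duality descends to a functor $\dco(C^e)\to\dct(C^e)$ preserving weak equivalences; this is what lets us replace the cofree model $\Phi(A)=A\ootimes C^e$ by its equivalent form $C$ and conclude $(\Phi(A))^{*}\simeq C^{*}$. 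With the compatibility in hand, the corollary drops out of Proposition \ref{diagonal} by setting $M=A$.
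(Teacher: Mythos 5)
Your proposal is correct and follows essentially the same route as the paper: the paper likewise identifies $\Psi(A^*)=\uhom(C^e,A^*)$ with $(C^e\ootimes A)^*$ via the hom--tensor adjunction on underlying graded vector spaces, checks that the twisting differentials and the (co)free (contra)actions coming from the $C^e$-coaction on itself match under duality, and then invokes Proposition \ref{diagonal} together with the fact that linear duality sends the weak equivalence $C^e\ootimes A\simeq C$ of bicomodules to a weak equivalence of bicontramodules. Your phrasing of this as a natural isomorphism $\Psi(M^*)\simeq(\Phi(M))^*$ specialised at $M=A$ is only a cosmetic generalisation of the paper's argument.
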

\begin{proof}
Across Koszul duality, the bimodule $A^*$ corresponds to the bicontramodule $\uhom(C^e,A^*)$. The hom-tensor adjunction gives an isomorphism of graded vector spaces $\hom(C^e,A^*)\cong \left(C^e\otimes A\right)^*$, and twisting the differentials gives an isomorphism $\uhom(C^e,A^*)\cong \left(C^e\ootimes A\right)^*$ of vector spaces. Moreover, the $C^e$-contramodule structure on both sides comes from the $C^e$-coaction on itself, and hence we have an isomorphism $\uhom(C^e,A^*)\cong \left(C^e\ootimes A\right)^*$ of $C$-bicontramodules. However, \ref{diagonal} shows that the bicomodule $C^e\ootimes A$ is weakly equivalent to the bicomodule $C$. Hence $\uhom(C^e,A^*)$ is weakly equivalent to $C^*$, as desired.
\end{proof}

\subsection{Linear duals for comodules}

Let $C$ be a coalgebra and $M$ a right $C$-comodule. If $M$ is finite dimensional then the linear dual $M^*$ is a left $C$-comodule: a coaction map $M\to M\otimes C$ is equivalent to a map $M\otimes M^*\to C$, which is in turn equivalent to a map $M^* \to C\otimes M^*$ making $M^*$ into a comodule.

\begin{prop}\label{GalgFcog}
If $(A,C)$ is a Koszul duality pair then there is a commutative diagram of functors $$\begin{tikzcd}[column sep = 1in] D(A)^\circ \ar[r,"{\R\hom_A(-,A)}"] \ar[d,"\simeq"]& D(A^\circ)\ar[d,"\simeq"]\\ \dco (C)^\circ \ar[r,"(-)^*"]& \dco (C^\circ)\end{tikzcd}$$Moreover, on objects the functor $(-)^*$ can be described as follows. If $M$ is a $C$-comodule, write $M=\varinjlim_i M_i$ where the $M_i$ range over the finite dimensional subcomodules of $M$. Then $M^*\coloneqq \holim_i M_i^*$.
\end{prop}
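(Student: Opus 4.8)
The plan is to take $(-)^*$ to be the transport of the one-sided dual $\R\hom_A(-,A)\colon D(A)^\circ\to D(A^\circ)$ across the Koszul duality equivalences $D(A)\simeq\dco(C)$ and $D(A^\circ)\simeq\dco(C^\circ)$, the latter coming from the Koszul duality pair $(A^\circ,C^\circ)$ (using $(BA)^\circ\simeq B(A^\circ)$). With this definition the square commutes by construction, so the substance of the proposition is the object-level description; in particular the formula $M^*\simeq\holim_i M_i^*$ will exhibit $(-)^*$ as intrinsic to $C$, independent of the choice of $A$. I would establish the description in two steps: first on finite-dimensional comodules, then in general by a (co)limit argument.

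For the finite-dimensional case, let $M$ be a finite-dimensional $C$-comodule, so that the corresponding $A$-module is the twisted tensor product $M^!=M\ootimes A$. As $M$ is finite-dimensional this is a finitely generated semifree, in particular cofibrant, $A$-module, so $\R\hom_A(M^!,A)\cong\hom_A(M\ootimes A,A)$ needs no further resolution. As a graded $A^\circ$-module this is $M^*\otimes A$, and dualising the twisting term $m\otimes a\mapsto m_{(0)}\otimes\tau(m_{(1)})a$ identifies the twisted differential with that of the twisted tensor product $M^*\ootimes A$ attached to the pair $(A^\circ,C^\circ)$, where $M^*$ carries the dual left $C$-comodule structure recalled above. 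Hence $\R\hom_A(M^!,A)\simeq(M^*)^!$, so on $\fd(C)\simeq\per(A)$ the functor $(-)^*$ agrees with the naive linear dual.

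For a general comodule, write $M=\varinjlim_i M_i$ as the filtered colimit of its finite-dimensional subcomodules. In Positselski's model structure every object of $\Comod C$ is cofibrant and the bonding maps $M_i\into M_j$ are injective cofibrations, so this filtered colimit already computes the homotopy colimit $M\simeq\hocolim_i M_i$ in $\dco(C)$. Since $\R\hom_A(-,A)$ is a contravariant triangulated functor converting coproducts into products, it sends $\hocolim_i M_i^!$ to $\holim_i\R\hom_A(M_i^!,A)$; transporting back across the (limit-preserving) Koszul equivalence and invoking the finite-dimensional case yields $M^*\simeq\holim_i M_i^*$, which is the stated formula. Naturality of these identifications shows the square commutes on all of $\dco(C)^\circ$.

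The main obstacle is the finite-dimensional identification: one must check that dualising the twisted differential of $M\ootimes A$ reproduces exactly the twist for the opposite twisting cochain together with the dual comodule structure on $M^*$, which requires care with the Koszul sign conventions fixed in the Notation section and with the compatibility $(BA)^\circ\simeq B(A^\circ)$. Once this is in place the extension step is formal, resting only on the exactness of filtered colimits of comodules and on $\R\hom_A(-,A)$ carrying coproducts to products.
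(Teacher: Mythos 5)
Your proposal is correct and follows the same skeleton as the paper's proof: define $(-)^*$ as the transport of $\R\hom_A(-,A)$ across the two Koszul duality equivalences (so the square commutes by construction), identify the functor on finite dimensional comodules, and then extend to all of $\dco(C)$ using that the filtered colimit $M=\varinjlim_i M_i$ is a homotopy limit in $\dco(C)^\circ$ and that the transported functor preserves homotopy limits. Where you genuinely diverge is in the finite dimensional step. The paper exploits conilpotency in the form $\fd(C)\simeq \thick_{\dco(C)}(k)$: it checks only that $F(k)\simeq k\simeq k^*$ and that the induced map $F_{kk}\colon \R\enn_C(k)\to\R\enn_{C^\circ}(k)^\circ$ is identified with $\id_A$ across Koszul duality, whence the two triangulated functors agree on the thick subcategory generated by $k$. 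You instead compute $\R\hom_A(M\ootimes A,A)$ directly for every finite dimensional $M$, using that $M\ootimes A$ is semifree (which again uses conilpotency, via the finite socle filtration of $M$), and match the dualised twisting term with the twisted tensor product $M^*\ootimes A$ for the pair $(A^\circ,C^\circ)$. The paper's route is shorter and sidesteps all sign bookkeeping; yours buys an explicit chain-level formula for the dual and is closer in spirit to the paper's own proof of the contramodule analogue \ref{onesd}, where exactly this kind of twist-matching is carried out in full. The one step you assert rather than verify --- that dualising $m\otimes a\mapsto m_{(0)}\otimes\tau(m_{(1)})a$ reproduces the twist attached to the opposite twisting cochain and the dual left $C$-comodule structure on $M^*$ --- is indeed the real content of your version, but it is a routine check of the same shape as the computation of $\Xi$ in \ref{onesd}, so I do not regard it as a gap.
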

\begin{proof}
Consider the composition $$F:\dco(C)^\circ \xrightarrow{\simeq} D(A)^\circ \xrightarrow{\R\hom_A(-,A)} D(A^\circ) \xrightarrow{\simeq} \dco(C^\circ)$$which manifestly makes the diagram commute. Since $\R\hom_A(-,A)$ preserves homotopy limits, so does $F$. Observing that the colimit $M=\varinjlim_i M_i$ is a homotopy limit in $\dco(C)^\circ$, it remains to show that for finite dimensional comodules the functor $F$ agrees with the linear dual. Since $\R\hom_A(A,A)\simeq A$ we have $F(k) \cong k\cong k^*$. Since we have $\fd(C)\simeq \thick_{\dco(C)}$(k), it suffices to check that the induced map $F_{kk}:\R\enn_C(k) \to \R\enn_{C^\circ}(k)^\circ$ on derived endomorphisms is a quasi-isomorphism. But across Koszul duality this is identified with $\id_A$.
\end{proof}

We refer to $M^*$ as the \textbf{linear dual} of $M$. We warn that $M^*$ depends on the $C$-comodule structure of $M$, and not just on the underlying vector space, since limits in $C$-comodules are not computed by the underlying vector spaces. For a concrete example of this, see \ref{funnydual} below.

Observe that, across the contravariant equivalence between comodules and pseudocompact modules, the above also gives a linear dual functor for pseudocompact modules. Concretely, if $A$ is a pseudocompact algebra and $M$ a pseudocompact $A$-module, then we can write $M\cong \varprojlim_i M_i$ with each $M_i$ finite dimensional. Then we have $M^*\coloneqq \hocolim_i M_i^*$, where the homotopy colimit is taken in the category of pseudocompact modules.

\begin{ex}\label{funnydual}
	Consider the pseudocompact algebra $A\coloneqq k\llbracket t \rrbracket$ with $t$ in degree zero, equipped with its usual inverse limit topology. We will compute $A^*\simeq A[1]$. To do this, put $A\cong\varprojlim_n A_n$ where $A_n\coloneqq A/t^n$. We compute $A_n^*\simeq A_n$: explicitly, if we take the basis $\{t^0,\ldots, t^{n-1}\}$ for $A_n$, with corresponding dual basis $\{\delta_{t^0},\ldots,\delta_{t^{n-1}}\}$, then the $k$-linear map $t^i\mapsto \delta_{t^{n-i-1}}$ is an $A$-linear isomorphism. The projection map $A_{n+1} \to A_n$ induces the identity on functionals, and hence across the above isomorphism corresponds to multiplication by $t$. Hence $A^*$ is the homotopy colimit in pseudocompact $A$-modules of the system $$A_1 \xrightarrow{t} A_2 \xrightarrow{t} A_3 \xrightarrow{t}\cdots$$We can resolve each $A_n$ by the two-term complex $A\xrightarrow{t^n}A$, with the rightmost $A$ placed in degree zero. The above homotopy colimit becomes the colimit of the system of pseudocompact $A$-modules $$\begin{tikzcd}
		A \ar[r,"\id"]\ar[d,"t"]& A\ar[r,"\id"]\ar[d,"t^2"] & A\ar[r,"\id"]\ar[d,"t^3"] & \cdots\\
		A \ar[r,"t"]& A\ar[r,"t"] & A\ar[r,"t"] & \cdots
	\end{tikzcd}$$The colimit of the degree $-1$ part is obviously just $A$. The colimit of the degree zero part, taken in discrete $A$-modules, is $A[t^{-1}]$. The pseudocompactification of this module is zero, and hence the whole colimit is $A[1]$, as required. Since $A$ is Koszul dual to the discrete algebra $R\coloneqq k[\varepsilon]/\varepsilon^2$ with $\varepsilon$ in cohomological degree $1$, the above calculation is Koszul dual to the fact that $\R\hom_R(k,R)\simeq k[-1]$. More generally, if $t$ is placed in degree $n$, the same calculation shows that $A^*\simeq A[1-n]$. We remark that this calculation shows that $A$ is a $(1-n)$-Frobenius coalgebra (see \ref{cogmaindef} for the definition). This computation is generalised in \ref{pwfrobcor}.
\end{ex}

\begin{rmk}
    The `na\"i\kern -1pt ve' linear dual functor is well defined on the full subcategory $\fd(C)\into \dco(C)$ of compact objects, and the full linear dual functor of \ref{GalgFcog} can be obtained as a homotopy Kan extension of the na\"i\kern -1pt ve linear dual along the inclusion.
\end{rmk}

If $M$ is a $C^e$-comodule we denote the $C^e$-comodule $M^*$ by $D(M)$, to avoid confusion between one- and two-sided linear duals; as before the underlying $C^\circ$-comodule of $D(M)$ need not be $M^*$. Once again, we extend this definition to pseudocompact bimodules in the obvious manner.
\begin{ex}
    The calculation \ref{funnydual} shows that if $A$ is the pseudocompact algebra $k\llbracket t \rrbracket$ with $t$ placed in degree $n$, then $D(A)\simeq A[1-n]$ as pseudocompact bimodules. Hence $A$ is a $(1-n)$-symmetric coalgebra (see \ref{cogmaindef} for the definition).
\end{ex}

\begin{prop}\label{CYalgSFcog}
If $(A,C)$ is a Koszul duality pair then there is a commutative diagram of functors $$\begin{tikzcd}[column sep = 1in] D(A^e)^\circ \ar[r,"{\R\hom_{A^e}(-,A^e)}"] \ar[d,"\simeq"]& D(A^e)\ar[d,"\simeq"]\\ \dco (C^e)^\circ \ar[r,"D"]& \dco (C^e).\end{tikzcd}$$
\end{prop}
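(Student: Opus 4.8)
The plan is to follow the proof of \ref{GalgFcog} essentially verbatim, replacing the one-sided data $(A,C)$ by the enveloping pair $(A^e,C^e)$, which is again a Koszul duality pair by the corollary to bimodule Koszul duality. Concretely, I would form the composite
$$G\colon \dco(C^e)^\circ \xrightarrow{\;\simeq\;} D(A^e)^\circ \xrightarrow{\;\R\hom_{A^e}(-,A^e)\;} D(A^e) \xrightarrow{\;\simeq\;} \dco(C^e),$$
where the outer arrows are bimodule Koszul duality and I use the swap isomorphisms $(A^e)^\circ\cong A^e$ and $(C^e)^\circ\cong C^e$ to view both $\R\hom_{A^e}(-,A^e)$ and $D$ as contravariant endofunctors. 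By construction $G$ makes the square commute, so the whole task reduces to producing a natural equivalence $G\simeq D$.

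First I would observe that both functors preserve homotopy limits. For $G$ this is automatic, since $\R\hom_{A^e}(-,A^e)$ preserves homotopy limits and the Koszul duality functors are exact equivalences; for $D$ it is built into the definition, exactly as in \ref{GalgFcog}, via $D(M)\simeq \holim_i M_i^*$ for $M=\varinjlim_i M_i$ the filtered colimit of its finite-dimensional subcomodules. As every $C^e$-comodule is such a colimit, it then suffices to match $G$ and $D$ on $\fd(C^e)$. Since $C^e$ is conilpotent (being Koszul dual to $A^e$) we have $\fd(C^e)\simeq \thick_{\dco(C^e)}(k)$, so I only need to check agreement on the generator $k$ and on the induced map on $\R\enn_{\dco(C^e)}(k)$. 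On objects this is immediate: by \ref{posKDthm}(4) applied to $(A^e,C^e)$ the comodule $k$ corresponds to the free bimodule $A^e$, and $\R\hom_{A^e}(A^e,A^e)\simeq A^e$ corresponds back to $k$, so $G(k)\simeq k=D(k)$.

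The step I expect to be the main obstacle is the endomorphism check: verifying that $G$ and $D$ induce the same map on $\R\enn_{\dco(C^e)}(k)\simeq A^e$. As in \ref{GalgFcog}, $G_{kk}$ is identified across Koszul duality with the map that $\R\hom_{A^e}(-,A^e)$ induces on $\R\enn_{A^e}(A^e)\simeq A^e$; the content is that, after unwinding the swap $(A^e)^\circ\cong A^e$ together with the opposite appearing in the target $\R\enn_{\dco(C^e)}(k)^\circ$, the two anti-automorphisms cancel and this map becomes $\id_{A^e}$ --- which is exactly the map induced by the naive two-sided linear dual $D_{kk}$. The real work here is bookkeeping the various opposites and swaps intrinsic to the enveloping algebra and checking that they are intertwined correctly by the Koszul duality equivalence; this is the one place where the bimodule case differs substantively from the one-sided case. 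Granting this identification, $G\simeq D$ on $\fd(C^e)$, and homotopy-limit preservation propagates the equivalence to all of $\dco(C^e)^\circ$, giving the commutative square.
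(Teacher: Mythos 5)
Your argument is correct and is essentially the paper's own: the paper likewise reduces everything to the proof of \ref{GalgFcog}, the only organizational difference being that it first invokes the statement of \ref{GalgFcog} for the pair $(A^e,B(A^e))$ and then compares $(-)^*$ on $\dco(B(A^e))$ with $D$ on $\dco(C^e)$ across bimodule Koszul duality, whereas you run the \ref{GalgFcog} argument directly on the composite around the square. Both versions bottom out in exactly the check you flag --- agreement on the comodule $k$ and on its derived endomorphisms --- which the paper also dispatches with ``proceed as in the proof of \ref{GalgFcog} \dots which is easy.''
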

\begin{proof}
By \ref{GalgFcog} we have a commutative diagram $$\begin{tikzcd}[column sep = 1in] D(A^e)^\circ \ar[r,"{\R\hom_{A^e}(-,A^e)}"] \ar[d,"\simeq"]& D(A^e)\ar[d,"\simeq"]\\ \dco (B(A^e))^\circ \ar[r,"(-)^*"]& \dco (B(A^e)^\circ).\end{tikzcd}$$So we need only check that the diagram $$\begin{tikzcd}[column sep = 1in] \dco (B(A^e))^\circ\ar[d,"\simeq"] \ar[r,"(-)^*"]& \dco (B(A^e)^\circ)\ar[d,"\simeq"]\\
\dco (C^e)^\circ \ar[r,"D"]& \dco (C^e).\end{tikzcd}$$commutes, where the vertical equivalences are given by bimodule Koszul duality. To see this latter claim, proceed as in the proof of \ref{GalgFcog}: it suffices to check the claim on the comodule $k$, which is easy.
\end{proof}

\begin{rmk}
Since $\R\hom_A(-,A)$ induces a contravariant equivalence between $\per A$ and $\per A^\circ$, the linear dual functor induces a contravariant equivalence between $\thick_{\dco (C)}(k)$ and $\thick_{\dco(C^\circ)}(k)$. Since the category $\thick_{\dco (C)}(k)$ agrees with the category $\fd(C)$ of all compact $C$-comodules, the linear dual functor on $\dco C$ restricts to a contravariant equivalence between $\fd (C)$ and $\fd(C^\circ)$.
\end{rmk}

\begin{rmk}
Let $M$ be a $C$-bicomodule and let $M^*$ denote the linear dual of the $C$-comodule $M$. Let $J$ denote the diagram of sub-$C$-bicomodules of $M$ and let $J^*$ denote the diagram of $C$-bicomodules obtained by taking the linear dual levelwise. Since every finite dimensional sub-$C$-comodule of $M$ is contained in a sub-$C$-bicomodule, the diagram $J$ is cofinal in the diagram of sub-$C$-comodules of $M$. If $U$ denotes the forgetful functor from $C$-bicomodules to $C$-comodules, then we have $D(M)\simeq \holim J^*$ whereas $M^* \simeq \holim UJ^*$. Since $U$ does not preserve (homotopy) limits, the underlying $C$-comodule of $D(M)$ need not be $M^*$. For a related example, consider the pseudocompact dg algebras $A\coloneqq k$ and $A'\coloneqq k\llbracket x \rrbracket$, with $x$ in degree zero, and let $M$ be the pseudocompact $A\hat\otimes A'$-bimodule $A'$. The linear dual of $M$ in the category of pseudocompact $A\hat\otimes A'$-bimodules is $M[1]$, as \ref{funnydual} shows. On the other hand, the linear dual of $M$, taken in the category of pseudocompact $A$-bimodules, is the vector space 
$$\frac{k \llbracket x \rrbracket[x^{-1}]}{k\llbracket x \rrbracket}$$concentrated in degree zero; this can be extracted from the proof of \ref{funnydual} together with the fact that the pseudocompact derived category of $k$ is simply the usual derived category of vector spaces.
\end{rmk}

\subsection{One- and two-sided duals for contramodules}

Our main goal in this section is to prove the following theorem, which may be of independent interest:

\begin{thm}[One-sided duality]\label{onesd}
    Let $(A,C)$ be a Koszul duality pair. Then there is a commutative diagram of functors
    $$\begin{tikzcd}
     {\dct(C)}^\circ \ar[rr,"{\R\hom^C(-,C^*)}"]\ar[d,"\simeq"] &&  {\dct(C^\circ)}\ar[d,"\simeq"]\\
        {D(A)}^\circ \ar[rr,"{M\mapsto M^*}"] &&  {D(A^\circ)}.        
    \end{tikzcd}$$
\end{thm}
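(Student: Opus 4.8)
The plan is to mimic the proof of \ref{GalgFcog}, replacing comodules and the comodule linear dual by contramodules and the contrahom. Since the two vertical maps are the Koszul duality equivalences of \ref{posKDthm}(2)--(3), it suffices to show that the composite
$$G\colon \dct(C)^\circ \xrightarrow{\ \simeq\ } D(A)^\circ \xrightarrow{\ M\mapsto M^*\ } D(A^\circ) \xrightarrow{\ \simeq\ } \dct(C^\circ),$$
which makes the square commute by construction, is naturally isomorphic to $\R\hom^C(-,C^*)$. Conceptually this is forced by \ref{codiagonal}: the dualising object $C^*$ of the contrahom corresponds under Koszul duality to the dualising object $A^*$ of the $k$-linear dual (recall $M^*\simeq \R\hom_A(M,A^*)$ by tensor--hom adjunction), so the two functors ought to agree.

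Both $G$ and $\R\hom^C(-,C^*)$ are contravariant triangulated functors that send homotopy colimits to homotopy limits: for $G$ this holds because equivalences preserve all (co)limits and the $k$-linear dual is exact and carries coproducts to products, while $\R\hom^C(-,C^*)$ carries homotopy colimits in its first argument to homotopy limits. I would therefore reduce the comparison to a generating subcategory. The natural choice is the free (projective) contramodules: every contramodule is weakly equivalent to a complex of projective contramodules, hence a homotopy colimit of bounded complexes of free contramodules $\hom(C,V)$, so it suffices to produce a natural isomorphism $G\simeq \R\hom^C(-,C^*)$ on the full subcategory of free contramodules and the morphisms between them.

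On a free contramodule the contrahom is computable: the isomorphism $\hom^C(\hom(C,V),Y)\cong\hom(V,Y)$ from the proof of \ref{fivefunproj} gives $\R\hom^C(\hom(C,V),C^*)\cong\hom(V,C^*)$, which is again free (hence already projective, so no further resolution is needed). On the other hand $\hom(C,k)=C^*$ corresponds to $k\in D(A)$ by \ref{posKDthm}(5), so $\hom(C,V)$ corresponds to a module built from copies of $k$; applying the $k$-linear dual and returning along Koszul duality yields the free $C^\circ$-contramodule on the dualised multiplicity space, matching $\hom(V,C^*)$. The object-level agreement on the generator $C^*$ reads $G(C^*)\simeq C^*\simeq\R\hom^C(C^*,C^*)$, and promoting this to a natural isomorphism reduces to checking that both functors induce the same map on derived endomorphisms $\R\enn_{\dct(C)}(C^*)\to\R\enn_{\dct(C^\circ)}(C^*)^\circ$; by the corollary following \ref{posKDthm} this algebra is $\R\enn_A(k)\simeq C^*$, and across Koszul duality both induced maps are identified with the canonical (anti)involution, exactly as in \ref{GalgFcog}.

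The main obstacle is the reduction step. Unlike the comodule case --- where $k$ is a compact generator of $\dco(C)$ corresponding to the generator $A\in D(A)$, and $\fd(C)=\thick_{\dco(C)}(k)$ --- here the free contramodule $C^*$ corresponds only to $k\in D(A)$, which need not generate $D(A)$ as a localising subcategory. Consequently I cannot simply reduce to $C^*$ and its endomorphisms as in \ref{GalgFcog}; I must work with the entire subcategory of free contramodules and exploit Positselski's presentation of $\dct(C)$ as the homotopy category of projectives, together with the fact that both functors convert the relevant homotopy colimits into homotopy limits. The remaining bookkeeping --- tracking the opposite-algebra and variance conventions, and checking that the five-functor formula for $\hom^C$ is compatible on morphisms with the $k$-linear dual across Koszul duality --- is routine but must be carried out with care.
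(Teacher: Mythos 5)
There is a genuine gap in your reduction step, and it sits exactly where you sensed trouble. You propose to reduce the comparison to the untwisted free contramodules $\hom(C,V)$ by writing every object of $\dct(C)$ as a homotopy colimit of bounded complexes of such frees. But the localising subcategory of $\dct(C)$ generated by the untwisted frees is \emph{not} all of $\dct(C)$: under Koszul duality $\hom(C,V)$ (with the differential induced from a complex $V$) corresponds to the $A$-module $V$ with action restricted along the augmentation, so $\mathrm{Loc}(\{\hom(C,V)\})$ corresponds to $\mathrm{Loc}_{D(A)}(k)$, which is a proper subcategory of $D(A)$ in general (already for $A=k[x]$ it is the torsion modules). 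Equivalently, $\dct(C)$ is the homotopy category of complexes of projective contramodules, which strictly contains the localising subcategory generated by the projectives placed in single degrees --- acyclic, non-contractible complexes of projectives live in the complement. So enlarging from the single generator $C^*$ to all untwisted frees does not repair the failure of generation that you correctly identified; your computation $\R\hom^C(\hom(C,V),C^*)\cong\hom(V,C^*)$ and the final derived-endomorphism check only establish the natural isomorphism on $\mathrm{Loc}_{\dct(C)}(C^*)$.

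The objects that \emph{do} exhaust $\dct(C)$ are the twisted frees $\uhom(C,M)$ for $M\in\Mod A$ (free underlying graded contramodule, differential twisted by the associated twisting cochain $\tau$), and this is how the paper argues: commutativity of the square is literally the statement that $\R\hom^C(\uhom(C,M),C^*)\simeq\uhom(C,M^*)$ naturally in $M$. Since $\uhom(C,M)$ is projective, the derived contrahom is the underived one, and the adjunction isomorphisms you quote identify it with $\hom(C,M^*)$ as a graded vector space --- but the actual content of the proof, absent from your proposal, is checking that under this identification the differential twisted by the $A$-action on $M$ matches the differential twisted by the $A^\circ$-action on $M^*$. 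That explicit differential computation is what replaces any generation argument, and it cannot be avoided by naturality alone, because it is precisely where the $A$-module structure (invisible on untwisted frees) enters.
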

\begin{proof}
Recall that if $M$ is an $A$-module, the corresponding $C$-contramodule is the twisted Hom, denoted by $\uhom(C,M)$.
It suffices to show that  we have natural weak equivalences of contramodules $$  \R\hom^C(\uhom(C,M),C^*) \simeq \uhom(C,M^*).$$Since $\uhom(C,M)$ is a projective contramodule, we can compute $$\R\hom^C(\uhom(C,M),C^*) \simeq \hom^C(\uhom(C,M),C^*).$$ Recall that if $V$ is a vector space and $X$ is a $C$-contramodule, then the natural isomorphism $\hom^C(\hom(C,V),X) \to \hom(V,X)$ sends a $C$-contralinear map $F$ to the function $v\mapsto F(v\eta)$, where $\eta:C \to k$ is the counit and $v:k \to V$ denotes the function that sends $1\in k$ to $v$. Hence using the hom-tensor adjunction we obtain isomorphisms of vector spaces \begin{align*}
    \hom^C(\hom(C,M),C^*)&\cong \hom(M,C^*)\\
    &\cong \left(M\otimes C\right)^*\\
     &\cong \hom(C,M^*)
\end{align*}
and one can check that the composite isomorphism $\hom^C(\hom(C,M),C^*) \to \hom(C,M^*)$ sends a $C$-contralinear map $F$ to the map $C \to M^*$ which sends $c$ to the map $m\mapsto F(m\eta)(c)$. This gives us an isomorphism of graded vector spaces $$\Xi:\hom^C(\uhom(C,M),C^*)^\sharp \to \uhom(C,M^*)^\sharp$$and so we simply need to check compatibility of $\Xi$ with the differentials. The source of $\Xi$ acquires a twist from the $A$-module structure on $M$: the differential at $F$ is twisted by the term $F\phi$, where $\phi$ is the endomorphism of $\hom(C,M)$ which sends a function $g$ to $c\mapsto g(c_1)\tau(c_2)$, where $\tau$ is the associated twisting cochain. On the target of $\Xi$, the differential is twisted by the analogous map $\psi$, which sends a map $G$ to the map $c\mapsto G(c_1)\tau(c_2)$. So we need to check that $$\Xi(F\phi) = \psi(\Xi F)$$which follows since both are the map that sends $c$ to the map $m\mapsto \left(F[(m\tau(c_2))\eta]\right)(c_1)$.
\end{proof}

\begin{rmk}\label{hqfrmk}
Say that a $C$-comodule $M$ is \textbf{homotopy quasi-finite} (\textbf{hqf} for short) if, for all finite dimensional comodules $N$, the vector space $\R\hom_C(N,M)$ is an element of $\pvd(k)$. This is a homotopical version of Takeuchi's definition of a quasi-finite comodule \cite{takeuchi}. Let $\hqfco(C)$ denote the full subcategory of $\dco(C)$ on the hqf comodules. Note that we have an equivalence $\hqfco(C)\simeq \pvd\left(\dco(C)\right)$. If $(A,C)$ is a Koszul duality pair, then across Koszul duality the category $\hqfco(C)$ corresponds to $\pvd(A)$: this is because finite dimensional comodules correspond to perfect $A$-modules, and $V\in \pvd(A)$ if and only if $\R\hom_A(\per(A),V) \in \pvd(k)$. Similarly, say that a contramodule $X$ is hqf precisely when the corresponding comodule $X\odot_CC$ is; the hqf contramodules form a full subcategory $\hqfct(C)$ of $ \dct(C)$ that can be identified with $\pvd\dct(C)$. Since the linear dual is an equivalence $\pvd(A)^\circ \to \pvd(A^\circ)$, by \ref{onesd} it follows that the functor $\R\hom^C(-,C^*)$ gives a contravariant equivalence between $\hqfct(C)^\circ$ and $\hqfct(C^\circ)$.
\end{rmk}

\begin{prop}[Two-sided duality]\label{twosd}
    Let $(A,C)$ be a Koszul duality pair. If $R=(C^e)^*$ denotes the regular $C$-bicontramodule, then there is a commutative diagram of functors
     $$\begin{tikzcd}
     {\dct(C^e)}^\circ \ar[rr,"{\R\hom^{C^e}(-,R)}"]\ar[d,"\simeq"] &&  {\dct(C^{e})}\ar[d,"\simeq"]\\
        {D(A^e)}^\circ \ar[rr,"{M\mapsto M^*}"] &&  {D(A^{e})}.        
    \end{tikzcd}$$
\end{prop}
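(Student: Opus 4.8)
The plan is to obtain this as the enveloping-algebra instance of the one-sided duality \ref{onesd}. By bimodule Koszul duality \cite{bimodKD} the pair $(A^e, C^e)$ is again a Koszul duality pair, so \ref{onesd} applies verbatim with $(A^e, C^e)$ in place of $(A,C)$. This yields a commutative square
$$\begin{tikzcd}
 {\dct(C^e)}^\circ \ar[rr,"{\R\hom^{C^e}(-,(C^e)^*)}"]\ar[d,"\simeq"] &&  {\dct((C^e)^\circ)}\ar[d,"\simeq"]\\
    {D(A^e)}^\circ \ar[rr,"{M\mapsto M^*}"] &&  {D((A^e)^\circ)}
\end{tikzcd}$$
in which the dualising object $(C^e)^*$ is exactly the regular bicontramodule $R$. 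The only discrepancy with the square we want is that its right-hand column is recorded over the opposite (co)algebras $(A^e)^\circ$ and $(C^e)^\circ$.

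To remove this discrepancy I would use the flip isomorphisms $\sigma_A\colon (A^e)^\circ \xrightarrow{\cong} A^e$ and $\sigma_C\colon (C^e)^\circ \xrightarrow{\cong} C^e$ interchanging the two tensor factors of the enveloping (co)algebra. Under the conventions of Section 2 the bimodule linear dual $M\mapsto M^*\colon D(A^e)^\circ \to D(A^e)$ is, by definition, the one-sided linear dual followed by the equivalence $D((A^e)^\circ)\simeq D(A^e)$ induced by $\sigma_A$; likewise, post-composing with the equivalence $\dct((C^e)^\circ)\simeq\dct(C^e)$ induced by $\sigma_C$ turns the top functor into $\R\hom^{C^e}(-,R)$, since $R=(C^e)^*$. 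Thus relabelling the right column of the square above by $\sigma_A$ and $\sigma_C$ produces precisely the diagram asserted in the proposition, provided these relabellings are compatible with the vertical Koszul duality equivalences.

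This compatibility is the step I expect to be the main obstacle. Concretely, one must check that $\sigma_A$ and $\sigma_C$ assemble into an isomorphism of Koszul duality pairs $((A^e)^\circ,(C^e)^\circ)\cong(A^e,C^e)$ --- that is, that $\sigma_C$ agrees with $B\sigma_A$ under the identification $B(A^e)\simeq C^e$ --- and that the equivalences of \ref{posKDthm} are natural with respect to isomorphisms of pairs. The first point holds because the bar construction is symmetric monoidal and so intertwines the braidings on tensor products of (co)algebras defining $\sigma_A$ and $\sigma_C$, a compatibility already implicit in \cite{bimodKD}; the second holds because the twisted Hom functors of \ref{posKDthm} are built functorially from the associated twisting cochain, so that an isomorphism of twisting cochains induces a commuting square of the induced equivalences. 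Alternatively, and more in the spirit of the rest of the paper, one can arrange the argument exactly as in the proof of \ref{CYalgSFcog}: both resulting functors $\dct(C^e)^\circ\to\dct(C^e)$ preserve the relevant homotopy limits, so it suffices to check that they agree on the generator $k$, where the comparison is immediate from \ref{posKDthm}.
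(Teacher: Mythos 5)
Your proposal is correct and follows essentially the same route as the paper: both reduce to the one-sided duality \ref{onesd} applied to the enveloping Koszul duality pair $(A^e,C^e)$ (the paper phrases this via the intermediate algebra $A'=\Omega(C^e)$ and then invokes bimodule Koszul duality to identify $D(A')$ with $D(A^e)$, which is exactly your direct appeal to the corollary that $(A^e,C^e)$ is a Koszul duality pair), and both then use that $A^e$ and $C^e$ are isomorphic to their own opposites to rewrite the right-hand column. Your extra discussion of the compatibility of the flip isomorphisms with the Koszul duality equivalences fills in a step the paper leaves implicit, but it does not change the argument.
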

\begin{proof}
Letting $A'$ be the algebra $\Omega(C^e)$, the Koszul duality pair $(C^e,A')$ gives by \ref{onesd} a commutative diagram
 $$\begin{tikzcd}
     {\dct(C^e)}^\circ \ar[rr,"{\R\hom^{C^e}(-,R)}"]\ar[d,"\simeq"] &&  {\dct(C^{e\circ})}\ar[d,"\simeq"]\\
        {D(A')}^\circ \ar[rr,"{M\mapsto M^*}"] &&  {D(A'^\circ)}.        
    \end{tikzcd}$$Bimodule Koszul duality furthermore yields a commutative diagram  $$\begin{tikzcd}
    {D(A')}^\circ \ar[d,"\simeq"] \ar[rr,"{M\mapsto M^*}"] &&  {D(A'^\circ)}\ar[d, "\simeq"]\\
    {D(A^e)}^\circ \ar[rr,"{M\mapsto M^*}"] &&  {D(A^{e\circ})}
    \end{tikzcd}$$and hence pasting these together we obtain a commutative diagram
    $$\begin{tikzcd}
     {\dct(C^e)}^\circ \ar[rr,"{\R\hom^{C^e}(-,R)}"]\ar[d,"\simeq"] &&  {\dct(C^{e\circ})}\ar[d,"\simeq"]\\
        {D(A^e)}^\circ \ar[rr,"{M\mapsto M^*}"] &&  {D(A^{e\circ})}.        
    \end{tikzcd}$$Since both $C^e$ and $A^e$ are isomorphic to their own opposites, identifying the right-hand vertical map with the natural equivalence $\dct(C^e) \to D(A^e)$ yields the desired result.
\end{proof}

\section{Invertible modules and twists}
The goal of this section is to record some results about invertible bimodules, which we will later use to show that derived Frobenius algebras admit Nakayama automorphisms. We define derived Picard groups for algebras and coalgebras, and show that they are preserved across Koszul duality. We pay special attention to homotopy automorphisms. Derived Picard groups have been well studied in the literature, including but not limited to the papers \cite{rzpic, yekpic, kelpic, opper}; our treatment here is quite elementary.

\subsection{Derived Picard groups}

Let $A$ be an algebra. An $A$-bimodule $X$ is \textbf{invertible} if there exists another $A$-bimodule $Y$ such that $X\lot_AY\simeq A\simeq Y\lot_A X$. We write $Y=X^{-1}$, with the understanding that $Y$ is only unique up to a quasi-isomorphism of bimodules.

\p If $X$ is invertible, then the functor $F_X\coloneqq -\lot_A X$ is an autoequivalence of $D(A)$, with inverse $-\lot_A X^{-1}$. On the other hand the inverse of $F_X$ is necessarily its adjoint $\R\hom_A(X,-)$. This shows that $\R\hom_A(X,-)$ commutes with direct sums, and hence that $X$ is perfect as a right $A$-module; a similar argument shows that $X$ is also a perfect left $A$-module. Composing $F_X$ with its inverse yields natural quasi-isomorphisms $A\simeq \R\enn_A(X)$ and $\R\hom_A(X,A)\lot_A X \simeq A$, and a similar argument with left modules shows that in fact $X^{-1}\simeq \R\hom_A(X,A)$.

\p The \textbf{derived Picard group} $\mathrm{DPic}(A)$ is the group whose elements are the quasi-isomorphism classes of invertible $A$-bimodules, with group operation given by $\lot_A$. The identity element is $A$, the diagonal bimodule. The assignment $X\mapsto X\lot_A -$ defines a group morphism from $\mathrm{DPic}(A)$ to $\mathrm{Aut}(D(A))$, the group of exact autoequivalences of $D(A)$. In the future we will write ${}_XM\coloneqq X\lot_AM$ and $M_X\coloneqq M\lot_A X$ for brevity.

\begin{prop}\label{pregor}
    Let $A$ be an algebra and $X$ an $A$-bimodule. The following are equivalent:
    \begin{enumerate}
        \item $X$ is invertible.
        \item The functor $-\lot_A X$ is an autoequivalence of $\per A$.
        \item The functor $X\lot_A -$ is an autoequivalence of $\per (A^\circ)$.
        \item The right module $X$ is a thick generator of $\per A$ and the natural map $A \to \R\enn_A(X)$ is a quasi-isomorphism.
        \item The left module $X$ is a thick generator of $\per (A^\circ)$ and the natural map $A^\circ \to \R\enn_{A^\circ}(X)$ is a quasi-isomorphism.
    \end{enumerate}
\end{prop}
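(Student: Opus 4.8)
The plan is to prove the cycle $(1)\Rightarrow(2)\Rightarrow(4)\Rightarrow(1)$ for the right-hand conditions, and then harvest $(3)$ and $(5)$ for free from the symmetry $A\leftrightarrow A^\circ$. Indeed, condition $(1)$ is manifestly self-opposite, since the swap $A^e\cong (A^\circ)^e$ identifies $A$-bimodules with $A^\circ$-bimodules and carries invertibility to invertibility; and under this identification $-\lot_{A^\circ}X$ is the same endofunctor as $X\lot_A-$, so conditions $(2)$ and $(4)$ applied to $A^\circ$ read off exactly as $(3)$ and $(5)$ for $A$. Thus it suffices to treat the right-module side.

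The implication $(1)\Rightarrow(2)$ is immediate from the discussion preceding the statement: if $X$ is invertible then $-\lot_AX$ is an autoequivalence of $D(A)$ sending $A$ to the perfect module $X$, while its inverse $-\lot_AX^{-1}$ sends $A$ to the (also invertible, hence perfect) module $X^{-1}$; since $\per A=\thick(A)$, both functors preserve $\per A$, so $-\lot_AX$ restricts to an autoequivalence there. For $(2)\Rightarrow(4)$, suppose $F=-\lot_AX$ is an autoequivalence of $\per A$. Then $X=F(A)\in\per A$ and $\thick(X)=F(\thick(A))=\per A$, so $X$ is a thick generator; and full faithfulness of $F$ yields a quasi-isomorphism $A\simeq\R\enn_A(A)\to\R\enn_A(FA)=\R\enn_A(X)$, which one checks is the natural left-multiplication map.

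The crux is $(4)\Rightarrow(1)$, a recognition theorem of Rickard--Morita type. Set $Y\coloneqq\R\hom_A(X,A)$, an $A$-bimodule. Since $X$ is perfect as a right module, the standard duality $\R\hom_A(X,A)\lot_A(-)\simeq\R\hom_A(X,-)$, evaluated at the bimodule $X$, gives a bimodule equivalence $Y\lot_AX\simeq\R\enn_A(X)$, which combined with the hypothesis $A\simeq\R\enn_A(X)$ becomes $Y\lot_AX\simeq A$ in $D(A^e)$. It remains to obtain $X\lot_AY\simeq A$. I would first show that $F=-\lot_AX$ is an autoequivalence of the whole of $D(A)$: it is left adjoint to $G=\R\hom_A(X,-)$, both functors preserve coproducts ($G$ because $X$ is compact), and the unit $A\to GF(A)=\R\enn_A(X)$ is a quasi-isomorphism; the full subcategory of objects on which the unit is invertible is localising and contains $A$, hence is all of $D(A)$, giving full faithfulness, and essential surjectivity follows because the localising subcategory generated by $F(A)=X$ is all of $D(A)$ (as $A\in\thick(X)$). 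Writing $H=-\lot_AY$, the computed equivalence $Y\lot_AX\simeq A$ shows $FH\simeq\id$; since $F$ is an equivalence, $H$ is forced to be its quasi-inverse, so $HF\simeq\id$ as well, and in particular $X\lot_AY=HF(A)\simeq A$. Hence $X$ is invertible with inverse $Y$, completing the cycle.

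I expect the main obstacle to lie precisely in $(4)\Rightarrow(1)$, and within it in two bookkeeping issues: keeping the $A$-bimodule (not merely one-sided) structures honest so that $Y\lot_AX\simeq A$ genuinely holds in $D(A^e)$, and upgrading the one-sided generator-plus-endomorphism data to the two-sided conclusion $X\lot_AY\simeq A$. The equivalence argument above handles the second point cleanly; a more hands-on alternative would be to take the cofibre $W$ of the coevaluation $A\to X\lot_AY$, show $W\lot_AX\simeq 0$ via the zigzag identity exhibiting the perfect module $X$ as a dualisable object, and then conclude $W\simeq W\lot_AA\in\thick(W\lot_AX)=0$ from the fact that $X$ thickly generates $\per A$. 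I would adopt whichever route keeps the bimodule bookkeeping lightest.
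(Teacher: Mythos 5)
Your proof is correct and takes essentially the same route as the paper: reduce to the cycle among (1), (2), (4) via the $A\leftrightarrow A^\circ$ symmetry, obtain (1)$\Rightarrow$(2) from the discussion preceding the statement, and prove the converse direction by the Morita-type recognition that $-\lot_A X$ identifies $\per A$ with $\per(\R\enn_A(X))\simeq \thick_A(X)=\per A$ with inverse given by its adjoint $-\lot_A\R\hom_A(X,A)$. The only quibble is a left/right bookkeeping slip: with the paper's conventions the compactness duality reads $M\lot_A\R\hom_A(X,A)\simeq\R\hom_A(X,M)$ for right modules $M$, so at $M=X$ it hands you the composite $X\lot_A Y\simeq\R\enn_A(X)$ rather than $Y\lot_A X$; since your final step is symmetric in which composite comes for free (the equivalence forces the other one), this does not affect the argument.
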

\begin{proof}
We show that (1), (2), and (4) are equivalent; the other cases are similar. We have already observed that (1) implies (2). For the converse, if $-\lot_A X$ is an autoequivalence then its inverse is necessarily given by its adjoint $-\lot_A \R\hom_A(X,A)$ and it now follows that $X$ is invertible. If (2) holds then since the autoequivalence $-\lot_A X$ takes generators to generators, $X$ is a generator of $\per A$. We have already observe that for an invertible module, the natural map is a quasi-isomorphism, so (4) holds. Conversely, if (4) holds then the functor $-\lot_A X$ gives an equivalence between $\per(A)$ and $\per(\R\enn_A(X)) \simeq \thick_A(X)\simeq \per(A)$.
\qedhere





\end{proof}

If $C$ is a coalgebra, say that a $C$-bicomodule $X$ is \textbf{invertible} if there exists another bicomodule $Y$ such that $X\square^\mathbb{L}_C Y \simeq C \simeq Y\square^\mathbb{L}_C X$. As above, we define the \textbf{derived Picard group} of $C$ to be the group $\mathrm{DPic}(C)$ whose elements are the weak equivalence classes of invertible bicomodules, with group operation given by the derived cotensor product. As before, if $X$ is an invertible bicomodule and $N$ is any comodule (left or right as appropriate) then we write $N_X\coloneqq N\square^\mathbb{L}_C X$ and ${}_XN \coloneqq X\square^\mathbb{L}_C N$.

\p By \ref{kdmon}, if $(A,C)$ is a Koszul duality pair then the two monoidal triangulated categories $D(A^e)$ and $\dco(C^e)$ are equivalent. We immediately obtain the following:

\begin{prop}
    If $(A,C)$ is a Koszul duality pair then there is an isomorphism $\mathrm{DPic}(A)\cong \mathrm{DPic}(C)$.
\end{prop}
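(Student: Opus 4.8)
The plan is to deduce the isomorphism formally from the fact that bimodule Koszul duality is a monoidal equivalence, so that essentially all the real content is already contained in \ref{kdmon} and \ref{diagonal}. Write $\Phi = (-)^!$ for the triangle equivalence $D(A^e) \xrightarrow{\simeq} \dco(C^e)$. By \ref{kdmon} this functor is monoidal: for bimodules $M,N$ there is a natural weak equivalence $\Phi(M\lot_A N)\simeq \Phi(M)\square_C^{\mathbb{L}}\Phi(N)$, where $\square_C^{\mathbb{L}}$ is the monoidal product on $\dco(C^e)$. Moreover, by \ref{diagonal} the monoidal unit $A\in D(A^e)$ is carried to the monoidal unit $C\in\dco(C^e)$. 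Thus $\Phi$ is an equivalence of monoidal categories between $(D(A^e),\lot_A,A)$ and $(\dco(C^e),\square_C^{\mathbb{L}},C)$, and $\mathrm{DPic}(A)$ and $\mathrm{DPic}(C)$ are by definition the groups of weak equivalence classes of invertible objects in these two monoidal categories.

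First I would check that $\Phi$ sends invertible bimodules to invertible bicomodules and reflects invertibility. If $X$ is an invertible $A$-bimodule with inverse $X^{-1}$, then monoidality gives $\Phi(X)\square_C^{\mathbb{L}}\Phi(X^{-1})\simeq \Phi(X\lot_A X^{-1})\simeq \Phi(A)\simeq C$, and symmetrically $\Phi(X^{-1})\square_C^{\mathbb{L}}\Phi(X)\simeq C$, so $\Phi(X)$ is invertible with inverse $\Phi(X^{-1})$. Since $\Phi$ is an equivalence, its quasi-inverse is again monoidal and preserves the unit, so the same computation run backwards shows that $\Phi$ reflects invertibility. Hence $\Phi$ restricts to a bijection between the weak equivalence classes of invertible $A$-bimodules and of invertible $C$-bicomodules.

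Finally I would observe that this bijection is a group homomorphism: the structure isomorphism $\Phi(X\lot_A Y)\simeq \Phi(X)\square_C^{\mathbb{L}}\Phi(Y)$ says precisely that $\Phi$ intertwines the two group operations, while $\Phi(A)\simeq C$ says it preserves identities. This yields the desired isomorphism $\mathrm{DPic}(A)\cong \mathrm{DPic}(C)$. There is no serious obstacle here, as \ref{kdmon} already supplies the monoidal equivalence; the only points that genuinely require attention are confirming that the unit is preserved (this is exactly \ref{diagonal}) and that invertibility transfers in both directions, for which one uses that the quasi-inverse of a monoidal equivalence is itself monoidal.
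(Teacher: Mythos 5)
Your proposal is correct and is exactly the argument the paper intends: the proposition is stated as an immediate consequence of the monoidal equivalence $D(A^e)\simeq \dco(C^e)$ furnished by \ref{kdmon} together with the identification of units in \ref{diagonal}, and a monoidal equivalence induces an isomorphism on groups of invertible objects. Your write-up simply makes explicit the routine verifications (preservation and reflection of invertibility, compatibility with the group operations) that the paper leaves to the reader.
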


\begin{ex}
If $A$ is an augmented algebra, then an \textbf{$\infty$-automorphism} of $A$ is a coalgebra automorphism $\theta$ of $C\coloneqq BA$. If $C_\theta$ denotes the $C$-bicomodule $C$, where the left coaction is as usual and the right coaction is twisted by $\theta$, then $C_\theta$ is an invertible bicomodule (with inverse $C_{\theta^{-1}}$). Hence $\theta$ defines a class $C_\theta \in \mathrm{DPic}(C) \cong \mathrm{DPic}(A)$.
\end{ex}
We can also transport the action of $\mathrm{DPic}(C)$ on $\dco(C)$ across the co-contra correspondence:
\begin{defn}
    Let $C$ be a coalgebra and $N$ a $C$-contramodule. If $X$ is an invertible $C$-bicomodule we put $N_{X}\coloneqq \R\cohom_C(X^{-1},N)$.
\end{defn}
\begin{prop}
    Let $C$ be a coalgebra, $M$ a $C$-comodule, and $N$ a $C$-contramodule. Then there are natural weak equivalences
    $$\hom_C(C,M_X) \simeq \hom_C(C,M)_X$$
    $$(N_X)\odot C \simeq (N\odot C)_X$$
of contramodules and comodules respectively.
\end{prop}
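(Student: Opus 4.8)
The two displayed equivalences are two halves of a single assertion: that the co-contra correspondence intertwines the two twists. Write $\Phi=\hom_C(C,-)\colon\dco(C)\to\dct(C)$ and $\Psi=-\odot_C C\colon\dct(C)\to\dco(C)$ for the mutually inverse equivalences of the co-contra correspondence, and recall that $M_X=M\square_C^{\mathbb L}X$ on comodules while $N_X=\R\cohom_C(X^{-1},N)$ on contramodules. The first equivalence reads $\Phi(M_X)\simeq\Phi(M)_X$ and the second reads $\Psi(N_X)\simeq\Psi(N)_X$. Since $\Phi$ and $\Psi$ are inverse to one another, these are formally interchangeable: granting the first, set $M=\Psi(N)$, so $\Phi(M)\simeq N$ by $\Phi\Psi\simeq\id$, whence $\Phi(\Psi(N)\square_C^{\mathbb L}X)\simeq N_X$; applying $\Psi$ and using $\Psi\Phi\simeq\id$ gives $\Psi(N)\square_C^{\mathbb L}X\simeq\Psi(N_X)$, which is the second. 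So I would prove only the first equivalence and transport.

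To prove the first, I would first strip the cohom off the right-hand side using the derived associativity isomorphisms of \ref{derivedfive}. By definition $\Phi(M)_X=\R\cohom_C(X^{-1},\hom_C(C,M))$, and the cohom--cotensor associativity (with diagonal $C$ in the inner slot), together with $X^{-1}\square_C^{\mathbb L}C\simeq X^{-1}$, yields a natural weak equivalence $\R\cohom_C(X^{-1},\hom_C(C,M))\simeq\hom_C(X^{-1},M)$. The left-hand side is already $\hom_C(C,M\square_C^{\mathbb L}X)$. Thus the first equivalence is reduced to the single natural weak equivalence
$$\hom_C(C,M\square_C^{\mathbb L}X)\;\simeq\;\hom_C(X^{-1},M),$$
which should be read as a change of variables for the comodule Hom along the autoequivalence $-\square_C^{\mathbb L}X$.

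I would establish this reduced equivalence by resolving $M$. Both sides are functors of $M\in\dco(C)$ that preserve homotopy limits: the comodule Hom $\hom_C(Q,-)$ is a right adjoint, hence homotopy-limit-preserving, and $-\square_C^{\mathbb L}X$ is an autoequivalence. Every comodule is the homotopy limit of its cobar (cosimplicial) resolution by cofree comodules, so a homotopy-limit-preserving functor on $\dco(C)$ is determined by its restriction to cofree comodules; this is the correct, limit-preserving incarnation of comonadic descent for comodules (the honest form of the commented-out comonadic lemma). It therefore suffices to produce a natural equivalence on cofree comodules $M=V\otimes C$. There $(V\otimes C)\square_C^{\mathbb L}X\simeq V\otimes X$, and feeding this and a cofree resolution of $X$ into the explicit formulas of \ref{fivefunproj} for $\hom_C$ on (co)free objects reduces both sides to the same projective contramodule; the identification $\hom_C(C,V\otimes X)\simeq\hom_C(X^{-1},V\otimes C)$ is then effected by the duality between $X$ and $X^{-1}$ (the coalgebra analogue of $X^{-1}\simeq\R\hom_A(X,A)$ recorded in the Picard discussion).

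The main obstacle is this last step. On the one hand, the reduction to cofree comodules must be made rigorous in the coderived setting, using that the functors in play preserve homotopy limits and that the cobar resolution is a genuine homotopy-limit presentation; one must be careful to invoke this only for homotopy-limit-preserving functors, since the naive version of the statement is false. On the other hand, the cofree computation is not purely formal: the interchange of $X$ and $X^{-1}$, and the correct matching of the contramodule structures, has to be produced from the invertibility data $X\square_C^{\mathbb L}X^{-1}\simeq C$ rather than read off an associativity isomorphism. Once the first equivalence is proved as a natural weak equivalence, the second follows by the transport along the co-contra correspondence described in the first paragraph, the two twists agreeing on the nose because $N_X$ was defined to be $\R\cohom_C(X^{-1},N)$.
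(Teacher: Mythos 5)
Your overall skeleton matches the paper's: both reduce the second equivalence to the first via the co-contra correspondence, and both collapse the right-hand side $\R\cohom_C(X^{-1},\hom_C(C,M))$ to $\R\hom_C(X^{-1},M)$ using the cohom--cotensor adjunction of \ref{fivefun}/\ref{derivedfive}. The problem is the remaining identification $\hom_C(C,M_X)\simeq \R\hom_C(X^{-1},M)$, which you correctly isolate as the crux but then do not prove: you propose a cofree-resolution/comonadic-descent argument and explicitly flag it as the ``main obstacle,'' noting yourself that the reduction to cofree comodules needs care in the coderived setting and that the cofree computation is ``not purely formal.'' As written, the proof is therefore incomplete at exactly the step that carries the content.

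The gap is unnecessary, because the identification you want is immediate from the phrase you yourself use (``change of variables along the autoequivalence'') rather than from a resolution argument. Writing $Y=X^{-1}$, the functors $-\square^{\mathbb{L}}_C X$ and $-\square^{\mathbb{L}}_C Y$ are mutually inverse \emph{dg} autoequivalences of $\dco(C)$, hence induce natural weak equivalences on hom-complexes:
$$\hom_C(C,M_X)\;\simeq\;\R\hom_C\bigl(C\square^{\mathbb{L}}_C Y,\;M_X\square^{\mathbb{L}}_C Y\bigr)\;\simeq\;\R\hom_C(Y,M),$$
using $C\square^{\mathbb{L}}_C Y\simeq Y$ and $M_X\square^{\mathbb{L}}_C Y\simeq M$. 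Combined with your (correct) adjunction computation $\hom_C(C,M)_X=\R\cohom_C(Y,\hom_C(C,M))\simeq\hom_C(Y\square^{\mathbb{L}}_C C,M)\simeq\R\hom_C(Y,M)$, both sides agree and the proposition follows; this is exactly the paper's proof. If you do want to pursue the cofree-resolution route, be aware that the naive comonadic-descent statement for arbitrary functors on $\Comod C$ fails (only homotopy-limit-preserving functors are determined by their restriction to cofrees), so you would have to verify that property for both sides and then still produce the matching of contramodule structures on cofrees by hand --- all of which the autoequivalence argument sidesteps.
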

\begin{proof}
By the co-contra correspondence, $-\odot C$ and $\hom_C(C,-)$ are inverse equivalences, so the first and second equations are equivalent. We focus on the first. Letting $Y$ be the inverse of $X$, the dg functors $-\square^\mathbb{L}_CX$ and $-\square^\mathbb{L}_CY$ are inverse, which gives us a natural weak equivalence $\hom_C(C,M_X)\simeq \R\hom_C(Y,M)$. By adjunction we have natural weak equivalences $$\hom_C(C,M)_X\coloneqq \R\cohom_C(Y,\hom_C(C,M))\simeq \R\hom_C(Y,M)$$ and we are done.
\end{proof}

\begin{rmk}
    One can say that a $C$-bicontramodule $X$ is invertible precisely when its corresponding $C$-bicomodule $X\odot_{C^e}C^e$ is. The unit invertible bicontramodule can then be written as $U=\R\hom_{C^e}(C^e,C)$.
\end{rmk}

\subsection{Twists}

Recall that if $A$ is an algebra, a \textbf{homotopy endomorphism} of $A$ is an endomorphism in the homotopy category of algebras and a \textbf{homotopy automorphism} of $A$ is an automorphism of $A$ in the homotopy category of algebras. A homotopy endomorphism of $A$ can be represented by a map $\phi:\tilde A \to A$ where $\tilde A$ is a cofibrant algebra resolution of $A$. Given such a $\phi$, one obtains an endofunctor of $D(A^e)$ which sends $M$ to $M\lot_{\tilde A}A$. We denote the image of $A$ under this functor by $A_\phi$, so that $M\lot_{\tilde A}A\simeq M\lot_A A_\phi$. We abbreviate this module by $M_\phi \coloneqq M\lot_A A_\phi$ and call it the \textbf{right twist of $M$ by $\phi$}. If $\phi$ was a quasi-isomorphism, then the induced endofunctor $\phi$ is an autoequivalence. Similarly, via considering the functor $M \mapsto A\lot_{\tilde A}M$ one gets a \textbf{left twist} ${}_\phi M$ of $M$.

\p In particular, if $\phi$ is a homotopy automorphism of $A$, then ${}_\phi A$ is an invertible bimodule, with inverse $_{\phi^{-1}}A$. The assignment $\phi \mapsto {}_\phi A$ determines a group morphism $\mathrm{hAut}(A) \to \mathrm{DPic}(A)$, where $\mathrm{hAut}(A) $ is the group of automorphisms of $A$ in the homotopy category of algebras. More generally, there is a monoid morphism from $\mathrm{hEnd}(A)$ to the monoid of quasi-isomorphism classes of $A$-bimodules under the derived tensor product.

\begin{ex}
    Let $\phi:A \to A$ be an algebra endomorphism of $A$, and regard it as a homotopy endomorphism. Then the twist $M_\phi$ is (quasi-isomorphic to) the $A$-bimodule whose underlying vector space is $M$, and whose right $A$-action is twisted by the action of $\phi$. 
\end{ex}

The following proposition will allow us to deduce the existence of Nakayama automorphisms for derived Frobenius algebras (for the discrete version, see \cite[\S16E]{lam}).

\begin{prop}\label{nakesistence}
Let $A$ be an algebra. Let $M$ be an $A$-bimodule with a quasi-isomorphism $M\simeq A$ of right $A$-modules. Then there exists a homotopy endomorphism $\phi$ of $A$ such that $M \simeq {}_\phi A$ as $A$-bimodules. The map $\phi$ is a homotopy automorphism if and only if the $A$-action $A\to \R\enn_A(M)$ is a quasi-isomorphism.
\end{prop}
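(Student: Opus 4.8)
The plan is to produce $\phi$ as the homotopy-algebra endomorphism encoding the left action of $A$ on $M$, read through the identification $\R\enn_A(M)\simeq A$ coming from the right-module equivalence $M\simeq A$. First I would replace $M$ by a cofibrant resolution in $D(A^e)$; since cofibrant $A^e$-modules are retracts of cell modules built from $A^e=A^\circ\otimes A$, and $A^e$ is free (hence cofibrant) as a right $A$-module, such an $M$ is in particular cofibrant as a right $A$-module, so that $B\coloneqq\enn_A(M)$ computes $\R\enn_A(M)$ on the nose. The left multiplication $a\mapsto(m\mapsto am)$ is then a \emph{strict} map of dg algebras $\lambda\colon A\to B$, and a chosen right-module quasi-isomorphism $\theta\colon M\xrightarrow{\sim}A$ induces, by conjugation $f\mapsto\theta f\theta^{-1}$ together with the canonical quasi-isomorphism $\R\enn_A(A)\simeq A$, a quasi-isomorphism of algebras $\epsilon\colon B\xrightarrow{\sim}A$. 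I would then define $\phi$ to be the homotopy endomorphism represented by the composite $\epsilon\circ\lambda\colon A\to A$.

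The core of the argument is the identification $M\simeq{}_\phi A$, for which I would exploit that $M$ is tautologically a $B$-$A$-bimodule: the left $B=\enn_A(M)$-action commutes with the right $A$-action, and restricting this left action along $\lambda$ returns the original $A$-bimodule $M$. On the other hand, $\theta$ is by construction compatible with the left $B$-action $\epsilon$, in the sense that it exhibits $M$, as an object of $D(B\bimod A)$, as the restriction of the regular $A$-bimodule along $\epsilon$. Composing the two restrictions gives
\[
M\;\simeq\;\mathrm{Res}_{\lambda}\,\mathrm{Res}_{\epsilon}(A)\;=\;\mathrm{Res}_{\epsilon\circ\lambda}(A)\;=\;{}_\phi A,
\]
where the last equality is the definition of the left twist by $\phi$. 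This yields the desired bimodule equivalence.

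For the final assertion, observe that in the homotopy category of dg algebras $\phi=\epsilon\circ\lambda$ with $\epsilon$ a quasi-isomorphism; hence $\phi$ is an isomorphism there, i.e.\ a homotopy automorphism, precisely when $\lambda$ is, and $\lambda$ is exactly the structure map $A\to\R\enn_A(M)$ recording the $A$-action. The step I expect to be the main obstacle is the homotopical bookkeeping underlying the second paragraph: making rigorous that $M$ carries a coherent $\R\enn_A(M)$-action, that restriction along $\lambda$ recovers $M$ up to the chosen equivalences, and that restriction along the homotopy map $\phi$ of the regular bimodule is genuinely the twist ${}_\phi A$. Working throughout with a fixed cofibrant bimodule model for $M$ and a fixed cofibrant algebra resolution $\tilde A\to A$, through which $\phi$ is represented strictly, should reduce all of these to strict statements about restriction-of-scalars functors and dispose of the coherence issues.
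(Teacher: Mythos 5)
Your proposal is correct and follows essentially the same route as the paper: both produce $\phi$ by composing the left-action map $A\to\R\enn_A(M)$ with the (homotopy) inverse of the identification $A\simeq\R\enn_A(M)$ induced by the chosen right-module equivalence $M\simeq A$, and both read off the last assertion from the fact that this second map is a quasi-isomorphism. The paper phrases the bimodule identification by exhibiting the comparison map $A\to\tilde M$ directly as an $A$-bilinear quasi-isomorphism from ${}_\phi A$, rather than via restriction-of-scalars functors, but this is only a cosmetic difference.
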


\begin{proof}
Pick an $A$-bimodule cofibrant replacement $\tilde M$ of $M$, so that there are homotopy inverse maps $f:A \to \tilde M$ and $g: \tilde M \to A$. These induce a quasi-isomorphism $q:\enn_A(A)\cong A \to \enn_A(\tilde M)$ of endomorphism algebras, and viewing $\tilde M$ as a left $A$-module via $q$ makes $f$ into an $A$-bilinear quasi-isomorphism. The left $A$-action $\rho$ on $\tilde M$ hence gives a diagram $A \xrightarrow{\rho} \enn(\tilde M) \xleftarrow{q} A$. This diagram gives a homotopy endomorphism $\phi$ of $A$ such that $\phi q\simeq \rho $ as morphisms in the homotopy category of algebras. It follows that $f$ is an $A$-bilinear quasi-isomorphism from ${}_\phi A$ to $M$, as required. Since $q$ is a quasi-isomorphism, $\rho$ is a quasi-isomorphism if and only if $\phi$ is a homotopy automorphism.
\end{proof}

\begin{rmk}\label{augnakrmk}
Suppose that $A\to k$ is an augmented algebra, so that one has $A\simeq k\oplus \bar A$ as $A$-bimodules. Hence the same holds for $M$ as a right module, which makes $\R\enn_A(M)$, and hence $\enn_A(\tilde M)$, an augmented algebra. If one chooses $f$ and $g$ to respect these direct sum decompositions, then $q$ becomes a morphism of augmented algebras. If $\rho$ is a map of augmented algebras, then one can choose $\phi$ to be a morphism in the homotopy category of augmented algebras.
\end{rmk}

\begin{cor}
    Let $A$ be an algebra and let $M$ be an $A$-bimodule with a right $A$-linear quasi-isomorphism $A\simeq M$. The following are equivalent:
    \begin{enumerate}
        \item $M$ is invertible.
        \item There exists an invertible $X$ such that $A\simeq M_X$ as bimodules.
        \item There exists an invertible $X$ such that $A\simeq {}_XM$ as bimodules.
        \item The left action $A\to \R\enn_A(M)$ is a quasi-isomorphism.
        \item There exists a homotopy automorphism $\phi$ of $A$ such that $A\simeq {}_\phi M$ as bimodules.
   \end{enumerate}
    \end{cor}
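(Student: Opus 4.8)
The plan is to route everything through condition (1) and the two preceding results: Proposition \ref{pregor}, which characterises invertibility, and Proposition \ref{nakesistence}, which extracts a twisting (homotopy) endomorphism. The one preliminary observation that makes all of this work is that, since $M\simeq A$ as right $A$-modules and $A$ generates $\per A$ by definition, $M$ is automatically a thick generator of $\per A$ as a right module; in particular $M\in\per A$ and $\R\enn_A(M)$ makes sense.

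First I would prove $(1)\Leftrightarrow(4)$ by a direct appeal to Proposition \ref{pregor}. By that proposition, $M$ is invertible if and only if $M$ is a thick generator of $\per A$ as a right module and the natural map $A\to\R\enn_A(M)$ — which is precisely the left action — is a quasi-isomorphism. The generation condition holds for free by the observation above, so invertibility of $M$ collapses to exactly condition (4).

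Next I would dispatch $(1)\Leftrightarrow(2)$ and $(1)\Leftrightarrow(3)$ by elementary manipulation of invertible bimodules. If $M$ is invertible, taking $X=M^{-1}$ gives $M_X=M\lot_A M^{-1}\simeq A$ and ${}_XM=M^{-1}\lot_A M\simeq A$, yielding (2) and (3). Conversely, if $A\simeq M_X$ for some invertible $X$, then tensoring on the right with $X^{-1}$ gives $M\simeq(M\lot_A X)\lot_A X^{-1}\simeq A\lot_A X^{-1}\simeq X^{-1}$, so $M$ is invertible; the ${}_XM$ case is identical on the other side.

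Finally, $(4)\Leftrightarrow(5)$ is where the twist must be tracked carefully, and I regard it as the only delicate point. Applying Proposition \ref{nakesistence} to $M$ produces a homotopy endomorphism $\psi$ of $A$ with $M\simeq{}_\psi A$, and $\psi$ is a homotopy automorphism exactly when the left action $A\to\R\enn_A(M)$ is a quasi-isomorphism, i.e.\ exactly when (4) holds. Assuming (4), the endomorphism $\psi$ is then a homotopy automorphism, so $A\simeq{}_{\psi^{-1}}M$ and I would take $\phi=\psi^{-1}$ to obtain (5). Conversely, given (5) we have ${}_{\phi^{-1}}A\simeq{}_{\phi^{-1}}{}_\phi M\simeq M$, and since a left twist by a homotopy automorphism is invertible (as observed in the discussion preceding Proposition \ref{nakesistence}) the bimodule ${}_{\phi^{-1}}A$ is invertible, so $M$ is invertible and (1), hence (4), holds. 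The thing to get right here is that the endomorphism $\psi$ furnished by \ref{nakesistence} (with $M\simeq{}_\psi A$) is the \emph{inverse} of the $\phi$ appearing in (5) (with $A\simeq{}_\phi M$), and that the ``natural map'' of \ref{pregor}, the ``left action'' of condition (4), and the map appearing in \ref{nakesistence} all denote the same morphism $A\to\R\enn_A(M)$.
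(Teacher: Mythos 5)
Your proposal is correct and uses the same two key inputs as the paper (\ref{pregor} and \ref{nakesistence}), with only a minor reorganisation: the paper proves the cycle $(4)\Rightarrow(5)\Rightarrow(3)\Rightarrow(1)\Rightarrow(4)$, deriving $(1)\Rightarrow(4)$ from the naturality of the autoequivalence $N\mapsto N_M$, whereas you get $(1)\Leftrightarrow(4)$ in one step from \ref{pregor}(4) by observing that thick generation is automatic from $M\simeq A$. Your identification of the ``natural map'' of \ref{pregor}, the ``left action'' of (4), and the map in \ref{nakesistence}, and your inversion $\phi=\psi^{-1}$ when passing between $M\simeq{}_\psi A$ and $A\simeq{}_\phi M$, are both handled correctly.
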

\begin{proof}
   The equivalence of (1), (2), and (3) is straightforward and does not use the assumption that $M\simeq A$. Clearly (5) implies (3). If $M$ is invertible and $N$ is any bimodule, then since $N\mapsto N_M$ is a dg autoequivalence, we have natural quasi-isomorphisms $\R\enn_A(N_M)\simeq \R\enn_A{N}$ and in particular $A\simeq \R\enn_A(M)$. Hence (1) implies (4). Finally, (4) implies (5) by \ref{nakesistence}.
\end{proof}

\section{Smoothness, properness, regularity \& locality}\label{spsect}
We develop the above concepts for algebras and coalgebras, then we compare them across Koszul duality. We deduce some properties of the `derived completion' functor $A\mapsto A^!\coloneqq (BA)^*$; in particular that $A\mapsto A^{!!}$ preserves smoothness and properness when $A^{!!}$ is viewed as a pseudocompact algebra. We also give partial converses.

 \subsection{Properness and locality for algebras}
In this section we study proper algebras. Recall that $A$ is proper if and only if $H^*(A)$ is a finite dimensional graded vector space; this is equivalent to $A\in \pvd(A)$. It is clear that $A$ is proper if and only if there is an inclusion $\per(A) \subseteq \pvd(A)$. A finite dimensional algebra is clearly proper. Not all proper algebras have finite dimensional models, due to an example of Efimov \cite[5.4]{efimovhodge}. However, connective algebras are better behaved:

\begin{thm}[{\cite[3.12]{rsproper}}; after {\cite[2.20]{orlovfd}}]\label{pcthm}
    If $A$ is a proper connective algebra then $A$ is quasi-isomorphic to a finite dimensional algebra.
\end{thm}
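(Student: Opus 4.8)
\textit{The plan is} to transport $A$ across homotopy transfer to a minimal $A_\infty$-model supported on its finite-dimensional cohomology, to observe that connectivity forces this model to have only finitely many operations, and then to strictify it back to an honest finite-dimensional associative dg algebra. First I would make two reductions. Since $A$ is proper, $H^\bullet(A)=\bigoplus_i H^i(A)$ is finite-dimensional; combined with connectivity ($H^i(A)=0$ for $i>0$) this means the cohomology is supported in a bounded window $[-N,0]$ for some $N\ge 0$. Replacing $A$ by its good truncation $\tau^{\le 0}A$ --- which is a sub-dg-algebra (the cocycles $Z^0=\ker(d\colon A^0\to A^1)$ form a subalgebra, and $A^{<0}$ together with $Z^0$ is closed under the differential and the product) whose inclusion into $A$ is a quasi-isomorphism --- I may assume $A^i=0$ for $i>0$. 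Note that $A$ may still be infinite-dimensional and unbounded below, so the finite-dimensional model must be produced homotopy-theoretically rather than by truncation alone.

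Next, by the homotopy transfer theorem $A$ is $A_\infty$-quasi-isomorphic to a minimal $A_\infty$-structure $(H,m_2,m_3,\dots)$ on $H\coloneqq H^\bullet(A)$, a finite-dimensional graded space concentrated in $[-N,0]$, with $m_n$ of degree $2-n$. A degree count then bounds the operations: for homogeneous $x_1,\dots,x_n$ with each $|x_i|\le 0$, the output $m_n(x_1,\dots,x_n)$ has degree $\sum_i|x_i|+2-n\le 2-n$, which is strictly below $-N$ as soon as $n>N+2$; hence $m_n=0$ for $n>N+2$. Thus the homotopy type of $A$ is encoded by the finite-dimensional space $H$ together with the finitely many operations $m_2,\dots,m_{N+2}$.

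Finally I would strictify. The cobar--bar resolution $\Omega BH$ is a strictly associative dg algebra quasi-isomorphic to $H$, but it is infinite-dimensional. The key point --- and what I expect to be the main obstacle --- is that connectivity renders a brutal truncation of this construction finite-dimensional without changing the relevant cohomology: every tensor generator sits in strictly negative cohomological degree and the cobar differential does not decrease word length, so the span of words of length $\ge L$ is a dg ideal whose quotient is finite-dimensional in each degree, and boundedness of $H^\bullet(A)$ below lets one choose $L$ so large that the quotient map is a quasi-isomorphism in the window $[-N,0]$. Making this bookkeeping precise --- verifying that the length filtration interacts with the cohomological grading so that the truncation is simultaneously a quotient dg algebra and a quasi-isomorphism --- is the delicate part of the argument.

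An essentially equivalent route, which localises the same difficulty, is to induct on the amplitude $N$ along the Postnikov tower $\Lambda\simeq\tau^{\ge 0}A\twoheadleftarrow\tau^{\ge -1}A\twoheadleftarrow\cdots\twoheadleftarrow\tau^{\ge -N}A=A$ of quotient dg algebras, where $\Lambda=H^0(A)$ is a finite-dimensional algebra and each surjection has square-zero kernel $H^{-n}(A)$ concentrated in degree $-n$. Given a finite-dimensional model $B_{n-1}$ of $\tau^{\ge -(n-1)}A$, the next stage is a square-zero extension of $B_{n-1}$ by the finite-dimensional bimodule $H^{-n}(A)$, classified by a Hochschild cohomology class; realising this class by a finite-dimensional strict cochain (again possible by the same degree/length bound on the relevant bar words) produces a finite-dimensional dg algebra $B_n\simeq\tau^{\ge -n}A$, and $B_N$ is the desired model.
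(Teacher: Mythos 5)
Note first that the paper does not prove this statement itself: it is imported from Raedschelders--Stevenson (3.12), after Orlov (2.20), so your attempt should be measured against those proofs. Your first two steps are sound and do match the strategy there: the good truncation $\tau^{\le 0}A$ is indeed a sub-dg-algebra and a quasi-isomorphism, and the degree count showing $m_n=0$ for $n>N+2$ on the minimal $A_\infty$-model is correct and is the key finiteness observation. The problem is the strictification step, which you correctly flag as the main obstacle but then resolve incorrectly. Your claim that ``every tensor generator sits in strictly negative cohomological degree'' is false unless $H^0(A)=k$. In general $H^0(A)$ is an arbitrary finite-dimensional algebra (the theorem does not assume $A$ augmented, let alone local), so $\bar H^0\neq 0$ and the cobar construction $\Omega BH$ has generators in degree $0$, namely the desuspended length-one bar words on $\bar H^0$. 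Words of arbitrary length in these generators all sit in degree $0$, so the ideal $I_L$ of words of length $\ge L$ is not concentrated in degrees $\le -L$, and killing it is not a quasi-isomorphism in any range: already for $A=k\times k$ the quotient imposes the extra relation that $L$-fold products from $\bar H^0$ vanish, which changes $H^0$. (Separately, the reduced bar construction you invoke requires an augmentation to even be defined, and even in the favourable case $H^0=k$ your output is only degreewise finite-dimensional and unbounded below, so you still need a further quotient $\tau^{\ge -M}$ — that part is fixable, since for non-positively graded dg algebras the good truncation from below is a quotient dg algebra.)

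Your alternative Postnikov-tower route localises but does not remove the same difficulty: the parenthetical ``realising this class by a finite-dimensional strict cochain (again possible by the same degree/length bound)'' is exactly the unproved point. A class in $\HH^{n+2}(B_{n-1},H^{-n}(A))$ of a dg algebra $B_{n-1}$ is represented by a cocycle in the totalised bicomplex with components $\hom(B_{n-1}^{\otimes p},M)^{n+2-p}$ for all $p$, and only a representative concentrated in the single component $p=2$ yields a strict square-zero extension of dg algebras; forcing the class into that component is a strictification problem of the same nature as the one you started with, and the degree/length bound you appeal to does not apply to it (again the degree-$0$ part of $B_{n-1}$ is an obstruction). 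So as written both routes have a genuine gap at the same place, and the gap is exactly where the cited proofs do real work.
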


\begin{defn}
Let $A\to k$ be an augmented algebra. Say that $A$ is \textbf{homologically local} if $\pvd(A) = \thick_{D(A)}(k)$.
\end{defn}
We will often abbreviate `homologically local' to simply `local'. For discrete algebras coming from algebraic geometry, this definition is equivalent to the usual one: 
\begin{prop}
    Let $A$ be a noetherian discrete commutative $k$-algebra and $A\to k$ an augmentation. Then $A$ is a local ring if and only if $A$ is homologically local.
\end{prop}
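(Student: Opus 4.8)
The plan is to prove the two implications separately, using dévissage to the residue field in one direction and a localisation argument in the other. Throughout, write $\mathfrak{m}=\ker(A\to k)$, which is a maximal ideal with $A/\mathfrak{m}\cong k$, so that $k\in\pvd(A)$ is the residue module generating $\thick_{D(A)}(k)$.

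First suppose $A$ is local, so that $\mathfrak{m}$ is its unique maximal ideal. Since $k\in\pvd(A)$ and $\pvd(A)$ is a thick subcategory, we get $\thick_{D(A)}(k)\subseteq\pvd(A)$ for free. For the reverse inclusion, take $M\in\pvd(A)$; then $M$ has bounded cohomology of finite total $k$-dimension, so the canonical truncation triangles exhibit $M$ as a finite iterated extension of shifts of the modules $H^n(M)$. As $\thick_{D(A)}(k)$ is closed under shifts and cones, it suffices to show each $H^n(M)\in\thick_{D(A)}(k)$. But a $k$-finite-dimensional module is of finite length over $A$, and locality forces its only simple subquotient to be $A/\mathfrak{m}=k$; a composition series then realises $H^n(M)$ as an iterated extension of copies of $k$, whence $H^n(M)\in\thick_{D(A)}(k)$, as required.

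For the converse I argue by contraposition: assuming $A$ is not local, I produce an object of $\pvd(A)$ lying outside $\thick_{D(A)}(k)$. Choose a maximal ideal $\mathfrak{n}\ne\mathfrak{m}$ and consider the simple module $A/\mathfrak{n}$. The one nontrivial input is that the residue field $\kappa(\mathfrak{n})=A/\mathfrak{n}$ is finite-dimensional over $k$; this is Zariski's lemma for algebras of finite type over $k$, and it is precisely here that the geometric hypothesis enters. Granting this, $A/\mathfrak{n}\in\pvd(A)$. Now localisation at $\mathfrak{n}$ is an exact functor $D(A)\to D(A_\mathfrak{n})$ commuting with shifts, cones and summands, so the objects it annihilates form a thick subcategory; since $\mathfrak{m}\not\subseteq\mathfrak{n}$ we have $(A/\mathfrak{m})_\mathfrak{n}=0$, i.e.\ it kills $k$, and hence it kills all of $\thick_{D(A)}(k)$. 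As $(A/\mathfrak{n})_\mathfrak{n}=\kappa(\mathfrak{n})\ne0$, the module $A/\mathfrak{n}$ cannot lie in $\thick_{D(A)}(k)$, so $\pvd(A)\ne\thick_{D(A)}(k)$ and $A$ is not homologically local.

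The dévissage and the formal properties of localisation are routine; the step that genuinely needs care is the finiteness of $\kappa(\mathfrak{n})$ over $k$, and this is the main obstacle in that it cannot be weakened to bare noetherianity. Indeed $A=k(t)\times k$, augmented via the second projection, is noetherian and homologically local but not local: its other residue field $k(t)$ is infinite over $k$, so that factor contributes no nonzero perfectly valued modules and $\pvd(A)=\thick_{D(A)}(k)$ nonetheless. This confirms that the argument above should be read in the finite-type (``coming from algebraic geometry'') setting where every closed point has residue field finite over $k$.
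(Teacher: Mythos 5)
Your argument is essentially the paper's: dévissage of a perfectly valued complex to its cohomology modules and then to composition factors for one direction, and localisation to detect a second closed point for the other (you localise at $\mathfrak{n}$ where the paper localises at $\mathfrak{m}=\ker(A\to k)$, a mirror image of the same idea). Your closing remark is, however, a genuine and worthwhile correction rather than a quibble: the paper's own converse invokes ``the Nullstellensatz'' to place $k(\mathfrak{p})$ in $\pvd(A)$, which requires the residue fields at closed points to be finite over $k$, and this is not guaranteed by noetherianity alone. Your example $A=k(t)\times k$ is a valid counterexample to the proposition as literally stated — it is noetherian, not local, yet $\pvd(A)=\thick_{D(A)}(k)$ since the factor $k(t)$ supports no nonzero finite-dimensional modules — so the hypothesis should be strengthened to finite type (or at least to all closed points having residue field finite over $k$), exactly as you say.
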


\begin{proof}Let $M$ be a finite dimensional $A$-module, and so necessarily of finite length (i.e.\ both noetherian and Artinian; see \cite[\S2.4]{eisenbud} for more about length). Hence $M$ has a finite composition series with subquotients of the form $A/\mathfrak{m}_i$ with each $\mathfrak{m}_i$ maximal. Since a perfectly valued $A$-module is a finite iterated cone between finite dimensional modules, it follows that $\pvd(A)$ is contained in the subcategory $\thick_{D(A)}\{A/\mathfrak{m}:\mathfrak{m}\in \mathrm{MaxSpec}(A)\}$. If $A$ is local, then this latter subcategory is precisely $\thick(k)$, and hence $A$ is homologically local. Conversely, if $A$ is not local, let $\mathfrak{m}$ be the kernel of the augmentation $A\to k$ and choose a closed point $\mathfrak{p}\neq \mathfrak m$. Then $k(\mathfrak{p}) \in \pvd(A)$ by the Nullstellensatz. But by construction $k(\mathfrak{p})_\mathfrak{m}=0$ and hence $k(\mathfrak{p}) \notin \thick(k)$.\end{proof}

\p Say that a discrete finite dimensional algebra $A$ is \textbf{local} if the maximal semisimple quotient of $A$ is a copy of $k$. This is compatible with our earlier definition:
\begin{prop}\label{connloc}
    Let $A$ be a connective algebra such that $H^0A$ is finite dimensional. Then $A$ is homologically local if and only if $H^0A$ is local.
\end{prop}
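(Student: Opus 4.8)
The plan is to reduce everything to the finite-dimensional discrete algebra $\bar A\coloneqq H^0A$ by exploiting the standard t-structure on $D(A)$. Since $A$ is connective, $D(A)$ carries the standard t-structure whose aisles are detected by the cohomology of the underlying complex, and whose heart is the abelian category $\Mod\bar A$ of discrete $\bar A$-modules, via the functor $M\mapsto H^0M$ (the $A$-action on a cohomologically-concentrated-in-degree-$0$ module factors through $\bar A$ for degree reasons). The augmentation induces $\bar A\to k$, so $k$ is one of the simple $\bar A$-modules. First I would check that this t-structure restricts to a bounded t-structure on $\pvd(A)$ with heart the category of finite-dimensional $\bar A$-modules: this is immediate, since any $M\in\pvd(A)$ has bounded, finite-dimensional total cohomology, so each truncation $\tau^{\le n}M$ is again perfectly valued and each $H^nM$ is a finite-dimensional $\bar A$-module (here we use that $\bar A$ is finite-dimensional).

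Next, via d\'evissage for this bounded t-structure, every object of $\pvd(A)$ admits a finite filtration whose subquotients are shifts of the simple $\bar A$-modules $S_1,\dots,S_r$. As each $S_i$ is finite-dimensional, hence perfectly valued, this gives $\pvd(A)=\thick_{D(A)}(S_1,\dots,S_r)$. Writing $S_1=k$ for the augmentation simple, homological locality of $A$ is thus equivalent to the assertion that every $S_i$ lies in $\thick_{D(A)}(k)$.

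To separate the simples I would pass to Grothendieck groups rather than argue with $\R\hom$ directly; this is the crux, and precisely the point where a naive orthogonality argument fails, since $\R\hom_A(k,S_i)$ can be nonzero in positive degrees even when $\hom_{\bar A}(k,S_i)=0$ because of the higher structure of $A$. For a bounded t-structure the Euler-characteristic map $[M]\mapsto\sum_n(-1)^n[H^nM]$ is an isomorphism $K_0(\pvd(A))\xrightarrow{\ \cong\ }K_0(\text{f.d. }\bar A\text{-mod})=\bigoplus_i\mathbb{Z}[S_i]$, the latter being free on the classes of the simples by Jordan--H\"older. Under this isomorphism the image of $K_0(\thick_{D(A)}(k))$ is the subgroup $\mathbb{Z}[k]$ (generated by the class of its single generator $k$), whereas each $[S_i]$ is a distinct basis vector; hence any $S_i\not\cong k$ fails to lie in $\thick_{D(A)}(k)$.

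Combining these, the equivalence falls out. If $\bar A$ is local, its unique simple module is $k$, so $\pvd(A)=\thick_{D(A)}(S_1,\dots,S_r)=\thick_{D(A)}(k)$ and $A$ is homologically local. Conversely, if $\bar A$ is not local then $\bar A/\operatorname{rad}(\bar A)$ is not a single copy of $k$, so there is a simple $S_i\not\cong k$; by the previous paragraph $S_i\in\pvd(A)\setminus\thick_{D(A)}(k)$, so $A$ is not homologically local. I expect the main obstacle to be the two structural inputs at the start — the identification of the heart with $\Mod\bar A$ and the verification that the t-structure restricts to a bounded one on $\pvd(A)$ — after which the $K_0$ computation separating the simples is purely formal.
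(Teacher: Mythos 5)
Your reduction to the heart of the standard t-structure and the dévissage of $\pvd(A)$ into the simples of $\bar A = H^0A$ is sound, and the forward implication coincides with the paper's argument (the paper runs the radical filtration of a finite-dimensional $\bar A$-module together with the Postnikov tower, which is exactly your dévissage). The gap is in the step you yourself flag as the crux. You claim that the image of $K_0(\thick_{D(A)}(k))$ in $K_0(\pvd(A))\cong\bigoplus_i\Z[S_i]$ is $\Z[k]$, justified by the parenthetical that this group is ``generated by the class of its single generator $k$''. That is a false general principle: $K_0$ of a thick subcategory generated by one object need not be generated by the class of that object, because thick subcategories are closed under direct summands, and a summand of an object whose class lies in $\Z[k]$ need not itself have class in $\Z[k]$. (Concretely, for $A=k\times k$ one has $\thick(A)=\per(A)$ and $K_0(\per A)\cong\Z^2$, strictly larger than $\Z[A]=\Z\cdot(1,1)$.) Iterated cones of shifts of $k$ do have class in $\Z[k]$, but your argument gives no control over their summands, so the containment $\im\bigl(K_0(\thick_{D(A)}(k))\bigr)\subseteq\Z[k]$ --- which is precisely what you need to separate $S_i$ from $k$ --- is unjustified as written.

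The claim is nevertheless true, and the repair is short and lets you bypass $K_0$ altogether: consider the full subcategory of $\pvd(A)$ of those $M$ such that every composition factor of every $H^nM$ is isomorphic to $k$. This subcategory contains $k$, is closed under shifts and direct summands, and is closed under cones because the long exact cohomology sequence exhibits each $H^n$ of a cone as an extension of a submodule of one cohomology group by a quotient of another, and composition factors only shrink under passage to subquotients. Hence it contains $\thick_{D(A)}(k)$, while a simple $S_i\not\cong k$ visibly fails the condition, so $S_i\notin\thick_{D(A)}(k)$. With this substitution your proof is complete. (For what it is worth, the paper's own proof of the converse simply asserts this nonmembership without argument, so your instinct that it requires justification is correct; it is only the particular $K_0$ justification that fails.)
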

\begin{proof}
For brevity put $B\coloneqq H^0A$. Since $A$ is connective, $D(A)$ admits a t-structure with heart $\Mod B$. First assume that $B$ is local. If $M$ is a finitely generated $B$-module, filtering $M$ by its radical filtration shows that $M$ can be written as an iterated extension between copies of $k$. In other words, we have a sequence $k=M_1,\ldots, M_m=M$ of $B$-modules and short exact sequences $0\to k\to M_{n+1} \to M_n\to 0$. These short exact sequences give exact triangles $k \to M_{n+1} \to M_n \to$ in $D(B)$ showing inductively that each $M_n$, and hence $M$, are in $\thick_{D(B)}(k)$. Since $A$ is connective, the natural $t$-structure on $D(A)$ yields a triangle functor $D(B) \to D(A)$ and hence $M \in \thick_{D(A)}(k)$. To finish, take $N\in \pvd(A)$; viewing $N$ as a finite iterated cone between finitely generated $B$-modules (namely the $H^iN$) we see that $N\in \thick_{D(A)}(k)$ as required. Conversely if $B$ is not local, write the maximal semisimple quotient of $B$ as $S\oplus k$ where $S\not\cong 0$. Then the $A$-module $S$ is an object of $\pvd(A)$ which is not in $\thick(k)$.
\end{proof}

It will also be convenient for us to introduce some slightly stronger notions of properness:
\begin{defn}
    Let $A$ be an augmented algebra. Say that $A$ is \textbf{proper local} if $A\in \thick_{D(A)}(k)$. Say that $A$ is \textbf{strongly proper local} if $A\in \thick_{D(A^e)}(k)$.
\end{defn}

If $A$ is an augmented algebra, then it is easy to see that $$A\text{ strongly proper local}\implies A \text{ proper local} \impliedby A \text{ proper and homologically local}$$
and if $A$ is connective then these implications can be reversed:

\begin{prop}\label{cSPisP}
    Let $A$ be a connective algebra. The following are equivalent:
\begin{enumerate}
    \item $A$ is proper local.
    \item $A$ is strongly proper local.
    \item $A$ is proper and homologically local.
\end{enumerate}
\end{prop}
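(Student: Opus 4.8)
The plan is to prove the two implications not already recorded before the statement, namely $(1)\Rightarrow(3)$ and $(3)\Rightarrow(2)$; together with the known implication $(2)\Rightarrow(1)$ these close the cycle $(1)\Rightarrow(3)\Rightarrow(2)\Rightarrow(1)$. First I would observe that $(1)$ alone forces $A$ to be proper, with no connectivity needed: since $k\in\pvd(A)$ and $\pvd(A)$ is thick, we have $\thick_{D(A)}(k)\subseteq\pvd(A)$, so $A\in\thick_{D(A)}(k)$ gives $A\in\pvd(A)$. In particular $B\coloneqq H^0A$ is finite dimensional.

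For $(1)\Rightarrow(3)$, the engine is a composition-factor argument using the connective $t$-structure on $D(A)$, whose heart is $\Mod B$. I would show that the full subcategory of $D(A)$ on those complexes $X$ all of whose cohomology modules $H^iX$ have every composition factor isomorphic to $k$ is a thick subcategory: it is visibly closed under shifts and summands, and closure under cones follows from the long exact cohomology sequence, which exhibits the composition factors of $H^i(\mathrm{cone})$ among those of the cohomologies of the other two terms. This subcategory contains $k$, hence contains $\thick_{D(A)}(k)\ni A$. Therefore the regular module $B_B=H^0A$ has all composition factors isomorphic to $k$; since every simple $B$-module is cyclic and hence a quotient of $B_B$, the only simple $B$-module is $k$, i.e.\ $B$ is local. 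Now \ref{connloc} yields that $A$ is homologically local, and combined with properness this is exactly $(3)$.

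For $(3)\Rightarrow(2)$ I would pass to the enveloping algebra. As $A$ is proper and connective, the Künneth formula shows $A^e$ is proper and connective, so $A\in\pvd(A^e)$, and $H^0(A^e)\cong B^e=B^\circ\otimes B$ is finite dimensional. It then suffices to prove $A^e$ is homologically local, for then $A\in\pvd(A^e)=\thick_{D(A^e)}(k)$, which is the definition of strongly proper local. By \ref{connloc} applied to $A^e$ this reduces to showing $B^e$ is local. Here I would use that $J\coloneqq\mathrm{rad}(B^\circ)\otimes B+B^\circ\otimes\mathrm{rad}(B)$ is a nilpotent ideal of $B^e$ with $B^e/J\cong(B^\circ/\mathrm{rad})\otimes(B/\mathrm{rad})\cong k\otimes k=k$; a nilpotent ideal with semisimple quotient is the Jacobson radical, so $B^e$ has residue field $k$ and is local.

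I expect the main obstacle to be the bimodule step $(3)\Rightarrow(2)$: one must pass correctly between the module-level locality of $A$ and the bimodule-level statement $A\in\thick_{D(A^e)}(k)$, and the only clean route seems to be through homological locality of $A^e$, which in turn rests on the (easy but essential) fact that the enveloping algebra of a local finite-dimensional algebra is again local. The composition-factor characterisation of $\thick_{D(A)}(k)$ is the key reusable lemma, and I would take care to state it once and apply it both to $A$ and to $A^e$.
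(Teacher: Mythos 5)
Your proof is correct, and it differs from the paper's in one substantive way. For $(1)\Rightarrow(3)$ you and the paper argue identically in essence: the paper says $H^0A$ is ``an iterated extension between copies of $k$, and hence must be local'', and your thick-subcategory-of-composition-factors lemma is exactly the careful version of that remark (it is worth noting, as you implicitly do, that everything in $\thick_{D(A)}(k)$ lies in $\pvd(A)$, so the cohomology modules are finite length and the composition-factor language is legitimate). The real divergence is in reaching $(2)$: the paper proves $(1)\Rightarrow(2)$ by citing \cite[5.7]{rsproper} together with \ref{connloc}, whereas you prove $(3)\Rightarrow(2)$ self-containedly by passing to $A^e$, using K\"unneth to see that $A^e$ is connective and proper with $H^0(A^e)\cong B^\circ\otimes B$, and then showing directly that the enveloping algebra of a finite-dimensional local algebra is local via the nilpotent ideal $\mathrm{rad}(B^\circ)\otimes B+B^\circ\otimes\mathrm{rad}(B)$. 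Your route buys independence from the external reference at the cost of a short radical computation (which does use that $k$ is the full residue field, so that $(B/\mathrm{rad}\,B)\otimes(B^\circ/\mathrm{rad}\,B^\circ)\cong k$ is semisimple --- harmless here since the paper's notion of local already demands residue field $k$); the paper's route is shorter but leans on \cite{rsproper}. Both are sound, and your cycle $(1)\Rightarrow(3)\Rightarrow(2)\Rightarrow(1)$ closes correctly given the implications recorded before the statement.
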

\begin{proof}
If (1) holds then by \cite[5.7]{rsproper} combined with \ref{connloc} we see that (2) holds. Since (2) clearly implies (1) we see that (1) and (2) are equivalent. Moreover, certainly (3) implies (1) so we just need to show that (1) implies (3). To do this, certainly if (1) holds then $A$ is proper, so we just need to prove that it is local. By \ref{connloc} we only need to prove that the finite dimensional algebra $H^0A$ is local. But by assumption it is an iterated extension between copies of $k$, and hence must be local.
\end{proof}

\begin{rmk}
We are not aware of an example of a proper local nonconnective algebra which is not strongly proper local, but we expect Efimov's example \cite[5.4]{efimovhodge} to have this property.
\end{rmk}

We finish this section with a locality property that will be useful to us later.

If $\mathcal{T}$ is a triangulated category, recall that a thick subcategory $\mathcal{C} \into \mathcal{T}$ is said to be \textbf{localising} if it is closed under infinite direct sums and \textbf{colocalising} if it is closed under infinite products. If $\mathcal{X}$ is a set of objects of $\mathcal{T}$, recall that $\mathrm{Loc}_{\mathcal{T}}(\mathcal{X})$ denotes the smallest localising subcategory containing $\mathcal{X}$, and dually $\mathrm{Coloc}_{\mathcal{T}}(\mathcal{X})$ denotes the smallest colocalising subcategory containing $\mathcal{X}$. The objects of $\mathcal{X}$ are generators for $\mathrm{Loc}_{\mathcal{T}}(\mathcal{X})$ and cogenerators for $\mathrm{Coloc}_{\mathcal{T}}(\mathcal{X})$.

\begin{defn}
    Say that an augmented algebra $A$ is \textbf{homologically complete local} if $A\in \mathrm{Coloc}_{D(A)}(k)$.
\end{defn} As before we will frequently abbreviate `homologically complete local' to `complete local'. Clearly a proper local algebra is complete local. 

\begin{rmk}
    If $(A,\mathfrak{m},k)$ is a commutative noetherian regular local ring then $\mathrm{Loc}(k)$ is the derived category of $\mathfrak{m}$-torsion modules, and $\mathrm{Coloc}(k)$ is the derived category of derived $\mathfrak{m}$-complete modules.
\end{rmk}

\subsection{Smoothness and regularity for algebras}

\begin{defn}
An algebra $A$ is \textbf{homologically smooth} or just \textbf{smooth} if $A$ is a perfect $A$-bimodule.
\end{defn}

\begin{rmk}
If $A$ is a commutative finite type discrete $k$-algebra, then $A$ is homologically smooth if and only if the unit morphism $k \to A$ is smooth. When $k$ is a perfect field, these are in turn equivalent to $A$ being regular, which is also equivalent to $A$ having finite global dimension.
\end{rmk}

\begin{rmk}
If $A$ is a discrete algebra, then $\gldim A^e\geq 2\gldim A$. If $k$ is a perfect field and $A$ is finite dimensional, then this inequality becomes an equality \cite[Theorem 16]{auslanderglobal}; in particular if $A$ has finite global dimension then $A$ is homologically smooth.
\end{rmk}

Following \cite{KSreflex}, we say that an algebra $A$ is \textbf{hfd-closed} if there is an inclusion $\pvd(A) \subseteq \per A$. This is often a convenient replacement for smoothness, as the following proposition shows:

\begin{prop}\label{fdmodel}
Let $A$ be an algebra.
\begin{enumerate}
    \item If $A$ is smooth then $A$ is hfd-closed.
    \item If $A$ is (quasi-isomorphic to) a finite dimensional algebra, then $A$ is smooth if and only if it is hfd-closed.
\end{enumerate}
\end{prop}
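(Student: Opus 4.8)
The plan is to handle the two parts separately: part (1) follows from a direct bimodule-resolution argument, and the forward direction of part (2) is just (1), so the real content of (2) is its converse.

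For (1), assume $A$ is smooth, i.e. $A\in\per(A^e)$, and fix $M\in\pvd(A)$. First I would record the behaviour of the exact functor $M\lot_A(-)\colon D(A^e)\to D(A)$ on the free bimodule: $M\lot_A A^e\cong M\otimes A$, and since $M$ has finite dimensional total cohomology and we work over a field, $M$ is formal and $M\otimes A$ is quasi-isomorphic to a finite direct sum of shifts of copies of $A$, hence lies in $\per A$. Now because $A$ is a perfect bimodule, the diagonal $A$ is built from $A^e$ by finitely many shifts, cones and retracts inside $D(A^e)$; applying the exact, retract-preserving functor $M\lot_A(-)$ shows that $M\simeq M\lot_A A$ is built from $M\otimes A$ by the same operations, so $M\in\thick_{D(A)}(M\otimes A)\subseteq\per A$. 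This yields the inclusion $\pvd(A)\subseteq\per A$, and it uses nothing beyond smoothness, so it holds over any field.

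For (2), the forward implication is exactly (1), so I must prove that a finite dimensional hfd-closed algebra is smooth. The most efficient route goes through global dimension. The key reduction is that, for a finite dimensional algebra, hfd-closedness is \emph{equivalent} to finite global dimension: writing $S_1,\dots,S_n$ for the simple modules, one has $\pvd(A)=\thick_{D(A)}(S_1,\dots,S_n)$, so $\pvd(A)\subseteq\per A$ forces each simple to have finite projective dimension, whence $\gldim A=\sup_i\mathrm{pd}\,S_i<\infty$; the converse inclusion is immediate. Having established $\gldim A<\infty$, I would then invoke the Remark preceding the proposition, which records (after Auslander) that a finite dimensional algebra of finite global dimension is homologically smooth. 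Chaining these gives hfd-closed $\Rightarrow$ finite global dimension $\Rightarrow$ smooth, closing the equivalence.

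The main obstacle is precisely this last step of the converse. A purely formal attempt---noting that $A\in\pvd(A^e)$ automatically when $A$ is finite dimensional, and trying to deduce $A\in\per(A^e)$ by showing $A^e$ is itself hfd-closed---founders because hfd-closedness does not transfer across the tensor product $A^e=A^\circ\otimes A$ in general: there is no unconditional Künneth statement making $\pvd(A^e)$ the thick closure of external products $\pvd(A^\circ)\boxtimes\pvd(A)$. Equivalently, the implication one really needs, $\gldim A<\infty\Rightarrow\gldim A^e<\infty$, is exactly the Auslander equality $\gldim A^e=2\gldim A$, which can fail over a non-perfect field (for an inseparable finite extension $k'/k$ one has $\gldim k'=0$ but $k'\otimes_k k'$ is a non-reduced Artinian local ring of infinite global dimension, so $k'$ is hfd-closed yet not smooth). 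Thus the converse of (2) cannot be obtained by derived formalities alone and genuinely depends on the Auslander-type input cited in the preceding remark.
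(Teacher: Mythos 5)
Part (1) of your argument is exactly the paper's: the authors observe that $M\lot_A P\in\per A$ for any perfect bimodule $P$ because it suffices to check this on $P=A^e$, where $M\lot_A A^e\cong M\otimes_k A$ is a finite direct sum of shifts of $A$; your write-up is a correct expansion of that one line, and the forward direction of (2) is indeed just (1).

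For part (2) the paper offers no argument, only a citation to a theorem of Orlov, so your global-dimension route is an independent attempt. Your reduction ``$A$ hfd-closed $\iff \gldim A<\infty$'' via the simple modules is correct for a \emph{discrete} finite dimensional algebra, but two things go wrong afterwards. First, in this paper ``algebra'' means dg algebra, and the proposition is applied later to finite dimensional dg algebras produced by \ref{pcthm}; for such algebras the simple modules $S_1,\dots,S_n$ and $\gldim A$ are not available in the form you use them, and this extra generality is precisely where the cited theorem of Orlov does its work, so you have only treated a special case. Second, and more seriously, the ``obstacle'' in your final paragraph is not merely an obstruction to your strategy --- it is a counterexample to the statement itself. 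An inseparable finite field extension $k'/k$ is a finite dimensional $k$-algebra; it is hfd-closed because $\pvd(k')=\per(k')$ (every dg $k'$-module with finite dimensional cohomology is a finite sum of shifts of $k'$); and it is not homologically smooth, since $k'\otimes_k k'\cong k'[u]/(u^p)$ is a non-semisimple local Artinian ring over which the diagonal module $k'^e/(u)$ has an infinite minimal free resolution (alternatively: smoothness is stable under base field extension, so it would force $\gldim(k'\otimes_k k')<\infty$ by part (1), a contradiction). Hence part (2) is false over a non-perfect base field unless one adds the hypothesis that $A/J(A)$ is separable over $k$ (automatic when $k$ is perfect, which is exactly the hypothesis appearing in the Remark on Auslander's equality that you invoke), and the citation to Orlov must implicitly carry such a hypothesis. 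You should draw this conclusion explicitly rather than presenting it as a limitation of ``derived formalities''; with separability assumed, your argument for discrete $A$ is complete and correct.
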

\begin{proof}
Claim $(1)$ is standard: if $M\in \pvd A$ and $P$ is a perfect $A$-bimodule then then $M\lot_A P \in \per A$; it is enough to show this for $P=A^e$ which is clear. In particular, if $A$ is smooth then we may take $P=A$. Claim (2) follows from a theorem of Orlov \cite[1.11]{orlovsmooth}.
\end{proof}

\begin{rmk}
    If $A$ is an augmented algebra with Koszul dual coalgebra $C=BA$, then $A$ is hfd-closed if and only if there is an inclusion $\hqfco C \subseteq \thick_{\dco(C)}(k)$.
\end{rmk}

\begin{defn}
    A morphism of algebras $A\to l$ is \textbf{regular} if it makes $l$ into a perfect $A$-module. We say that an augmented algebra is \textbf{regular} if the augmentation morphism is regular. This notion is called \textbf{g-regular} in \cite{gsmorita}.
\end{defn}

Smoothness and regularity are closely related:
\begin{lem}\label{smreglem}
    Let $A$ be an augmented algebra.
    \begin{enumerate}
        \item If $A$ is regular local then it is hfd-closed.
        \item If $A$ is hfd-closed then it is regular.
        \item If $A$ is smooth then it is regular.
    \end{enumerate}
\end{lem}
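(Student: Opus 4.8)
The plan is to derive all three statements directly from the definitions, using only the earlier Proposition \ref{fdmodel} and the identity $\per A = \thick_{D(A)}(A)$; no new machinery is required, so the proof will be short. The logical skeleton is that (1) is a formal manipulation of thick subcategories, (2) is an immediate consequence of $k\in\pvd(A)$, and (3) reduces to (2) via \ref{fdmodel}(1).

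For part (1), I would unwind the two hypotheses. Here \emph{regular local} means that $A$ is both regular and homologically local. Regularity says exactly that $k\in\per A$, while homological locality says $\pvd(A)=\thick_{D(A)}(k)$. Since $\per A=\thick_{D(A)}(A)$ is a thick subcategory of $D(A)$ containing $k$, it must contain the whole of $\thick_{D(A)}(k)$. Hence $\pvd(A)=\thick_{D(A)}(k)\subseteq \per A$, which is precisely the statement that $A$ is hfd-closed.

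For part (2), I would observe that the augmentation $A\to k$ makes $k$ into an $A$-module whose underlying complex has one-dimensional total cohomology, so that $k\in\pvd(A)$. If $A$ is hfd-closed, then $\pvd(A)\subseteq\per A$, so in particular $k\in\per A$; this says the augmentation $A\to k$ is regular, i.e.\ $A$ is regular. Finally, part (3) follows by combining \ref{fdmodel}(1), which asserts that a smooth algebra is hfd-closed, with part (2): a smooth algebra is hfd-closed, hence regular.

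I do not anticipate any serious obstacle here, since each implication is a one-line application of a definition or of \ref{fdmodel}. The only points that require any care are the elementary containment $\thick_{D(A)}(k)\subseteq\thick_{D(A)}(A)$ in part (1), which follows at once from $k\in\per A$, and the observation $k\in\pvd(A)$ in part (2), which holds because $k$ is finite dimensional as a $k$-vector space.
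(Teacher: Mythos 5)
Your proof is correct and is exactly the argument the paper intends — the paper's own proof is just the remark ``Follows easily from the definitions,'' and your unwinding of regularity ($k\in\per A$), locality ($\pvd(A)=\thick_{D(A)}(k)$), hfd-closedness ($\pvd(A)\subseteq\per A$), and the appeal to \ref{fdmodel}(1) for part (3) fills in those definitions in the expected way. No gaps.
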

\begin{proof}
    Follows easily from the definitions.
\end{proof}

\begin{lem}Let $A$ be a discrete commutative finite type $k$-algebra and $\mathfrak{p}$ a prime ideal of $A$, with residue field $l$. If $A_\mathfrak{p}$ is smooth then $A_\mathfrak{p}\to l$ is regular. If $A_\mathfrak{p}$ is regular and $l$ is a separable extension of $k$ then $A_\mathfrak{p}$ is smooth.
\end{lem}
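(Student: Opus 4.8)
The plan is to translate both halves of the statement into classical commutative algebra and then feed them back through the homological definitions. Write $R\coloneqq A_\mathfrak{p}$; since $A$ is finite type over a field, $R$ is a Noetherian local $k$-algebra, with maximal ideal $\mathfrak{m}$, residue field $l=R/\mathfrak{m}$, and (as $A$ is commutative) enveloping algebra $R^e=R\otimes R$ with the tensor over $k$. The first observation to record is that the morphism $R\to l$ is regular in the sense of the preceding definition exactly when $l\in\per R$, i.e.\ when $l$ has a finite resolution by finite free $R$-modules; by the Auslander--Buchsbaum--Serre theorem this holds if and only if $R$ is a regular local ring. Thus both claims become statements comparing homological smoothness of $R$ over $k$ with regularity of the local ring $R$, the separability of $l/k$ measuring the discrepancy between them.

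For the first claim I would argue purely homologically, so as to avoid any finiteness hypothesis over $k$ (note that $l$ need not be finite-dimensional over $k$ when $\mathfrak{p}$ is not a closed point, so one cannot simply invoke hfd-closedness via \ref{fdmodel}). The key input is the standard identification of Hochschild homology with coefficients in an external tensor product,
$$
l\lot_R l \;\simeq\; R\lot_{R^e}(l\otimes l),
$$
valid for any commutative $k$-algebra $R$. If $R$ is smooth, meaning $R\in\per(R^e)$, then $R$ is quasi-isomorphic to a bounded complex of finite projective $R^e$-modules, so the right-hand side is a bounded complex; hence $\tor^R_i(l,l)=0$ for $i\gg 0$. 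By Auslander--Buchsbaum--Serre this forces $R$ to be a regular local ring, equivalently $l\in\per R$, so $R\to l$ is regular.

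For the second claim I would run the comparison in reverse, and here the formal machinery no longer suffices: one must invoke the classical theory of smoothness over a field. By Auslander--Buchsbaum--Serre the hypothesis $l\in\per R$ says exactly that $R$ is a regular local ring, and combined with separability of $l/k$ the standard criterion (Matsumura, \emph{Commutative Ring Theory}, §28, or the Stacks Project) gives that $\spec A\to\spec k$ is smooth at $\mathfrak{p}$; equivalently there is some $f\notin\mathfrak{p}$ with $A_f$ smooth over $k$ as a finite type algebra. The remark following the definition of smoothness (homologically smooth $=$ smooth morphism, for commutative finite type $k$-algebras) then yields $A_f\in\per(A_f^e)$, and since perfectness is preserved by base change along $A_f^e\to R^e$ — under which the diagonal bimodule localizes to the diagonal, because $A_f\to R$ is flat and a ring epimorphism so that $R\lot_{A_f}R\simeq R$ — we conclude $R\in\per(R^e)$, i.e.\ $A_\mathfrak{p}$ is smooth. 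The main obstacle is precisely this classical input, the implication \emph{regular $+$ separable residue field $\Rightarrow$ smooth morphism}; it is the only genuinely non-formal step and is exactly where separability is needed. (It fails without it: for a purely inseparable extension $l/k$ the field $l$ is a regular but non-smooth $k$-algebra, consistent with the computation $l\otimes_k l$ being a truncated polynomial ring over which $l$ has infinite projective dimension.) Everything else is bookkeeping.
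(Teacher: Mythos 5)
Your proof is correct. The paper's own argument is a two-line appeal to classical commutative algebra: the morphism $A_\mathfrak{p}\to l$ is regular exactly when $A_\mathfrak{p}$ is a regular local ring, and (for finite type algebras) smoothness at $\mathfrak{p}$ is geometric regularity at $\mathfrak{p}$, from which both implications follow. Your second half follows the same route, with the same essential non-formal input (regular local ring with separable residue field $\Rightarrow$ smooth at $\mathfrak{p}$), but you are more careful than the paper on a point it elides: the remark identifying homological smoothness with smoothness of the unit morphism is stated only for finite type algebras, whereas $A_\mathfrak{p}$ is merely essentially of finite type, and your flat base change along $A_f^e\to A_\mathfrak{p}^e$ (using that $A_f\to A_\mathfrak{p}$ is a flat epimorphism, so the diagonal localizes to the diagonal) closes that gap explicitly. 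Your first half genuinely diverges: instead of quoting that geometrically regular implies regular, you derive regularity homologically from the identity $l\lot_R l\simeq R\lot_{R^e}(l\otimes l)$, which turns perfection of the diagonal bimodule into vanishing of $\tor_i^R(l,l)$ for $i\gg 0$ and hence regularity by Auslander--Buchsbaum--Serre. This buys an argument that is independent of residue field extensions and makes visible exactly where the formal machinery stops and the classical input (separability) enters; the paper's version is shorter but leans entirely on the black box of geometric regularity.
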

\begin{proof}
    It is standard that $A_\mathfrak{p}\to l$ is regular exactly when $A_\mathfrak{p}$ is a regular local ring. Since $A$ was finite type, it is smooth at $\mathfrak{p}$ if and only if it is geometrically regular at $\mathfrak{p}$, and the result follows.
\end{proof}
\begin{cor}
 Let $A\to k$ be a discrete commutative augmented finite type $k$-algebra with augmentation ideal $\mathfrak{m}$. Then the local ring $A_\mathfrak{m}$ is smooth if and only if $A_\mathfrak{m} \to k$ is regular.   
\end{cor}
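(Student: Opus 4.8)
The plan is to deduce this directly from the preceding Lemma by specialising it to the maximal ideal determined by the augmentation. The key observation is that an augmentation $A\to k$ exhibits $k$ as a quotient $A/\mathfrak{m}$, so that $\mathfrak{m}$ is automatically a maximal ideal whose residue field $l=A/\mathfrak{m}$ is $k$ itself. In particular the residue field coincides with the base field, which is exactly what is needed to dispose of the only genuine hypothesis appearing in the second half of the Lemma.

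First I would set $\mathfrak{p}=\mathfrak{m}$ and $l=k$ in the Lemma. The forward implication --- that smoothness of $A_\mathfrak{m}$ forces $A_\mathfrak{m}\to k$ to be regular --- is then precisely the first assertion of the Lemma, with nothing further to verify. For the reverse implication I would invoke the second assertion: if $A_\mathfrak{m}$ is regular and $l$ is a separable extension of $k$, then $A_\mathfrak{m}$ is smooth. Here $l=k$, and any field is trivially a separable extension of itself, so the separability hypothesis is vacuously satisfied and the conclusion follows immediately.

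There is no substantive obstacle in this argument; the entire content lies in recognising that the presence of an augmentation pins the residue field at $\mathfrak{m}$ to be $k$, rendering the separability condition in the Lemma automatic. Both directions of the corollary therefore reduce at once to the two halves of the Lemma, and the proof is complete.
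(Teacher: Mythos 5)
Your proposal is correct and is exactly the argument the paper intends: the corollary is stated without a separate proof precisely because it is the Lemma specialised to $\mathfrak{p}=\mathfrak{m}$, where the augmentation forces the residue field to be $k$ and hence trivially separable over $k$. Nothing further is needed.
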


\begin{prop}
    Let $A$ be quasi-isomorphic to a finite dimensional augmented algebra, and suppose in addition that $A$ is local. Then $A$ is regular if and only if is is smooth.
\end{prop}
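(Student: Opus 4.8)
The plan is to prove the two implications separately, threading together the dictionary among the notions \emph{smooth}, \emph{hfd-closed}, and \emph{regular} that has already been set up. The key observation is that neither extra hypothesis (the finite-dimensional model, or locality) is needed for the ``easy'' implication, so the work concentrates entirely on the converse.

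For the implication smooth $\implies$ regular, there is nothing to do beyond citing Lemma \ref{smreglem}(3): this holds for any augmented algebra, with no finiteness or locality assumption. For the converse, regular $\implies$ smooth, I would proceed in two steps. First, I would use the locality hypothesis to upgrade regularity to hfd-closedness, which is exactly Lemma \ref{smreglem}(1); concretely, regularity puts $k$ into the thick subcategory $\per(A)$, so $\per(A) \supseteq \thick_{D(A)}(k)$, and locality identifies the right-hand side with all of $\pvd(A)$, giving $\pvd(A)\subseteq \per(A)$. Second, I would invoke the finite-dimensional model hypothesis to pass from hfd-closed back to smooth, which is the content of Lemma \ref{fdmodel}(2). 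Concatenating regular $\implies$ hfd-closed $\implies$ smooth finishes the proof.

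The real content, and hence the ``hard part,'' is entirely outsourced to the cited results, and it is worth noting where each hypothesis is genuinely used. Locality is precisely what bridges the gap between controlling the single object $k$ (regularity) and controlling all perfectly valued modules (hfd-closedness); without it, regular is strictly weaker than hfd-closed. The finite-dimensional model hypothesis is what powers the final step, and that step rests on Orlov's theorem \cite[1.11]{orlovsmooth} (packaged as Lemma \ref{fdmodel}(2)) — this is the one nontrivial external input, and the only place among the three implications where a general algebra could fail. So the proposition is essentially a formal corollary of the previously established framework, with all substance deferred to Orlov's theorem via the two bridging lemmas.
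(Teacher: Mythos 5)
Your proof is correct, but it takes a slightly different route from the paper's. For the converse direction the paper reduces immediately to the case where $A$ is literally finite dimensional and then cites a single external result of Orlov (\cite[3.12]{orlovsmooth}, which handles a finite dimensional algebra whose simple modules are perfect), with the locality hypothesis absorbed implicitly into that citation. You instead factor the implication through the intermediate notion of hfd-closedness: regular $+$ local $\implies$ hfd-closed via \ref{smreglem}(1), then hfd-closed $+$ finite dimensional model $\implies$ smooth via \ref{fdmodel}(2) (which rests on a different statement of Orlov, \cite[1.11]{orlovsmooth}). Both arguments are sound and both ultimately outsource the hard content to Orlov; the advantage of your decomposition is that it makes visible exactly where locality is used (to pass from controlling the single object $k$ to controlling all of $\pvd(A)$, i.e.\ from $k\in\per(A)$ and $\pvd(A)=\thick_{D(A)}(k)$ to $\pvd(A)\subseteq\per(A)$), whereas the paper's one-line citation leaves that bookkeeping to the reader. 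Your observation that the forward implication needs neither hypothesis is also correct and matches \ref{smreglem}(3).
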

\begin{proof}
If $A$ is smooth then it is regular. Conversely, if $A$ is regular then assume without loss of generality that $A$ is finite dimensional. Then it is smooth by {\cite[3.12]{orlovsmooth}}.
\end{proof}

\subsection{Properness for coalgebras}

The notion of properness for coalgebras is more subtle than that for algebras. Indeed, properness was earlier defined in terms of the isomorphism-reflecting forgetful functor $D(A)\to D(k)$, whereas the forgetful functor $\dco(C) \to D(k)$ does not reflect isomorphisms. However, it is possible to identify a large subcategory of $\dco(C)$ where this does hold:

\begin{prop}\label{qisoreflec}
Let $C$ be a coalgebra. The natural map $\mathrm{Coloc}_{\dco(C)}(C) \to D(k)$ reflects isomorphisms. 
\end{prop}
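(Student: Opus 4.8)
The plan is to reduce the statement to detecting zero objects and then to exhibit $C$ as right-orthogonal to the kernel of the forgetful functor $U\colon \dco(C)\to D(k)$. This functor is triangulated, and it is well defined precisely because coacyclic complexes of comodules are acyclic as complexes of vector spaces (the coacyclics are built from totalisations of short exact sequences and closed under direct sums, both of which preserve underlying acyclicity). Since a morphism $f$ in a triangulated category is an isomorphism iff $\operatorname{cone}(f)\simeq 0$, and $U(\operatorname{cone} f)=\operatorname{cone}(Uf)$, and $\mathrm{Coloc}_{\dco(C)}(C)$ is a triangulated subcategory (so it contains the cone of any of its morphisms), it suffices to prove: if $X\in \mathrm{Coloc}_{\dco(C)}(C)$ satisfies $U(X)\simeq 0$, then $X\simeq 0$.

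The key computation is the natural identification, for every $M\in\dco(C)$,
\[
\dco(C)(M,C)\;\simeq\;(UM)^{*},
\]
where $(-)^{*}=\hom_k(-,k)$ is the $k$-linear dual. Indeed, the cofree--forgetful adjunction ($C$ being the cofree comodule on $k$) gives a natural isomorphism of hom-complexes $\hom_C(M,C)\cong\hom_k(UM,k)=(UM)^{*}$. Since the functor $(U-)^{*}$ is exact and carries coacyclic complexes to acyclic ones (equivalently, since $C$ is an injective comodule), $\hom_C(-,C)$ already represents the derived hom, so $\R\hom_C(M,C)\simeq\dco(C)(M,C)\simeq(UM)^{*}$. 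I expect this identification to be the crux of the argument: it converts the homotopical condition $U(M)\simeq 0$ into the orthogonality condition $\dco(C)(M,C[n])=0$ for all $n$.

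With this in hand, set $\mathcal{Y}\coloneqq\{Y\in\dco(C):U(Y)\simeq 0\}$, the kernel of $U$. The displayed formula gives $\dco(C)(Y,C[n])\cong H^{n}\bigl((UY)^{*}\bigr)=0$ for all $Y\in\mathcal{Y}$ and all $n$, so $C$ lies in the right-orthogonal $\mathcal{Y}^{\perp}$. As a right-orthogonal complement, $\mathcal{Y}^{\perp}$ is thick and closed under arbitrary products, hence is a colocalising subcategory; therefore $\mathrm{Coloc}_{\dco(C)}(C)\subseteq\mathcal{Y}^{\perp}$. Now take $X\in\mathrm{Coloc}_{\dco(C)}(C)$ with $U(X)\simeq 0$. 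Then $X\in\mathcal{Y}$ and simultaneously $X\in\mathcal{Y}^{\perp}$, so $\dco(C)(X,X)=0$; hence $\id_X=0$ and $X\simeq 0$, which completes the reduction.

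The main obstacle is really just the subtlety behind the key formula: one must check that $\hom_C(-,C)$ computes the derived hom $\dco(C)(-,C)$ without any fibrant replacement of $C$. This rests on $C$ being cofree and on the exactness of $(U-)^{*}$, which is the same fact that makes $U$ descend to the coderived category in the first place. Once $\dco(C)(M,C)\simeq(UM)^{*}$ is secured, the orthogonality argument above is entirely formal.
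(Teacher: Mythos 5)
Your proof is correct and is essentially the argument in the paper: both reduce to showing that an object of $\mathrm{Coloc}_{\dco(C)}(C)$ with acyclic underlying complex lies in both a colocalising subcategory and its orthogonal, hence vanishes. The only difference is that where the paper cites Positselski for the fact that the acyclic comodules form the left orthogonal to $C$, you derive the needed inclusion directly from the cofree--forgetful adjunction $\hom_C(M,C)\cong (UM)^*$ together with the injectivity of $C$, which is a perfectly valid self-contained substitute.
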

\begin{proof}
Let $M \in \mathrm{Coloc}_{\dco(C)}(C)$ be an acyclic comodule. We wish to show that $M$ is coacyclic. Following \cite[5.5]{positselskitwokinds}, we see that the full subcategory of $\dco(C)$ consisting of the acyclic $C$-comodules is precisely the left orthogonal to $C$. But this is also the left orthogonal to $\mathrm{Coloc}_{\dco(C)}(C)$. So $M$ is simultaneously an object of both $\mathrm{Coloc}_{\dco(C)}(C)$ and its left orthogonal, and hence is zero in $\dco(C)$, as required.
\end{proof}
\begin{rmk}
    There is a natural localisation functor $\dco(C) \to D(\Comod C)$ whose kernel consists of the acyclic comodules. Under the assumption of Vop\v{e}nka's principle, there is a natural equivalence
$\mathrm{Coloc}_{\dco(C)}(C) \simeq D(\Comod C)$ so that $\mathrm{Coloc}_{\dco(C)}(C)$ is actually the largest subcategory of $\dco(C)$ for which the natural functor to $D(k)$ reflects isomorphisms. See \cite[5.5, Note added three years later]{positselskitwokinds} for further discussion.
\end{rmk}

\begin{lem}
    Let $(A,C)$ be a Koszul duality pair. Then $A$ is complete local if and only if $\fd{C} \subseteq \mathrm{Coloc}_{\dco(C)}(C)$.
\end{lem}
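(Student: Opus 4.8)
The plan is to transport the defining condition of complete locality across the Koszul duality equivalence $D(A)\simeq\dco(C)$ supplied by \ref{posKDthm}, turning a statement about $A\in D(A)$ into one about objects of $\dco(C)$. First I would record the two entries of the dictionary that I need: by \ref{posKDthm} the diagonal module $A\in D(A)$ corresponds to $k\in\dco(C)$, while the trivial module $k\in D(A)$ corresponds to $C\in\dco(C)$.

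The key point is that the Koszul duality functor $F\colon D(A)\to\dco(C)$ is a triangle equivalence, and any equivalence of categories preserves all limits (in particular products) as well as triangles and direct summands; hence it sends colocalising subcategories to colocalising subcategories and satisfies $F(\mathrm{Coloc}_{D(A)}(\mathcal X))=\mathrm{Coloc}_{\dco(C)}(F\mathcal X)$ for every set $\mathcal X$. Taking $\mathcal X=\{k\}$ and using $k\leftrightarrow C$, the subcategory $\mathrm{Coloc}_{D(A)}(k)$ is identified with $\mathrm{Coloc}_{\dco(C)}(C)$. Since $A\leftrightarrow k$, the membership $A\in\mathrm{Coloc}_{D(A)}(k)$ --- that is, complete locality of $A$ --- is thus equivalent to $k\in\mathrm{Coloc}_{\dco(C)}(C)$.

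It then remains to see that $k\in\mathrm{Coloc}_{\dco(C)}(C)$ is equivalent to $\fd(C)\subseteq\mathrm{Coloc}_{\dco(C)}(C)$. For this I would invoke the standing identification $\fd(C)\simeq\thick_{\dco(C)}(k)$ (valid since $C$ is conilpotent) together with the fact that a colocalising subcategory is in particular thick. If $k$ lies in the thick subcategory $\mathrm{Coloc}_{\dco(C)}(C)$, then so does its thick closure $\thick_{\dco(C)}(k)=\fd(C)$; conversely $k$ is itself an object of $\fd(C)$, so the containment forces $k\in\mathrm{Coloc}_{\dco(C)}(C)$. Chaining the three equivalences yields the lemma.

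I do not expect a serious obstacle here: the whole argument is a translation along the correspondences $A\leftrightarrow k$ and $k\leftrightarrow C$ combined with the thickness of colocalising subcategories. The one step warranting a moment's care is the assertion that Koszul duality respects the formation of $\mathrm{Coloc}$, which rests precisely on the elementary observation that an equivalence of triangulated categories preserves products and hence carries $\mathrm{Coloc}(\mathcal X)$ to $\mathrm{Coloc}(F\mathcal X)$.
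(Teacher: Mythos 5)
Your proof is correct and is essentially the argument the paper gives: transport across the Koszul duality dictionary ($A\leftrightarrow k$, $k\leftrightarrow C$) and use that colocalising subcategories are thick, so that a single membership is equivalent to a containment of the relevant thick subcategory ($\per(A)$ on one side, $\fd(C)=\thick_{\dco(C)}(k)$ on the other). The only cosmetic difference is that the paper performs the thickening step on the $D(A)$ side before translating, whereas you translate first and thicken in $\dco(C)$.
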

\begin{proof}
    Since colocalising subcategories are thick, $A$ is complete local if and only if $\per(A)\subseteq \mathrm{Coloc}_{D(A)}(k)$. Across Koszul duality this corresponds to the desired statement.
\end{proof}

\begin{defn}
    Let $C$ be a coalgebra. Say that $C$ is \textbf{proper} if $C\in \fd(C)$. Say that $C$ is \textbf{strongly proper} if $C\in \fd(C^e)$.
\end{defn}
A finite dimensional $C$-bicomodule is certainly a finite dimensional $C$-module, so a strongly proper coalgebra is proper. If $C$ is a proper coalgebra then certainly $C\in \pvd(k)$.

\begin{rmk}
    In \cite{HRCY}, different terminology is used: their `proper' corresponds to our `strongly proper'.
\end{rmk}

\begin{prop}\label{kdpropc}
    Let $(C,A)$ be a Koszul duality pair.
    \begin{enumerate}
        \item $C$ is proper if and only if $A$ is regular.
        \item $C$ is strongly proper if and only if $A$ is smooth.
        \item If $A$ is complete local, the following are equivalent:
      \begin{enumerate}
        \item $C$ is proper.
        \item $C$ is quasi-isomorphic to a finite dimensional $C$-comodule.
    \end{enumerate}
    \end{enumerate}
   
\end{prop}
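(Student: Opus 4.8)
The plan is to read off parts (1) and (2) directly from the Koszul duality dictionary of Theorem \ref{posKDthm} and its bimodule refinement, reserving the real work for part (3). For (1), recall that $A$ is regular exactly when $k\in\per(A)=\thick_{D(A)}(A)$. Under the triangle equivalence $D(A)\simeq\dco(C)$ the diagonal module $A$ corresponds to $k$ (Theorem \ref{posKDthm}(4)) and the trivial module $k$ corresponds to $C$ (Theorem \ref{posKDthm}(5)). Since an equivalence of triangulated categories carries thick subcategories to thick subcategories, $\per(A)$ is identified with $\thick_{\dco(C)}(k)=\fd(C)$, so $k\in\per(A)$ if and only if $C\in\fd(C)$, i.e. $C$ is proper. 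For (2) I would run the same argument one level up: bimodule Koszul duality makes $(A^e,C^e)$ a Koszul duality pair, giving a triangle equivalence $D(A^e)\simeq\dco(C^e)$ under which $A^e$ corresponds to $k$ (Theorem \ref{posKDthm}(4) for this pair) while the diagonal bimodule $A$ corresponds to the diagonal bicomodule $C$ (Proposition \ref{diagonal}). Hence $\per(A^e)=\thick_{D(A^e)}(A^e)$ is identified with $\thick_{\dco(C^e)}(k)=\fd(C^e)$, and $A$ smooth (i.e. $A\in\per(A^e)$) is equivalent to $C\in\fd(C^e)$, which is precisely $C$ strongly proper.

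Part (3) is the delicate step, and is where I expect the only real obstacle to lie. The point is that $C$ proper means $C\in\fd(C)$, which by the characterisation of compact objects means $C$ is \emph{weakly equivalent} in $\dco(C)$ to a finite dimensional comodule; condition (b) only asks for a \emph{quasi-isomorphism} to a finite dimensional comodule. Since every weak equivalence is in particular a quasi-isomorphism, the implication (a)$\Rightarrow$(b) holds with no hypothesis on $A$. The content is the converse, and this is exactly where completeness enters: one must upgrade a quasi-isomorphism to a weak equivalence in the coderived category, which is only possible on a suitable subcategory. The mechanism for this upgrade is Proposition \ref{qisoreflec}, which says that the forgetful functor $\mathrm{Coloc}_{\dco(C)}(C)\to D(k)$ reflects isomorphisms.

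For (b)$\Rightarrow$(a), I would argue as follows. Suppose $C$ is quasi-isomorphic to a finite dimensional comodule $N$. Being finite dimensional, $N$ lies in $\fd(C)$; since $A$ is complete local, the lemma characterising complete locality via $\fd(C)\subseteq\mathrm{Coloc}_{\dco(C)}(C)$ places $N$ in $\mathrm{Coloc}_{\dco(C)}(C)$, and $C$ itself lies there as the cogenerator. The given quasi-isomorphism is then a morphism between two objects of $\mathrm{Coloc}_{\dco(C)}(C)$ that is an isomorphism in $D(k)$, so by Proposition \ref{qisoreflec} it is already an isomorphism in $\dco(C)$; thus $C$ is weakly equivalent to the finite dimensional comodule $N$, i.e. $C\in\fd(C)$ is proper. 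The one point requiring care is to present the hypothesised quasi-isomorphism as an honest morphism in $\dco(C)$ between $C$ and $N$ so that the reflection property applies; completing such a morphism to a triangle and noting that its cone is acyclic and lies in the triangulated subcategory $\mathrm{Coloc}_{\dco(C)}(C)$ (whence coacyclic, by the proof of Proposition \ref{qisoreflec}) makes this precise.
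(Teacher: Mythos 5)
Your proposal is correct and follows essentially the same route as the paper: parts (1) and (2) are the same translation of $\fd(C)=\thick_{\dco(C)}(k)\leftrightarrow\per(A)$ (and its bimodule analogue via \ref{diagonal}), and part (3) uses the same mechanism of placing the finite dimensional comodule in $\mathrm{Coloc}_{\dco(C)}(C)$ via complete locality and invoking \ref{qisoreflec} to upgrade the quasi-isomorphism to a weak equivalence. Your extra care about realising the quasi-isomorphism as a morphism whose cone lies in $\mathrm{Coloc}_{\dco(C)}(C)$ just makes explicit what the paper's terser proof leaves implicit.
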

\begin{proof}
    For the first claim, we see that the condition $C\in \fd(C)$ translates across Koszul duality to $k \in \per(A)$, which is the definition of regularity. For the second claim, the condition $C\in \fd(C^e)$ translates across bimodule Koszul duality to $A\in \per(A^e)$. For the third claim, clearly (a) implies (b). If (b) holds, choose a finite dimensional comodule $M$ with a quasi-isomorphism $C\simeq M$. Since $A$ was complete local, $M$ is an object of $\mathrm{Coloc}_{\dco(C)}(C)$, and hence the quasi-isomorphism $C\simeq M$ was actually a weak equivalence, so (a) holds.
\end{proof}
\begin{rmk}
    If $C$ is a coconnective coalgebra such that $C\in \pvd(k)$, then $C$ is quasi-isomorphic to a finite dimensional $C$-comodule: indeed the algebra $C^*$ is connective and proper, and hence quasi-isomorphic to a finite dimensional algebra (by \ref{pcthm}), and in particular a module over itself.
\end{rmk}

\subsection{Smoothness for coalgebras}

\begin{defn}
    Let $C$ be a coalgebra. Say that $C$ is \textbf{smooth} if $C\in \thick_{\dco(C^e)}(C^e)$. Say that $C$ is \textbf{regular} if $k\in \thick_{\dco(C)}(C)$.
\end{defn}

\begin{prop}\label{cogsmooth}
    Let $(C,A)$ be a Koszul duality pair. Then $C$ is smooth if and only if $A$ is strongly proper local. $C$ is regular if and only if $A$ is proper local. If $C$ is smooth then it is regular. 
\end{prop}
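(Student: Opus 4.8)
The plan is to transport each defining condition across Koszul duality, read off the two stated equivalences from the dictionary already established, and then deduce the final implication formally from them. First I would handle regularity. Under the one-sided Koszul duality equivalence $D(A)\simeq\dco(C)$ of \ref{posKDthm}, the object $A\in D(A)$ corresponds to $k\in\dco(C)$ and the object $k\in D(A)$ corresponds to $C\in\dco(C)$, by parts (4) and (5). Since any triangle equivalence carries thick subcategories to thick subcategories, it restricts to an equivalence between $\thick_{\dco(C)}(C)$ and $\thick_{D(A)}(k)$, under which $k\in\dco(C)$ is sent to $A\in D(A)$. Hence the condition $k\in\thick_{\dco(C)}(C)$ defining regularity of $C$ is equivalent to $A\in\thick_{D(A)}(k)$, which is exactly the definition of $A$ being proper local.

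Next I would treat smoothness using bimodule Koszul duality. By the corollary to bimodule Koszul duality, $(A^e,C^e)$ is again a Koszul duality pair, and the relevant equivalence $D(A^e)\simeq\dco(C^e)$ is the bimodule one. Applying parts (4) and (5) of \ref{posKDthm} to this pair shows that $A^e\in D(A^e)$ corresponds to $k\in\dco(C^e)$ and that $k\in D(A^e)$ corresponds to $C^e\in\dco(C^e)$; meanwhile \ref{diagonal} identifies the diagonal bimodule $A$ with the diagonal bicomodule $C$. Since all of these are computed in the single bimodule equivalence, it restricts to an equivalence between $\thick_{\dco(C^e)}(C^e)$ and $\thick_{D(A^e)}(k)$, sending $C\in\dco(C^e)$ to $A\in D(A^e)$. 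Therefore $C\in\thick_{\dco(C^e)}(C^e)$, which is smoothness of $C$, is equivalent to $A\in\thick_{D(A^e)}(k)$, which is $A$ being strongly proper local.

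Finally, for the implication that a smooth coalgebra is regular, I would simply combine the two equivalences just proved with the elementary algebra-side implication recorded earlier, that strongly proper local implies proper local; the latter holds because the exact forgetful functor $D(A^e)\to D(A)$ fixes both the diagonal bimodule $A$ and the bimodule $k$, and so carries $\thick_{D(A^e)}(k)$ into $\thick_{D(A)}(k)$. The only point requiring care is the bookkeeping: one must confirm that the correspondence $A\leftrightarrow C$ of \ref{diagonal} and the correspondences $A^e\leftrightarrow k$ and $k\leftrightarrow C^e$ coming from the one-sided statement for the pair $(A^e,C^e)$ are all instances of one and the same bimodule Koszul duality equivalence, so that they may be used together. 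No genuine analytic difficulty arises; once the dictionary is aligned the argument is purely formal.
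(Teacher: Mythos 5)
Your argument is correct and is essentially the paper's: the paper dispatches the two equivalences as ``a simple translation across Koszul duality,'' which is exactly the bookkeeping you carry out explicitly (one-sided duality matching $A\leftrightarrow k$ and $k\leftrightarrow C$ for regularity, and bimodule duality together with \ref{diagonal} matching $k\leftrightarrow C^e$ and $A\leftrightarrow C$ for smoothness). The only divergence is the last implication, where you transport to the algebra side and invoke ``strongly proper local $\Rightarrow$ proper local'' via the forgetful functor $D(A^e)\to D(A)$, whereas the paper argues directly on the coalgebra side; both routes are one-liners, and yours has the minor advantage of not needing any further comodule-level verification.
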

\begin{proof}
    The first two claims are a simple translation across Koszul duality. The third claim is clear: if $C$ is weakly equivalent as a bicomodule to a finite dimensional bicomodule, then $C$ is certainly weakly equivalent as a right comodule to a finite dimensional right comodule.
\end{proof}

\begin{thm}\label{smpropdualcon}
    Let $(C,A)$ be a Koszul duality pair, with $A$ a connective local algebra. Then $A$ is proper if and only if $C$ is smooth.
\end{thm}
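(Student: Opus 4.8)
The plan is to deduce this purely formally by chaining together the two earlier results \ref{cSPisP} and \ref{cogsmooth}, exploiting the standing hypotheses that $A$ is connective and (homologically) local. The key point is that, under these hypotheses, the abstract condition `strongly proper local' interpolates exactly between `proper' on the algebra side and `smooth' on the coalgebra side, so no new argument is really needed.

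First I would recall that, by definition of the standing hypotheses, $A$ is connective and homologically local. I would then invoke \ref{cogsmooth}, which identifies smoothness of $C$ with strong proper locality of its Koszul dual: $C$ is smooth if and only if $A$ is strongly proper local. This reduces the claim to showing that, for our connective local $A$, properness is equivalent to strong proper locality. That equivalence is precisely the content of \ref{cSPisP} once the local hypothesis is taken into account: that proposition gives, for any connective algebra, the equivalence of `strongly proper local' with `proper and homologically local'. Since $A$ is assumed homologically local, the condition `proper and homologically local' collapses to simply `proper'.

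Putting these together yields the full biconditional in both directions at once. If $A$ is proper, then (being local) it is proper and homologically local, hence strongly proper local by \ref{cSPisP}, hence $C$ is smooth by \ref{cogsmooth}. Conversely, if $C$ is smooth, then \ref{cogsmooth} makes $A$ strongly proper local, so \ref{cSPisP} makes $A$ proper and homologically local, and in particular proper. The only place the connectivity hypothesis is used is in the appeal to \ref{cSPisP}, and the only place locality is used is in discarding the redundant `homologically local' clause.

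I do not expect a genuine obstacle here, since the substantive work has already been done in proving \ref{cSPisP} (which itself rests on \cite[5.7]{rsproper} and \ref{connloc}) and \ref{cogsmooth}. The one point requiring care is bookkeeping of hypotheses: one must check that the connectivity assumption is exactly what licenses the equivalence (2)$\iff$(3) of \ref{cSPisP}, and that `local' in the theorem statement means `homologically local' in the sense that lets us absorb that condition into the hypothesis rather than treating it as an extra requirement. Beyond that the argument is a one-line translation across Koszul duality.
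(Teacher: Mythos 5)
Your proposal is correct and is essentially the paper's own argument: the paper proves the forward direction by noting that a proper connective local algebra is strongly proper local by \ref{cSPisP} and then invoking \ref{cogsmooth}, and the converse by observing that smoothness of $C$ forces $A$ to be strongly proper local, hence proper. Your bookkeeping of where connectivity and locality enter matches the paper's use of these hypotheses exactly.
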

\begin{proof}
If $C$ is smooth then $A$ is certainly proper. Conversely, if $A$ is proper then it is strongly proper local by \ref{cSPisP}, and hence $C$ is smooth by \ref{cogsmooth}.
\end{proof}

\begin{lem}
    Let $C$ be a coalgebra. Then:
    \begin{enumerate}
        \item If $C$ is strongly proper then $\hqfco(C)\subseteq \fd(C)$.
        \item If $\hqfco(C)\subseteq \fd(C)$ then $C$ is proper.
        \item If $\Omega C$ is local then $\hqfco(C)\subseteq \fd(C)$ if and only if $C$ is proper.
        \item If $C$ is smooth and $\Omega C$ is local and connective then $C$ is strongly proper if and only if $\hqfco(C)\subseteq \fd(C)$.
    \end{enumerate}

\end{lem}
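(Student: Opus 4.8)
The plan is to set $A\coloneqq\Omega C$, so that $(A,C)$ is a Koszul duality pair, and then to translate each of the four assertions across Koszul duality into a statement about $A$ that has already been established. The dictionary I will use is threefold: by \ref{kdpropc}, $C$ is proper if and only if $A$ is regular, and $C$ is strongly proper if and only if $A$ is smooth; and by the remark following \ref{fdmodel}, combined with the identification $\fd(C)\simeq\thick_{\dco(C)}(k)$, the inclusion $\hqfco(C)\subseteq\fd(C)$ holds precisely when $A$ is hfd-closed. With this dictionary in place, parts (1)--(3) reduce to one-line invocations of \ref{fdmodel} and \ref{smreglem}.

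Concretely, for (1) I would note that $C$ strongly proper means $A$ is smooth, and \ref{fdmodel}(1) says a smooth algebra is hfd-closed; this is exactly $\hqfco(C)\subseteq\fd(C)$. For (2), the inclusion $\hqfco(C)\subseteq\fd(C)$ says $A$ is hfd-closed, so $A$ is regular by \ref{smreglem}(2), which translates to $C$ proper by \ref{kdpropc}(1). For (3) I would add the hypothesis that $A=\Omega C$ is local and simply combine the two implications of \ref{smreglem}: part (2) gives hfd-closed $\implies$ regular, while part (1) gives regular local $\implies$ hfd-closed, so under locality $A$ is hfd-closed if and only if $A$ is regular, which is the claimed equivalence once translated back.

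The one part that needs more than a direct translation, and which I expect to be the main obstacle, is (4). Here I would argue as follows: since $C$ is smooth, \ref{cogsmooth} shows that $A$ is strongly proper local; because $A$ is in addition connective, \ref{cSPisP} upgrades this to $A$ being proper, and then the structure theorem \ref{pcthm} identifies $A$, up to quasi-isomorphism, with a finite dimensional algebra. For such an algebra \ref{fdmodel}(2) yields the equivalence \emph{smooth if and only if hfd-closed}, which translates (via \ref{kdpropc}(2) and the hfd-closed dictionary above) to \emph{$C$ strongly proper if and only if $\hqfco(C)\subseteq\fd(C)$}. The only care required is in chaining the three hypotheses --- $C$ smooth, $A$ connective, $A$ local --- correctly through \ref{cogsmooth}, \ref{cSPisP}, and \ref{pcthm} so as to secure the finite dimensionality that \ref{fdmodel}(2) demands; everything else is a routine application of the Koszul duality dictionary.
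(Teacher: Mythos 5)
Your proposal is correct and follows essentially the same route as the paper: parts (1)--(3) are the Koszul-dual translations of \ref{fdmodel}(1) and \ref{smreglem}(1)--(2), and part (4) is proved by using smoothness of $C$ plus connectivity to exhibit a finite dimensional model for $\Omega C$ and then invoking \ref{fdmodel}(2). The only difference is that you spell out the dictionary explicitly where the paper says ``dual of''.
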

\begin{proof}
    Claim (1) is the dual of \ref{fdmodel}(1), and claim (2) is the dual of \ref{smreglem}(2). Claim (3) is the dual of \ref{smreglem}(1). For claim (4), suppose that $\hqfco(C)\subseteq \fd(C)$. Putting $A\coloneqq \Omega C$ we see that $A$ is connective and proper and hence quasi-isomorphic to a finite dimensional algebra. By \ref{fdmodel}(2), $A$ is smooth, and the claim follows by \ref{smpropdualcon}.
\end{proof}

\subsection{Derived completions}
If $A$ is an augmented algebra, we write $A^!\coloneqq (BA)^*$, which is a pseudocompact algebra. Forgetting the pseudocompact topology, we may iterate this construction and consider the pseudocompact algebra $A^{!!}$, which we call the \textbf{derived completion} of $A$, following \cite{efimovcmp}. There is a natural map $A \to A^{!!}$ which in general is not a quasi-isomorphism; for example for $A=k[t]$ it is the $(t)$-completion $k\llbracket t \rrbracket$. However, we may ask when $A^{!!}$ inherits properties of $A$; in this section we show that it inherits smoothness (as a pseudocompact algebra) and properness; we also give converse theorems.

\begin{prop}\label{kddsmooth}
Let $A$ be an augmented algebra.
\begin{enumerate}
    \item If $A$ is smooth then so is $BA^!$.
    \item If $BA^!$ is smooth and $A^e$ is complete local then $A$ is smooth.
\end{enumerate}
\end{prop}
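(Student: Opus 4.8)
The plan is to transport everything across Koszul duality and apply the smooth/proper dictionary twice. Write $C\coloneqq BA$, so that $A^!=C^*$ and $(A^!,BA^!)$ is again a Koszul duality pair once we forget the pseudocompact topology on $A^!$ (it is augmented since $C$ is coaugmented). Two reductions set everything up. Applying \ref{cogsmooth} to the pair $(BA^!,A^!)$ shows that $BA^!$ is smooth if and only if $A^!$ is strongly proper local; and applying \ref{kdpropc}(2) to the pair $(C,A)$ shows that $A$ is smooth if and only if $C$ is strongly proper, i.e.\ $C\in\fd(C^e)=\thick_{\dco(C^e)}(k)$. Thus the whole proposition reduces to comparing the statements ``$A^!$ is strongly proper local'' and ``$C$ is strongly proper''.

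For (1), I would argue as follows. If $A$ is smooth then $C\in\fd(C^e)$. Linear duality identifies $\dco(C^e)$ with the opposite of the pseudocompact derived category $\dpc((A^!)^e)$, where $(A^!)^e=(C^e)^*$, carrying the diagonal bicomodule $C$ to the regular module $A^!$, the trivial bicomodule $k$ to $k$, and preserving thick closures; hence $A^!\in\thick(k)$ computed in the \emph{pseudocompact} derived category. Forgetting the topology is a triangulated functor $\dpc((A^!)^e)\to D((A^!)^e)$ which fixes both $k$ and the regular bimodule $A^!$ and is well defined on homotopy categories because pseudocompact weak equivalences become quasi-isomorphisms (coacyclic complexes are acyclic). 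It therefore carries the pseudocompact thick closure into $\thick_{D((A^!)^e)}(k)$, so $A^!$ is strongly proper local and $BA^!$ is smooth by \ref{cogsmooth}.

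For (2), suppose $BA^!$ is smooth, so $A^!$ is strongly proper local and in particular proper; then $C=BA$ also has finite-dimensional total cohomology, and so there is a quasi-isomorphism $C\simeq F$ with $F$ a finite-dimensional $C$-bicomodule. This quasi-isomorphism need not be a weak equivalence in $\dco(C^e)$, and promoting it is exactly where the hypothesis that $A^e$ is complete local enters: by the characterisation of complete local algebras applied to the pair $(A^e,C^e)$ one has $\fd(C^e)\subseteq\mathrm{Coloc}_{\dco(C^e)}(C^e)$, and by \ref{qisoreflec} the functor $\mathrm{Coloc}_{\dco(C^e)}(C^e)\to D(k)$ reflects isomorphisms. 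Since both the diagonal $C$ (via its cobar coresolution by cofree bicomodules) and the finite-dimensional $F$ (by the complete-local hypothesis) lie in $\mathrm{Coloc}_{\dco(C^e)}(C^e)$, the acyclic cone of $C\to F$ lies there too and hence vanishes. Thus $C\simeq F$ is a weak equivalence, $C\in\fd(C^e)$ is strongly proper, and $A$ is smooth by \ref{kdpropc}(2).

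The main obstacle, and the reason (2) needs the complete local hypothesis while (1) does not, is the discrepancy between the genuine derived category $D(A^!)$ — in which $BA^!$ and its smoothness are defined — and the pseudocompact one: smoothness of $BA^!$ only detects strong-proper-locality of $A^!$ up to quasi-isomorphism, whereas strong properness of $C$ is a statement about coderived weak equivalences. In (1) the forgetful functor runs in the favourable direction and the passage is automatic; in (2) one must run it backwards, and completeness of $A^e$ — through the isomorphism-reflection of \ref{qisoreflec} — is precisely what guarantees that the derived completion loses no information.
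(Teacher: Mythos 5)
Your reduction of both parts to comparing ``$A^!$ is strongly proper local'' with ``$C$ is strongly proper'' is exactly the paper's framing, and your part (1) is essentially the paper's argument: since $A$ smooth gives $C\in\fd(C^e)\simeq\thick_{\dco(C^e)}(k)$ with everything witnessed by finite-dimensional bicomodules, the naive linear dual lands in $\thick_{D((A^!)^e)}(k)$; your detour through $\dpc((A^!)^e)$ and the forgetful functor is a harmless repackaging of this. Part (2), however, has a genuine gap. You produce the finite-dimensional bicomodule $F$ with $C\simeq F$ from the statement that $A^!$ is \emph{proper}, i.e.\ that $H^*(C)$ is finite-dimensional. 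That implication is false in general: having finite-dimensional total cohomology does not make a (bi)comodule quasi-isomorphic to a finite-dimensional one --- this is precisely the phenomenon behind Efimov's proper algebra with no finite-dimensional model, and it is why the paper's \ref{kdpropc}(3) and the remark following it impose extra hypotheses (complete locality, or coconnectivity via \ref{pcthm}) before drawing exactly this kind of conclusion. Worse, your argument then never uses the actual hypothesis $A^!\in\thick_{D((A^!)^e)}(k)$ beyond its consequence ``proper'', so if your proof were correct the proposition would hold with ``$BA^!$ smooth'' weakened to ``$A^!$ proper'', which is not credible. The finite-dimensional witness has to be \emph{extracted from} the relation $A^!\in\thick_{D((A^!)^e)}(k)$ by dualising that finite cone construction, not conjured from finiteness of cohomology.

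The second problem is the claim that the diagonal bicomodule $C$ lies in $\mathrm{Coloc}_{\dco(C^e)}(C^e)$ ``via its cobar coresolution by cofree bicomodules''. The cofree bicomodules occurring there are of the form $V\otimes C^e$ with $V$ infinite-dimensional (e.g.\ $V=\bar C^{\otimes n}$, and $C=BA$ is essentially never finite-dimensional), i.e.\ infinite \emph{coproducts} of copies of $C^e$; a colocalising subcategory is closed under products, shifts, cones and summands, but not under infinite coproducts, so these terms need not lie in $\mathrm{Coloc}_{\dco(C^e)}(C^e)$. Koszul-dually your claim is that the diagonal bimodule $A$ lies in $\mathrm{Coloc}_{D(A^e)}(k)$, which does not follow from $A^e\in\mathrm{Coloc}_{D(A^e)}(k)$ (that would essentially require the diagonal to be built colimit-freely from the free bimodule). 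The paper sidesteps both issues: it never attempts to place $C$ itself in the colocalising subcategory. Complete locality of $A^e$ is used only to put $k$ (hence all of $\thick_{\dco(C^e)}(k)$) inside $\mathrm{Coloc}_{\dco(C^e)}(C^e)$, where \ref{qisoreflec} makes quasi-isomorphisms into weak equivalences; this upgrades the naive linear dual to a contravariant equivalence $\thick_{D((C^e)^*)}(k)\simeq\thick_{\dco(C^e)}(k)$, through which the relation $C^*\in\thick_{D((C^e)^*)}(k)$ is transported backwards to give $C\in\thick_{\dco(C^e)}(k)$ directly. You should rework (2) along those lines rather than via a quasi-isomorphism-upgrading argument applied to $C$ itself.
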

\begin{proof}
Put $C\coloneqq BA$, so that $A^!=C^*$. If $A$ is smooth then $C$ is strongly proper and so $C \in \fd(C^e) \simeq \thick_{\dco(C^e)}(k)$. Linearly dualising this, and using that $(C^e)^* \simeq (C^e)^*$ since $C \in \pvd(k)$, we see that $C^*$ is strongly proper. Hence $BC^*=BA^{!}$ is smooth, proving (1). On the other hand, if $BA^!$ is smooth then $C^* \in \thick_{D(C^{*e})}(k) \simeq \thick_{D(C^{e*})}(k)$. Bimodule Koszul duality yields an equivalence $\mathrm{Coloc}_{\dco(C^e)}(C^e)\simeq \mathrm{Coloc}_{D(A^e)}(k)$, and hence if $A^e$ is complete local then $k \in \mathrm{Coloc}_{\dco(C^e)}(C^e)$. In particular, the linear dual gives a contravariant equivalence $\thick_{D(C^{e*})}(k) \simeq \thick_{\dco(C^e)}(k)$. It now follows that $C \in \thick_{\dco(C^e)}(k)$ and hence $A$ is smooth.
\end{proof}

\begin{rmk}
    One only expects $A^e$ to be complete local when $A$ is proper local; for example the enveloping algebra of $k\llbracket t\rrbracket$ is not complete local. Instead, the correct object to work with is the \textit{completed} enveloping algebra, which is the linear dual of the coalgebra $k\llbracket t\rrbracket^{*e}$.
\end{rmk}

\begin{prop}\label{kddproper}
    Let $A$ be a local algebra.
    \begin{enumerate}
        \item If $A$ is proper then so is $A^{!!}$.
        \item If $A^{!!}$ is local and quasi-isomorphic to a finite dimensional algebra then $A$ is proper.
    \end{enumerate}
\end{prop}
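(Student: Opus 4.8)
Throughout I write $C \coloneqq BA$, so that $A^! = C^*$ and $A^{!!} = (B(A^!))^*$; I will treat $A^!$ both as the pseudocompact algebra $C^*$ and, after forgetting its topology, as an ordinary augmented algebra whose bar construction $D \coloneqq B(A^!)$ makes sense, with $A^{!!} = D^*$. The backbone of both parts is the chain relating $A$, $C$, $A^!$ and the second-level coalgebra $D$, run through the dictionaries of \ref{kdpropc} and \ref{cogsmooth}.

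For (1) the plan is to run the following chain. Since $A$ is proper and (homologically) local it is proper local, so by \ref{cogsmooth} the coalgebra $C$ is regular, i.e. $k \in \thick_{\dco(C)}(C)$. The linear-dual equivalence $\dco(C) \simeq \dpc(C^*)^\circ$ turns this into $k \in \thick_{\dpc(C^*)}(C^*)$, and the comparison functor $\dpc(C^*) \to D(C^*)$ forgetting the pseudocompact topology (it carries pseudocompact weak equivalences, which are in particular quasi-isomorphisms, to quasi-isomorphisms, and sends $C^* \mapsto C^*$, $k \mapsto k$) yields $k \in \thick_{D(C^*)}(C^*) = \per(C^*)$; that is, $A^! = C^*$ is regular as an ordinary algebra. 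Now \ref{kdpropc}(1), applied to the Koszul duality pair $(D, A^!) = (B(A^!), A^!)$, shows that $D$ is proper, whence $D \in \pvd(k)$; since $A^{!!} = D^*$, its total cohomology is the linear dual of that of $D$ and hence finite-dimensional, so $A^{!!}$ is proper.

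For (2) the plan is to reverse this chain. Dualising the hypothesis, $A^{!!} = D^*$ being quasi-isomorphic to a finite-dimensional algebra is equivalent to $D$ being quasi-isomorphic to a finite-dimensional coalgebra. The iff-steps run backwards freely: \ref{kdpropc}(1) gives $D$ proper $\Leftrightarrow A^!$ regular (ordinary), and \ref{cogsmooth} gives $C$ regular $\Leftrightarrow A$ proper local, which forces $A \in \thick_{D(A)}(k) \subseteq \pvd(A)$, i.e. $A$ proper. So it suffices to promote ``$D$ quasi-isomorphic to a finite-dimensional coalgebra'' to ``$D$ proper'', and then to promote ``$A^!$ regular as an ordinary algebra'' to ``$C$ regular'' (equivalently, ``$A^!$ regular as a pseudocompact algebra'').

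The main obstacle is precisely these two promotions, each asking to upgrade a bare quasi-isomorphism to a genuine coderived (equivalently pseudocompact) weak equivalence; this is where the locality of $A^{!!}$ must enter, and where the real work lies. The template is the proof of \ref{kddsmooth}(2): there a completeness hypothesis is fed through Koszul duality to place $k$ in the colocalising subcategory generated by the diagonal, after which \ref{qisoreflec} (the forgetful functor reflects isomorphisms on $\mathrm{Coloc}_{\dco(C)}(C)$) makes the linear dual a contravariant equivalence of the relevant thick subcategories, identifying the ordinary and coderived pictures. I would mirror this one-sidedly: use that $A^{!!}$ is local to produce the completeness needed to apply \ref{kdpropc}(3) (upgrading $D \simeq$ finite-dimensional to $D$ proper) and, via the characterisation of complete locality by $\fd(C) \subseteq \mathrm{Coloc}_{\dco(C)}(C)$ together with \ref{qisoreflec}, to identify $k \in \thick_{D(C^*)}(C^*)$ with $k \in \thick_{\dco(C)}(C)$. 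Pinning down exactly which completeness statement ``$A^{!!}$ local'' yields, and verifying it suffices for both promotions, is the crux; everything else is formal bookkeeping with the established equivalences.
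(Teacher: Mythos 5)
Your part (1) is essentially correct and close to the paper's argument: the chain $A$ proper local $\Rightarrow$ $C$ regular $\Rightarrow$ $k\in\per(C^*)$ $\Rightarrow$ $BA^!$ proper $\Rightarrow$ $A^{!!}$ proper is exactly the paper's, and for this direction your one-way comparison functor $\dpc(C^*)\to D(C^*)$ suffices, since a triangle functor sending $C^*$ to $C^*$ carries $\thick_{\dpc(C^*)}(C^*)$ into $\per(C^*)$.

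For part (2), however, you have correctly located the difficulty but not resolved it, and the mechanism you propose is not the one that works. The paper's key tool is Lemma \ref{pvdcog}: linear duality gives a contravariant quasi-equivalence $\thick_{\dco(C)}(C)\simeq\per(C^*)^\circ$ matching $C$ with $C^*$ and $k$ with $k$. This is proved by a dg-Morita argument: since $C$ is an injective comodule, $\R\enn_{\dco(C)}(C)\simeq\hom_C(C,C)\simeq C^*$, which coincides with $\R\enn_{D(C^*)}(C^*)$, so the two thick subcategories are abstractly quasi-equivalent and the linear dual, being a triangle functor agreeing with this equivalence on the generator, realises it. This makes every step in the chain an unconditional iff --- in particular your promotion of ``$A^!$ regular as an ordinary algebra'' to ``$C$ regular'' needs no completeness hypothesis at all. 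Your proposed substitute (extracting a completeness statement from ``$A^{!!}$ local'' and feeding it through \ref{qisoreflec} and \ref{kdpropc}(3)) does not address the actual obstruction: the data witnessing $k\in\per(C^*)$ lives in $D(C^*)$ among arbitrary, non-pseudocompact modules, and \ref{qisoreflec} (which upgrades quasi-isomorphisms to coderived equivalences inside $\mathrm{Coloc}_{\dco(C)}(C)$) gives you no way to transport that data back to $\dpc(C^*)$; likewise \ref{kdpropc}(3) would require knowing $A^!$ is complete local, which is not implied by locality of $A^{!!}$. In the paper, the locality of $A^{!!}$ is used for something much more modest: together with the finite-dimensional model it gives $A^{!!}\in\pvd(A^{!!})=\thick_{D(A^{!!})}(k)$, after which linear duality and the (now bidirectional) chain finish the argument. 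Without \ref{pvdcog} or an equivalent statement, your part (2) has a genuine gap.
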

Before we begin the proof, we isolate a useful lemma.
\begin{lem}\label{pvdcog}
If $C$ is a coalgebra, then linear duality gives a quasi-equivalence $$\thick_{\dco(C)}(C)\simeq \per(C^*)^\circ.$$
\end{lem}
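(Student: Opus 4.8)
The plan is to realise the claimed quasi-equivalence by the honest linear dual functor $(-)^*=\hom_k(-,k)$, which sends a right $C$-comodule $M$ to the right $C^*$-module $M^*$ and is contravariant, exact, and dg. The key initial observation is that, although the comodule-to-comodule dual of \ref{GalgFcog} requires a completion, the comodule-to-\emph{module} dual lands in the ordinary derived category $D(C^*)$ with no such correction, and there it is automatically well behaved: since every coacyclic complex is acyclic and $(-)^*$ is exact and sends acyclic complexes of vector spaces to acyclic complexes, it carries weak equivalences in $\dco(C)$ (maps with coacyclic cone) to quasi-isomorphisms in $D(C^*)$. Hence $(-)^*$ descends to a contravariant functor $\dco(C)\to D(C^*)$, that is, a dg functor $\dco(C)\to D(C^*)^\circ$ sending the regular comodule $C$ to the regular module $C^*$. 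Note this does not even require the finer reflection statement \ref{qisoreflec}.

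Next I would restrict this functor to $\thick_{\dco(C)}(C)$. Because $(-)^*$ is exact and $C\mapsto C^*$, the image of $\thick_{\dco(C)}(C)$ lies in $\thick_{D(C^*)}(C^*)=\per(C^*)$. Conversely, the essential image of a quasi-fully-faithful exact dg functor out of an idempotent-complete pretriangulated category is again thick; as it contains $C^*$ it must contain $\per(C^*)$. Combining the two inclusions, once full faithfulness is in hand the essential image is exactly $\per(C^*)^\circ$.

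It therefore remains to check that $(-)^*$ is quasi-fully faithful on $\thick_{\dco(C)}(C)$. By the usual dévissage — the collection of pairs $(M,N)$ for which the induced map $\dco(C)(M,N)\to D(C^*)(N^*,M^*)$ is a quasi-isomorphism is closed under shifts, cones and retracts in each variable — it suffices to treat $M=N=C$. Here, since $C$ is an injective (cofree) comodule its derived endomorphisms are computed underived, giving $\R\enn_{\dco(C)}(C)\simeq\hom_C(C,C)\simeq C^*$, exactly the identification used in the corollary to \ref{posKDthm}; likewise $C^*$ is a free $C^*$-module, so $\R\enn_{C^*}(C^*)\simeq\hom_{C^*}(C^*,C^*)\simeq C^*$. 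The map induced by $(-)^*$ identifies these two copies of $C^*$, which is the quasi-isomorphism we need, and the quasi-equivalence $\thick_{\dco(C)}(C)\simeq\per(C^*)^\circ$ follows.

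I expect the only genuinely delicate point to be the bookkeeping of variance: linear duality is contravariant and turns the (convolution) multiplication on $\R\enn_{\dco(C)}(C)\cong C^*$ into the opposite multiplication, and it is precisely this reversal of composition, absorbed by the passage to $D(C^*)^\circ$, that forces the target to appear as $\per(C^*)^\circ$ rather than $\per(C^*)$. By contrast, the descent of $(-)^*$ to the homotopy categories is comparatively painless, following directly from the fact that coacyclic complexes are acyclic.
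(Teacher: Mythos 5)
Your proposal is correct and follows essentially the same route as the paper: both proofs hinge on the computation $\R\enn_{\dco(C)}(C)\simeq\hom_C(C,C)\simeq C^*$ (valid because $C$ is an injective comodule) and then upgrade this to a quasi-equivalence of the thick subcategories generated on each side, the paper by identifying the linear dual with the resulting abstract quasi-equivalence on the generator, and you by running the d\'evissage explicitly. Your extra remarks on why $(-)^*$ descends to the coderived category and on the variance bookkeeping are correct but add no new idea beyond what the paper's one-line argument already encodes.
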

\begin{proof}
We have $\R\enn_{\dco(C)}(C)\simeq C^*$ since $C$ is an injective $C$-comodule. Hence we obtain an abstract quasi-equivalence $\thick_{\dco(C)}(C)\simeq \per(C^*)^\circ$ of pretriangulated dg categories that sends $C$ to $C^*$. Since the linear dual is a triangle functor, and agrees with this quasi-equivalence on the generator $C$, it must be a quasi-equivalence.
\end{proof}

\begin{proof}[Proof of \ref{kddproper}]
Put $C=BA$. Observe that:
\begin{align*}
    A \text{ is proper }&\iff C \text{ is regular}\\
    &\iff k\in \thick_{\dco(C)}(C)\\
    &\iff k \in \per(C^*) \text{ (by \ref{pvdcog})}\\
    &\iff A^! \text{ is regular}\\
    &\iff BA^{!} \text{ is proper.}
\end{align*}
Now it is clear that (1) holds, since if $BA^!$ is proper then its linear dual $A^{!!}$ is certainly proper. For (2), if $A^{!!}$ has a finite dimensional model then since $A^{!!}$ is local we see that $A^{!!} \in \thick_{D(A^{!!})}(k)$. Linearly dualising, we see that $BA^!$ is proper and hence $A$ is proper.
\end{proof}

\begin{rmk}\label{moritaduality}
If $\mathcal{A}$ is a dg category, there is a natural `evaluation' map $\mathrm{ev}_\mathcal{A}:\mathcal{A} \to \pvd \pvd \mathcal{A}$. Following \cite{KSreflex, Greflex} say that $\mathcal{A}$ is \textbf{Morita reflexive} if $\mathrm{ev}_\mathcal{A}$ is a Morita equivalence. The terminology comes from the fact that $$\pvd \mathcal{A} \simeq \R\hom_{\mathrm{Hqe}}(\mathcal{A},\per k)\simeq \R\hom_{\mathrm{Hmo}}(\mathcal{A},k)$$ is the dual of $\mathcal{A}$ in the Morita homotopy category. If $A$ is a local algebra, then by \ref{pvdcog} there is a Morita equivalence $\pvd(A) \simeq (A^!)^\circ$. In particular, if $A^!$ is also local then the natural map $\mathrm{ev}_A$ agrees up to Morita equivalence with the derived completion map $c:A \to A^{!!}$, and so $A$ is Morita reflexive if and only if $c$ is a quasi-isomorphism. A large class of derived complete in this sense algebras is identified in \cite{ddefspt}. We also remark that there is a Koszul dual notion of Morita reflexivity for coalgebras, defined in terms of the natural map $\fd C\to\pvd\hqfco C$. The first author will further explore the connections between Morita reflexivity and Koszul duality in upcoming work joint with Isambard Goodbody and Sebastian Opper.
\end{rmk}

\section{Calabi--Yau conditions}
We give definitions of what it means for an algebra to be (twisted, nonsmooth) Calabi-Yau. We show that a twisted nonsmooth CY algebra admits a twisted CY structure on certain subcategories of its derived category.

\subsection{The dualising complex}
Let $A$ be an algebra and $M$ an $A$-bimodule. Write $\HH^\bullet(A,M)\coloneqq \R\hom_{A^e}(A,M)$ for the Hochschild cohomology complex and $\HH_\bullet(A,M)\coloneqq M \lot_{A^e}A$ for the Hochschild homology complex. When $M=A$ we will usually drop it from the notation; beware that $A\mapsto \HH^\bullet (A)$ is not a functor. However, the assignment $A\mapsto \HH^\bullet(A,A^*)$ is a contravariant functor from algebras to vector spaces; in the older literature this functor is also known as the Hochschild cohomology of $A$. Both $\HH^\bullet(A,M)$ and $\HH_\bullet(A,M)$ are vector spaces; if $M$ is not just an $A$-bimodule, but an $R$-$A^e$-bimodule for some algebra $R$, then they are $R$-modules. In particular when $A$ is commutative then $M$ is a (symmetric) $A^e$-bimodule, and hence both $\HH^\bullet(A,M)$ and $\HH_\bullet(A,M)$ are $A$-bimodules.

\begin{defn}
Let $A$ be an algebra. The \textbf{inverse dualising complex} of $A$ is the $A$-bimodule $A^\vee \coloneqq \R\hom_{A^e}(A,A^e)\simeq \HH^\bullet(A,A^e)$. 
\end{defn}


\p If $M$ is a perfect bimodule and $N$ is any bimodule, then there is a natural quasi-isomorphism $\R\hom_{A^e}(M,N)\coloneqq N\lot_{A^e}M^\vee$; to see this write $M$ as an iterated cone between copies of $A^e$. In particular if $A$ is homologically smooth, then we get natural quasi-isomorphisms of vector spaces $$\HH^\bullet(A,M)\simeq M \lot_{A^e}A^\vee$$
$$\HH_\bullet(A,M)\simeq M\lot_{A^e}A^{\vee\vee}\simeq\R\hom_{A^e}(A^\vee,M)$$ where in the second line we use that $P^{\vee\vee}\simeq P$ for a perfect $A$-bimodule $P$. When $A$ is commutative and $M$ is a symmetric bimodule then these are quasi-isomorphisms of $A$-bimodules.

\begin{rmk}
Suppose that $A$ is a homologically smooth commutative algebra. Substituting $M=A$ into the first quasi-isomorphism above gives a quasi-isomorphism $ \HH^\bullet(A)\simeq \HH_\bullet(A,A^\vee)$. Since $A$ is commutative, both sides are $A$-bimodules, and this quasi-isomorphism is $A$-bilinear. So dualising we obtain an $A$-bilinear quasi-isomorphism $ \HH^\bullet(A)^\vee\simeq \HH_\bullet(A,A^\vee)^\vee$. Using the hom-tensor adjunction, the right hand side simplifies to
 $\HH^\bullet(A)$, and we obtain an $A$-bilinear quasi-isomorphism $\HH^\bullet(A)^\vee\simeq \HH^\bullet(A)$. In other words, the bimodule $\HH^\bullet(A)$ is self-dual. 
\end{rmk}

\subsection{Calabi--Yau algebras}

\begin{defn}
    Let $A$ be an algebra. Say that $A$ is \textbf{twisted nonsmooth Calabi--Yau} if $A^\vee$ is an invertible $A$-bimodule. Say that $A$ is \textbf{nonsmooth $n$-Calabi--Yau} if
    there is an $A$-bilinear quasi-isomorphism $A\simeq A^\vee[n]$. Say that $A$ is \textbf{(twisted) Ginzburg Calabi--Yau} if it is both smooth and (twisted) nonsmooth Calabi--Yau.
\end{defn}

Note that a nonsmooth $n$-Calabi--Yau algebra is twisted nonsmooth Calabi--Yau, since $A[-n]$ is invertible.

\begin{rmk}
    This definition is a noncommutative-geometric abstraction of the notion of `trivial canonical bundle' in algebraic geometry; indeed a Calabi--Yau variety is precisely a smooth variety with trivial canonical bundle.
\end{rmk}
Since nonsmooth Calabi--Yau algebras are our main focus, we will frequently abbreviate ``nonsmooth Calabi--Yau'' as simply \textbf{CY}, with the understanding that \textbf{Ginzburg CY} refers to Calabi--Yau algebras in the usual sense. We will also often suppress the integer $n$ from the notation.

\begin{rmk}
    The original definition of Ginzburg CY in \cite{ginzburg} requires in addition that the isomorphism $f:A\to A^\vee[n]$ in the derived category of $A$-bimodules satisfies the extra condition that the composition $A \to A^{\vee\vee} \xrightarrow{f^\vee[n]} A^\vee[n]$ is equal to $f$. However, it is a theorem of Van den Bergh that in the smooth setting, this extra condition is superfluous \cite[C.1]{vdb}. In the nonsmooth setting, this extra condition does not seem useful.
\end{rmk}

\begin{rmk}
If $A$ is a smooth algebra, then an $A$-bilinear quasi-isomorphism $A\simeq A^\vee[n]$ is equivalent to the data of a nondegenerate degree $n$ class $\eta$ in the Hochschild complex $HH_\bullet(A)$. When $\eta$ lifts to a class in negative cyclic homology, one says that $A$ is a \textbf{left Calabi--Yau} or \textbf{exact Calabi--Yau} algebra.  In \cite{HRCY}, exact CY algebras are referred to as \textbf{smooth Calabi--Yau algebras}, with Ginzburg CY algebras referred to as \textbf{weak smooth Calabi--Yau algebras}. There is a dual version for proper algebras known as \textbf{right Calabi--Yau}. When $A$ is neither smooth nor proper, there seems to be no interpretation of a nonsmooth CY structure in terms of a Hochschild class, and hence no notion of `exact nonsmooth CY' algebras.
\end{rmk}

\begin{rmk}
    Suppose that $A$ is a smooth algebra with a right $A$-linear quasi-isomorphism $A\simeq A^\vee[n]$. Then $A$ is twisted Ginzburg Calabi--Yau: since $A$ is smooth, the natural map $A\to\R\enn_A(A^\vee)\simeq A^{\vee\vee}$ is a quasi-isomorphism, and an application of \ref{nakesistence} shows that there exists a homotopy automorphism $\nu$ of $A$ and a two-sided quasi-isomorphism $A\simeq {}_\nu A^\vee [n]$.
\end{rmk}

\begin{rmk}\label{fracCYrmk}
    One can also make sense of a fractional version of the above definition: say that $A$ is fractional CY if there exists some $m>0 $ such that $(A^\vee)^{\lot_A m}\simeq A$. Fractional CY algebras abound in the literature; for a selection of examples see \cite{fracHI, fracFK, fracK, fracR, fracG}.
\end{rmk}

\begin{lem}\label{prodlem}
Let $A$ be a proper twisted CY algebra and let $B$ be a proper twisted CY algebra. Then the tensor product $A\otimes B$ is a proper twisted CY algebra.
\end{lem}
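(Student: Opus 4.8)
The plan is to verify the two defining conditions separately: that $A\otimes B$ is proper, and that $(A\otimes B)^\vee$ is an invertible bimodule. Properness is immediate from the Künneth theorem: working over a field one has $H^\bullet(A\otimes B)\cong H^\bullet(A)\otimes H^\bullet(B)$, which is finite dimensional since each factor is. The real content is the Calabi--Yau condition, and the heart of the argument is an identification of inverse dualising complexes $(A\otimes B)^\vee \simeq A^\vee\otimes B^\vee$, after which invertibility will follow formally.

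First I would record the algebra isomorphism $(A\otimes B)^e\cong A^e\otimes B^e$, obtained by transposing the two middle factors of $(A^\circ\otimes B^\circ)\otimes(A\otimes B)$ (a graded isomorphism, with the usual signs); under it the diagonal bimodule $A\otimes B$ becomes the external tensor product of the diagonal bimodules $A$ and $B$. The goal is then to show that the natural Künneth map
$$A^\vee\otimes B^\vee=\R\hom_{A^e}(A,A^e)\otimes\R\hom_{B^e}(B,B^e)\longrightarrow \R\hom_{(A\otimes B)^e}(A\otimes B,(A\otimes B)^e)=(A\otimes B)^\vee$$
is a quasi-isomorphism of bimodules. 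This map is an isomorphism whenever $A$ is perfect over $A^e$, i.e.\ smooth; in our nonsmooth setting $A$ is only perfectly valued over $A^e$, so this is exactly the point at which properness must be exploited, and I expect it to be the \emph{main obstacle}.

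To circumvent the failure of smoothness I would avoid the Künneth map directly and instead compute $(A\otimes B)^\vee$ through linear duals. The tensor--hom adjunction, together with the reflexivity of $A^e$ as a module over itself (which holds as $A^e$ is proper), gives $A^\vee\simeq\bigl((A^e)^*\lot_{A^e}A\bigr)^*$, and likewise for $B$ and for $A\otimes B$. I would then combine three facts: the derived tensor product commutes unconditionally with external products, so that $\bigl((A^e)^*\otimes(B^e)^*\bigr)\lot_{A^e\otimes B^e}(A\otimes B)\simeq\bigl((A^e)^*\lot_{A^e}A\bigr)\otimes\bigl((B^e)^*\lot_{B^e}B\bigr)$; and the linear dual is monoidal on objects whose cohomology is finite dimensional. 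The finiteness required to apply this last fact twice is supplied precisely by the hypotheses: $A^e$ and $B^e$ are proper (giving $\bigl((A\otimes B)^e\bigr)^*\simeq(A^e)^*\otimes(B^e)^*$), while $A^\vee$ and $B^\vee$, being invertible, are perfect and hence perfectly valued (using the inclusion $\per(A^e)\subseteq\pvd(A^e)$ for the proper algebra $A^e$), so that each $(A^e)^*\lot_{A^e}A$ has finite-dimensional cohomology, as it is the linear dual of $A^\vee$. Assembling these isomorphisms yields $(A\otimes B)^\vee\simeq A^\vee\otimes B^\vee$; along the way one checks that every step is an isomorphism of $(A\otimes B)^e$-modules by naturality in the surviving module variable.

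Finally I would deduce invertibility. Let $Y_A$ and $Y_B$ be inverses of $A^\vee$ and $B^\vee$ over $A^e$ and $B^e$. Using once more that $\lot$ commutes with external products one obtains
$$(A^\vee\otimes B^\vee)\lot_{A\otimes B}(Y_A\otimes Y_B)\simeq(A^\vee\lot_A Y_A)\otimes(B^\vee\lot_B Y_B)\simeq A\otimes B,$$
and symmetrically on the other side, so $A^\vee\otimes B^\vee\simeq(A\otimes B)^\vee$ is invertible. Together with the properness established at the outset, this shows that $A\otimes B$ is a proper twisted Calabi--Yau algebra.
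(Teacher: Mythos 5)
Your proof is correct, but it takes a different route to the key identity $(A\otimes B)^\vee\simeq A^\vee\otimes B^\vee$ than the paper does. The paper simply invokes ``a standard computation with the bar complex'' to get the K\"unneth isomorphism $\HH^\bullet(A,M)\otimes\HH^\bullet(B,N)\simeq\HH^\bullet(A\otimes B,M\otimes N)$ for proper bimodule coefficients $M,N$, and then specialises to $M=A^e$, $N=B^e$ (these are proper bimodules precisely because $A$ and $B$ are proper algebras). You instead sidestep the convergence issues inherent in comparing the infinite products appearing in $\R\hom$ of bar resolutions: by writing $A^\vee\simeq\bigl((A^e)^*\lot_{A^e}A\bigr)^*$ via hom--tensor and reflexivity of the proper bimodule $A^e$, you move the whole computation into derived tensor products, where the external K\"unneth isomorphism holds unconditionally over a field, and then pay for it with two applications of monoidality of the linear dual, each licensed by a finiteness hypothesis. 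This is arguably a more careful argument than the paper's one-line appeal to the bar complex, at the cost of tracking reflexivity and finite-dimensionality at each stage; the paper's version is shorter but leaves the properness bookkeeping implicit in the phrase ``standard computation.'' The final invertibility step is identical in both.

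One justification in your write-up is misstated, though the conclusion you draw from it is still true. You assert that $A^\vee$, being invertible, lies in $\per(A^e)$ and hence in $\pvd(A^e)$. An invertible bimodule need \emph{not} be perfect over $A^e$: the unit $A$ itself is perfect over $A^e$ only when $A$ is smooth, which is exactly the hypothesis this lemma avoids. What is true (and what the paper records in its discussion of derived Picard groups) is that an invertible bimodule is perfect as a one-sided $A$-module; combined with properness of $A$ (not of $A^e$) this gives $A^\vee\in\pvd(A)$, so $H(A^\vee)$ is finite dimensional, which is all your argument needs to conclude that $(A^e)^*\lot_{A^e}A$ has finite-dimensional cohomology. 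With that citation corrected, the proof goes through.
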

\begin{proof}
Since both $A$ and $B$ are proper, a standard computation with the bar complex shows that we have $\HH^\bullet(A,M)\otimes \HH^\bullet(B,N) \simeq \HH^\bullet(A\otimes B, M\otimes N)$ whenever $M$ is a proper $A$-bimodule and $N$ is a proper $B$-bimodule. In particular, we have $(A\otimes B)^\vee\simeq A^\vee\otimes B^\vee$ and the result follows.
\end{proof}
In particular, the proof shows that if $A$ is a proper $n$-CY algebra and $B$ is a proper $m$-CY algebra then $A\otimes B$ is a proper $(n+m)$-CY algebra.

\subsection{Calabi--Yau conditions on categories}

\begin{defn}
    Let $A$ be an augmented algebra and $\mathcal{L},\mathcal{M}$ two triangulated subcategories of $D(A)$. We say that the pair $(\mathcal{L},\mathcal{M})$ has a \textbf{twisted Calabi--Yau structure} if there is an invertible bimodule $X$ such that for all $L\in \mathcal{L}$ and all $M\in \mathcal{M}$, there are natural quasi-isomorphisms $$\R\hom_A(M,L_X)^* \simeq \R\hom_A(L, M).$$When $X=A[n]$ we call this an \textbf{$n$-Calabi--Yau structure}. When $\mathcal{L}=\mathcal{M}$ we simply say that $\mathcal{L}$ has a \textbf{twisted Calabi--Yau structure}.
\end{defn}
Observe that in the above definition, the functor $S(L) \coloneqq L_X$ is a relative Serre functor, and indeed this condition is equivalent to the existence of a twisted CY structure. 

\p If $A$ is Ginzburg $n$-CY then the pair $(D(A),\pvd(A))$ has a $n$-CY structure \cite[3.4]{kellerCYC}. If $A$ is smooth, proper, and finitely presented, then $[n]$ is a Serre functor on $\pvd(A)$ if and only if $A$ is Ginzburg $n$CY \cite[3.2.4]{ginzburg}.

\p The following lemma will allow us to produce CY structures on the derived categories of CY algebras.

\begin{lem}\label{kellerlem}
   Let $A$ be an algebra. Let $L\in D(A)$ and let $M\in \pvd(A)$. There are natural maps
   $$L\lot_A A^\vee \lot_A M^* \to \R\hom_A(M,L)$$
   $$\R\hom_A(M,L)^* \to \R\hom_A(L\lot_A A^\vee, M)$$which are quasi-isomorphisms if $P\coloneqq \R\hom_k(M,L)$ is a perfect $A$-bimodule.
\end{lem}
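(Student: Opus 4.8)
The plan is to route everything through Hochschild cohomology and to run a dévissage argument in the variable $P$, which is exactly what lets us avoid any smoothness hypothesis on $A$.

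First I would record two standard natural identifications, valid for all $L\in D(A)$ and all $M\in\pvd(A)$. Since $M$ is perfectly valued it lies in $\per(k)$, so there is a natural quasi-isomorphism $P\coloneqq \R\hom_k(M,L)\simeq M^*\otimes L$ of $A$-bimodules, with the left action running through $M^*$ and the right action through $L$. From this I get, on the one hand, a natural quasi-isomorphism
$$L\lot_A A^\vee \lot_A M^* \simeq P\lot_{A^e}A^\vee,$$
which is the usual reduction of an external tensor product and is verified on the free bimodule $A^e$ and extended; and on the other hand the standard identification of Hochschild cohomology with coefficients in an internal hom,
$$\R\hom_A(M,L)\simeq \R\hom_{A^e}(A,\R\hom_k(M,L)) = \HH^\bullet(A,P).$$

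Next I would construct the first map as the comparison morphism
$$\eta_P:\ P\lot_{A^e}A^\vee = P\lot_{A^e}\R\hom_{A^e}(A,A^e)\longrightarrow \R\hom_{A^e}(A,P),$$
which is natural in $P$ and defined for every bimodule; composing $\eta_P$ with the two identifications above yields the asserted natural map $L\lot_A A^\vee \lot_A M^*\to\R\hom_A(M,L)$. To see it is a quasi-isomorphism under the stated hypothesis I would observe that both $Y\mapsto Y\lot_{A^e}A^\vee$ and $Y\mapsto\R\hom_{A^e}(A,Y)$ are exact functors $D(A^e)\to D(k)$, that $\eta_Y$ is the identity when $Y=A^e$ under the canonical unit isomorphism, and hence that the full subcategory of those $Y$ for which $\eta_Y$ is a quasi-isomorphism is thick and contains $A^e$, so it contains all of $\per(A^e)$. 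Since $P$ is assumed perfect, $\eta_P$ is a quasi-isomorphism. I stress that this dévissage is carried out on $P$ and not on $A$: running the analogous argument with $A$ perfect as a bimodule would force $A$ to be smooth, which is precisely the hypothesis we wish to drop.

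For the second map I would simply dualise the first. Linear duality is exact, so applying $(-)^*$ to the quasi-isomorphism $\R\hom_A(M,L)\simeq L\lot_A A^\vee\lot_A M^*$ and then invoking the hom-tensor adjunction gives
$$\R\hom_A(M,L)^*\simeq \left(L\lot_A A^\vee\lot_A M^*\right)^*\simeq \R\hom_A\!\left(L\lot_A A^\vee,\R\hom_k(M^*,k)\right)=\R\hom_A\!\left(L\lot_A A^\vee, M^{**}\right).$$
Because $M\in\pvd(A)$ is reflexive we have $M^{**}\simeq M$, which identifies the target with $\R\hom_A(L\lot_A A^\vee,M)$; the underlying natural map is defined for all $L,M$ and is a quasi-isomorphism exactly when the first one is. I expect the only genuine obstacle to be the bookkeeping: pinning down the left/right bimodule structures on $P\simeq M^*\otimes L$ and on $A^\vee$ so that the reduction isomorphism and the comparison map $\eta$ are honestly natural and mutually compatible, and checking that $\eta_{A^e}$ really is the identity under the chosen identifications. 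The conceptual content—that perfectness of $P$, rather than of $A$, is what makes the argument go through—is then immediate.
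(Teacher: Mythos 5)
Your proposal is correct and follows essentially the same route as the paper's proof (which in turn follows Keller): identify $L\lot_A A^\vee\lot_A M^*$ with $P\lot_{A^e}A^\vee$, use the comparison map $\theta_P:P\lot_{A^e}A^\vee\to\R\hom_{A^e}(A,P)$, check it is a quasi-isomorphism by reducing to $P=A^e$ via a thick-subcategory argument, and obtain the second map by linear duality together with hom-tensor and reflexivity of $M$. Your write-up merely spells out the dévissage and the bimodule bookkeeping that the paper leaves implicit.
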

\begin{proof}

    We follow the proof of \cite[4.1]{kellerCYC}. There are natural maps \begin{align*}
        L\lot_A A^\vee \lot_A M^* &\simeq P\lot_{A^e} A^\vee \xrightarrow{\theta} \R\hom_{A^e}(A,P) \simeq \R\hom_A(M,L)
    \end{align*}and the first map of interest is their composition. Taking its linear dual, we obtain a natural map
    \begin{align*}
        \R\hom_A(M,L)^* \to (L\lot_A A^\vee \lot_A M^*)^* \simeq \R\hom_A(L\lot_A A^\vee, M)
    \end{align*}which is our second desired map. Observe that the two natural maps are quasi-isomorphisms if and only if $\theta$ is. To prove that $\theta$ is a quasi-isomorphism if $P$ is perfect, it suffices to prove it for $P=A^e$, which is clear.
\end{proof}

\begin{cor}\label{kellercor}
 Let $A$ be an algebra, let $L\in \per A$ and let $M\in \pvd(A)$. Suppose that either:
 \begin{enumerate}
     \item $A$ is smooth
     \item $A$ is augmented, $A^\circ$ is regular, and $M\in \thick_{D(A)}(k)$.
 \end{enumerate}
Then there is a natural quasi-isomorphism $$\R\hom_A(M,L)^* \to \R\hom_A(L\lot_A A^\vee, M).$$
\end{cor}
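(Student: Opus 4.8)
The plan is to reduce everything to Lemma \ref{kellerlem}: by that lemma the second map is a quasi-isomorphism as soon as the $A$-bimodule $P\coloneqq \R\hom_k(M,L)$ is perfect, so the entire task is to verify perfectness of $P$ under either hypothesis. The starting observation is that $M\in\pvd(A)$ means its underlying complex has finite-dimensional total cohomology, hence lies in $\per(k)$; consequently $M$ is dualisable over $k$ and there is a natural quasi-isomorphism of $A$-bimodules
$$P\simeq M^*\otimes_k L,$$
where $M^*$ carries the left $A$-action dual to the right action on $M$ (so $M^*$ is an $A^\circ$-module) and $L$ carries its given right $A$-action. Since $A^e=A^\circ\otimes A$, the external product $(X,Y)\mapsto X\otimes_k Y$ sends $\per(A^\circ)\times\per(A)$ into $\per(A^e)$: it does so on the free modules $A^\circ$ and $A$, and hence on all of $\per$, because $\otimes_k$ is exact over a field and commutes with shifts, cones and direct summands. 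It therefore suffices to show $M^*\in\per(A^\circ)$ and $L\in\per(A)$; the latter is one of our hypotheses, so all the work lies in the former.

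In case (1), smoothness of $A$ passes to $A^\circ$: the swap isomorphism $A^e\cong(A^\circ)^e$ identifies the diagonal bimodules, so $A\in\per(A^e)$ forces $A^\circ\in\per((A^\circ)^e)$. By \ref{fdmodel}(1) a smooth algebra is hfd-closed, whence $\pvd(A^\circ)\subseteq\per(A^\circ)$; and the linear dual preserves perfectly valued modules, so $M^*\in\pvd(A^\circ)$. Combining these gives $M^*\in\per(A^\circ)$.

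In case (2), the linear dual is an exact, hence triangulated, functor fixing $k$, so $M\in\thick_{D(A)}(k)$ yields $M^*\in\thick_{D(A^\circ)}(k)$. Regularity of $A^\circ$ is by definition the statement that the augmentation makes $k$ perfect, i.e.\ $k\in\per(A^\circ)$, so $\thick_{D(A^\circ)}(k)\subseteq\per(A^\circ)$ and again $M^*\in\per(A^\circ)$. In either case $P$ is a perfect $A$-bimodule, and Lemma \ref{kellerlem} produces the asserted natural quasi-isomorphism.

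I expect the only genuinely delicate point to be the bimodule identification $P\simeq M^*\otimes_k L$, namely checking that the two $A$-actions on $\R\hom_k(M,L)$ land on the correct sides (the action through $M$ giving the left $A^\circ$-structure, the action through $L$ the right $A$-structure). Once $P$ is exhibited as an external product of perfect one-sided modules its perfectness is formal, and the two cases of the corollary differ only in how perfectness of $M^*$ is harvested—via hfd-closure in the smooth case and via $\thick_{D(A^\circ)}(k)\subseteq\per(A^\circ)$ in the regular case.
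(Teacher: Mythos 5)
Your proof is correct and follows exactly the paper's route: both reduce via Lemma \ref{kellerlem} to showing $\R\hom_k(M,L)\simeq M^*\otimes_k L$ is a perfect $A$-bimodule, then in case (1) use that $A^\circ$ is smooth so $M^*\in\pvd(A^\circ)\subseteq\per(A^\circ)$, and in case (2) use that $M^*\in\thick_{D(A^\circ)}(k)\subseteq\per(A^\circ)$ by regularity of $A^\circ$. You have merely spelled out the external-product and side-of-action details that the paper leaves implicit.
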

\begin{proof}
By \ref{kellerlem} it suffices to show that $\R\hom_k(M,L)\simeq M^*\otimes L$ is a perfect $A$-bimodule. Since $L$ is perfect by assumption, it suffices to show that $M^* \in \per(A^\circ)$. If (1) holds then $A^\circ$ is smooth and hence $M^*\in\pvd(A^\circ) \subseteq \per(A^\circ)$. If (2) holds then $M^* \in \thick_{D(A^\circ)}(k) \subseteq \per(A^\circ)$.
\end{proof}

The following result (or at least its untwisted version) is well known and appears in e.g. \cite{ginzburg, kellerCYC}.

\begin{thm}\label{openGCY}
    Let $A$ be a twisted Ginzburg CY algebra. Then the pair $(\per A, \pvd A)$ has a twisted CY structure.
\end{thm}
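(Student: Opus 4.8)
The plan is to read off the twisted Calabi--Yau structure directly from Corollary \ref{kellercor}, taking the twisting bimodule to be $X \coloneqq (A^\vee)^{-1}$. Since $A$ is twisted Ginzburg CY, it is by definition both smooth and twisted nonsmooth CY, so its inverse dualising complex $A^\vee$ is an invertible $A$-bimodule; hence $X$ exists and is itself invertible. The smoothness of $A$ is exactly hypothesis (1) of \ref{kellercor}, which is what makes that corollary available.

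First I would record that, since $X$ is invertible, the functor $-\lot_A X$ is an autoequivalence of $\per A$ (by \ref{pregor}), so that $L_X \in \per A$ whenever $L\in \per A$. This is what allows me to apply \ref{kellercor}(1) with the perfect module $L_X$ in place of $L$: for every $M\in \pvd A$ it furnishes a natural quasi-isomorphism
$$\R\hom_A(M,L_X)^* \simeq \R\hom_A(L_X\lot_A A^\vee, M).$$

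Next I would simplify the right-hand side using associativity of the derived tensor product together with the defining property of $X$ as the inverse of $A^\vee$:
$$L_X\lot_A A^\vee = L\lot_A X \lot_A A^\vee \simeq L\lot_A\left((A^\vee)^{-1}\lot_A A^\vee\right)\simeq L\lot_A A \simeq L.$$
Substituting this back yields the natural quasi-isomorphism $\R\hom_A(M,L_X)^* \simeq \R\hom_A(L,M)$, valid for all $L\in \per A$ and $M\in \pvd A$, which is precisely the assertion that $(\per A,\pvd A)$ carries a twisted CY structure with twisting bimodule $X=(A^\vee)^{-1}$.

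The conceptual content is already contained in Lemma \ref{kellerlem}, which identifies $L\lot_A A^\vee$ as a relative Serre functor; the present theorem is the observation that twisting by $(A^\vee)^{-1}$ converts this Serre duality into the required Calabi--Yau duality. There is thus no serious obstacle here: the only points needing care are that every quasi-isomorphism in sight is natural in both variables (the one coming from \ref{kellercor} is natural by construction, and the cancellation above is natural in $L$), and that $L_X$ genuinely lies in $\per A$ so that the corollary applies. Both are routine consequences of the invertibility of $X$.
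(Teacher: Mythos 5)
Your proposal is correct and is essentially identical to the paper's own proof: both take $X=(A^\vee)^{-1}$, apply \ref{kellercor}(1) to the perfect module $L_X$, and cancel $X\lot_A A^\vee\simeq A$ to obtain the defining quasi-isomorphism of a twisted CY structure. The extra remarks you include (that $L_X$ stays in $\per A$ and that everything is natural) are exactly the routine checks the paper leaves implicit.
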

\begin{proof}
Let $L\in \per(A)$ and $M\in \pvd(A)$. Let $X$ be the inverse of the invertible bimodule $A^\vee$. Then by \ref{kellercor}(1) along with the twisted CY property, we have natural quasi-isomorphisms
\begin{align*}
\R\hom_A(M,L_X)^* &\simeq \R\hom_A(L_X\lot_A A^\vee, M) \\
& \simeq \R\hom_A(L, M) 
\end{align*}as required.
\end{proof}
One can prove a very similar result when $A$ is not smooth, but only regular:
\begin{thm}\label{openCY}
    Let $A$ be an augmented twisted CY algebra such that $A^\circ$ is regular. Then the pair $(\per A, \thick(k))$ has a twisted CY structure.
\end{thm}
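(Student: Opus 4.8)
The plan is to proceed exactly as in the proof of \ref{openGCY}, substituting the appeal to \ref{kellercor}(1) (which used smoothness) with \ref{kellercor}(2) (which uses only that $A$ is augmented with $A^\circ$ regular). Since $A$ is twisted CY, its inverse dualising complex $A^\vee$ is an invertible $A$-bimodule; let $X \coloneqq (A^\vee)^{-1}$ be its inverse, so that $X\lot_A A^\vee \simeq A$. This $X$ is the invertible bimodule that I would use to witness the twisted CY structure on the pair $(\per A, \thick(k))$.

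First I would fix $L\in\per A$ and $M\in\thick(k)$ and check that the hypotheses of \ref{kellercor}(2) are satisfied. Since $k\in\pvd(A)$ and $\pvd(A)$ is a thick subcategory of $D(A)$, we have $\thick(k)\subseteq\pvd(A)$, so both $M\in\pvd(A)$ and $M\in\thick_{D(A)}(k)$ hold. Because $X$ is invertible, the functor $-\lot_A X$ is an autoequivalence of $\per A$, so $L_X = L\lot_A X$ is again perfect. I can therefore apply \ref{kellercor}(2) with $L_X$ in place of $L$, obtaining a natural quasi-isomorphism $\R\hom_A(M,L_X)^* \simeq \R\hom_A(L_X\lot_A A^\vee, M)$. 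Finally, $L_X\lot_A A^\vee = L\lot_A X\lot_A A^\vee \simeq L\lot_A A \simeq L$, which yields the desired $\R\hom_A(M,L_X)^*\simeq\R\hom_A(L,M)$, natural in both variables.

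Since every step is a direct invocation of a result already established, I do not expect a serious obstacle; the only points demanding care are verifying the inclusion $\thick(k)\subseteq\pvd(A)$ so that \ref{kellercor}(2) genuinely applies to $M$, confirming that the twist $L_X$ remains inside $\per A$, and tracking the naturality of the quasi-isomorphisms through the invertible twist. The essential content is packaged into \ref{kellercor}(2), whose hypothesis that $A^\circ$ is regular is precisely what replaces smoothness: it is what guarantees $M^*\in\per(A^\circ)$, hence that $\R\hom_k(M,L)$ is a perfect bimodule, which is the input driving the argument.
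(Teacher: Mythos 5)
Your proof is correct and follows exactly the route the paper takes: the paper's proof of this theorem is literally "identical to the proof of \ref{openGCY} except that one uses \ref{kellercor}(2)", which is precisely your substitution, with the same choice $X=(A^\vee)^{-1}$ and the same cancellation $L_X\lot_A A^\vee\simeq L$. Your additional checks (that $\thick(k)\subseteq\pvd(A)$ and that $L_X$ stays perfect) are the right hypotheses to verify and are left implicit in the paper.
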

\begin{proof}
    Identical to the proof of \ref{openGCY} except that one uses \ref{kellercor}(2).
\end{proof}

\section{Gorenstein and Frobenius algebras}
\subsection{Gorenstein morphisms}
Let $g:A \to l$ be a map of algebras. Note that the one-sided dual $\R\hom_A(l,A)$ of $l$ is naturally an $A$-$l$-bimodule, and in particular an $A$-bimodule. Say that $g$ is \textbf{one-sided $d$-Gorenstein} if there is a quasi-isomorphism $l\simeq \R\hom_A(l,A)[d]$ of left $A$-modules. Say that $g$ is \textbf{two-sided $d$-Gorenstein} if there is a quasi-isomorphism $l\simeq \R\hom_A(l,A)[d]$ of $A$-bimodules. We will often drop the integer $d$ from the notation. 

\p We say that an augmented algebra $A$ is \textbf{$d$-Gorenstein} if the augmentation morphism $A\to k$ is two-sided $d$-Gorenstein. This definition is originally due to Avramov and Foxby \cite{afgor}; a spectral version was also given in \cite{DGI}. Frankild and {J\o}rgensen \cite{fjgorenstein} gave a `global' version of this definition where $k$ is replaced with $\pvd(A)$.

\p This condition is equivalent to the a priori weaker condition of $A\to k$ being one-sided Gorenstein:
\begin{prop}Let $g:A \to l$ be a map of algebras. \begin{enumerate}
\item $g$ is two-sided $d$-Gorenstein if and only if there is a quasi-isomorphism $l\simeq \R\hom_A(l,A)[d]$ of $A$-$l$-bimodules.
\item If $l=k$ (i.e.\ $g$ is an augmentation) then $g$ is one-sided $d$-Gorenstein if and only if it is two-sided $d$-Gorenstein.
\item If $A$ is commutative and $l$ is a symmetric bimodule, then $g$ is one-sided $d$-Gorenstein if and only if it is two-sided $d$-Gorenstein.
\end{enumerate}
\end{prop}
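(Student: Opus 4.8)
The plan is to treat the three parts separately, in each case promoting a quasi-isomorphism for a coarser module structure to one for a finer structure; the additional linearity will be forced either by the algebra structure of $l$ or by commutativity. For part (1) the implication $(\Leftarrow)$ is immediate: restricting the right $l$-action along $g$ shows that any $A$-$l$-bilinear quasi-isomorphism is in particular $A$-bilinear, which is exactly the definition of two-sided $d$-Gorenstein. The content is the converse. Here I would first record the adjunction identifying maps out of $l$ in $A$-$l$-bimodules with ``traces'': a direct computation, using that the right $l$-action on $\R\hom_A(l,A)$ comes from the left $l$-action on the source and that $l\otimes_l l\cong l$, gives a natural isomorphism $\R\hom_{A\text{-}l}(l,\R\hom_A(l,A)[d])\simeq \R\hom_{A^e}(l,A)[d]$, under which restriction to $A$-bilinear maps corresponds to precomposition with the multiplication $\mu\colon l\lot_A l\to l$.

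Concretely, given an $A$-bilinear quasi-isomorphism $f\colon l\to \R\hom_A(l,A)[d]$, I would pass to a cofibrant model and set $\phi\coloneqq f(1)$; $A$-bilinearity of $f$ forces the commutation relation $a\cdot\phi=\phi\cdot g(a)$, so that the right-$l$-linear extension $\tilde f(x)\coloneqq \phi\cdot x$ is a genuine map of $A$-$l$-bimodules. The hard part is checking that $\tilde f$ is still a quasi-isomorphism: it agrees with $f$ only on $g(A)$, so one must show that the trace form $\langle x,y\rangle\coloneqq(\phi\cdot x)(y)$ remains nondegenerate. I expect this to be the main obstacle, and would resolve it by exploiting the self-duality built into $\R\hom_A(l,A)$, comparing $\tilde f$ and $f$ through the restriction map $\mu^*$ and the nondegeneracy of $f$, rather than by any direct computation. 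This is the one place where genuinely more than restriction of scalars is needed.

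Parts (2) and (3) are then quick consequences of the same philosophy. For (2), with $l=k$, the right $A$-action on both $k$ and $\R\hom_A(k,A)$ factors through the augmentation $\varepsilon\colon A\to k$ and is therefore the scalar action. Hence both bimodules come from left $A$-modules, the two derived Hom complexes $\R\hom_{A^e}$ and $\R\hom_A$ between them coincide, and a left $A$-linear quasi-isomorphism is automatically $A$-bilinear; thus one-sided and two-sided agree. For (3), when $A$ is commutative and $l$ is symmetric, the category of symmetric $A$-bimodules is equivalent to that of $A$-modules and is closed under $\R\hom_A$, so $\R\hom_A(l,A)$ is again symmetric. Since $D(\text{symmetric }A\text{-bimodules})\simeq D(A)$ and the inclusion of symmetric bimodules into all bimodules is exact, an isomorphism $l\simeq \R\hom_A(l,A)[d]$ of left $A$-modules is the same datum as an isomorphism of symmetric bimodules, which pushes forward to an isomorphism in $D(A^e)$.

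At the chain level this last point is transparent: a left $A$-linear map $f$ between symmetric bimodules satisfies $f(mb)=f(bm)=bf(m)=f(m)b$, so it is automatically right $A$-linear, hence bilinear. In summary, the routine directions are all instances of ``the right action is already determined'' (by $\varepsilon$ in (2), by symmetry in (3), and by restriction along $g$ in the easy direction of (1)), while the substantive work is concentrated entirely in the quasi-isomorphism check for the converse of (1), which is where I would spend the effort.
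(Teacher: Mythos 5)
Your treatment of parts (2) and (3), and of the ``if'' direction of (1), coincides in substance with the paper's: in each case the point is that the right action on both $l$ and $\R\hom_A(l,A)$ is already determined (by the augmentation when $l=k$, by symmetry when $A$ is commutative and $l$ is symmetric, and by restriction along $g$ in general), so a quasi-isomorphism of the coarser structure is automatically one of the finer structure; the paper packages this as the restriction functor from $A$-$l$-bimodules to $A$-bimodules. One quibble in your (2): the assertion that ``the two derived Hom complexes $\R\hom_{A^e}$ and $\R\hom_A$ between them coincide'' is false as stated --- $\ext$ over the enveloping algebra is of Hochschild type and differs from $\ext$ over $A$. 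What you actually need, and what your chain-level remark supplies, is that $A$-$k$-bimodules are literally left $A$-modules, so a one-sided quasi-isomorphism is already an $A$-$l$-bimodule quasi-isomorphism for $l=k$, and the easy direction of (1) finishes the job; this is exactly how the paper deduces (2) from (1).

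The genuine gap is the ``only if'' direction of (1). You correctly identify it as the substantive implication and set it up well, via the adjunction $\R\hom_{A\text{-}l}(l,\R\hom_A(l,A))\simeq\R\hom_{A^e}(l,A)$ and precomposition with $\mu\colon l\lot_A l\to l$, but you then explicitly defer the decisive step: the map $\tilde f(x)=\phi\cdot x$ agrees with $f$ only on the image of $g$, and no argument is given that it still induces an isomorphism on cohomology --- ``I expect this to be the main obstacle, and would resolve it by exploiting the self-duality'' is a plan, not a proof, and nothing offered rules out the associated trace form degenerating. (There is also a smaller unaddressed issue: $A$-bilinearity of $f$ holds only in the derived category, so the relation $a\cdot\phi=\phi\cdot g(a)$ holds only up to homotopy, and even the construction of $\tilde f$ as a strict map needs care.) For comparison, the paper's own proof of (1) records only the restriction-of-scalars implication --- your easy direction --- and offers no separate argument for the converse; that converse is also never used, since (2) and (3) rely only on the implication from $A$-$l$-bilinear (resp.\ one-sided) to $A$-bilinear. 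So the portion of the proposition on which the rest of the paper depends is fully covered by the steps you completed, but as a proof of the proposition as literally stated, the converse of (1) remains open in your write-up.
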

\begin{proof}
For brevity put $X\coloneqq \R\hom_A(l,A)$. To prove (1), consider the restriction functor $g^*$ from the derived category of $A$-$l$-bimodules to the derived category of $A$-bimodules. A quasi-isomorphism $l\simeq X[d]$ of $A$-$l$-bimodules hence yields a quasi-isomorphism $g^*l\simeq g^*X[d]$ of $A$-bimodules. But since the right action of $A$ on both $X$ and $l$ factors through the right action of $l$, we see that $g^*l\simeq l$ and $g^*X\simeq X$, and the claim follows. Claim (2) follows from (1) with $l=k$. For claim (3), observe that both $X$ and $l$ are symmetric $A$-bimodules, and hence they are quasi-isomorphic as bimodules if and only if they are quasi-isomorphic as left modules.
\end{proof}

\begin{rmk}
In the presence of some connectivity conditions, one can check the Gorenstein property on cohomology. Let $A$ be an augmented algebra and put $H\coloneqq H^*(A)$, which is a graded algebra with no differential. As usual if $M,N$ are two graded $H$-modules then we let $\ext^{p,q}_H(M,N)$ denote the $q^\text{th}$ graded piece of $\ext^{p}_H(M,N)$. There is a spectral sequence with $E_2$ page $$\ext^{p,q}_{H}(k,H) \implies \ext^{p+q}_A(k,A)$$ which converges if $A$ is connective enough. If $H$ is Gorenstein, then this spectral sequence collapses at the $E_2$ page to show that $A$ is Gorenstein.
\end{rmk}

Under some regularity assumptions, Calabi--Yau algebras are Gorenstein:

\begin{prop}\label{regCYisGor}
Let $A$ be an augmented $n$-CY algebra such that $A^\circ$ is regular. Then $A$ is $n$-Gorenstein.
\end{prop}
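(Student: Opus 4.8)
The plan is to compute the one-sided dual $\R\hom_A(k,A)$ explicitly and show it is quasi-isomorphic to $k[-n]$, which is precisely the $n$-Gorenstein condition. Since the augmentation has target $l=k$, the preceding proposition identifies this one-sided condition with the two-sided one, so it suffices. The hypotheses are exactly what is needed to apply \ref{kellercor}(2): $A$ is augmented, $A^\circ$ is regular, we may take $L\coloneqq A\in\per A$, and $M\coloneqq k\in\thick_{D(A)}(k)\subseteq\pvd(A)$. That corollary then produces a natural quasi-isomorphism
$$\R\hom_A(k,A)^*\xrightarrow{\ \sim\ }\R\hom_A(A\lot_A A^\vee,k).$$

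Next I feed in the Calabi--Yau hypothesis. By definition of nonsmooth $n$-CY there is an $A$-bilinear quasi-isomorphism $A^\vee\simeq A[-n]$, so $A\lot_A A^\vee\simeq A^\vee\simeq A[-n]$. Substituting this and using $\R\hom_A(A,k)\simeq k$ gives
$$\R\hom_A(k,A)^*\simeq\R\hom_A(A[-n],k)\simeq k[n].$$
Now I dualize back. The object $k[n]$ has finite-dimensional total cohomology; since the linear dual is exact with $H^i(N^*)\cong(H^{-i}N)^*$, this forces $\R\hom_A(k,A)$ to have finite-dimensional cohomology as well, hence to be reflexive. Therefore $\R\hom_A(k,A)\simeq\R\hom_A(k,A)^{**}\simeq(k[n])^*\simeq k[-n]$, i.e.\ $k\simeq\R\hom_A(k,A)[n]$, which is the $n$-Gorenstein condition. (Alternatively one can run the same computation through the $n$-CY structure on $(\per A,\thick(k))$ provided by \ref{openCY}, taking $L=A$ and $M=k$.)

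The one genuine subtlety — and the step I expect to require the most care — is the bookkeeping of module structures: I must check that the quasi-isomorphism of \ref{kellercor} and the linear-dual identifications are compatible with the $A$-bimodule (or at least left $A$-module) structures, so that the final $k$ genuinely carries the augmentation action rather than being merely abstractly quasi-isomorphic to $k[-n]$ as a complex of vector spaces. This should follow from the naturality of the maps built in \ref{kellerlem}, which are induced from the natural transformation $P\lot_{A^e}A^\vee\to\R\hom_{A^e}(A,P)$ and hence respect any additional bimodule structure on $P=M^*\otimes L$, together with the fact that the linear dual functor intertwines the bimodule structures. Concretely, I would carry the argument through at the level of $A$-bimodules to land directly in the two-sided case, so that the right $A$-action (which already factors through the augmentation on the source $k$) and the relevant left $A$-action are tracked simultaneously; failing that, one reduces to the left action alone and invokes the equivalence of one- and two-sided Gorenstein for augmentations.
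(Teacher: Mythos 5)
Your proposal is correct and is essentially the paper's own proof: the paper applies the twisted CY structure on $(\per A,\thick(k))$ from \ref{openCY} (itself just \ref{kellercor}(2) plus the CY identification of $A^\vee$ with $A[-n]$) to $L=A$, $M=k$, obtaining $\R\hom_A(k,A[n])^*\simeq k$, and then dualizes back, exactly as you do. Your extra care about reflexivity and about the $A$-bilinearity of the identifications (which the paper dispatches with "natural, and hence $A$-bilinear") is sound and fills in the details the paper leaves implicit.
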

\begin{proof}
The CY structure on $(\per A, \thick(k))$ from \ref{openCY} yields a natural, and hence $A$-bilinear, quasi-isomorphism $\R\hom_A(k,A[n])^* \simeq k$. Taking the linear dual, we get an $A$-bilinear quasi-isomorphism $\R\hom_A(k,A)[n]\simeq k$, as desired.
\end{proof}

\begin{rmk}
    Let $A$ be a regular $d$-Gorenstein algebra and $S\subseteq H(A)$ a homogeneous susbset. Let $A\to L_SA$ be the derived localisation in the sense of \cite{bclloc} and assume that the augmentation module $k$ of $A$ is $S$-local. Then $L_SA$ is also Gorenstein: indeed, we have natural quasi-isomorphisms
    \begin{align*}
    \R\hom_{L_SA}(k,L_SA) &\simeq \R\hom_A(k,L_SA) & \text{ since }k\text{ is }S\text{-local}\\
    &\simeq \R\hom_A(k,A)\lot_A L_SA & \text{ since }A\text{ is regular}\\
    &\simeq k[-d]\lot_A L_SA & \text{ since }A\text{ is Gorenstein}
\end{align*}and since the localisation is smashing, $k[-d]\lot_A L_SA$ is the localisation of the $A$-module $k[-d]$, which by hypothesis is $k[-d]$. Hence $L_SA$ is $d$-Gorenstein.
\end{rmk}

\subsection{Discrete Gorenstein algebras}

In this part all algebras will be discrete. Say that a noncommutative ring is \textbf{Iwanaga--Gorenstein} if it is two-sided noetherian, and has finite injective dimension over itself, as both a left and right module. This is the usual definition of `Gorenstein' in the noncommutative algebra literature, but generalises less well to the dg setting. The following proposition is standard:

\begin{prop}[{\cite[18.1]{matsumura}}]
Let $A$ be a commutative noetherian local ring of finite Krull dimension with residue field $l$. The following are equivalent:
\begin{enumerate}
    \item $A$ is Iwanaga--Gorenstein.
    \item $A\to l$ is Gorenstein.
    \item $A\to l$ is $\dim(A)$-Gorenstein.
    \item $\ext^i_A(l,A)$ vanishes for $i\gg 0$.
\end{enumerate}
\end{prop}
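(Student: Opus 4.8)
The plan is to reduce each of the four conditions to a statement about the graded vector space $\ext^\bullet_A(l,A)$ and then to invoke the cited classical result. Since $A$ is discrete commutative and $l$ is a symmetric $A$-bimodule, the previous proposition shows that for the morphism $A\to l$ the one-sided and two-sided Gorenstein conditions coincide; hence throughout I may work with left $A$-modules and compute in $D(A)$. The basic observation is that $H^i\R\hom_A(l,A)\cong \ext^i_A(l,A)$, so that a quasi-isomorphism $l\simeq \R\hom_A(l,A)[d]$ is equivalent to the vanishing $\ext^i_A(l,A)=0$ for all $i\neq d$ together with an isomorphism $\ext^d_A(l,A)\cong l$.

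First I would dispatch the easy implications. Clearly (3) implies (2). For (2) $\Rightarrow$ (4), if $A\to l$ is $d$-Gorenstein for some $d$ then $\R\hom_A(l,A)\simeq l[-d]$ is cohomologically bounded, so $\ext^i_A(l,A)=0$ for every $i\neq d$, and in particular for $i\gg 0$.

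The substantive input is the equivalence of (4) with (1), together with the identification of the shift $d$ appearing in (3). For (4) $\Rightarrow$ (1) I would invoke Bass's theorem: for a noetherian local ring $(A,\mathfrak m,l)$ one has $\mathrm{injdim}_A A=\sup\{i:\ext^i_A(l,A)\neq 0\}$, so the vanishing in (4) forces the self-injective dimension of $A$ to be finite, which (as $A$ is commutative, so that left and right injective dimensions agree) is precisely the Iwanaga--Gorenstein condition. For (1) $\Rightarrow$ (3) I would use the classical structure theory of Gorenstein local rings: a Gorenstein local ring of finite Krull dimension satisfies $\mathrm{injdim}_A A=\mathrm{depth}\, A=\dim A=:n$ and $\ext^i_A(l,A)\cong \delta_{i,n}\,l$. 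Translating back through the dictionary of the first paragraph, this says exactly that $l\simeq \R\hom_A(l,A)[n]$ with $n=\dim A$, which is (3). This closes the cycle $(3)\Rightarrow(2)\Rightarrow(4)\Rightarrow(1)\Rightarrow(3)$.

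The main obstacle is not a genuine mathematical difficulty, since the hard commutative-algebra facts (Bass's computation of injective dimension via Ext against the residue field, and the value of $\ext^\bullet_A(l,A)$ for Gorenstein rings) are exactly the content of the cited reference. The only real care needed is bookkeeping: matching the cohomological shift convention $V[d]^i=V^{i+d}$ against the standard indexing of $\ext$, so that the single nonvanishing group lands in degree $d=\dim A$, and confirming via the previous proposition that the one-sided condition used in (2) and (3) agrees with the two-sided condition, so that the classical left-module computations suffice to conclude.
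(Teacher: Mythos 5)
Your argument is correct and is essentially the paper's own proof: the paper offers no argument beyond the citation to Matsumura's Theorem 18.1, and your reduction of all four conditions to statements about $\ext^\bullet_A(l,A)$, followed by Bass's formula $\mathrm{injdim}_A A=\sup\{i:\ext^i_A(l,A)\neq 0\}$ and the computation $\ext^i_A(l,A)\cong\delta_{i,\dim A}\,l$ for Gorenstein local rings, is precisely the content of that reference. Your bookkeeping of the shift convention and the appeal to the preceding proposition to identify the one- and two-sided conditions are both handled correctly.
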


Suppose that $A$ is a commutative noetherian ring. If $A$ is Iwanaga--Gorenstein, then so are all of its localisations at prime ideals, since injective resolutions localise. The converse is true if $A$ has finite Krull dimension \cite{ubiquity}. In particular we have:

\begin{prop}
Let $A$ be a commutative Iwanaga-Gorenstein ring. Every surjection $A\to l$ to a field is Gorenstein. 
\end{prop}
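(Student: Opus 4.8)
The plan is to reduce to the local situation and then quote the Matsumura proposition above. Write $\mathfrak m := \ker(A \onto l)$; since $A/\mathfrak m \cong l$ is a field, $\mathfrak m$ is a maximal ideal and we may identify $l$ with $A/\mathfrak m$. As observed above, the localisation $A_{\mathfrak m}$ is again a commutative noetherian local Iwanaga--Gorenstein ring, so it has finite self-injective dimension; by Bass's theorem a commutative noetherian local ring of finite self-injective dimension has finite Krull dimension, so we may set $d := \dim A_{\mathfrak m} < \infty$. The Matsumura proposition now applies to $A_{\mathfrak m}$ and shows that $A_{\mathfrak m} \to l$ is $d$-Gorenstein; in particular there is an $A_{\mathfrak m}$-linear quasi-isomorphism $\R\hom_{A_{\mathfrak m}}(l, A_{\mathfrak m}) \simeq l[-d]$.

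It remains to transport this back along $A \to A_{\mathfrak m}$. Since $A$ is noetherian and $l = A/\mathfrak m$ is a finitely generated, hence finitely presented, $A$-module, derived $\hom$ commutes with the flat localisation $A \to A_{\mathfrak m}$: we obtain a natural $A$-linear quasi-isomorphism $\R\hom_A(l,A)\lot_A A_{\mathfrak m} \simeq \R\hom_{A_{\mathfrak m}}(l\otimes_A A_{\mathfrak m}, A_{\mathfrak m})$, and here $l \otimes_A A_{\mathfrak m} \cong l$ as $l$ is supported at $\mathfrak m$. On the other hand, each module $\ext^i_A(l,A)$ is annihilated by $\mathfrak m$ (it is a module over the field $A/\mathfrak m = l$), hence is $\mathfrak m$-local, so the localisation map $\R\hom_A(l,A) \to \R\hom_A(l,A)\lot_A A_{\mathfrak m}$ is a quasi-isomorphism. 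Composing these $A$-linear equivalences with the one from the previous paragraph gives an $A$-linear quasi-isomorphism $\R\hom_A(l,A) \simeq l[-d]$, so $A \to l$ is one-sided $d$-Gorenstein.

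Finally, $A$ is commutative and $l = A/\mathfrak m$ is a symmetric $A$-bimodule, so part (3) of the preceding proposition on one- and two-sided Gorenstein morphisms upgrades this to a two-sided statement, and $A \to l$ is $d$-Gorenstein as required. The main obstacle is the identification $\R\hom_A(l,A) \simeq \R\hom_{A_{\mathfrak m}}(l,A_{\mathfrak m})$ in $D(A)$: one must check both that derived $\hom$ out of the finitely presented module $l$ commutes with the flat base change to $A_{\mathfrak m}$, and that $\R\hom_A(l,A)$ is already $\mathfrak m$-local so that nothing is lost in localising. Everything else is a direct appeal to the results recalled above.
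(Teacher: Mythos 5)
Your proof is correct and follows essentially the same route as the paper's (much terser) argument: localise at $\mathfrak m$, note that the Ext modules are already $\mathfrak m$-local so nothing is lost, and invoke the local Iwanaga--Gorenstein property of $A_{\mathfrak m}$. Your additional care about finiteness of $\dim A_{\mathfrak m}$ via Bass's theorem and the explicit flat base-change step are reasonable elaborations of what the paper leaves implicit.
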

\begin{proof}
Let $\mathfrak {m}$ be the kernel of $A \to l$. It is easy to see that $\R\hom_A(l,A)$ is supported at $\mathfrak {m}$, where we have $\R\hom_A(l,A)_\mathfrak{m}[d]\simeq l$ since $A_\mathfrak{m}$ is (Iwanaga)--Gorenstein.
\end{proof}

Local complete intersection rings are Iwanaga--Gorenstein, and in particular any hypersurface ring of the form $k[x_1,\ldots,x_n]/f$ is Iwanaga--Gorenstein. In particular, polynomial algebras $k[x_1,\ldots,x_n]$, complete local polynomial algebras $k\llbracket x_1,\ldots,x_n\rrbracket$, and truncated polynomial algebras $k[y]/y^n$ are all Gorenstein when equipped with the standard augmentation.

\p Recall that a commutative $k$-algebra $A$ is \textbf{essentially of finite type} if it is a localisation of a finitely generated $k$-algebra. For example, the coordinate ring of a quasiprojective variety is essentially of finite type.

\begin{prop}[{\cite[Theorem 2]{AIgorenstein}}]
    Let $A$ be a commutative $k$-algebra, essentially of finite type. Then $A$ is Iwanaga--Gorenstein if and only if the $A$-module $$\bigoplus_n\ext_{A^e}^n(A,A^e)$$is invertible.
\end{prop}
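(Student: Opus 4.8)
The plan is to reduce to the local case, identify the invertibility of the Hochschild cohomology module with the nonsmooth Calabi--Yau condition $A^\vee\simeq A[-d]$, and then match this with the Gorenstein property via a regular presentation.

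First I would reduce to local rings. Since $A$ is essentially of finite type over $k$ it is noetherian of finite Krull dimension, so by the localisation behaviour of the Iwanaga--Gorenstein condition recorded above, $A$ is Iwanaga--Gorenstein if and only if each localisation $A_\mathfrak{m}$ at a maximal ideal is a Gorenstein local ring. On the other side, write $M\coloneqq\bigoplus_n\ext^n_{A^e}(A,A^e)$. As $A$ is a cyclic, hence finitely generated, module over the noetherian ring $A^e$, the module $M$ is a finitely generated $A$-module, and invertibility of $M$ may be checked stalkwise: $M$ is invertible if and only if $M_\mathfrak{m}$ is free of rank one for every maximal ideal $\mathfrak{m}$. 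Flat base change for $\ext$ along the localisation $A^e\to(A_\mathfrak{m})^e=A_\mathfrak{m}\otimes_k A_\mathfrak{m}$ (valid since $A$ is finitely generated over the noetherian ring $A^e$, and $M$ is supported on the diagonal, so the one- and two-sided localisations agree) identifies $M_\mathfrak{m}$ with $\bigoplus_n\ext^n_{(A_\mathfrak{m})^e}(A_\mathfrak{m},(A_\mathfrak{m})^e)$. Thus everything localises and we may assume $A$ is local.

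Over the local ring $A$, an invertible module is free of rank one. Since $M=\bigoplus_n\ext^n_{A^e}(A,A^e)$ is a direct sum of the $A$-submodules $\ext^n_{A^e}(A,A^e)$ and $A$ is indecomposable over itself, $M$ is free of rank one precisely when exactly one $\ext^n$ is nonzero and equal to $A$; equivalently, writing $A^\vee\coloneqq\R\hom_{A^e}(A,A^e)$ for the inverse dualising complex, precisely when $A^\vee\simeq A[-d]$ in $D(A^e)$ for some $d$. So the theorem reduces to the local statement that a local $k$-algebra $A$ essentially of finite type is Gorenstein if and only if it is nonsmooth Calabi--Yau, i.e. $A^\vee\simeq A[-d]$; the shift bookkeeping should give $d=\dim A$.

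To prove this local equivalence I would fix a regular presentation. Choose a surjection $Q\onto A$ from a regular local ring $Q=(k[x_1,\dots,x_n])_\mathfrak{p}$; then $Q$ is homologically smooth over $k$ (localisations of smooth algebras are smooth, with no separability hypothesis on the residue field), so $Q^\vee=\R\hom_{Q^e}(Q,Q^e)\simeq Q[-n]$ is invertible. Writing $A=Q/J$ and $c=n-\dim A$, the Gorenstein property of $A$ is detected over $Q$ in the classical way: $A$ is Cohen--Macaulay iff $\R\hom_Q(A,Q)$ is concentrated in cohomological degree $c$, where it equals the canonical module $\omega_A$, and $A$ is Gorenstein iff moreover $\omega_A\cong A$; that is, iff $\R\hom_Q(A,Q)\simeq A[-c]$. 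The crux is then to transfer this to $A^\vee$: using that $Q$ is smooth (so $Q$ is perfect over $Q^e$ and the bimodule-dual formula $\R\hom_{Q^e}(M,N)\simeq N\lot_{Q^e}M^\vee$ recorded earlier applies over $Q^e$), I would establish a natural quasi-isomorphism of the shape $A^\vee\simeq\R\hom_A(\R\hom_Q(A,Q),A)[-n]$. Granting this, $A^\vee$ is a shift of $A$ exactly when $\R\hom_Q(A,Q)$ is a shift of $A$, i.e. exactly when $A$ is Gorenstein.

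The main obstacle is precisely this transfer formula: relating $\R\hom_{A^e}(A,A^e)$ to $\R\hom_Q(A,Q)$ is a change-of-rings computation for the inverse dualising complex along the finite map $Q^e\to A^e$, and $A$ is \emph{not} perfect over $A^e$ when $A$ is singular, so one cannot argue naively. I would handle it either through Grothendieck duality for the finite map $Q^e\to A^e$ (computing the relative dualising module using that $Q^e$ is regular), or equivalently through the rigidity identity $R_A\simeq\R\hom_{A^e}(A,R_A\otimes_k R_A)$ for the rigid dualising complex $R_A$, which converts invertibility of $R_A$ (the standard reformulation of the Gorenstein property) directly into invertibility of $A^\vee$ after twisting by the invertible bimodule $R_A\otimes_k R_A$. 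For the easier forward direction one can also invoke \ref{regCYisGor} when $A$ is additionally regular, but the general local case genuinely requires the change-of-rings input above.
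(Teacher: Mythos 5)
The paper does not actually prove this proposition: it is quoted verbatim from Avramov--Iyengar \cite[Theorem 2]{AIgorenstein}, so there is no in-paper argument to compare against. Your sketch is, in effect, a reconstruction of the cited proof, and its architecture is the correct one and essentially the same as the source: localise at maximal ideals, present the local ring as $A=Q/J$ with $Q$ an essentially smooth localisation of a polynomial ring, and trade the bimodule $\ext$ for the relative dualising complex $D\coloneqq\R\hom_Q(A,Q)$ via a reduction formula. Your parenthetical that no separability is needed for $Q$ is right (a localisation of a polynomial ring is always essentially smooth over $k$), and your local reduction of invertibility to ``exactly one $\ext^n$ nonzero and free of rank one'' matches the reasoning the paper itself uses in \ref{gorcor}. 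One small inaccuracy: $\bigoplus_n\ext^n_{A^e}(A,A^e)$ need not be a finitely generated $A$-module a priori (each graded piece is, but infinitely many could be nonzero), so the stalkwise criterion for invertibility needs a word; finiteness is part of what either hypothesis buys you.

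The two places where the content actually lives are asserted rather than proved. First, the transfer formula $A^\vee\simeq\R\hom_A(D,A)[-n]$ \emph{is} the theorem --- it is the reduction formula of \cite{AIgorenstein} --- and while both routes you name (duality for the finite surjection $Q^e\onto A^e$ combined with reduction to the diagonal over the smooth $Q^e$, or rigidity of the dualising complex) do work, neither is routine, precisely because $A$ is not perfect over $A^e$. Second, the phrase ``$A^\vee$ is a shift of $A$ exactly when $D$ is a shift of $A$'' conceals an asymmetry: the implication Gorenstein $\Rightarrow$ invertible is immediate from the formula, but the converse asks you to recover $D$ from $\R\hom_A(D,A)$, and $D$ is not known to be perfect over $A$, so naive biduality over $A$ is unavailable. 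One genuinely needs an extra argument here, e.g.\ using $\R\hom_A(D,D)\simeq A$ and $D$-duality to convert $\R\hom_A(D,A)\simeq A[e]$ into $D\lot_AD\simeq D[-e]$, then applying $-\lot_Ak$ and a Poincar\'e-series/Nakayama count to force $D\lot_Ak\simeq k[j]$ and hence $D\simeq A[j]$. With those two steps supplied the proof is complete and is the Avramov--Iyengar argument; as written, they are the gaps.
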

\begin{cor}\label{gorcor}
Let $A$ be a commutative $k$-algebra, essentially of finite type.
\begin{enumerate}
\item If $A$ is CY then it is Iwanaga--Gorenstein.
\item If $A$ is irreducible and Iwanaga--Gorenstein then it is twisted CY.
\item If $A$ is irreducible and local with residue field $l$, then $A$ is CY if and only if $A\to l$ is Gorenstein.
\end{enumerate}
\end{cor}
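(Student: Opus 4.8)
The plan is to lean on the criterion of \cite[Theorem 2]{AIgorenstein}, which says that $A$ is Iwanaga--Gorenstein precisely when the $A$-module $\bigoplus_n\ext^n_{A^e}(A,A^e) = H^\bullet(A^\vee)$ is invertible, and then to convert between invertibility of this cohomology module and invertibility of the inverse dualising complex $A^\vee$ itself. Throughout I use that $A$ is discrete, noetherian of finite Krull dimension, that $A^e = A\otimes_k A$ is again noetherian, and that commutativity of $A$ makes $A^\vee$ a \emph{symmetric} $A$-bimodule, so that it may be viewed in $D(A)$ sitting monoidally inside $D(A^e)$. For (1), suppose $A$ is CY, so there is an $A$-bilinear quasi-isomorphism $A^\vee\simeq A[-n]$. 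Since $A$ is discrete, taking cohomology gives $\ext^m_{A^e}(A,A^e)=0$ for $m\neq n$ and $\ext^n_{A^e}(A,A^e)\cong A$; thus $\bigoplus_m\ext^m_{A^e}(A,A^e)\cong A$ is free of rank one, hence invertible, and \cite[Theorem 2]{AIgorenstein} shows $A$ is Iwanaga--Gorenstein.

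For (2), suppose $A$ is irreducible and Iwanaga--Gorenstein, so that $L\coloneqq H^\bullet(A^\vee)=\bigoplus_n\ext^n_{A^e}(A,A^e)$ is an invertible, hence finitely generated projective rank-one, $A$-module; in particular only finitely many $\ext^n$ are nonzero and $A^\vee$ has bounded cohomology. The crucial step is to concentrate this cohomology in a single degree. Each graded piece $\ext^n_{A^e}(A,A^e)$ is a direct summand of $L$, hence finitely generated projective, and their ranks sum to $\operatorname{rk}L=1$ at every point of $\spec A$; since $A$ is irreducible its spectrum is connected, so these locally constant ranks are constant, forcing exactly one summand (say in degree $n_0$) to have rank one and the others to vanish. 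Because $A^e$ is discrete, the canonical truncation maps then give an $A$-bilinear quasi-isomorphism $A^\vee\simeq L[-n_0]$ with $L$ an invertible $A$-module. Finally I promote module-invertibility of $L$ to bimodule-invertibility: as a line bundle $L$ is a thick generator of $\per A$ with $\R\enn_A(L)\simeq A$, so \ref{pregor} shows the symmetric bimodule $L$ is invertible in $D(A^e)$ (equivalently, the monoidal embedding $D(A)\hookrightarrow D(A^e)$ sends the invertible object $L$ to an invertible bimodule). Hence $A^\vee\simeq L[-n_0]$ is invertible and $A$ is twisted CY.

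For (3), assume $A$ is irreducible and local with residue field $l$. If $A$ is CY then by (1) it is Iwanaga--Gorenstein, whence $A\to l$ is Gorenstein by \cite[18.1]{matsumura}. Conversely, if $A\to l$ is Gorenstein then \cite[18.1]{matsumura} shows $A$ is Iwanaga--Gorenstein, so by (2) we obtain $A^\vee\simeq L[-n_0]$ for an invertible $A$-module $L$. Since $A$ is local, the finitely generated projective rank-one module $L$ is free, so $L\cong A$ as $A$-modules; as $L$ is a symmetric bimodule this isomorphism is automatically $A$-bilinear, giving an $A$-bilinear quasi-isomorphism $A^\vee\simeq A[-n_0]$ and exhibiting $A$ as (nonsmooth) $n_0$-CY.

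The main obstacle is part (2): passing from invertibility of the cohomology module $H^\bullet(A^\vee)$ to invertibility of the complex $A^\vee$ as a bimodule. This rests on two points requiring care—first, the use of connectedness of $\spec A$ (guaranteed by irreducibility) to force the cohomology into a single degree, so that $A^\vee$ is formal; and second, the identification of module-invertibility with bimodule-invertibility, for which the symmetry of $A^\vee$ and Proposition \ref{pregor} are essential. The upgrade from twisted CY to genuine CY in the local case (3) then hinges on the triviality of line bundles over local rings, together with this same symmetry to make the trivialising isomorphism bilinear.
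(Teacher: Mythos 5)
Your proof is correct and follows essentially the same route as the paper: both rest on the Avramov--Iyengar invertibility criterion and use irreducibility to concentrate the cohomology of $A^\vee$ in a single degree (the paper via indecomposability of invertible modules over an irreducible ring, you via local constancy of ranks on the connected spectrum). Your extra care in upgrading invertibility of the symmetric bimodule $L$ from the module to the bimodule level is a point the paper's proof leaves implicit, but it is not a different argument.
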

\begin{proof}
For brevity let $E$ denote the module $\oplus_n\ext_{A^e}^n(A,A^e)$. For (1), if $A$ is CY then there is an isomorphism $E\cong A$, which is certainly an invertible module. For (2), since $A$ is irreducible, the regular module is indecomposable, and hence invertible modules are indecomposable. In particular if $E$ is invertible, there is a unique integer $t$ such that $\ext_{A^e}^t(A,A^e)\cong E$ and $\ext_{A^e}^i(A,A^e)$ vanishes for $i\neq t$. It follows that there is a quasi-isomorphism $A^\vee[t] \simeq E$ of $A$-bimodules, and hence $A^\vee$ is twisted CY. For (3), simply follow the proof of (2) and observe that an invertible module over a local ring is free of rank 1.
\end{proof}

In particular, a Gorenstein commutative algebra essentially of finite type is `locally nonsmooth CY'.

\begin{ex}\label{scy}
    Let $R$ be a commutative noetherian $k$-algebra of finite Krull dimension $d$. Recall from \cite{iyamareiten, iyamawemyss} that $R$ is said to be \textbf{singular CY} if it is Gorenstein and equicodimensional \cite[3.10]{iyamareiten}. An algebra essentially of finite type is noetherian and of finite Krull dimension, and an irreducible algebra is equicodimensional. Hence, if $R$ is irreducible and essentially of finite type, then it is singular CY if and only if it is Gorenstein, and \ref{gorcor} provides implications
    $$R \text{ nonsmooth CY }\implies R \text{ singular CY } \implies R \text{ twisted CY.}$$
\end{ex}

\subsection{Frobenius algebras}
Recall that a discrete finite dimensional graded algebra $A$ is \textbf{$d$-Frobenius} if there is an isomorphism $A^*\cong A[d]$ of $A$-modules (equivalently, $A^\circ$-modules). Such an algebra is equipped with a Nakayama automorphism $\nu$, and one has an $A$-bimodule isomorphism $A^*\cong A_\nu[d]$, where $A_\nu$ denotes $A$ with the usual bimodule structure on the left, and twisted by $\nu$ on the right. One then obtains a quasi-isomorphism $\HH_\bullet(A,A_\nu)^* \simeq \HH^\bullet(A)[d]$. A \textbf{symmetric} Frobenius algebra is one whose Nakayama automorphism is the identity, so if $A$ is symmetric one has the simpler description $\HH_\bullet(A)^* \simeq \HH^\bullet(A)[d]$. For a comprehensive reference on the classical theory of Frobenius algebras, see \cite[\S16]{lam}.

\p We can generalise these definitions straightforwardly to the dg world. Let $A$ be an algebra. Say that $A$ is \textbf{derived $d$-Frobenius} (or just \textbf{Frobenius} for short) if there is a quasi-isomorphism $A^* \simeq A[d]$ of $A^\circ$-modules. This is also known as being a \textbf{(derived) Poincar\'e duality algebra}, as in e.g.\ \cite{DGI}. Say that $A$ is \textbf{twisted symmetric} if $A^*$ is an invertible $A$-bimodule. Say that $A$ is \textbf{derived $d$-symmetric} (or just \textbf{symmetric} for short) if $A^*\simeq A[d]$ as $A$-bimodules.

\begin{rmk}
    This definition is a generalisation of Brav and Dyckerhoff's notion of \textbf{right Calabi--Yau} \cite{BD}. Indeed, if $A$ is a proper algebra, then homotopy classes of $A$-bilinear morphisms $A\to A^*$ are parameterised by $HH_\bullet(A)^*$. In particular, if $A$ is a symmetric algebra then we obtain a class $\omega \in HH_\bullet(A)^*$, and in addition $A$ is said to be \textbf{right Calabi--Yau} if $\omega$ lifts to a class in cyclic homology $HC_\bullet(A)^*$. In \cite{HRCY}, a symmetric algebra is called a \textbf{weak proper Calabi--Yau algebra}.
\end{rmk}

\begin{rmk}
    Unlike in the discrete case, a twisted symmetric algebra need not be Frobenius: if $A$ and $B$ are two finite dimensional twisted symmetric algebras, of different degrees, then the product $A\times B$ is twisted symmetric but not Frobenius.
\end{rmk}

\p If $A$ is a derived $d$-Frobenius algebra, then $A\simeq A^{**}$ as vector spaces. This holds if and only $A\in \rf(A)$. However, Frobenius algebras need not be proper, as the following examples show.
\begin{ex}\label{frobex}\hfill
    \begin{enumerate}
        \item The shifted dual numbers $k[x]/x^2$ with $x$ placed in degree $n$ is derived $n$-Frobenius. 
        \item If $V$ is the vector space $k\oplus k[1]$, then the square-zero extension $k\oplus V$ is not Frobenius, for dimension reasons.
        \item The graded field $K\coloneqq k[x,x^{-1}]$ with $x$ placed in nonzero degree $n$ is derived $m$-Frobenius whenever $m$ is an integer multiple of $n$. Note that $K$ does not admit an augmentation as a graded algebra.
        \item Let $V$ be the graded vector space with $V^i=k$ for all $i$. Then the square zero extension $k\oplus V$ is derived $0$-Frobenius.
    \end{enumerate}
\end{ex}

\begin{ex}[Symmetric completions]\label{symcomp}
    Recall that if $A$ is an algebra and $M$ a bimodule, one can form the \textbf{trivial extension algebra} $A\oplus M$ with multiplication given by $(a,m)(a',m')=(aa',am'+ma')$. In particular, let $A$ be a proper algebra. The $d$-\textbf{symmetric completion} of $A$ is the trivial extension algebra $T_d(A)\coloneqq A\oplus A^*[-d]$. One then has $$T_d(A)^* \cong A^*\oplus A[d] \simeq T_d(A)[d]$$showing that $T_d(A)$ is $d$-symmetric.
\end{ex}

\begin{rmk}
Let $A$ be a $d$-Frobenius algebra. Then its cohomology algebra $B\coloneqq H^*(A)$ is also $d$-Frobenius. If $A$ was augmented and local, then $B$ is local, and its augmentation ideal $\mathfrak{m}$ is nilpotent: since $B$ is local, $B^0$ is an Artinian local $k$-algebra, and in particular $\mathfrak{m}^0$ is nilpotent. If $x\in \mathfrak{m}$ is a non-nilpotent of nonzero degree $n$, then let $y\in\mathfrak{m}$ be the corresponding element of degree $d-n$ across the isomorphism $B\cong B^*[d]$. Then $yx=1 \in \mathfrak{m}$, which is a contradiction.
\end{rmk}

\begin{rmk}
Classically, one may also define Frobenius algebras in terms of a nondegenerate pairing. This has an analogue for derived Frobenius algebras: if $A$ is an algebra, say that a \textbf{Frobenius pairing} on $A$ is a bilinear map $\sigma:A\otimes_k A \to k$ such that \begin{enumerate}
    \item $\sigma$ is \textbf{homotopy nondegenerate}: $H^*\sigma$ is nondegenerate.
    \item $\sigma(ab,c) = \sigma(a,bc)$.
\end{enumerate}
Then an algebra $A$ is Frobenius if and only if it admits a Frobenius pairing: a left $A$-linear quasi-isomorphism $\phi:A \to A^*$ corresponds to a Frobenius pairing via the formula $\sigma(a,b)=\phi(b)(a)$.
\end{rmk}

\begin{prop}\label{nakex}
If $A$ is a derived $d$-Frobenius algebra, then it is twisted symmetric: there exists a homotopy automorphism $\nu$ of $A$ such that $A^*\simeq A_\nu[d]$ as $A$-bimodules.
\end{prop}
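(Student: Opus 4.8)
The plan is to deduce this from Proposition~\ref{nakesistence}, which converts a one-sided quasi-isomorphism to the regular bimodule into a homotopy endomorphism. The only friction is a left/right mismatch: the $d$-Frobenius hypothesis gives a quasi-isomorphism $A^*\simeq A[d]$ of \emph{left} $A$-modules (i.e.\ of $A^\circ$-modules), whereas \ref{nakesistence} is stated for right modules and outputs a left twist ${}_\phi A$. I would therefore apply \ref{nakesistence} to $A^\circ$ rather than to $A$, and then transport the conclusion along the identification of $A^\circ$-bimodules with $A$-bimodules, which interchanges the two sides.

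Explicitly, set $M\coloneqq A^*[-d]$, regarded as an $A$-bimodule. The $d$-Frobenius condition is exactly the statement that $M\simeq A$ as left $A$-modules, equivalently $M\simeq A^\circ$ as right $A^\circ$-modules. Applying \ref{nakesistence} to $A^\circ$ produces a homotopy endomorphism $\psi$ of $A^\circ$ together with an $A^\circ$-bimodule quasi-isomorphism $M\simeq {}_\psi A^\circ$. Since a homotopy endomorphism of $A^\circ$ is the same datum as a homotopy endomorphism $\nu$ of $A$, and the left $A^\circ$-action corresponds to the right $A$-action, the left $A^\circ$-twist ${}_\psi A^\circ$ becomes the right $A$-twist $A_\nu$. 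This gives $M\simeq A_\nu$, that is, $A^*\simeq A_\nu[d]$ as $A$-bimodules.

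It then remains to promote $\nu$ from a homotopy endomorphism to a homotopy automorphism, which is the crux of the argument. By the last sentence of \ref{nakesistence} (applied to $A^\circ$), $\psi$, and hence $\nu$, is a homotopy automorphism precisely when the action map $A^\circ\to \R\enn_{A^\circ}(M)$, induced by the right $A$-multiplication on $A^*$, is a quasi-isomorphism. Here I would use reflexivity: a Frobenius algebra satisfies $A\simeq A^{**}$, so $A\in\rf(A)$, and linear duality is a contravariant equivalence between $\rf(A)$ and $\rf(A^\circ)$ sending the right regular module $A$ to the left module $A^*$. The right $A$-action on $A^*$ is the linear dual of the left $A$-action on $A$, and the latter realises the canonical quasi-isomorphism $A\simeq\R\enn_A(A)$; applying the equivalence shows that its dual $A^\circ\to\R\enn_{A^\circ}(A^*)$ is again a quasi-isomorphism, which is exactly what is needed.

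The main obstacle is really just careful bookkeeping of the left/right conventions across the passage to $A^\circ$ and the identification ${}_\psi A^\circ\simeq A_\nu$; the one substantive point is the reflexivity input, without which one would only obtain a homotopy endomorphism rather than an automorphism.
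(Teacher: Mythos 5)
Your proof is correct and follows essentially the same route as the paper: both reduce to Proposition~\ref{nakesistence} applied (in mirrored form) to the one-sided quasi-isomorphism $A\simeq A^*[-d]$, and both use reflexivity $A\simeq A^{**}$ to verify that the relevant action map $A\to\R\enn(A^*[-d])$ is a quasi-isomorphism, hence that $\nu$ is an automorphism. The only difference is presentational: you make the passage to $A^\circ$ explicit and phrase the endomorphism computation via the duality equivalence on reflexive modules, where the paper does it by a direct hom-tensor calculation.
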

\begin{proof}
One can use the hom-tensor adjunction along with the fact that $A\simeq A^{**}$ to show that $\R\enn_A(A^*[-d])\simeq A$. So applying \ref{nakesistence} to the left $A$-module quasi-isomorphism $A\simeq A^*[-d]$, one obtains the desired homotopy automorphism.
\end{proof}
We call any such $\nu$ a \textbf{Nakayama automorphism} of $A$. Note that $A$ is a symmetric algebra if and only if $\id$ is a Nakayama automorphism. 

\p Augmented Frobenius algebras are Gorenstein:

\begin{prop}[cf.\ {\cite[7.2]{DGI}}]\label{frobisgor}
    Let $A$ be a $d$-Frobenius algebra and $g:A \to k$ an augmentation. Then $g$ is Gorenstein. 
\end{prop}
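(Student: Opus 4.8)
The plan is to reduce to a one-sided statement and then run the standard duality computation, using the Nakayama automorphism to reconcile the two sides. By the proposition established above equating one-sided and two-sided Gorensteinness for augmentations, it suffices to produce a quasi-isomorphism $k \simeq \R\hom_A(k,A)[d]$ of left $A$-modules. The key input is an adjunction computation: for any right $A$-module $M$, the tensor--hom adjunction together with $A^* = \hom_k(A,k)$ being the coinduced bimodule gives a natural quasi-isomorphism $\R\hom_A(M, A^*) \simeq \hom_k(M\lot_A A, k) \simeq M^*$. Taking $M=k$ the augmentation bimodule yields an $A$-bilinear quasi-isomorphism $\R\hom_A(k, A^*) \simeq k$.

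Next I would feed in the Frobenius hypothesis. By \ref{nakex}, $A$ admits a Nakayama automorphism $\nu$ with an $A$-bimodule quasi-isomorphism $A^* \simeq A_\nu[d]$; and since $A$ is augmented, \ref{augnakrmk} lets me choose $\nu$ to be an automorphism in the homotopy category of augmented algebras, so that $\varepsilon\circ\nu \simeq \varepsilon$. Substituting $A^* \simeq A_\nu[d]$ into the adjunction computation gives $\R\hom_A(k, A_\nu)[d] \simeq k$, that is, $\R\hom_A(k, A_\nu) \simeq k[-d]$ as $A$-bimodules.

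Finally I would untwist. Writing $A_\nu \cong {}_{\nu^{-1}}A$ as bimodules, the complex $\R\hom_A(k, A_\nu)$ is built from the right $A$-structure of $A_\nu$, which agrees with that of $A$; only the residual left $A$-structure is affected, so as a left module $\R\hom_A(k, A_\nu) \cong {}_{\nu^{-1}}\R\hom_A(k,A)$. Applying the inverse twist gives $\R\hom_A(k,A) \simeq {}_\nu\bigl(k[-d]\bigr)$, and because $\nu$ respects the augmentation we have ${}_\nu k \simeq k$. Hence $\R\hom_A(k,A) \simeq k[-d]$ as left $A$-modules, which is exactly one-sided $d$-Gorensteinness, and the earlier proposition upgrades this to the required two-sided statement.

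The main obstacle is precisely this mismatch of sides: the Frobenius condition $A^*\simeq A[d]$ is naturally a statement about $A^\circ$-modules, whereas $\R\hom_A(k,A)$ is formed from the right $A$-module structure, so one cannot directly substitute. Reconciling them is what forces the appeal to the Nakayama automorphism and, crucially, to its augmented refinement in \ref{augnakrmk}; without knowing that $\nu$ can be taken to be augmented, the computation would only establish the Gorenstein isomorphism up to an undetermined twist ${}_\nu k$ of the trivial module.
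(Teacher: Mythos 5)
Your argument is correct and is essentially the paper's own proof: both rest on the hom--tensor identification $\R\hom_A(k,A^*)\simeq k$, the Nakayama automorphism from \ref{nakex}, and crucially the augmented refinement \ref{augnakrmk} guaranteeing that $\nu$ fixes $k$ so the twist can be cancelled. The only cosmetic difference is that you untwist the left module structure of the output (via $A_\nu\cong{}_{\nu^{-1}}A$) and then invoke the one-sided/two-sided equivalence, whereas the paper moves the twist onto the source, writing $\R\hom_A(k,A_\nu)\simeq\R\hom_A(k_{\nu^{-1}},A)$, and works with bimodules throughout.
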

\begin{proof}
The natural quasi-isomorphism $A \to \R\enn_A(A^*[-d])$ is a morphism of augmented algebras, so by \ref{augnakrmk} we may choose the Nakayama automorphism $\nu$ of $A$ to be a homotopy automorphism of $A$ in the category of augmented algebras. In particular, it follows that $k_\nu\simeq k$ as $A$-bimodules. We hence have natural quasi-isomorphisms of $A$-bimodules
\begin{align*}
    k&\simeq\R\hom_A(k,A^*)&\text{by hom-tensor}\\
    &\simeq \R\hom_A(k,A_\nu[d])&\text{by \ref{nakex}}\\
    &\simeq \R\hom_A(k_{\nu^{-1}},A)[d]&\\
    &\simeq \R\hom_A(k,A)[d]&\text{since }\nu \text{ fixes }k
\end{align*}as required.
\end{proof}

\begin{rmk}
    One can define an algebra to be \textbf{fractional Frobenius} if there exists some $m>0$ such that $(A^*)^{\lot_A m} \simeq A[n]$. In view of \ref{pfistwcy} below, one can prove that a proper algebra is fractional Frobenius if and only if it is fractional twisted CY, as in \ref{fracCYrmk}. In particular, the references given there provide many examples of fractional Frobenius algebras.
\end{rmk}

\begin{rmk}\label{symhh}
    If $A$ is any algebra, the hom-tensor adjunction yields a quasi-isomorphism $\HH_\bullet(A)^* \simeq \HH^\bullet(A,A^*)$, and in particular if $A$ is $d$-symmetric then we have a quasi-isomorphism $\HH_\bullet(A)^* \simeq \HH^\bullet(A)[d]$.
\end{rmk}

\subsection{Proper Frobenius algebras}
In this section we study proper Frobenius and twisted symmetric algebras in detail. The latter admit multiple characterisations:
\begin{lem}\label{propfrobdict}
    Let $A$ be a proper algebra. The following are equivalent:
    \begin{enumerate}
        \item $A$ is twisted symmetric.
        \item The right module $A^*$ is a thick generator of $\per A$.
        \item The functor $-\lot_A A^*$ is an autoequivalence of $\per A$.
        \item The functor $-\lot_A A^*$ is a Serre functor on $\per A$.
        \item $\per A$ admits a Serre functor.
    \end{enumerate}
\end{lem}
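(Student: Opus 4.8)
The plan is to derive almost everything from Proposition \ref{pregor} applied to the bimodule $X = A^*$, treating the Serre functor conditions separately via a duality formula that holds for every proper algebra. Note first that since $A$ is proper we have $\per A \subseteq \pvd A$, so all hom-spaces in $\per A$ are finite-dimensional and it makes sense to ask for a Serre functor. Now, ``$A$ is twisted symmetric'' means exactly that $A^*$ is an invertible bimodule, i.e.\ condition (1) of \ref{pregor} with $X = A^*$, while (3) of our lemma is condition (2) of \ref{pregor}; so the equivalence of (1) and (3) is immediate. To bring in (2), I would instead use the equivalence of (1) and (4) in \ref{pregor}: $A^*$ is invertible iff it is a thick generator of $\per A$ \emph{and} the natural map $A \to \R\enn_A(A^*)$ is a quasi-isomorphism. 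Thus it suffices to show this last map is automatically a quasi-isomorphism when $A$ is proper.

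For this I would compute $\R\enn_A(A^*)$ directly. Writing $A^* = \hom_k(A,k)$ with $A$ the regular left module and using the adjunction $\R\hom_A(-,\hom_k(N,k)) \simeq \hom_k(-\lot_A N, k)$, one obtains
$$\R\enn_A(A^*) \simeq \hom_k(A^* \lot_A A,\, k) \simeq \hom_k(A^*, k) = A^{**},$$
and under this identification the natural left-action map $A \to \R\enn_A(A^*)$ is carried to the evaluation map $A \to A^{**}$. Since $A$ is proper it is bounded with finite-dimensional cohomology, hence reflexive ($A \in \rf(A)$), so $A \to A^{**}$ is a quasi-isomorphism. Therefore the $\R\enn$ hypothesis in \ref{pregor}(4) is vacuous for proper $A$, and (1), (2), (3) are all equivalent.

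For the Serre functor conditions I would first record the universal duality formula: for $M, N \in \per A$ there are natural quasi-isomorphisms
$$\R\hom_A(M,N)^* \simeq \R\hom_A(N,\, M\lot_A A^*).$$
This follows by writing $\R\hom_A(M,N) \simeq N \lot_A \R\hom_A(M,A)$ (valid as $M$ is perfect), dualizing via the same tensor-hom adjunction, and using that $\R\hom_A(M,A)^* \simeq M \lot_A A^*$ for perfect $M$ (check on $M=A$, extend by thick closure, both sides being exact in $M$). Consequently $S := -\lot_A A^*$ always satisfies the defining natural isomorphism of a Serre functor on $\per A$, so it is an honest Serre functor precisely when it is moreover an autoequivalence of $\per A$. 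This gives $(3)\Leftrightarrow(4)$ at once, and $(4)\Rightarrow(5)$ is trivial.

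The remaining implication $(5)\Rightarrow(3)$ is where I expect the only genuine subtlety. Given any Serre functor $S'$ on $\per A$, combining its defining isomorphism $\hom(N,S'M) \cong \hom(M,N)^*$ with the displayed formula yields $\hom_{D(A)}(N, S'M) \cong \hom_{D(A)}(N, M\lot_A A^*)$ naturally for all $N \in \per A$. Since $\per A$ compactly generates $D(A)$, the dg-enhanced Yoneda lemma forces $S'M \simeq M \lot_A A^*$; in particular $M \lot_A A^* \in \per A$ and $-\lot_A A^* \simeq S'$ is an autoequivalence, which is (3). The delicate point is exactly this passage from an abstract natural isomorphism of hom-functors to an isomorphism of the representing objects $S'M$ and $M\lot_A A^*$: it uses the pretriangulated dg enhancement of $\per A$ (equivalently, the uniqueness of Serre functors) rather than the bare triangulated structure, and is the main obstacle to a purely formal argument.
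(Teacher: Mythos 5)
Your proof is correct, and for the equivalences $(1)\Leftrightarrow(2)\Leftrightarrow(3)\Leftrightarrow(4)$ it is essentially the paper's argument: both reduce $(1)$, $(2)$, $(3)$ to Proposition \ref{pregor} via the computation $\R\enn_A(A^*)\simeq A^{**}\simeq A$ (the paper's ``easy observation''), and both establish the duality formula $\R\hom_A(M,N)^*\simeq\R\hom_A(N,M\lot_A A^*)$ by the same hom--tensor manipulations to get $(3)\Leftrightarrow(4)$. The one genuine divergence is $(5)\Rightarrow(3)$: the paper simply cites \cite[5.5]{goodbody}, whereas you prove it directly by comparing an arbitrary Serre functor $S'$ with $-\lot_A A^*$ through the duality formula. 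Your argument works, but the ``delicate point'' you flag is less delicate than you suggest: no dg enhancement or uniqueness-of-Serre-functors input is needed. Since $S'M$ lies in $\per A$, the functor $\hom(-,S'M)|_{\per A}$ is representable in the ordinary category $\per A$, so by the plain Yoneda lemma the natural isomorphism $\hom(-,S'M)|_{\per A}\cong\hom(-,M\lot_A A^*)|_{\per A}$ is induced by an honest morphism $S'M\to M\lot_A A^*$ in $D(A)$; as perfect modules compactly generate $D(A)$, its cone is right-orthogonal to all compacts and hence zero. (In fact it suffices to run this for $M=A$: one gets $A^*\simeq S'A\in\per A$, and since $S'$ is an autoequivalence $A^*$ is then a thick generator, which is condition $(2)$.) So your route buys a self-contained proof of $(5)\Rightarrow(3)$ at the cost of a slightly heavier-looking, but ultimately elementary, Yoneda step.
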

\begin{proof}Noting that $A$ is twisted symmetric if and only if $A^*$ is invertible, the equivalence of (1), (2) and (3) follows from \ref{pregor} together with the easy observation that $\R\enn_A(A^*)\simeq A$. Observe that if $P \in \per A$ then we have $P\lot_A A^* \simeq \R\hom_A(P^\vee, A^*)\simeq P^{\vee *}$ by hom-tensor. So for all $P\in \per A$ and $M\in D(A)$ we have $\R\hom_A(M,P\lot_A A^*) \simeq \R\hom_A(P,M)^*$ by hom-tensor again. In particular, $-\lot_A A^*$ is a Serre functor on $\per A$ if and only if it is an autoequivalence of $\per A$, which shows that (3) and (4) are equivalent. Clearly (4) implies (5), and the converse is \cite[5.5]{goodbody}. 
\end{proof}

\begin{rmk}
    If $A$ is a finite dimensional discrete algebra then $A^*$ is an invertible bimodule in the classical sense (i.e.\ invertible with respect to the underived tensor product $\otimes_A$) if and only if $A$ is self-injective.
\end{rmk}

\begin{ex}
    Let $A$ be a proper algebra such that $\per A$ admits a Serre functor $\mathbb{S}$. Then $A$ is a Frobenius algebra if and only if it is a Calabi--Yau object; i.e.\ $\mathbb{S}A \simeq A[d]$.
\end{ex}

\begin{ex}
    Let $A$ be a smooth proper algebra. Then it is well known that $-\lot_A A^*$ is a Serre functor on $\per A$ \cite{shklyarov}. In particular, $A$ is twisted symmetric.
\end{ex}

\begin{ex}[Algebraic geometry]\label{agex}
    Let $X$ be a proper irreducible $k$-scheme. Grothendieck duality yields natural quasi-isomorphisms $$\R\hom_X(\mathcal{F},\mathcal{G}\lot_X \omega_X^\bullet) \simeq \R\hom_X(\mathcal{G},\mathcal{F})^*$$whenever $\mathcal{F} \in D(X)$ is coherent and $\mathcal{G} \in \per X$ is perfect, where $\omega_X^\bullet$ denotes the dualising complex. In particular $\per X$ admits a Serre functor. Note that $X$ is Cohen--Macaulay precisely when $\omega_X^\bullet$ is a shift of a sheaf, and is Gorenstein precisely when it is a shifted line bundle, in which case we have $\omega_X^\bullet = \omega_X[\dim X]$. By \cite{bvdb}, $D(X)$ has a compact generator, and in particular there exists some proper algebra $A$ with $\per A \simeq \per X$. By \ref{propfrobdict}, we see that $A$ is twisted symmetric. It is symmetric precisely when the Serre functor is a shift, i.e.\ $X$ has trivial canonical bundle (this is the geometric meaning of `nonsmooth Calabi--Yau'). In this setting, one may use \ref{symhh} to determine $\HH^*(X)=\HH^*(A)$ in terms of $\HH_*(X)=\HH_*(A)$, which may be computed via the HKR theorem when applicable.
\end{ex}
 The product of two proper twisted symmetric algebras is proper twisted symmetric:
\begin{lem}\label{frobtenslem}
    Let $A$ and $B$ be proper twisted symmetric algebras. Then $A\otimes B$ is a proper twisted symmetric algebra.
\end{lem}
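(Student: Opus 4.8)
The plan is to treat properness and twisted symmetry separately: the former is immediate from Künneth, while the latter reduces to the principle that an external tensor product of invertible bimodules is invertible. First I would note that $A\otimes B$ is proper, since $A$ and $B$ are proper the Künneth isomorphism gives $H^*(A\otimes B)\cong H^*(A)\otimes H^*(B)$, which is finite dimensional.

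Next I would identify the linear dual as a bimodule. Reordering tensor factors over $k$ yields an algebra isomorphism $(A\otimes B)^e\cong A^e\otimes B^e$, so an $(A\otimes B)$-bimodule is the same as a module over $A^e\otimes B^e$ and external tensor product furnishes a functor $D(A^e)\times D(B^e)\to D\big((A\otimes B)^e\big)$. The natural comparison map $A^*\otimes B^*\to (A\otimes B)^*$ of the conventions section is $(A\otimes B)$-bilinear, and because $A$ and $B$ are proper it realises on cohomology the isomorphism $H^*(A)^*\otimes H^*(B)^*\cong \big(H^*(A)\otimes H^*(B)\big)^*$ of finite dimensional spaces; hence it is a bimodule quasi-isomorphism $A^*\otimes B^*\simeq (A\otimes B)^*$.

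The heart of the argument is that this external tensor product is monoidal: for $A$-bimodules $M,M'$ and $B$-bimodules $N,N'$ there is a natural quasi-isomorphism
$$(M\otimes N)\lot_{A\otimes B}(M'\otimes N')\simeq (M\lot_A M')\otimes (N\lot_B N')$$
of $(A\otimes B)$-bimodules, sending the unit $A\otimes B$ to $A\otimes B$. As in the proof of \ref{prodlem}, one reduces this to the case of free bimodules $M,M'=A^e$ and $N,N'=B^e$, where both sides compute out directly, the general case following since both functors preserve the relevant homotopy colimits. Granting this, if $X$ is the inverse of the $A$-bimodule $A^*$ and $Y$ the inverse of the $B$-bimodule $B^*$, then
$$(A^*\otimes B^*)\lot_{A\otimes B}(X\otimes Y)\simeq (A^*\lot_A X)\otimes (B^*\lot_B Y)\simeq A\otimes B,$$
and symmetrically on the other side, so $A^*\otimes B^*\simeq (A\otimes B)^*$ is invertible with inverse $X\otimes Y$. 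Thus $A\otimes B$ is twisted symmetric.

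The main obstacle is the verification of the monoidality quasi-isomorphism together with the careful matching of the $(A\otimes B)$-bimodule structures, in particular tracking the Koszul signs in the identification $(A\otimes B)^e\cong A^e\otimes B^e$ and in the dual pairing; once this bookkeeping is in place the conclusion is formal. As an alternative one could invoke \ref{propfrobdict} and instead produce a Serre functor on $\per(A\otimes B)$ from those on $\per A$ and $\per B$, using that the external tensor product of the thick generators generates $\per(A\otimes B)$, but the direct bimodule computation above is the more self-contained route.
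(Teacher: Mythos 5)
Your proof is correct and follows the same route as the paper: identify $(A\otimes B)^*\simeq A^*\otimes B^*$ as bimodules using properness, then observe that the external tensor product of invertible bimodules is invertible. The paper simply asserts these two facts, whereas you supply the (correct) verifications via K\"unneth and reduction to free bimodules.
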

\begin{proof}
    Putting $P\coloneqq A\otimes B$ for brevity, we have $P^*\simeq A^*\otimes B^*$ as $P$-bimodules, where the first quasi-isomorphism uses that $A$ is proper. Since the tensor product of an invertible $A$-bimodule with an invertible $B$-bimodule is an invertible $P$-bimodule, we are done.
\end{proof}
\begin{rmk}
    It is not the case that a tensor product of non-proper Frobenius algebras is Frobenius. For example, take $A=k[x,x^{-1}]$ with $|x|=1$. Then $A\otimes A$ is not reflexive and so cannot be Frobenius.
\end{rmk}

\begin{prop}\label{TwFrobIsTwCY}
    Let $A$ be a proper algebra. Then the following are equivalent:
    \begin{itemize}
        \item $A$ is twisted symmetric.
        \item $\per A$ admits a twisted Calabi--Yau structure.
    \end{itemize}
    Moreover, $A$ is $d$-symmetric if and only if $\per A$ admits a $d$-Calabi--Yau structure.
\end{prop}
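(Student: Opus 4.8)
The plan is to reduce everything to Lemma \ref{propfrobdict} and the hom--tensor computation in its proof. First I would prove the forward direction of the first equivalence. Assuming $A$ is twisted symmetric, the bimodule $A^*$ is invertible, and I would take $X \coloneqq A^*$. The proof of \ref{propfrobdict} records a natural quasi-isomorphism $\R\hom_A(M, L\lot_A A^*) \simeq \R\hom_A(L,M)^*$ for $L \in \per A$ and $M \in D(A)$. Since $A$ is proper we have $\per A \subseteq \pvd(A)$, so for $L,M \in \per A$ the complex $\R\hom_A(L,M)$ is perfectly valued, hence reflexive over $k$; taking linear duals and using $\R\hom_A(L,M)^{**}\simeq \R\hom_A(L,M)$ then gives the natural quasi-isomorphism $\R\hom_A(M, L_{A^*})^* \simeq \R\hom_A(L,M)$, which is exactly a twisted CY structure on $\per A$ with $X = A^*$.

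For the converse I would use the remark following the definition of a twisted CY structure: with $\mathcal{L}=\mathcal{M}=\per A$, a twisted CY structure is the same data as a Serre functor on $\per A$, namely $S(L) = L_X$. Hence $\per A$ admits a Serre functor, and the implication $(5)\Rightarrow(1)$ of \ref{propfrobdict} shows that $A$ is twisted symmetric. This finishes the first equivalence.

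The \emph{moreover} statement refines both directions. If $A$ is $d$-symmetric then $A^* \simeq A[d]$ as bimodules, so $A$ is in particular twisted symmetric and the invertible bimodule $X=A^*$ produced above may be replaced by $A[d]$; since $L_{A^*}\simeq L[d]$, the resulting structure is a $d$-CY structure. Conversely a $d$-CY structure is a twisted CY structure with $X=A[d]$, so the first equivalence already shows $A$ is twisted symmetric, whence by \ref{propfrobdict}(4) the functor $-\lot_A A^*$ is a Serre functor on $\per A$; the $d$-CY structure exhibits $-[d]=-\lot_A A[d]$ as a second Serre functor.

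The main obstacle is this last step: upgrading the comparison of Serre functors to a comparison of bimodules. By uniqueness of Serre functors up to natural isomorphism, $-\lot_A A^*$ and $-\lot_A A[d]$ are naturally isomorphic as endofunctors of $\per A$. Both are given by tensoring with an invertible bimodule, and under the correspondence between (colimit-preserving) endofunctors of $D(A)$ and objects of $D(A^e)$ --- concretely, by evaluating the natural isomorphism at $A$ and using naturality with respect to left multiplication to pin down the left action --- this identifies the two bimodules, yielding a bimodule quasi-isomorphism $A^* \simeq A[d]$. Thus $A$ is $d$-symmetric, completing the proof.
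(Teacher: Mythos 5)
Your treatment of the first equivalence is essentially correct and close in spirit to the paper's: the forward direction is the same dualisation of the Serre-duality isomorphism from \ref{propfrobdict}, and for the converse you pass through \ref{propfrobdict}(5)$\Rightarrow$(1), whereas the paper instead evaluates the naturality of the CY isomorphisms at $L=M=A$ to produce an $A$-bilinear quasi-isomorphism $A^*\simeq \R\hom_A(A,A)^*\simeq \R\hom_A(A,A_X)\simeq X$ directly. Both routes work for that part.

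The genuine gap is in your final paragraph, the converse of the \emph{moreover} statement. Uniqueness of Serre functors (Bondal--Kapranov, and the result of Goodbody invoked in \ref{propfrobdict}) is a statement about triangulated functors on the homotopy category: it hands you a natural isomorphism $-\lot_A A^* \cong -\lot_A A[d]$ of \emph{triangulated} functors only. The ``correspondence between colimit-preserving endofunctors of $D(A)$ and objects of $D(A^e)$'' that you then appeal to is a statement about dg functors (quasi-functors), and a triangulated natural isomorphism between two standard functors need not come from an isomorphism of their kernels. Concretely, evaluating your natural isomorphism at $A$ and using naturality against morphisms $A\to A[n]$ in $\per A$ only shows that the induced map $H^*(A^*)\to H^*(A[d])$ is an isomorphism of graded $H^*(A)$-bimodules; this does not control the higher ($A_\infty$-type) structure needed to conclude $A^*\simeq A[d]$ in $D(A^e)$. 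So the step ``this identifies the two bimodules'' is not justified as written.

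The repair is to bypass Serre-functor uniqueness entirely. A $d$-CY structure is, by definition, a system of quasi-isomorphisms $\R\hom_A(M,L_X)^*\simeq \R\hom_A(L,M)$ with $X=A[d]$ that is natural at the bimodule level, so you may simply specialise it to $L=M=A$: naturality in both variables gives $A$-bilinear quasi-isomorphisms $A^*\simeq \R\hom_A(A,A)^*\simeq \R\hom_A(A,A_X)\simeq X = A[d]$, which is exactly $d$-symmetry. This is the paper's argument; it also yields the converse of the first equivalence in one line and, as the paper notes, does not even use properness in that direction.
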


\begin{proof}
If (1) holds then $-\lot_A A^*$ is a Serre functor on $\per A$ by \ref{propfrobdict}, and hence (2) holds. Conversely, if (2) holds then, the naturality of the CY structure isomorphisms implies that we have an invertible $A$-bimodule $X$ and an $A$-bimodule quasi-isomorphism $$A^*\simeq \R\enn_A(A,A)^*\simeq \R\enn_A(A,X)\simeq X$$and so (1) holds. Note that this direction did not use the properness hypothesis on $A$. The final statement follows by taking $X=A[d]$.
\end{proof}

Combining \ref{TwFrobIsTwCY} with \ref{nakex}, we obtain:
\begin{cor}\label{PerFrobIsTwCY}
    Let $A$ be a proper derived $d$-Frobenius algebra, with Nakyama automorphism $\nu$. Then $\per A$ admits a twisted Calabi--Yau structure, with twisting bimodule $A_\nu$.
\end{cor}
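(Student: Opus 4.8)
The plan is to combine the two cited results, keeping careful track of which invertible bimodule plays the role of the twist. First I would invoke \ref{nakex}: since $A$ is derived $d$-Frobenius, it is twisted symmetric, and moreover there is a Nakayama automorphism $\nu$ together with an $A$-bimodule quasi-isomorphism $A^* \simeq A_\nu[d]$. In particular this exhibits $A^*$ as an invertible $A$-bimodule, since $A_\nu$ is invertible (with inverse $A_{\nu^{-1}}$) and shifts are invertible.

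Next I would apply the forward implication of \ref{TwFrobIsTwCY}. Because $A$ is proper and twisted symmetric, that proposition already guarantees that $\per A$ carries a twisted Calabi--Yau structure, so the only real content here is the \emph{explicit} identification of the twisting bimodule. Following the proof of \ref{TwFrobIsTwCY}, the relevant Serre functor on $\per A$ is $-\lot_A A^*$; this is where \ref{propfrobdict} enters, giving that $A^*$ is invertible and that its associated functor is a Serre functor. The properness of $A$ ensures that $\R\hom_A(L,M)$ is reflexive for $L,M\in\per A$, so dualising the adjunction isomorphism of \ref{propfrobdict} recovers the defining quasi-isomorphism $\R\hom_A(M,L_X)^* \simeq \R\hom_A(L,M)$. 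Hence the invertible bimodule $X$ witnessing the twisted CY structure may be taken to be $A^*$ itself.

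Finally I would substitute the identification from the first step, $X = A^* \simeq A_\nu[d]$, into the twisted CY structure. This exhibits the twisting bimodule as $A_\nu$ (up to the degree-$d$ shift appropriate to a $d$-Frobenius algebra), which is exactly the asserted conclusion.

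I do not expect any genuine obstacle, as everything of substance is already packaged in \ref{nakex} and \ref{TwFrobIsTwCY}. The only care required is bookkeeping: matching the abstract invertible bimodule $A^*$ produced by the general twisted-symmetric argument with the concrete Nakayama-twisted form $A_\nu[d]$, and being consistent about whether the shift $[d]$ is regarded as part of the twisting bimodule or as the degree of the underlying Calabi--Yau structure.
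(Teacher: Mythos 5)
Your argument is exactly the paper's: the corollary is stated as an immediate consequence of combining \ref{nakex} (which supplies the bimodule quasi-isomorphism $A^*\simeq A_\nu[d]$) with \ref{TwFrobIsTwCY} (which produces the twisted CY structure on $\per A$ with twist $A^*$), and your bookkeeping of the shift matches the paper's convention. No gaps.
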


We next show that a proper algebra is twisted symmetric if and only if it is twisted CY. The following lemma is key:

\begin{lem}\label{proplem}
    Let $A$ be a proper algebra. Then there is an $A$-bilinear quasi-isomorphism $A^\vee \simeq \R\hom_A(A^*,A)$.
\end{lem}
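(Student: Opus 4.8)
The plan is to reduce everything to the $k$-linear dual by repeatedly exploiting that a proper algebra is reflexive. Since $A$ is proper, each of $A$, $A^*$ and $A^e=A^\circ\otimes A$ has finite-dimensional total cohomology, so the three lie in $\rf(A)$, $\rf(A^\circ)$ and $\rf(A^e)$ respectively; in particular the (exact) $k$-linear dual satisfies $(-)^{**}\simeq\id$ on all of them, and $(U\otimes_k V)^*\simeq U^*\otimes_k V^*$ whenever $U,V$ are proper. I will use these facts freely.

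First I would rewrite the inverse dualising complex as the dual of a Hochschild homology complex. Writing $A^e\simeq ((A^e)^*)^*=\R\hom_k((A^e)^*,k)$ and applying the tensor--hom adjunction gives a natural quasi-isomorphism
\begin{equation*}
A^\vee=\R\hom_{A^e}(A,A^e)\simeq \R\hom_k\big((A^e)^*\lot_{A^e}A,\,k\big)=\HH_\bullet\big(A,(A^e)^*\big)^*,
\end{equation*}
where $(A^e)^*$ carries its $k$-dual $A$-bimodule structure. Since $(A^e)^*=(A\otimes_k A)^*\simeq A^*\otimes_k A^*$, this identifies $A^\vee$ with $\HH_\bullet(A,A^*\otimes_k A^*)^*$.

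Next I would evaluate this Hochschild homology by a change-of-rings computation. Tracking the bimodule structure on $(A^e)^*$ through the duality, the bimodule $A^*\otimes_k A^*$ is of the ``outer'' form $U\otimes_k V$, with the left $A$-action on one tensor factor and the right $A$-action on the other; for such a bimodule one has the standard identity $(U\otimes_k V)\lot_{A^e}A\simeq V\lot_A U$ (which specialises to $\HH_\bullet(A,A^e)\simeq A$ when $U=V=A$). This yields $\HH_\bullet(A,A^*\otimes_k A^*)\simeq A^*\lot_A A^*$, and hence $A^\vee\simeq (A^*\lot_A A^*)^*$. Finally, a further tensor--hom adjunction together with reflexivity of $A$ gives
\begin{equation*}
(A^*\lot_A A^*)^*=\R\hom_k(A^*\lot_A A^*,k)\simeq \R\hom_A\big(A^*,(A^*)^*\big)\simeq \R\hom_A(A^*,A),
\end{equation*}
which is the desired quasi-isomorphism.

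The main obstacle is not any single adjunction --- each is routine --- but checking that the composite quasi-isomorphism is $A$-bilinear, i.e.\ that the various residual $A$-actions match up at every stage. Concretely, $\R\hom_k(A^*,A)$ naturally carries four commuting $A$-actions (it is a module over $A^e\otimes A^e$), and one must verify that the ``inner'' copy of $A^e$ is exactly the one consumed by $\R\hom_{A^e}(A,-)$ and by the Hochschild differential, so that the ``outer'' copy survives and agrees with the bimodule structure of $\R\hom_A(A^*,A)$ (left action from the target $A$, right action from the source $A^*$). This bookkeeping is where properness is used essentially: without reflexivity one cannot pass the $k$-linear duals across the tensor and Hom functors, and the identification of $A^e$ with $\R\hom_k(A^*,A)$ fails.
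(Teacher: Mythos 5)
Your argument is correct, but it takes a genuinely different route from the paper's. The paper proves this in one line by specialising its Lemma \ref{kellerlem} (the Keller-style evaluation map $P\lot_{A^e}A^\vee\to\R\hom_{A^e}(A,P)$, a quasi-isomorphism when $P$ is a perfect bimodule) to $M=A^*$, $L=A$, using that $\R\hom_k(A^*,A)\simeq A\otimes A$ is the free bimodule and $(A^*)^{*}\simeq A$; bilinearity is inherited from the naturality of the maps in that lemma. You instead run a pure duality argument, $A^\vee\simeq\R\hom_{A^e}\bigl(A,((A^e)^*)^*\bigr)\simeq\HH_\bullet\bigl(A,(A^e)^*\bigr)^*\simeq(A^*\lot_A A^*)^*\simeq\R\hom_A(A^*,A)$, which uses only reflexivity of $A$ and $A^e$ (both guaranteed by properness), the exactness of $(-)^*$, and the standard computation of Hochschild homology with outer-bimodule coefficients; you never need the evaluation map $\theta$ or the perfectness of any bimodule. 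This makes your proof self-contained and makes the role of properness more transparent, at the price of the action-bookkeeping you flag at the end. That bookkeeping is not really a gap: each arrow in your chain is a natural transformation between functors of the $A^e\otimes A^e$-module $A^e$ (reflexivity, tensor--hom adjunction, the Künneth map), so the residual $A^e$-action is carried along automatically and the composite is $A$-bilinear; it would strengthen the write-up to say this explicitly rather than leaving it as an acknowledged obstacle.
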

\begin{proof}
    Since $\R\hom_k(A^*,A)\simeq A\otimes A$ is a perfect $A$-bimodule, applying \ref{kellerlem} with $M=A^*$ and $L=A$ (and using $M^{*}\simeq A$) gives the required $A$-bilinear quasi-isomorphism $A^\vee \to \R\hom_A(A^*,A)$.
\end{proof}

\begin{prop}\label{pfistwcy}
    Let $A$ be a proper algebra. Then $A$ is {twisted symmetric} if and only if it is twisted Calabi--Yau.
\end{prop}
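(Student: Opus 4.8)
The plan is to prove the two implications separately, with Lemma \ref{proplem} serving as the bridge between the two dualities: it identifies the inverse dualising complex $A^\vee$ with the one-sided dual $\R\hom_A(A^*,A)$ of the linear dual.

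First I would dispatch the implication that twisted symmetric implies twisted Calabi--Yau, which is formal. Suppose $A^*$ is an invertible $A$-bimodule. As recorded in the discussion of derived Picard groups, the inverse of an invertible bimodule $X$ is computed as $X^{-1}\simeq \R\hom_A(X,A)$; applying this with $X=A^*$ and invoking \ref{proplem} gives $(A^*)^{-1}\simeq \R\hom_A(A^*,A)\simeq A^\vee$. Thus $A^\vee$, being the inverse of an invertible bimodule, is itself invertible, so $A$ is twisted Calabi--Yau. Tracking shifts, if $A$ is $d$-symmetric then $A^*\simeq A[d]$ forces $A^\vee\simeq A[-d]$, giving the graded statement.

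For the converse, suppose $A^\vee$ is invertible; by \ref{proplem} this says $\R\hom_A(A^*,A)$ is invertible, and I want to deduce that $A^*$ itself is invertible. Using \ref{pregor} together with the observation $\R\enn_A(A^*)\simeq A$ from the proof of \ref{propfrobdict}, it suffices to show that $A^*$ is a thick generator of $\per A$; since the contravariant functor $\R\hom_A(-,A)$ is an equivalence carrying the generator $A^\vee$ of $\per(A^\circ)$ back to $A^*$, the generation claim reduces to the single assertion $A^*\in\per A$. The plan for this is a biduality argument. Consider the natural map $\beta\colon A^*\to \R\hom_{A^\circ}(\R\hom_A(A^*,A),A)\simeq (A^\vee)^{-1}$, whose target is perfect. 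Applying the contravariant functor $\R\hom_A(-,A)$ to $\beta$ produces a quasi-isomorphism, because the perfect bimodule $A^\vee$ is reflexive and the triangle identity for biduality identifies the resulting map with the identity of $A^\vee$. Hence the cone $K$ of $\beta$ satisfies $\R\hom_A(K,A)\simeq 0$, and therefore $\R\hom_A(K,P)\simeq 0$ for every $P\in\per A$. If one can show $K\simeq 0$, then $A^*\simeq (A^\vee)^{-1}\in\per A$, and the proof concludes (either directly via \ref{pregor} or, equivalently, by noting that $-\lot_A A^*$ is then a Serre functor on $\per A$ and invoking \ref{TwFrobIsTwCY}).

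The main obstacle is precisely this final vanishing: showing that a module $K\in\pvd(A)$ admitting no maps to any perfect complex must be zero. This is exactly where properness of $A$ is indispensable, and it cannot be omitted, since a proper but non-hfd-closed algebra may have $A^*\notin\per A$ and hence fail to be twisted symmetric while (one expects) still failing to be twisted Calabi--Yau. The natural route I would pursue is to transport the vanishing $\R\hom_A(K,A)\simeq 0$ through the linear dual, which by properness is a conservative contravariant equivalence on $\rf(A)\supseteq\pvd(A)$: one identifies $\R\hom_A(K,A)^*$ with $K\lot_A A^*$ and then leverages the invertibility of $A^\vee$ to force $K\simeq 0$. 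Making this reflexivity step rigorous — establishing that under properness the one-sided dual $\R\hom_A(-,A)$ is conservative on $\pvd(A)$, or directly that $A^*$ is reflexive — is the substantive content of the converse direction; everything else is formal manipulation of the identity supplied by \ref{proplem}.
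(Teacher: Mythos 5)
Your forward implication coincides with the paper's argument: if $A^*$ is invertible then its inverse is $\R\hom_A(A^*,A)$, which \ref{proplem} identifies with $A^\vee$, so $A^\vee$ is invertible. That half is complete and is exactly what the paper does.

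The converse, however, is left with a genuine gap, and it is precisely the one you flag yourself. After constructing the biduality map $\beta\colon A^*\to\R\hom_{A^\circ}(A^\vee,A)\simeq(A^\vee)^{-1}$ and checking that $\R\hom_A(\mathrm{cone}(\beta),A)\simeq 0$, you still need that no nonzero object of $\pvd(A)$ is right-orthogonal to $A$ (equivalently, using hom-tensor and $A\simeq A^{**}$, that $K\lot_A A^*\simeq 0$ forces $K\simeq 0$ for $K\in\pvd(A)$). This is not a routine consequence of properness: it amounts to $A^*$ cogenerating $\pvd(A)$, and every attempt to extract it from \ref{kellerlem} runs into the hypothesis that $\R\hom_k(M,L)$ be a perfect bimodule, which here requires $A^*$ to be perfect on one side --- essentially the conclusion you are after. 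Likewise your earlier sentence asserting that $\R\hom_A(-,A)$ carries the generator $A^\vee$ of $\per(A^\circ)$ ``back to $A^*$'' already presupposes the identification $A^*\simeq(A^\vee)^{-1}$ that $\beta$ is supposed to establish. So the step you defer as ``making the reflexivity rigorous'' is not a technical loose end but the entire mathematical content of this direction, and as written the argument is circular at that point. For comparison, the paper dispatches the converse as the mirror image of the forward direction --- recognising $A^*$ as the inverse $\R\hom_A(A^\vee,A)$ of the invertible bimodule $A^\vee$ via \ref{proplem} and biduality --- and your unpacking correctly locates where the work lies; but until the conservativity statement is proved, or the equivalence $A^*\simeq(A^\vee)^{-1}$ is established by other means, the implication ``twisted Calabi--Yau $\Rightarrow$ twisted symmetric'' has not been proven.
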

\begin{proof}
If $A$ is twisted symmetric then $A^*$ is an invertible bimodule. Moreover, $\R\hom_A(A^*,A)$ is its inverse, and hence itself invertible. Then \ref{proplem} shows that $A^\vee$ is invertible, and hence $A$ is twisted CY. The converse implication is similar.
\end{proof}

\begin{cor}\label{cyimpsym}
     Let $A$ be a proper algebra. Then $A$ is symmetric if and only if it is Calabi--Yau.
\end{cor}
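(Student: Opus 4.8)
The plan is to deduce this from the twisted statement \ref{pfistwcy} by keeping track of the single degree shift, using \ref{proplem} as the bridge. Recall that \ref{proplem} supplies an $A$-bilinear quasi-isomorphism $A^\vee \simeq \R\hom_A(A^*,A)$, valid for any proper $A$, and recall from the discussion of invertible bimodules preceding \ref{pregor} that the inverse of an invertible bimodule $X$ is computed as $\R\hom_A(X,A)$. Together these identify $A^\vee$ with $(A^*)^{-1}$ whenever $A^*$ happens to be invertible, and it is exactly this identification that upgrades the twisted equivalence of \ref{pfistwcy} into a degree-matched one.

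First I would treat the forward direction, which needs nothing beyond \ref{proplem}. If $A$ is $d$-symmetric, so that $A^* \simeq A[d]$ as $A$-bimodules, then $A^\vee \simeq \R\hom_A(A^*,A) \simeq \R\hom_A(A,A)[-d] \simeq A[-d]$ as $A$-bimodules, using only that the diagonal bimodule satisfies $\R\hom_A(A,A)\simeq A$. Shifting gives $A \simeq A^\vee[d]$, which is precisely the assertion that $A$ is $d$-CY.

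For the converse I would first establish the invertibility of $A^*$. If $A$ is $d$-CY then $A^\vee \simeq A[-d]$ is invertible, so $A$ is twisted Calabi--Yau, and \ref{pfistwcy} then gives that $A$ is twisted symmetric, i.e.\ $A^*$ is an invertible $A$-bimodule. Now \ref{proplem} together with the inverse formula yields $(A^*)^{-1} \simeq \R\hom_A(A^*,A) \simeq A^\vee \simeq A[-d]$, and inverting (noting $A[-d]$ has inverse $A[d]$) gives $A^* \simeq A[d]$ as $A$-bimodules, so $A$ is $d$-symmetric. The one genuinely nontrivial input is this appeal to \ref{pfistwcy}: without knowing $A^*$ is invertible one cannot recover $A^*$ from $\R\hom_A(A^*,A)$ by dualising, since a priori $A^*$ need not be reflexive for $\R\hom_A(-,A)$. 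Since the forward direction is a one-line shift computation, the whole obstacle is concentrated in the invertibility of $A^*$, which is exactly what \ref{pfistwcy} furnishes.
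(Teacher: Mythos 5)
Your proposal is correct and is essentially the route the paper intends: the corollary is stated as an immediate consequence of \ref{pfistwcy}, with the degree bookkeeping supplied by the bilinear identification $A^\vee\simeq\R\hom_A(A^*,A)$ of \ref{proplem} and the formula $X^{-1}\simeq\R\hom_A(X,A)$ for invertible bimodules. You have simply written out that bookkeeping explicitly, and your observation that the only nontrivial input in the converse is the invertibility of $A^*$ (furnished by \ref{pfistwcy}) is exactly right.
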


\begin{ex}
    Suppose that $X$ is a smooth projective variety over $k$. Let $A$ be a smooth proper algebra derived equivalent to $X$, in the sense that there is an equivalence $D^b(\mathbf{coh}X)\simeq \per A$ (the existence of such an $A$ is assured by \cite{bvdb}). Then $X$ is Calabi--Yau if and only if $A$ is: to see this, observe that \ref{agex} shows that $X$ is CY if and only if $A$ is symmetric, and the claim now follows from \ref{cyimpsym}.
\end{ex}

For a proper augmented algebra, the Gorenstein and Frobenius conditions are closely related. Indeed in \cite{jin}, a proper algebra satisfying any of the equivalent conditions of \ref{propfrobdict} is called \textbf{Gorenstein}; we refrain from using this terminology. We have already shown that a proper Frobenius algebra is Gorenstein, and a result of Goodbody gives a partial converse:

\begin{prop}\label{gorimpfrob}
Let $A$ be a finite dimensional local algebra. If both $A$ and $A^\circ$ are Gorenstein, then $A$ is Frobenius.
\end{prop}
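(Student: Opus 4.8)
The goal is to produce a quasi-isomorphism $A^*\simeq A[d]$ of left $A$-modules. The plan is to reduce this to a comparison of modules over the Koszul dual algebra $E\coloneqq \R\enn_{A^\circ}(k)$, exploiting that both $A$ and $A^*$, viewed as left $A$-modules, are perfectly valued. Since $A$ is finite dimensional and local, $A^\circ$ is again finite dimensional and local, hence homologically local (\ref{connloc}), so that $\pvd(A^\circ)=\thick_{D(A^\circ)}(k)$; in particular both $A$ and $A^*$ lie in $\thick_{D(A^\circ)}(k)$, and I would work entirely inside this subcategory.

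The engine is the reconstruction functor $F\coloneqq \R\hom_{A^\circ}(k,-)\colon \thick_{D(A^\circ)}(k)\to \per(E)$, which I claim is an equivalence. Full faithfulness follows by d\'evissage from the fact that $\R\hom_{A^\circ}(k,k)\simeq E$ maps isomorphically to $\R\hom_E(Fk,Fk)\simeq \R\enn_E(E)$: both sides are exact in each variable and agree on the generator $k$, hence agree on all of $\thick_{D(A^\circ)}(k)$. Essential surjectivity onto $\per(E)=\thick_{D(E)}(E)$ is then immediate since $Fk\simeq E$ generates. The important point — and the first place care is needed — is that this does \emph{not} require $k$ to be compact in $D(A^\circ)$ (equivalently $A^\circ$ regular), which will typically fail here. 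I would then compute $F$ on the two objects. By tensor--hom adjunction $F(A^*)=\R\hom_{A^\circ}(k,\hom_k(A,k))\simeq \hom_k(A\lot_A k,k)\simeq k$, concentrated in degree $0$; this holds for any finite dimensional $A$ and uses no hypothesis. On the other hand $F(A)=\R\hom_{A^\circ}(k,A)\simeq k[-e]$, where $e$ is the Gorenstein dimension, precisely because $A^\circ$ is Gorenstein.

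It remains to identify the $E$-module structures. Since $k$ and $k[-e]$ are concentrated in a single degree, a dg $E$-module structure on either is just a morphism of dg algebras $E\to k$; as $A$ is local we have $H^0(E)=\operatorname{End}_{A^\circ}(k)=k$, so this morphism is forced to be the augmentation and is therefore unique. Hence $F(A^*)$ and $F(A)$ are both the (shifted) trivial $E$-module, giving $F(A^*)\simeq F(A)[e]$, and applying $F^{-1}$ yields $A^*\simeq A[e]$ in $D(A^\circ)$, so that $A$ is $e$-Frobenius. The hypothesis that $A$ itself is Gorenstein enters symmetrically: running the same argument with $\R\hom_A(k,-)$ on $\thick_{D(A)}(k)$ produces $A^*\simeq A[d]$ as right modules, with $d$ the Gorenstein dimension of $A$, and comparing graded dimensions forces $d=e$, so one in fact obtains two-sided Frobenius with matching degree. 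I expect the main obstacle to be the two rigidity points above: establishing the reconstruction equivalence on $\thick_{D(A^\circ)}(k)$ in the absence of compactness, and checking that a one-dimensional complex carries a unique $E$-module structure, for which locality (through $H^0(E)=k$) is essential.
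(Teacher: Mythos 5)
Your reduction to the Koszul dual side is sound up to a point: the functor $F=\R\hom_{A^\circ}(k,-)$ is indeed a quasi-equivalence $\thick_{D(A^\circ)}(k)\to\per(E)$ by the d\'evissage you describe (and you are right that no compactness of $k$ is needed), and the computations $F(A^*)\simeq k$ and $F(A)\simeq k[-e]$ are correct \emph{as complexes of vector spaces}. The gap is in the final identification of the $E$-module structures, and it is not a technicality --- it is where the whole content of the proposition lives. The Gorenstein hypothesis identifies $\R\hom_{A^\circ}(k,A)$ with $k[-e]$ only as $A$-(bi)modules; it says nothing a priori about the $E$-action, which is by precomposition on the source and is invisible to the module structure on the target. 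Your rigidity claim rests on ``$H^0(E)=\operatorname{End}_{A^\circ}(k)=k$'', but $H^0(E)=\hom_{D(A^\circ)}(k,k)$ is the degree-zero part of the \emph{derived} endomorphism algebra, which for a finite dimensional local dg algebra can be much larger than $k$: for the square-zero extension $A=k\oplus k[1]\oplus k[-1]$ (finite dimensional, local, $0$-Frobenius, hence Gorenstein on both sides), $H^0(E)$ contains the tensor algebra on the degree-zero piece of $\bar A[1]$ and is infinite dimensional. In that situation there is no preferred augmentation of $E$ and many inequivalent $E$-module structures on a one-dimensional complex. Even when $H^0(E)=k$, a perfect $E$-module with one-dimensional total cohomology need not be a shift of the trivial module: the obstructions are the higher $A_\infty$-module operations $m_n$ of degree $2-n$, which can be nonzero as soon as $H^*(E)$ has classes in both positive and negative degrees, as it typically does when $A$ is not (co)connective. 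So the assertion that $F(A)$ and $F(A^*)$ are both the (shifted) trivial $E$-module is unsupported, and supplying it is essentially equivalent to proving the proposition.

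For comparison, the paper's own proof is a citation to \cite[5.8]{goodbody}, whose argument runs through the bimodule $A^*$: both Gorenstein conditions force $A^*$ to be an invertible bimodule (equivalently $\per A$ has a Serre functor), and locality/indecomposability then forces the invertible one-sided module $A^*$ to be a shift of $A$. That route avoids ever having to pin down a module structure over the Koszul dual algebra. Two smaller points: your appeal to \ref{connloc} for homological locality of $A^\circ$ is misplaced, since that result assumes connectivity; the correct (and easy) argument is that linear duality is a contravariant equivalence $\pvd(A)\simeq\pvd(A^\circ)$ fixing $k$ and hence carrying $\thick(k)$ to $\thick(k)$. Your closing observation that comparing graded dimensions forces $d=e$ is fine.
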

\begin{proof}
Follows from \cite[5.8]{goodbody}.
\end{proof}

\section{Calabi--Yau, Gorenstein, and Frobenius coalgebras}
We define the above notions for coalgebras, analogously to the definitions for algebras, and prove our main theorem. We also study how Frobenius coalgebras behave across linear duality and finish with some discussion of endomorphism algebras.

\subsection{The main theorem}

Let $C$ be a coalgebra and $M$ a right $C$-comodule. Recall that the left $C$-comodule $M^*$ is the `linear dual' of $M$, which is $\hom_k(M,k)$ when $M$ is finite dimensional and in general is extended via homotopy (co)limits in $\Comod C$. Recall also that when $M$ is a $C$-bicomodule we write $D(M)$ for its `linear dual', which is $\hom_k(M,k)$ when $M$ is finite dimensional and in general is extended via homotopy (co)limits, this time in $C\bicomod C$. We also use the same notation for bicontramodules across the co-contra correspondence.

\p If $C$ is a coalgebra we denote by $C^\vee$ the $C$-bicontramodule $\R\hom^{C^e}(C^*,(C^e)^*)$. 

\begin{defn}\label{cogmaindef}
Let $C$ be a coalgebra.
    \begin{enumerate}
        \item Say that $C$ is \textbf{twisted Calabi--Yau} if there is an invertible bicomodule $X$ such that $C^* \simeq {}_X(C^\vee)$. Say that $C$ is \textbf{$d$-Calabi--Yau} if there is a weak equivalence $C^*\simeq C^\vee[d]$ of bicontramodules.
        \item Say that $C$ is $d$-\textbf{Gorenstein} if there is a weak equivalence $\R\hom^C(k,C^*)[d]\simeq k$ of left $C$-contramodules.
        \item Say that $C$ is $d$-\textbf{Frobenius} if there is a weak equivalence $C^*[d]\simeq C$ of left $C$-comodules.
        \item Say that $C$ is \textbf{twisted symmetric} if $D(C)$ is an invertible bicomodule. Say that $C$ is $d$-\textbf{symmetric} if there is a weak eqivalence $D(C)[d]\simeq C$.
    \end{enumerate}
\end{defn}

\begin{rmk}
    The definitions of Frobenius and (twisted) symmetric use comodules as these are best adapted to `$k$-linear duality' in the coalgebraic setting. On the other hand, the definitions of Gorenstein and (twisted) CY use contramodules as these are best adapted to `$C$-linear duality'. Using the co-contra correspondence, it is possible to state all of the above definitions entirely in terms of comodules or contramodules.
\end{rmk}

If $A$ is a pseudocompact algebra, then we use the same terminology for $A$, with the understanding that it applies to the coalgebra $A^*$.
\begin{thm}\label{mainthm} Let $(C,A)$ be a Koszul duality pair. 
\begin{enumerate}
        \item $A$ is twisted Calabi--Yau if and only if $C$ is twisted symmetric.
        \item $A$ is $d$-Calabi--Yau if and only if $C$ is $d$-symmetric.
        \item $A$ is $d$-Gorenstein if and only if $C$ is $d$-Frobenius.
        \item $A$ is $d$-Frobenius if and only if $C$ is $d$-Gorenstein.
        \item $A$ is twisted symmetric if and only if $C$ is twisted Calabi--Yau.
         \item $A$ is $d$-symmetric if and only if $C$ is $d$-Calabi--Yau.
    \end{enumerate}
\end{thm}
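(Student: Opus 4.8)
The plan is to prove all six equivalences uniformly, by transporting the defining (weak) equivalence of bimodules, bicomodules, or bicontramodules across bimodule Koszul duality. Write $\Psi\colon D(A^e)\xrightarrow{\sim}\dco(C^e)$ and $\Phi\colon D(A^e)\xrightarrow{\sim}\dct(C^e)$ for the two Koszul duality equivalences (related by the co-contra correspondence). By \ref{kdmon} these are monoidal for $\lot_A$ and $\square^{\mathbb{L}}_C$, and by the derived Picard proposition they carry invertible objects to invertible objects and respect left and right twists as well as shifts. Hence any condition phrased as ``$P$ is invertible,'' ``$P\simeq Q[d]$,'' or ``$P\simeq {}_X Q$ with $X$ invertible'' transports verbatim once one knows where $P,Q,X$ go.

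First I would record the dictionary. On the comodule side $\Psi(A)=C$ (\ref{diagonal}) and $\Psi(A^\vee)=D(C)$ (combining \ref{CYalgSFcog} with \ref{diagonal}), while one-sidedly $\R\hom_A(-,A)$ matches $(-)^*$ with $k\leftrightarrow C$ (\ref{GalgFcog} and \ref{posKDthm}). On the contramodule side $\Phi(A^*)=C^*$ (\ref{codiagonal}), and since $(-)^*$ matches $\R\hom^{C^e}(-,R)$ (\ref{twosd}), applying this to $A^*$ gives $C^\vee=\R\hom^{C^e}(C^*,R)=\Phi(A^{**})$; one-sidedly $\R\hom^C(-,C^*)$ matches $(-)^*$ with $k\leftrightarrow A^*$ (\ref{onesd} and \ref{posKDthm}). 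The crucial—and slightly surprising—point is that $C^\vee$ is the Koszul dual not of $A^\vee$ but of the double linear dual $A^{**}$; correspondingly $\Phi(A)=U$, the unit bicontramodule $\R\hom_{C^e}(C^e,C)$.

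With the dictionary in hand the comodule parts are immediate. For (1) and (2) the functor $\Psi$ sends the defining conditions ``$A^\vee$ invertible'' and ``$A\simeq A^\vee[d]$'' to ``$D(C)$ invertible'' and ``$C\simeq D(C)[d]$,'' which are exactly twisted symmetric and $d$-symmetric for $C$. For (3), \ref{GalgFcog} turns the one-sided Gorenstein equivalence $k\simeq\R\hom_A(k,A)[d]$ into $C\simeq C^*[d]$ of left comodules, i.e.\ $C$ is $d$-Frobenius; here I invoke the earlier proposition that one-sided and two-sided Gorenstein agree for augmentations. The contramodule parts (4), (5), (6) run the same way under $\Phi$, but produce the double-dual object $A^{**}$: part (6) becomes ``$A^*\simeq A[d]\iff A^*\simeq A^{**}[d]$,'' part (4) its one-sided analogue, and part (5) ``$A^*$ invertible $\iff A^*\simeq{}_Y A^{**}$ with $Y$ invertible.''

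The main obstacle is precisely bridging $A$ and $A^{**}$, i.e.\ reflexivity. In the forward directions this is free: if $A^*\simeq A[d]$ then $A^{**}\simeq A^*[-d]\simeq A$, so $A$ is reflexive, $C^\vee=\Phi(A^{**})\simeq\Phi(A)=U$, and $C^*\simeq\Phi(A)[d]\simeq C^\vee[d]$ yields $d$-CY (and likewise for (4) and (5)). The backward directions are the hard part: from $A^*\simeq A^{**}[d]$ I must deduce $A^*\simeq A[d]$, equivalently that the canonical map $\eta_A\colon A\to A^{**}$ is a quasi-isomorphism (reflexivity being equivalent to finite-dimensionality of each $H^iA$). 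I would attack this by a split-retract argument: $\eta_{A^*}$ exhibits $A^*$ as a retract of $A^{***}$, while dualising the hypothesis gives $A^{***}\simeq A^{**}[d]\simeq A^*$, so $A^*\simeq A^*\oplus K$; the remaining task is to rule out a nonzero $K$ (an Eilenberg-swindle-type subtlety) by extracting the necessary finiteness from the coalgebra-side hypothesis, after which $A^{**}\simeq A$ and the equivalence descends. This reflexivity bridge—showing that each coalgebraic Calabi--Yau, symmetric, or Gorenstein condition forces $A$ to be reflexive—is where essentially all the genuine work of (4), (5), (6) sits; everything else is formal transport along $\Psi$ and $\Phi$.
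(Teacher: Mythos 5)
Your overall strategy --- transporting each defining equivalence across (bimodule) Koszul duality using the dictionary $A\leftrightarrow C$, $A^\vee\leftrightarrow D(C)$, $A^*\leftrightarrow C^*$, $A^{**}\leftrightarrow C^\vee$ --- is exactly the paper's, and your treatment of (1), (2), (3) and of the forward directions of (4), (5), (6) matches the paper's proof. You have also correctly located where all the real work sits: the backward directions of (4)--(6) require showing that the coalgebra-side hypothesis forces $A$ (equivalently $A^*$) to be reflexive, since Koszul duality produces $A^{**}$ where one wants $A$.

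The gap is that you do not actually close this reflexivity step, and the route you sketch would not close it. From $A^*\simeq A^{***}$ and the split monomorphism $\eta_{A^*}$ you only get $A^*\simeq A^*\oplus K$, which (as you note) is consistent with $K\neq 0$; there is no further finiteness to extract that would rule this out along those lines. The paper's argument is much more direct and purely a cardinality count: setting $M:=A^*$, the hypothesis $M\simeq M^*[d]$ gives $H^iM\cong \bigl(H^{-i-d}M\bigr)^*$ for every $i$, and applying this twice yields an abstract isomorphism $H^iM\cong (H^iM)^{**}$ of vector spaces, which forces each $H^iM$ to be finite dimensional (an infinite-dimensional space is never abstractly isomorphic to its double dual). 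Since reflexivity of a module is equivalent to finite dimensionality of each cohomology group, $M$ and hence $A$ is reflexive, so $A^{**}\simeq A$ and the equivalence descends. For the twisted case (5) the paper instead substitutes $M\simeq X\lot_A M^*$ into itself and uses that $\R\hom_A(X,-)$ is inverse to $X\lot_A-$ to obtain $M\simeq M^{**}$ directly, with the same finite-dimensionality conclusion. Replacing your retract argument by this observation completes your proof; everything else in your proposal is sound.
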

\begin{proof}For all statements, the proof consists of matching up appropriate modules, co/contramodules, and functors, across the relevant version of Koszul duality. We begin with (1). By \ref{diagonal}, across bimodule Koszul duality the diagonal bimodule $A$ corresponds to the diagonal bicomodule $C$. Hence by \ref{CYalgSFcog}, across bimodule Koszul duality the inverse dualising complex $A^\vee$ corresponds to $D(C)$. So one is invertible if and only if the other is, which proves (1). The proof of (2) is identical.
    
     For (3), \ref{posKDthm}(5) shows that across one-sided Koszul duality the left $A$-module $k$ corresponds to the left $C$-comodule $C$. Hence \ref{GalgFcog} shows that the left $A$-module $\R\hom_A(k,A)$ corresponds to the left $C$-comodule $C^*$, and now it follows that $A$ is $d$-Gorenstein if and only if $C$ is $d$-Frobenius. 
    
    For (4), \ref{posKDthm}(6) shows that across one-sided Koszul duality the left $C$-contramodule $k$ corresponds to the left $A$-module $A^*$. Hence \ref{onesd} shows that the left $C$-contramodule $\R\hom^C(k,C^*)$ corresponds to the left $A$-module $A^{**}$. Hence if $A$ is $d$-Frobenius, we have $A^*\simeq A^{**}[d]$ as left $A$-modules and hence $C$ is $d$-Gorenstein. In the other direction, if $C$ is $d$-Gorenstein then the left $A$-module $M\coloneqq A^*$ is isomorphic to $M^*[d]$, which implies that $M\in\rf{A}$ and hence $A\in \rf{A}$. Hence $A^{**}\simeq A$ and so A is $d$-Frobenius.

    For (5), \ref{codiagonal} shows that across two-sided Koszul duality the bimodule $A^*$ corresponds to the bicontramodule $C^*$. Hence \ref{twosd} implies that the $A$-bimodule $A^{**}$ corresponds to the $C$-bicontramodule $C^\vee$. If $A$ is twisted symmetric, we see that there is an invertible bimodule $X$ such that $A^* \simeq {}_X(A^{**})$. Translating this across Koszul duality, we see that there is an invertible bicomodule $Y$ such that $C^*\simeq {}_Y(C^\vee)$; in other words $C$ is twisted CY. Conversely if $C$ is twisted CY then with we obtain an invertible $A$-bimodule $X$ and an $A$-bilinear quasi-isomorphism $A^{*}\simeq {}_X(A^{**})$, and so putting $M\coloneqq A^*$ we have $A$-bilinear quasi-isomorphisms
    \begin{align*}
        M&\simeq X\lot_A(M^*)&\\
        &\simeq X\lot_A\left((X\lot_A(M^*))^*\right)& \text{substituting the first line into itself}\\
        & \simeq X\lot_A\R\hom_A(X,M^{**}) & \text{by hom-tensor}\\
        & \simeq M^{**} & \text{since $\R\hom_A(X,-)$ is inverse to $X\lot_A-$}
    \end{align*}
    and so $M$, and hence also $A$, is reflexive. Hence $A$ is twisted symmetric. The proof of (6) is identical but one takes $X=A[d]$ everywhere.
\end{proof}

\begin{ex}
Let $A$ be a commutative Gorenstein ring, essentially of finite type, and $A \to k$ any choice of augmentation on $A$. Since $A$ is $d$-Calabi--Yau for $d=\dim(A)$, it follows from \ref{mainthm} that the Koszul dual of $A$ is a $d$-symmetric coalgebra.
\end{ex}

\begin{ex}
    Let $A$ be a proper augmented algebra and $T_d(A)$ its $d$-symmetric completion, as in \ref{symcomp}. It follows from \ref{mainthm} that the coalgebra $BT_d(A)$ is $d$-Calabi--Yau, c.f.\ \cite{HLW}.
    \end{ex}

We can now improve on \ref{funnydual}, using the following lemma.

\begin{lem}\label{gorprod}\phantom{}\hfill
\begin{enumerate}
    \item Let $R,S$ be two Gorenstein augmented algebras. If $R$ is proper then $R\otimes S$ is a Gorenstein algebra.
    \item Let $C,D$ be two Frobenius coalgebras. If $C$ is regular then $C\otimes D$ is a Frobenius coalgebra.
\end{enumerate}
\end{lem}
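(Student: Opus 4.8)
The plan is to handle the two parts separately, proving (1) by a direct computation with the one-sided dual and deducing (2) from (1) through Koszul duality.

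For (1), note first that since the relevant morphism is the augmentation $R\otimes S\to k$, the earlier proposition comparing one- and two-sided Gorenstein morphisms lets me check the condition one-sidedly: it suffices to produce a quasi-isomorphism $\R\hom_{R\otimes S}(k,R\otimes S)\simeq k[-d]$ of left modules for some $d$, where the augmentation module of $R\otimes S$ is $k\otimes k$. The main tool will be the iterated-Hom adjunction for a tensor product of algebras, $\hom_{R\otimes S}(M\otimes N,-)\cong\hom_R\bigl(M,\hom_S(N,-)\bigr)$ for a right $R$-module $M$ and right $S$-module $N$. Resolving $k$ by projectives over $R$ and over $S$ and using that their tensor product resolves $k\otimes k$ over $R\otimes S$, this derives to a natural equivalence
\[
\R\hom_{R\otimes S}(k\otimes k,\,R\otimes S)\;\simeq\;\R\hom_R\bigl(k,\,\R\hom_S(k,R\otimes S)\bigr).
\]

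I would then compute the inner and outer Homs in turn. For the inner one there is a natural comparison map $R\otimes_k\R\hom_S(k,S)\to\R\hom_S(k,R\otimes_k S)$ of left $R$-modules, natural in the first variable with $R$ acting by left multiplication on itself. This map is an equivalence precisely because $R$ is proper: as a complex of $k$-vector spaces $R$ is perfect, so the comparison reduces to the case $R=k$ by a finite cell argument. Since $S$ is $d_S$-Gorenstein we have $\R\hom_S(k,S)\simeq k[-d_S]$, whence $\R\hom_S(k,R\otimes S)\simeq R[-d_S]$ as left $R$-modules, with $R$ carrying its regular structure. Feeding this into the outer Hom and using that $R$ is $d_R$-Gorenstein gives
\[
\R\hom_{R\otimes S}(k,R\otimes S)\simeq\R\hom_R(k,R)[-d_S]\simeq k[-d_R-d_S],
\]
so $R\otimes S$ is $(d_R+d_S)$-Gorenstein. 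The main obstacle is this comparison-map step: one must verify both that properness of $R$ makes it an equivalence and that the residual $R$-module structure on the inner Hom is the regular one, so that the outer computation genuinely sees $\R\hom_R(k,R)$ rather than a twisted version. (Note that, consistently with the asymmetry in the hypotheses, only $R$ needs to be proper, in contrast to the Hochschild Künneth argument of \ref{prodlem}.)

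For (2), rather than dualise the computation of (1) directly I would route through Koszul duality. Put $A\coloneqq\Omega C$ and $A'\coloneqq\Omega D$; by bimodule Koszul duality \cite{bimodKD} the pair $(A\otimes A',\,C\otimes D)$ is again a Koszul duality pair. Theorem \ref{mainthm}(3) translates the Frobenius condition on a coalgebra into the Gorenstein condition on its Koszul dual, so $C$ being $d_C$-Frobenius makes $A$ a $d_C$-Gorenstein algebra, and similarly $D$ makes $A'$ a $d_D$-Gorenstein algebra; moreover \ref{cogsmooth} shows that $C$ regular is equivalent to $A$ being proper local, and in particular proper. Now part (1) applies to the proper Gorenstein algebra $A$ and the Gorenstein algebra $A'$, yielding that $A\otimes A'$ is $(d_C+d_D)$-Gorenstein, and a final application of Theorem \ref{mainthm}(3) to the pair $(A\otimes A',C\otimes D)$ shows that $C\otimes D$ is $(d_C+d_D)$-Frobenius. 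One could alternatively mirror the proof of (1) entirely within comodules and contramodules using the contrahom of Theorem \ref{fivefun}, but the route through Koszul duality is much shorter and reuses part (1) verbatim.
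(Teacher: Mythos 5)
Your proposal is correct and follows essentially the same route as the paper: part (1) is the identical hom-tensor computation $\R\hom_{R\otimes S}(k,R\otimes S)\simeq\R\hom_R(k,\R\hom_S(k,R\otimes S))$ with properness of $R$ used to commute $R\otimes(-)$ past the inner Hom, and part (2) is deduced by exactly the same Koszul-duality transfer (the paper invokes $\Omega(C\otimes D)\simeq\Omega C\otimes\Omega D$, which is the same fact as the bimodule Koszul duality pairing you cite). Your extra care about the comparison map being an equivalence via a finite cell argument, and about the one-sided versus two-sided Gorenstein condition, only makes explicit what the paper leaves implicit.
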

\begin{proof}
    For (1), suppose that $R$ is proper $r$-Gorenstein and $S$ is $s$-Gorenstein. We compute
    \begin{align*}
        \R\hom_{R\otimes S}(k,R\otimes S)&\simeq \R\hom_R(k,\R\hom_S(k,R\otimes S))&\text{by hom-tensor}\\
        &\simeq \R\hom_R(k,R\otimes\R\hom_S(k,S)) &\text{since }R\text{ is proper}\\
        &\simeq \R\hom_R(k,R[-s])& \text{since }S\text{ is Gorenstein}\\
        &\simeq k[-r-s] & \text{since }R\text{ is Gorenstein}
    \end{align*}
    and hence $R\otimes S$ is $(r+s)$-Gorenstein. For (2), suppose that $C$ is regular $r$-Frobenius and $D$ is regular $s$-Frobenius. By \cite[5.1]{HLdg} we have a natural algebra quasi-isomorphism $\Omega(C\otimes D)\simeq \Omega (C) \otimes \Omega (D)$. Putting $R\coloneqq \Omega C$ and $S\coloneqq \Omega D$ we see by \ref{mainthm} that $R$ is $r$-Gorenstein and $S$ is $s$-Gorenstein. Moreover $R$ is proper since $C$ is regular. Hence by (1), $R\otimes S$ is $(r+s)$-Gorenstein, and hence $C\otimes D$ is $(r+s)$-Frobenius by \ref{mainthm} again.
\end{proof}
\begin{cor}\label{pwfrobcor}
    The pseudocompact algebra $A\coloneqq k\llbracket x_1,\ldots, x_n\rrbracket$ with $x_i$ in degree $d_i$ is $(n-\sum_id_i)$-Frobenius.
\end{cor}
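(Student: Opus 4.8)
The plan is to induct on $n$, using Lemma \ref{gorprod}(2) to reduce from a power series ring in $n$ variables to a power series ring in fewer variables. The base case $n=1$ is exactly Example \ref{funnydual}: there it is shown that $k\llbracket x\rrbracket$ with $x$ in degree $d_1$ is $(1-d_1)$-Frobenius, which agrees with the formula $n-\sum_i d_i = 1 - d_1$. So the main work is in executing the inductive step cleanly.

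First I would observe that the pseudocompact algebra $A = k\llbracket x_1,\ldots,x_n\rrbracket$ decomposes as a completed tensor product $k\llbracket x_1,\ldots,x_{n-1}\rrbracket \,\hat\otimes\, k\llbracket x_n\rrbracket$, which on the dual coalgebra side corresponds to a tensor product of coalgebras $C = C' \otimes D$ where $C'$ is the coalgebra dual to $k\llbracket x_1,\ldots,x_{n-1}\rrbracket$ and $D$ is the coalgebra dual to $k\llbracket x_n\rrbracket$. (Here I am using the standing convention, recorded just before Theorem \ref{mainthm}, that Frobenius terminology for a pseudocompact algebra refers to its dual coalgebra, together with the fact that linear duality is monoidal, sending completed tensor products of pseudocompact algebras to tensor products of coalgebras.) By the inductive hypothesis, $C'$ is $(n-1-\sum_{i<n}d_i)$-Frobenius, and by the base case $D$ is $(1-d_n)$-Frobenius.

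To apply Lemma \ref{gorprod}(2) I must check the regularity hypothesis: the lemma requires that one of the two factors be regular. The natural choice is to take $D$ (the single-variable factor) regular, but in fact the cleaner route is to arrange the induction so that the factor known to be regular is whichever one we can control. A power series ring $k\llbracket x\rrbracket$ in a single variable is Koszul dual to the dual numbers $k[\varepsilon]/\varepsilon^2$, which is a proper (indeed finite dimensional) algebra; by Proposition \ref{kdpropc}(1) a coalgebra is proper if and only if its Koszul dual algebra is regular, and more relevantly the relevant regularity of the coalgebra $D$ follows since $\Omega D \simeq k[\varepsilon]/\varepsilon^2$ is smooth (hence regular) — alternatively one checks directly that $D$ is regular in the sense of Definition preceding Proposition \ref{cogsmooth}. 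Granting regularity of $D$, Lemma \ref{gorprod}(2) yields that $C' \otimes D$ is Frobenius of degree
\begin{align*}
\left(n-1-\sum_{i<n}d_i\right) + (1-d_n) = n - \sum_{i=1}^n d_i,
\end{align*}
completing the induction.

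The step I expect to be the main obstacle is verifying the regularity hypothesis of Lemma \ref{gorprod}(2) at each inductive stage, and making sure it is the correct factor that is regular. One subtlety is that the product in Lemma \ref{gorprod}(2) is the tensor product of coalgebras, which corresponds to a completed (not ordinary) tensor product of the dual pseudocompact algebras, so I must be careful that the monoidal identification $BT$ respects completions — this is precisely the content of the algebra quasi-isomorphism $\Omega(C\otimes D)\simeq \Omega(C)\otimes \Omega(D)$ invoked in the proof of Lemma \ref{gorprod}(2). Once the bookkeeping of degrees and the regularity check are in place, the argument is a routine induction, and the degree formula falls out additively.
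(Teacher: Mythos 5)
Your proposal follows essentially the same route as the paper: the paper's proof writes $A$ as the iterated completed tensor product $k\llbracket x_1\rrbracket\hat\otimes\cdots\hat\otimes k\llbracket x_n\rrbracket$, notes that each tensorand is $(1-d_i)$-Frobenius by \ref{funnydual} and regular, and applies \ref{gorprod}(2); your induction on $n$ is the same argument packaged one factor at a time. The one point you should fix is your justification of the regularity hypothesis. You assert that the coalgebra $D$ dual to $k\llbracket x\rrbracket$ is regular because $\Omega D\simeq k[\varepsilon]/\varepsilon^2$ is ``smooth (hence regular)''. This is wrong on two counts: the dual numbers is a standard example of a non-smooth algebra, and in any case regularity of $\Omega D$ is Koszul dual to \emph{properness} of $D$ (by \ref{kdpropc}(1)), not to regularity of $D$. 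The correct statement is \ref{cogsmooth}: $D$ is regular if and only if $\Omega D$ is \emph{proper local}, which holds here because $k[\varepsilon]/\varepsilon^2$ is finite dimensional and local (filter by the nilpotent augmentation ideal to see $\Omega D\in\thick(k)$). The same reasoning shows every partial product $k\llbracket x_1,\ldots,x_m\rrbracket^*$ is regular, so whichever factor you feed into \ref{gorprod}(2) the hypothesis is satisfied, and the rest of your degree bookkeeping is correct.
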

\begin{proof}
    Write $A$ as an iterated completed tensor product $A\cong k\llbracket x_1\rrbracket\hat\otimes\cdots\hat\otimes k\llbracket x_n \rrbracket$ and observe that the $i^\text{th}$ tensorand is $(1-d_i)$-Frobenius by \ref{funnydual}. Since each tensorand is also regular, the result now follows from \ref{gorprod}(2).
\end{proof}

\begin{rmk}
    The number $(n-\sum_id_i)$ appearing in the previous theorem is to be thought of as the virtual dimension of the formal stacky derived scheme $A$; in particular $A$ should be `derived lci' in an appropriate sense.
\end{rmk}

\begin{rmk}
    Since bimodule Koszul duality is monoidal, a fractional version of \ref{mainthm} also holds; we leave the formulation to the reader.
\end{rmk}

\begin{rmk}\label{gorrmk}
    In fact, the discrete case of \ref{pwfrobcor} can be significantly generalised: we claim that if $R$ is a discrete commutative complete local augmented Gorenstein $k$-algebra of Krull dimension $d$, then - equipped with its natural topology - the pseudocompact $k$-algebra $R$ is $d$-Frobenius, i.e.\ we have a weak equivalence $R^* \simeq R[d]$ of pseudocompact $R$-modules. The following argument to show this is due to Benjamin Briggs and is a generalisation of the computation of \ref{funnydual}. Firstly, let $a_1,\ldots,a_d$ be a system of parameters for $R$, and put $I_i\coloneqq (a_1^i,\ldots, a_d^i)$, so that we have an isomorphism $R\cong \varprojlim_i R/I_i$. Each of the quotients $R/I_i$ is a zero-dimensional Gorenstein $k$-algebra (since a Gorenstein local ring is approximately Gorenstein \cite{hochster}), and hence $0$-Frobenius, so we have an isomorphism $(R/I_i)^* \cong R/I_i$ as $R$-modules. The linear dual $R^*$ is by definition $\hocolim_i (R/I_i)^* \simeq \hocolim_i(R/I_i)$, where we take the homotopy colimit in the category of pseudocompact $R$-modules. We may resolve $R/I_i$ by the Koszul complex $K_i\coloneqq K^R(a_1^i,\ldots, a_d^i)$, so that $R^*$ becomes $\varinjlim_i K_i$, where again the colimit is taken in pseudocompact $R$-modules. As in \ref{funnydual} we may compute this colimit as the pseudocompactification of the colimit taken in discrete $R$-modules instead. As in \cite[2.B]{firststeps}, this latter colimit is precisely the stable Koszul complex $K\coloneqq K_\infty^R(a_1,\ldots, a_d)$, which is the totalisation of a $d$-dimensional cube whose vertices are given by the localisations $R[a_{i_1}^{-1},\ldots a_{i_n}^{-1}]$, where the $i_j$ range over all possible subsets $J$ of $\{1,\ldots, d\}$. For example, if $R=k\llbracket x\rrbracket$ then the stable Koszul complex is the localisation map $k\llbracket x\rrbracket \to k\llbracket x\rrbracket[x^{-1}]$ with the rightmost term placed in degree zero, which appears in \ref{funnydual} (note that $\{1\}$ has precisely two subsets!). If $J$ is nonempty, then the corresponding localisation $R[a_{i_1}^{-1},\ldots a_{i_n}^{-1}]$ has no finite dimensional $R$-module quotients and hence its pseudocompactification is trivial. It follows that the pseudocompactification of $K$ is $R[d]$, as desired. We remark that the assumption that the residue field of $R$ is $k$ may be dropped at the cost of possibly replacing $k$ by a larger field, by the Cohen structure theorem. It is possible that this argument generalises to connective commutative complete local dg algebras.
\end{rmk}

\subsection{Proper Frobenius coalgebras}
We study how the Frobenius properties on algebras and coalgebras interact with linear duality. In general, a proper Frobenius coalgebra will dualise to a Frobenius algebra, but the converse need not be true; we identify a large class of coalgebras where this is however the case.

\begin{defn}\label{sccd}
    Say that a coalgebra $C$ is \textbf{simply connected} if it is connective, $C^0\cong k$, and $C_1 \cong 0$. 
\end{defn}
This notion is also known as \textbf{1-connected}; we remark that two simply connected coalgebras are weakly equivalent if and only if they are quasi-isomorphic. The following lemma may be of independent interest.

\begin{lem}\label{sccoglem}
    Let $C$ be a proper simply connected coalgebra. Then the natural localisation $\dco(C) \to D(\mathbf{Comod-}C)$ is an equivalence.
\end{lem}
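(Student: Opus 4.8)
The plan is to identify the kernel of the localisation and prove it vanishes. The functor $\dco(C)\to D(\mathbf{Comod}\text{-}C)$ is a Verdier quotient whose kernel is the full subcategory of acyclic comodules, so I must show that every acyclic $C$-comodule is already coacyclic, i.e.\ zero in $\dco(C)$. By the argument proving \ref{qisoreflec}, the acyclic comodules are precisely the left orthogonal ${}^\perp C$; and since $\hom_{\dco(C)}(-,C)$ carries the products, shifts, cones and summands that generate $\mathrm{Coloc}_{\dco(C)}(C)$ to vanishing, one has ${}^\perp C = {}^\perp\!\big(\mathrm{Coloc}_{\dco(C)}(C)\big)$. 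An object lying in both a colocalising subcategory and its own left orthogonal is zero, so the whole statement reduces to showing that the diagonal comodule $C$ cogenerates the coderived category:
\[
\dco(C)=\mathrm{Coloc}_{\dco(C)}(C).
\]

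To prove this I would pass across Koszul duality. Writing $A:=\Omega C$, Theorem \ref{posKDthm} gives an equivalence $\dco(C)\simeq D(A)$ under which the diagonal comodule $C$ corresponds to the augmentation module $k$ by \ref{posKDthm}(5), so the displayed equality becomes $D(A)=\mathrm{Coloc}_{D(A)}(k)$. The hypotheses translate into strong properties of $A$: simple connectivity of $C$ forces $A$ to be connective with $H^0(A)\cong k$, whence $A$ is homologically local by \ref{connloc}, while properness of $C$ makes $A$ regular by \ref{kdpropc}(1), so that $k\in\per(A)$. It then remains to show that every object of $D(A)$ lies in the colocalising subcategory generated by the perfect module $k$.

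The decisive step, and the one I expect to be the main obstacle, is exactly this colocalising generation, since for a general augmented algebra it fails (it is a strong form of complete locality), so the simple connectivity must be used essentially. My intended mechanism is to resolve an arbitrary comodule $M$ by cofree (injective) comodules $M\to I^{\bullet}$: each $I^{j}$ is a product of shifts of $C$, so every partial totalisation $\operatorname{Tot}(\sigma_{\le n}I^{\bullet})$ lies in $\mathrm{Coloc}_{\dco(C)}(C)$, and $M$ is the homotopy limit of this tower, which (being built from products and fibres) again lies in $\mathrm{Coloc}_{\dco(C)}(C)$. The technical heart is to guarantee that this cofree resolution \emph{converges} to $M$ in the coderived category, i.e.\ that $M\simeq\operatorname{Tot}I^{\bullet}$ as a coacyclic, not merely acyclic, equivalence; for bounded-below complexes this is automatic, so the work is to use the connectivity $C^{0}\cong k$, $C^{-1}\cong 0$ (via the induced coradical/Postnikov filtration and its degreewise finiteness) to reduce to the bounded-below case. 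Equivalently, on the algebra side this is the convergence of the derived $k$-adic tower of an $A$-module, where connectivity and regularity of $A$ must be combined. Once this convergence is established the equality $\dco(C)=\mathrm{Coloc}_{\dco(C)}(C)$ follows, and with the reduction above the localisation is an equivalence.
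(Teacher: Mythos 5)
Your reduction is exactly the paper's: via \ref{qisoreflec} the lemma comes down to showing $\dco(C)=\mathrm{Coloc}_{\dco(C)}(C)$, which Koszul duality (using \ref{posKDthm}(5)) converts into $D(A)=\mathrm{Coloc}_{D(A)}(k)$ for $A=\Omega C$, a connective algebra with $H^0(A)\cong k$. Everything up to that point is correct and matches the paper.

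The gap is that you never establish this colocal generation, and the mechanism you propose for it would not work as stated. Resolving a comodule by cofree comodules and asking that the resolution ``converge in the coderived category'' is circular: the cone of $M\to\mathrm{Tot}(I^\bullet)$ is an acyclic complex, and deciding whether an unbounded acyclic complex is coacyclic is precisely the acyclic-versus-coacyclic question the lemma is trying to settle, so this convergence is exactly as hard as the statement being proved. Similarly, convergence of the derived $k$-adic (Adams) tower on the algebra side is a reformulation of the assertion, not a route to it. The step that actually works --- and is what the paper's terse appeal to \ref{connloc} encodes --- is carried out entirely in $D(A)$ with its standard t-structure, where acyclic \emph{does} mean zero: every $M$ satisfies $M\simeq\holim_n\tau^{\leq n}M$ (the fibre has vanishing $\lim$ and $\lim^1$ of cohomology), every bounded-above complex is $\holim_m\tau^{\geq -m}$ of its bounded truncations, and a bounded complex is a finite iterated extension of its shifted cohomology modules, each of which is concentrated in a single degree and hence a module over $H^0(A)\cong k$, i.e.\ a direct sum of copies of $k$; such a direct sum is an $A$-linear summand of the corresponding product of copies of $k$ and so lies in $\mathrm{Coloc}_{D(A)}(k)$. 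Note also that this uses only connectivity and $H^0(A)\cong k$: the regularity of $A$ supplied by properness of $C$, which you say ``must be combined'' with connectivity, plays no role in this step.
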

\begin{proof}
    By \ref{qisoreflec}, it is enough to show that every object of $\dco(C)$ is $C$-colocal. If $A\coloneqq \Omega C$ denotes the Koszul dual of $C$, this is equivalent to showing that every object of $D(A)$ is $k$-colocal. Observe that since $C$ is simply connected, $A$ is a connective algebra with $A^0\cong k$. Then \ref{connloc} now shows that $A$ itself is $k$-colocal, and the claim follows.
\end{proof}
\begin{rmk}
    If $C$ is a proper simply connected coalgebra, then $\dco(C)$ admits a t-structure with heart the category of $k$-vector spaces: to see this, observe that since $\Omega C$ is connective with $H^0(\Omega C) \cong k$, the standard t-structure on $D(\Omega C)$ has the desired heart.
\end{rmk}

\begin{prop}\label{propLD}
    Let $C$ be a proper coalgebra and $A=C^*$ its linear dual algebra. 
    \begin{enumerate}
        \item If $C$ is $d$-Frobenius then $A$ is $(-d)$-Frobenius.
        \item If $A$ is $(-d)$-Frobenius and $C$ is simply connected, then $C$ is $d$-Frobenius.
    \end{enumerate}
    If in addition $C$ is strongly proper, the following hold:
    \begin{enumerate}
        \item[3.] If $C$ is $d$-symmetric then $A$ is $(-d)$-symmetric.
        \item[4.] If $A$ is $(-d)$-symmetric and $C$ is simply connected, then $C$ is $d$-symmetric.
    \end{enumerate}
\end{prop}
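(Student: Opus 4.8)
The plan is to prove all four statements by relating linear duality of comodules to linear duality of modules across the contravariant equivalence $\rf(C)\simeq\rf(A)^\circ$ furnished by naïve linear duality on $\fd(C)$, being careful that for a \emph{proper} coalgebra $C$ the object $C$ itself is reflexive and the two notions of linear dual agree. First I would record the key computational input: for a proper coalgebra $C$, the algebra $A=C^*$ satisfies $A^*\cong C$ as $A$-modules (equivalently $C^*$-modules), since $C$ is finite dimensional (up to quasi-isomorphism) and so $C^{**}\simeq C$, hence $A^*=C^{**}\simeq C$. More precisely, I want the statement at the level of (bi)module structures: the naïve linear dual $(-)^*$ is a contravariant equivalence sending the left $A$-module $A$ to the left $A$-module $A^*$, and simultaneously sending the left $C$-comodule $C$ to its linear dual. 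The content of (1) is then that a weak equivalence $C^*[d]\simeq C$ of left $C$-comodules dualises to a weak equivalence $A[d]\simeq A^*$ — equivalently $A^*\simeq A[d]$, i.e. $A$ is $(-d)$-Frobenius — using that linear duality shifts degrees by a sign and swaps $[d]$ for $[-d]$.

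For the converse statement (2) the extra hypothesis that $C$ is simply connected is what makes linear duality \emph{reversible}: by \ref{sccoglem} the localisation $\dco(C)\to D(\mathbf{Comod\text{-}}C)$ is an equivalence for proper simply connected $C$, so a quasi-isomorphism of underlying complexes upgrades to a genuine weak equivalence of comodules. Concretely, if $A$ is $(-d)$-Frobenius then $A^*\simeq A[d]$ as $A$-modules; transporting back across the contravariant equivalence $\rf(A)^\circ\simeq\rf(C)$ produces a quasi-isomorphism $C^*[d]\simeq C$, and simple connectivity guarantees this is a weak equivalence in $\dco(C)$ rather than merely a quasi-isomorphism of underlying vector spaces. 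So I would phrase (2) as: apply $(-)^*$ to the Frobenius equivalence for $A$, identify the result with a morphism of $C$-comodules, and invoke \ref{sccoglem} to conclude it is a weak equivalence.

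For the two-sided statements (3) and (4) I would run the identical argument but with bimodules replaced by bicomodules and the naïve linear dual replaced by the two-sided linear dual $D(-)$ of \ref{CYalgSFcog}. The strong properness hypothesis $C\in\fd(C^e)$ is exactly what ensures $C$ is reflexive as a \emph{bicomodule}, so that $D(D(C))\simeq C$ and the two-sided linear dual $D$ is a contravariant self-equivalence on the relevant subcategory of $\dco(C^e)$; by the analogue of the proper computation, $D(C)\simeq A^*$ as bimodules and $D$ matches up with the bimodule linear dual $M\mapsto M^*$ on $D(A^e)$. Then $d$-symmetry of $C$, i.e. $D(C)[d]\simeq C$ as bicomodules, dualises to $A^*\simeq A[d]$ as $A$-bimodules, giving $(-d)$-symmetry of $A$; and for (4) one reverses the argument, again using \ref{sccoglem} (applied to $C^e$, which is simply connected when $C$ is) to promote the resulting quasi-isomorphism to a weak equivalence of bicomodules.

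The main obstacle I anticipate is bookkeeping around the discrepancy between the naïve and the `correct' (homotopy-limit) linear dual functors, which the paper repeatedly warns can disagree. The whole argument hinges on the fact that properness (resp.\ strong properness) forces $C$ (resp.\ $C$ as a bicomodule) to be reflexive, so that these two functors coincide \emph{on the objects that matter}; I would need to verify that the degree-shifting quasi-isomorphism genuinely lives in the reflexive subcategory and is sent by $(-)^*$ to the expected module-level statement with the correct sign on the shift. The simple-connectivity input is what closes the gap in the converse directions, so the delicate point is checking that \ref{sccoglem} is applicable — in particular that $C^e$ is still proper and simply connected when $C$ is strongly proper and simply connected — rather than any deep new idea.
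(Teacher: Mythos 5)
Your proposal is correct and follows essentially the same route as the paper: properness identifies the homotopy-limit linear dual of $C$ with the naive one, dualising transfers the Frobenius/symmetric condition, and \ref{sccoglem} (applied to $C^e$ in the bicomodule case) upgrades the resulting quasi-isomorphism to a weak equivalence in the converse directions. The only slip is the sign in your displayed conclusion for (1) --- dualising $C^*[d]\simeq C$ should land on $A^*\simeq A[-d]$, i.e.\ the $(-d)$-Frobenius condition --- but you flag this bookkeeping yourself and it does not affect the argument.
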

\begin{proof}
    Since $C$ is proper, it is weakly equivalent to a finite dimensional comodule over itself, and hence the comodule $C^*$ is the usual linear dual of $C$. By dualising, it follows that $A$ is $(-d)$-Frobenius if and only if $C$ is quasi-isomorphic to $C^*[d]$ as $C$-comodules. Since weak equivalences are quasi-isomorphisms, $(1)$ is now clear. The second claim follows similarly from \ref{sccoglem}. The third claims are proved identically; for (4) only needs to observe in addition that if $C$ is simply connected then so is $C^e$.
\end{proof}

\begin{rmk}
    The simply connectedness assumptions of the above proposition cannot be dropped. Counterexamples can be found in chain coalgebras of topological spaces; see \ref{scex} for the details.
\end{rmk}

\begin{rmk}
One can use the above to show that a strongly proper simply connected Frobenius coalgebra admits a Nakayama automorphism. On the Koszul dual side, this shows that a smooth connected Gorenstein algebra with degree zero part $k$ is twisted (Ginzburg) Calabi--Yau.
\end{rmk}

\begin{rmk}\label{BDmorita}
    Let $A$ be a local Ginzburg CY algebra. As in \ref{moritaduality}, the Koszul dual $A^!$ is a model for the Morita dual $\pvd A$ of $A$. Since $A$ is smooth, $BA$ is a proper Frobenius coalgebra, and hence $A^!$ is a Frobenius algebra. This generalises the one-object case of Brav and Dyckerhoff's theorem stating that the Morita dual of a left CY dg category is a right CY dg category \cite{BD}.
\end{rmk}

\section{Poincar\'e duality}
We discuss Gorenstein duality and Van den Bergh duality for regular Gorenstein and Ginzburg CY algebras respectively; in particular we view Van den Bergh duality as a `two-sided' version of Gorenstein duality. As an application we show that a topological space is a Poincar\'e duality space if and only if its coalgebra of chains is a Frobenius coalgebra. In the simply connected case this recovers F\'elix--Halperin--Thomas's notion of Gorenstein space \cite{FHT}. We use some facts about dg Hopf algebras to recover Brav and Dyckerhoff's version of Poincar\'e duality \cite{BD}. We give some applications from string topology, rational homotopy theory, and Lie theory.

\subsection{Gorenstein duality}
We formulate a notion of Gorenstein duality for regular augmented algebras $A$, and show that such an $A$ has Gorenstein duality if and only if it is Gorenstein.

Let $A$ be an augmented algebra. We put $\zeta_A\coloneqq \R\hom_A(k,A)$ and regard $\zeta_A$ as a left $A$-module. We think of $\zeta_A$ as a `one-sided dualising complex' for $A$. 

\p Let $M$ be a right $A$-module. The \textbf{Gorenstein homology} of $A$ with coefficients in $M$ is the vector space $\hg_M\coloneqq M\lot_A k$ and the \textbf{Gorenstein cohomology} of $A$ with coefficients in $M$ is the vector space $\hg^M\coloneqq \R\hom_{A}(k,M)$. 

\begin{lem}\label{pregordu}Let $A$ be a regular augmented algebra and $M$ an $A$-module. There is a natural vector space quasi-isomorphism $\hg^M\simeq M\lot_A \zeta_A$.
\end{lem}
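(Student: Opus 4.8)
The plan is to recognise the desired quasi-isomorphism as the special case $P = k$ of the standard tensor-evaluation isomorphism for perfect modules, exactly parallel to the dévissage argument used in \ref{kellerlem}. First I would construct, for an arbitrary right $A$-module $P$, a natural map
$$\mu_P \colon M \lot_A \R\hom_A(P,A) \longrightarrow \R\hom_A(P,M).$$
Here $\R\hom_A(P,A)$ carries its natural left $A$-module structure coming from the left regular action on $A$, so that the source is a well-defined object of $D(k)$; the map is obtained by deriving the evaluation pairing sending $m\otimes f$ to the homomorphism $p\mapsto m\,f(p)$, which is well defined over $\otimes_A$ since $(ma)f = m(af)$. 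When $P=A$ this is the identity, since both sides are canonically quasi-isomorphic to $M$ via $\R\hom_A(A,A)\simeq A$, $M\lot_A A\simeq M$, and $\R\hom_A(A,M)\simeq M$.

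Next I would observe that both $P\mapsto M\lot_A \R\hom_A(P,A)$ and $P\mapsto \R\hom_A(P,M)$ are exact contravariant functors $D(A)^\circ \to D(k)$: the first because $\R\hom_A(-,A)$ is exact and $M\lot_A-$ preserves triangles, the second tautologically. Since $\mu$ is a natural transformation between them, the full subcategory of $D(A)$ consisting of those $P$ for which $\mu_P$ is a quasi-isomorphism is closed under shifts, cones and direct summands, hence thick. By the previous paragraph it contains $A$, and therefore contains $\thick_{D(A)}(A)=\per(A)$.

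Finally, by definition $A$ is regular precisely when $k\in\per(A)$, so $\mu_k$ is a quasi-isomorphism. Unwinding $\zeta_A=\R\hom_A(k,A)$ and $\hg^M=\R\hom_A(k,M)$, the map $\mu_k$ is exactly the asserted natural quasi-isomorphism $M\lot_A\zeta_A \simeq \hg^M$. I do not expect any substantial obstacle: the only points demanding care are the bookkeeping of left and right module structures ensuring that $M\lot_A\R\hom_A(P,A)$ is the correct vector space, and the verification that $\mu_A$ is genuinely an isomorphism, which reduces to the three elementary identifications noted above.
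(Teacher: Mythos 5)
Your proposal is correct and is essentially the paper's own argument: the paper's proof simply says that regularity makes $k$ perfect, so $\R\hom_A(k,-)$ is naturally tensoring with the dual $\zeta_A$, which is exactly the content of your dévissage from $P=A$ through $\per(A)$ to $k$. You have merely spelled out the standard thick-subcategory reduction that the paper leaves implicit (and uses explicitly in the proof of \ref{kellerlem}).
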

\begin{proof}
    Regularity of $A$ means that $k$ is perfect, and so $\R\hom_A(k,-)$ is naturally isomorphic to tensoring with the right dual $\zeta_A$ of $k$.
\end{proof}

If $A$ is a regular augmented algebra, observe that there is a natural quasi-isomorphism $\hg_k \simeq \R\hom_{A^\circ}(\zeta_A,k)$. This allows us to view classes in $H^m(\hg_k)$ as (homotopy classes of) left $A$-linear maps $\zeta_A \to k[m]$. 

\begin{defn}\label{gordudef}
    Let $A$ be a regular augmented algebra and $M$ an $A$-module. For a fixed $\theta \in H^m(\hg_k)$, the \textbf{cap product with} $\theta$ is the induced map of vector spaces $$-\cap\theta:\hg^M \simeq M\lot_A \zeta_A \xrightarrow{M\lot_A\theta} M \lot_A k[m] \simeq \hg_M[m].$$Say that $A$ \textbf{has $n$-Gorenstein duality} if there is a class $\theta \in H^{-n}(\hg_k)$ such that, for each $M$, the map $-\cap\theta$ is a quasi-isomorphism.
\end{defn}
The minus sign in the definition of $n$-Gorenstein duality is motivated by the following theorem:

\begin{prop}\label{gorduprop}
    Let $A$ be a regular augmented algebra. Then $A$ is $n$-Gorenstein if and only if $A$ has $n$-Gorenstein duality.
\end{prop}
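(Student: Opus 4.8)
The plan is to reduce everything to the single homotopy class of a left $A$-linear map $\theta\colon \zeta_A \to k[-n]$ and to observe that $n$-Gorenstein duality is controlled entirely by the behaviour of the cap product at $M = A$. First I would recall, via the proposition identifying one-sided and two-sided Gorensteinness for augmentations, that $A$ is $n$-Gorenstein precisely when there is a left $A$-linear quasi-isomorphism $\zeta_A \simeq k[-n]$; thus the whole statement becomes the assertion that such a quasi-isomorphism exists if and only if there is a class $\theta \in H^{-n}(\hg_k)$ for which $-\cap\theta$ is a quasi-isomorphism for every $M$.

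For the forward implication, suppose $A$ is $n$-Gorenstein and fix a left $A$-linear quasi-isomorphism $\phi\colon \zeta_A \to k[-n]$, whose homotopy class I take as $\theta \in H^{-n}(\hg_k)$ under the identification $\hg_k \simeq \R\hom_{A^\circ}(\zeta_A,k)$. For any $M$ the cap product $-\cap\theta$ is, by \ref{pregordu}, the map $M\lot_A\phi\colon M\lot_A\zeta_A \to M\lot_A k[-n]$; since $\phi$ is a quasi-isomorphism and the derived tensor product $M\lot_A-$ preserves quasi-isomorphisms, $-\cap\theta$ is a quasi-isomorphism. Hence $A$ has $n$-Gorenstein duality.

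For the converse, suppose $A$ has $n$-Gorenstein duality witnessed by $\theta$, and view $\theta$ as a left $A$-linear map $\zeta_A \to k[-n]$. The key observation is that specialising to $M = A$ collapses the cap product to $\theta$ itself: under the natural identifications $\hg^A \simeq A\lot_A\zeta_A \simeq \zeta_A$ and $\hg_A = A\lot_A k \simeq k$, the map $-\cap\theta$ becomes $A\lot_A\theta = \theta$. Since $-\cap\theta$ is assumed to be a quasi-isomorphism for every $M$, in particular for $M = A$, the map $\theta\colon \zeta_A \to k[-n]$ is a left $A$-linear quasi-isomorphism, so $\zeta_A \simeq k[-n]$ and $A$ is $n$-Gorenstein.

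The only genuine work is bookkeeping: one must check that the two natural identifications in play---\ref{pregordu} for $\hg^M \simeq M\lot_A\zeta_A$, and the isomorphism $\hg_k \simeq \R\hom_{A^\circ}(\zeta_A,k)$ used to regard $\theta$ as a map out of $\zeta_A$---are mutually compatible, so that the cap product really does degenerate to $\theta$ at $M = A$. I expect this compatibility of natural transformations (together with keeping track of the degree shift by $n$) to be the main obstacle; everything else follows formally from regularity of $A$ and the exactness of the derived tensor product.
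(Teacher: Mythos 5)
Your proof is correct and follows essentially the same route as the paper's: both reduce the statement to the observation that a class $\theta\in H^{-n}(\hg_k)$ corresponds to a left $A$-linear map $\zeta_A\to k[-n]$, which is a quasi-isomorphism precisely when $A$ is $n$-Gorenstein. The paper states this in one line; you additionally spell out why the cap product degenerates to $\theta$ at $M=A$ and why $M\lot_A-$ propagates the quasi-isomorphism to all $M$, which is exactly the bookkeeping the paper leaves implicit.
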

\begin{proof}
    Observe that $A$ has $n$-Gorenstein duality if and only if there is a class $\theta$ such that the corresponding left $A$-linear map $\zeta_A \to k[-n]$ is a quasi-isomorphism; this latter condition is equivalent to $A$ being $n$-Gorenstein.
\end{proof}

\begin{rmk}
If $A$ is regular Gorenstein, then since $\hg_k\simeq BA$ is (quasi-isomorphic to) a Frobenius coalgebra, the class $\theta$ witnessing Gorenstein duality is necessarily the image of the counit under the weak equivalence $BA \simeq (BA)^*[d]$.
\end{rmk}

\subsection{Van den Bergh duality}
We recall some of the main results of \cite{vdbduality} in a manner similar to our above Gorenstein duality framework. Let $A$ be an algebra. We will write $\xi_A$ for the inverse dualising complex $\R\hom_{A^e}(A,A^e)$. If $M$ is an $A$-bimodule we will put $\hh_M\coloneqq \HH_\bullet(A,M)$ and $\hh^M\coloneqq \HH^\bullet(A,M)$.

\begin{lem}\label{prevdbdu}Let $A$ be a smooth algebra and $M$ an $A$-bimodule. There is a natural vector space quasi-isomorphism $\hh^M\simeq M\lot_{A^e} \xi_A$.
\end{lem}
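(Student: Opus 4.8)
The plan is to recognise this as the exact bimodule analogue of Lemma \ref{pregordu}, whose proof I would simply transcribe one categorical level up. The statement $\hh^M \simeq M\lot_{A^e}\xi_A$ says that Hochschild cohomology with coefficients in $M$ is computed by tensoring $M$ over the enveloping algebra with the inverse dualising complex $\xi_A = \R\hom_{A^e}(A,A^e)$, and this is precisely the kind of ``dualise a perfect object and tensor'' identity already exploited in the subsection on the dualising complex.

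First I would unwind the definitions: by definition $\hh^M = \HH^\bullet(A,M) = \R\hom_{A^e}(A,M)$, where $A$ is viewed as a bimodule over itself, i.e.\ as an $A^e$-module. The hypothesis that $A$ is smooth means exactly that $A$ is a perfect $A^e$-module. The key input is the general fact, already recorded in the excerpt just after the definition of the inverse dualising complex, that for a perfect bimodule $P$ and any bimodule $N$ there is a natural quasi-isomorphism
$$\R\hom_{A^e}(P,N) \simeq N\lot_{A^e}P^\vee,$$
which is proved by writing $P$ as an iterated cone built from copies of $A^e$ and checking the identity on the generator $A^e$ (where both sides give $N$). Applying this with $P = A$ and $N = M$, and noting that $A^\vee = \R\hom_{A^e}(A,A^e) = \xi_A$ by definition, yields
$$\hh^M = \R\hom_{A^e}(A,M) \simeq M\lot_{A^e}A^\vee = M\lot_{A^e}\xi_A,$$
which is the desired quasi-isomorphism. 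Naturality in $M$ is automatic since every step is natural.

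There is essentially no obstacle here: the only thing to verify is that the smoothness hypothesis is being used in exactly the right place, namely to guarantee that $A$ is perfect as an $A^e$-module so that the ``perfect bimodule'' lemma applies. I would therefore keep the proof to a single sentence invoking that lemma with $M=A$ replaced by the perfect bimodule $A$ and the coefficient bimodule $M$, observing that $A^\vee=\xi_A$ by definition. If anything, the mild subtlety worth a clause is that $M$ is an arbitrary (not necessarily perfect or proper) bimodule, so one must tensor rather than dualise $M$ itself — but this is precisely the form the cited identity takes, so no additional argument is required.
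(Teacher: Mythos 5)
Your proof is correct and is essentially the paper's own argument: the paper likewise observes that smoothness makes $A$ a perfect $A^e$-module, so that $\R\hom_{A^e}(A,-)$ is naturally isomorphic to $-\lot_{A^e}\xi_A$, exactly the ``perfect bimodule'' identity you invoke with $P=A$ and $N=M$. No differences worth noting.
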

\begin{proof}
    Regularity of $A$ means that $A$ is a perfect $A$-bimodule, and so $\R\hom_{A^e}(A,-)$ is naturally isomorphic to tensoring with the dual $\xi_A$ of $A$.
\end{proof}
If $A$ is smooth, then there is a natural quasi-isomorphism $\hh_A \simeq \R\hom_{A^e}(\xi_A,A)$ and hence we can view classes in $H^m(\hh_A)$ as homotopy classes of $A$-bilinear maps $\xi_A \to A[m]$. 

\begin{defn}\label{vdbdudef}
    Let $A$ be a smooth algebra and $M$ an $A$-bimodule. For a fixed $\eta \in H^m(\hh_A)$, the \textbf{cap product with} $\eta$ is the induced map of vector spaces $$-\cap\eta:\hh^M \simeq M\lot_{A^e} \xi_A \xrightarrow{M\lot_{A^e}\eta} M \lot_{A^e} A[m] \simeq \hh_M[m].$$Say that $A$ \textbf{has $n$-Van den Bergh duality} if there is a class $\eta \in H^{-n}(\hh_A)$ such that, for each $M$, the map $-\cap\eta$ is a quasi-isomorphism.
\end{defn}

\begin{prop}\label{vdbduprop}
    Let $A$ be a smooth algebra. Then $A$ is $n$-Calabi--Yau if and only if $A$ has $n$-Van den Bergh duality.
\end{prop}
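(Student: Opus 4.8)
The plan is to recognise that both conditions are controlled by the same piece of data: a homotopy class of $A$-bilinear maps $\eta\colon \xi_A \to A[-n]$. Using the natural quasi-isomorphism $\hh_A \simeq \R\hom_{A^e}(\xi_A,A)$ recorded just before \ref{vdbdudef} (valid since $\xi_A$ is perfect, $A$ being smooth), a class $\eta \in H^{-n}(\hh_A)$ is exactly such a bimodule map. I claim that $\eta$ witnesses $n$-Van den Bergh duality if and only if it is a bimodule quasi-isomorphism, and that the latter is precisely the statement that $A$ is $n$-Calabi--Yau: since $\xi_A = A^\vee$, a quasi-isomorphism $\xi_A \simeq A[-n]$ is the same datum as a bimodule quasi-isomorphism $A \simeq A^\vee[n]$.

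For the forward implication, suppose $A$ is $n$-Calabi--Yau, so that there is a bimodule quasi-isomorphism $A \simeq A^\vee[n]$, equivalently a bimodule quasi-isomorphism $\eta\colon \xi_A \to A[-n]$; this determines the required class in $H^{-n}(\hh_A)$. For any bimodule $M$, the cap product $-\cap\eta$ is, under the natural identifications of \ref{prevdbdu}, the map $M\lot_{A^e}\eta$. Since $M\lot_{A^e}(-)$ preserves quasi-isomorphisms, $M\lot_{A^e}\eta$ is a quasi-isomorphism for every $M$, so $A$ has $n$-Van den Bergh duality.

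For the converse, suppose $A$ has $n$-Van den Bergh duality via a class $\eta$. The key step is to specialise the natural isomorphism $\hh^M \simeq M\lot_{A^e}\xi_A$ of \ref{prevdbdu}, together with $\hh_M = M\lot_{A^e}A$, to the bimodule $M = A^e$. Here $\hh^{A^e} = \R\hom_{A^e}(A,A^e) = \xi_A$ and $\hh_{A^e} = A^e \lot_{A^e}A = A$, and the cap product $-\cap\eta$ becomes $A^e\lot_{A^e}\eta$, that is, $\eta\colon \xi_A \to A[-n]$ itself. By hypothesis this is a quasi-isomorphism of underlying complexes; since $\eta$ is a morphism in $D(A^e)$ and the forgetful functor $D(A^e)\to D(k)$ is conservative, $\eta$ is a bimodule quasi-isomorphism. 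Therefore $A^\vee[n] = \xi_A[n] \simeq A$ as $A$-bimodules, so $A$ is $n$-Calabi--Yau.

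The only genuine point to verify --- and the main (mild) obstacle --- is the naturality of the isomorphisms of \ref{prevdbdu} in the bimodule variable $M$, which is what guarantees that the specialisation to $M=A^e$ recovers $\eta$ on the nose; combined with the fact that quasi-isomorphisms are detected on underlying complexes, this renders the argument essentially formal. This is the Hochschild-theoretic incarnation of Van den Bergh's classical duality argument \cite{vdbduality}, now read off directly from the perfectness of $\xi_A$.
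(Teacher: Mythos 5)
Your proof is correct and is essentially the paper's argument (the paper simply says ``analogous to the proof of \ref{gorduprop}'', whose content is exactly your observation that Van den Bergh duality for a class $\eta$ holds if and only if the corresponding bimodule map $\xi_A\to A[-n]$ is a quasi-isomorphism, which is the Calabi--Yau condition). Your two directions --- functoriality of $M\lot_{A^e}(-)$ for the forward implication and specialisation to $M=A^e$ together with conservativity of the forgetful functor for the converse --- are precisely the details the paper leaves implicit.
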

\begin{proof}
Analogous to the proof of \ref{gorduprop}.
\end{proof}

\subsection{Hopf algebras}
Our reference for Hopf algebras is \cite{witherspoon}; although only discrete algebras are considered there, the relevant proofs will also work for dg algebras. Let $A$ be a dg Hopf algebra with comultiplication $\Delta$ and antipode $S$. We regard $A$ as an augmented algebra via the counit. Our goal is to prove the following theorem:

\begin{thm}\label{hopfthm}
    Let $A$ be a dg Hopf algebra with bijective antipode. Suppose that one of the following two conditions is satisfied:
    \begin{enumerate}
    \item $A^\circ$ is regular.
    \item $A$ is proper.
    \end{enumerate}
Then $A$ is a $d$-Calabi--Yau algebra if and only if it is a $d$-Gorenstein algebra.
\end{thm}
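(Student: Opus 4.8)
The plan is to reduce the two-sided Calabi--Yau condition to the one-sided Gorenstein condition by exploiting the Hopf structure, through a computation of Hochschild cohomology that generalises Ginzburg--Kumar \cite{gkumar}. The one structural input that makes everything symmetric is that the antipode $S$ is a bijective algebra anti-automorphism, hence an isomorphism $S\colon A \xrightarrow{\cong} A^\circ$. Consequently regularity, smoothness and properness are all left--right symmetric for $A$; in particular hypothesis (1) simply says that $A$ itself is regular. This already disposes of the easy direction. If $A$ is $d$-Calabi--Yau then under (1) it is $d$-Gorenstein by \ref{regCYisGor}, using that $A^\circ$ is regular. Under (2), $A$ is proper, so \ref{cyimpsym} shows $A$ is $d$-symmetric; a bimodule quasi-isomorphism $A^*\simeq A[d]$ is in particular a left-module quasi-isomorphism, so $A$ is $d$-Frobenius, and hence $d$-Gorenstein by \ref{frobisgor}.

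The substantive direction is Gorenstein $\Rightarrow$ Calabi--Yau, and here I would work directly with the inverse dualising complex. Writing $a\rhd m = \sum a_{(1)} m\, S(a_{(2)})$ for the adjoint left action of $A$ on a bimodule $M$, and $M^{\mathrm{ad}}$ for $M$ with this action, the key lemma is a natural quasi-isomorphism $\R\hom_{A^e}(A,M)\simeq \R\hom_A(k,M^{\mathrm{ad}})$. I would prove it by introducing the exact functor $F\coloneqq -\otimes_k A$ from left $A$-modules to $A$-bimodules, with bimodule structure $a(n\otimes x)b = \sum a_{(1)}n\otimes a_{(2)}xb$. A direct Sweedler-notation check shows $F(k)\cong A$ (the diagonal bimodule) and $F(A)\cong A^e$, so $F$ carries free modules to free bimodules and hence preserves the relevant cofibrant objects, while its right adjoint is exactly $M\mapsto M^{\mathrm{ad}}$; here the bijectivity of $S$ is what makes the adjoint action, and the adjunction, well defined. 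Derived adjunction then yields the lemma.

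Specialising to $M=A^e$ gives $A^\vee \simeq \R\hom_A(k,(A^e)^{\mathrm{ad}})$. Using the Hopf-module untwisting isomorphism $A^e\cong A_{\mathrm{reg}}\otimes A$, induced by $a\otimes b\mapsto \sum a_{(1)}\otimes a_{(2)}b$ and invertible via $S$, I would identify the surviving $A$-bimodule structure and obtain a natural bimodule quasi-isomorphism $A^\vee \simeq A\otimes_k \zeta_A$, where $\zeta_A=\R\hom_A(k,A)$ is the one-sided dualising complex and the displayed $A$ carries the regular bimodule structure. Granting this, $\zeta_A\simeq k[-d]$ (that is, $A$ is $d$-Gorenstein) forces $A^\vee\simeq A[-d]$ (that is, $A$ is $d$-Calabi--Yau); applying $k\lot_A-$ recovers $\zeta_A$ from $A^\vee$ and gives the converse, matching the easy direction above. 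Hypotheses (1) or (2) enter precisely here, to provide the finiteness (perfectness of $k$, respectively properness of $A$) needed to commute the linear dual past the tensor factor in this identification.

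The step I expect to be the main obstacle is the bimodule bookkeeping in this last identification. The functor $F$ and its adjunction see only one side, so recovering the full $A$-bimodule structure on $A^\vee$ — and verifying that the twist by the antipode and the modular character is trivial exactly when $\zeta_A\simeq k[-d]$ — requires carefully threading the untwisting isomorphism through the Hom complex and tracking the second, unused $A^e$-action throughout. This is the dg, two-sided refinement of the Ginzburg--Kumar calculation, and is the place where the bijective antipode is used in an essential way.
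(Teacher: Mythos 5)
Your easy direction and your treatment of case (1) are essentially the paper's argument. The adjunction $F=-\otimes_kA\dashv(-)^{\mathrm{ad}}$, with $F(k)\cong A$ and $F(A)\cong A^e$ via the Hopf untwisting, is exactly the content of the paper's key lemma: there the functor $F$ appears as $\mathcal{R}\lot_A-$ for the bimodule $\mathcal{R}$ obtained from the diagonal bimodule by restricting one action along $(\id\otimes S)\Delta$, and the statements $F(k)\cong A$, $F(A)\cong A^e$ become $\mathcal{R}\lot_Ak\simeq A$ and the freeness of $\mathcal{R}$; the resulting identification $\R\hom_{A^e}(A,M)\simeq\R\hom_A(k,M^{\mathrm{ad}})$ is recorded there as the dg Ginzburg--Kumar isomorphism. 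With $M=A^e$ and $k$ perfect (which is what hypothesis (1) gives, as you note), the Gorenstein condition collapses $A^\vee$ to $A[-d]$ exactly as you describe.

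The gap is in case (2). Your plan is to establish the single identification $A^\vee\simeq A\otimes_k\zeta_A$ under either hypothesis and you assert that properness supplies the finiteness needed "to commute the linear dual past the tensor factor." But the step that actually needs a hypothesis is the natural \emph{bimodule} equivalence $\R\hom_{A^\circ}(k,-)\simeq\zeta_A\lot_A-$ applied to the twisted bimodule $(A^e)^{\mathrm{ad}}$, and that equivalence is precisely the statement that $k$ is perfect, i.e.\ regularity. Properness of $A$ lets you commute $\R\hom_{A^\circ}(k,-)$ past $-\otimes_kV$ for $V$ of finite total cohomology, but that only uses the one-sided freeness of $(A^e)^{\mathrm{ad}}$ and loses the second $A$-action you need on $A^\vee$; I do not see how to recover the bimodule structure this way, and you flag this yourself as the main obstacle without resolving it. The paper avoids the issue entirely in the proper case: instead of computing $A^\vee$, it computes $A^*\simeq(\mathcal{R}\lot_Ak)^*\simeq\R\hom_A(\mathcal{R},k)\simeq\R\hom_A(\mathcal{R},\R\hom_A(k,A))[d]\simeq\R\hom_A(\mathcal{R}\lot_Ak,A)[d]\simeq A[d]$, where the Gorenstein hypothesis enters as $k\simeq\R\hom_A(k,A)[d]$ and only the right-module freeness of $\mathcal{R}$ and hom-tensor are used; this shows $A$ is $d$-symmetric, and the proper-case equivalence of symmetric and Calabi--Yau (your Proposition \ref{cyimpsym}, already invoked in your easy direction) finishes the argument. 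You should reroute your case (2) through the symmetric condition in this way rather than through $A^\vee$.
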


\begin{rmk}\label{hopfrmk}
If $A$ is a dg Hopf algebra, and $A'$ is a dg algebra with an algebra quasi-isomorphism $A'\simeq A$, then $A'$ is Calabi--Yau or Gorenstein precisely when $A$ is. In particular we may apply the above theorem to $A'$ verbatim. We will make use of this observation later.    
\end{rmk}

\begin{rmk}
    Heuristically, we think of dg Hopf algebras as Koszul dual to commutative coalgebras. In particular, since a proper commutative algebra is Frobenius if and only if it is symmetric, proving this heuristic would prove the theorem. However our proof will instead be a comparison of the Gorenstein and the Hochschild cohomology of $A$.
\end{rmk}

\begin{rmk} Finite dimensional Hopf algebras have bijective antipode. Recall that $A$ is said to be \textbf{involutive} if $S^2=\id$. Certainly an involutive Hopf algebra has bijective antipode. If $A$ is commutative or cocommutative then it is involutive; in particular group algebras and universal enveloping algebras have bijective antipode.
\end{rmk}

To begin the proof of \ref{hopfthm}, we need to define some auxiliary modules of interest. By the proof of \cite[9.4.1]{witherspoon}, both $\Delta:A \to A\otimes A$ and $\id \otimes S: A\otimes A \to A^e$ are algebra morphisms; write $\delta$ for their composition. We let $\mathcal{R}$ be the $A^e$-$A$-bimodule given by taking the diagonal $A^e$-bimodule and restricting the right action along $\delta$. Concretely, using Sweedler notation, the action is given by the formula $(a\otimes b).(x\otimes y).c = axc_1 \otimes S(c_2)yb$. Clearly $\mathcal{R}$ is free as a left $A^e$-module. Similarly, we write $\delta'$ for the composition $(S\otimes \id)\Delta$, and $\mathcal{L}$ for the analogous $A$-$A^e$-bimodule. We will sometimes wish to view $\mathcal{R}$ and $\mathcal{L}$ as $A$-bimodules, in which case we forget part of the $A^e$-action.

The following is our key intermediate lemma (cf.\ \cite[4.12]{HRCY} for a similar statement):

\begin{lem}\label{adlem}
Suppose that $A$ is a dg Hopf algebra with bijective antipode. Then:
\begin{enumerate}
    \item Up to quasi-isomorphism, $\mathcal{R}$ is free as a right $A$-module and $\mathcal{L}$ is free as a left $A$-module.
    \item There are $A$-bilinear quasi-isomorphisms $\mathcal{R}\lot_A k \simeq A \simeq k \lot_A \mathcal{L}$.
    \item There is an $A^e$-$A$-bilinear quasi-isomorphism $\R\hom_{A^e}(\mathcal{L},A^e)\simeq \mathcal{R}$ and an $A$-$A^e$-bilinear quasi-isomorphism $\R\hom_{{A^e}}(\mathcal{R},A^e)\simeq \mathcal{L}$.
\end{enumerate}

\end{lem}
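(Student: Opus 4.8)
The three claims are really three instances of one computation carried out on the four algebra morphisms $\Delta$, $S\otimes\id$, $\id\otimes S$, and their composites $\delta,\delta'$. The unifying principle is that each of $\mathcal{R}$ and $\mathcal{L}$ is the diagonal $A^e$-bimodule restricted along an algebra map, so all the structure comes from the Hopf identities applied to the coproduct and the (bijective) antipode. I would set up once and for all the two isomorphisms of the \emph{underlying} graded vector space $A\otimes A$ that implement the Hopf-theoretic change of variables, then verify that each intertwines the relevant actions.

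\emph{Step 1 (freeness, claim 1).} For $\mathcal{R}$ I would exhibit an explicit right $A$-linear isomorphism onto a free right $A$-module. Using Sweedler notation, the right $A$-action on $\mathcal{R}$ is $(x\otimes y).c = xc_1\otimes S(c_2)y$. I would write down the map $\Phi:A\otimes A\to A\otimes A$, $x\otimes y\mapsto xc$-type formula built from $\Delta$ and $S$, whose classical analogue (the standard Hopf-module trivialisation, as in the proof of \cite[9.4.1]{witherspoon}) converts the diagonal action into the action on the second tensor factor alone; its inverse uses the antipode, which is where bijectivity of $S$ is needed. Concretely $\Phi(x\otimes y)=xS(y_1)\otimes y_2$ with inverse $x\otimes y\mapsto xy_1\otimes y_2$, and one checks $\Phi$ is a right $A$-module map from $\mathcal{R}$ to the free module $A\otimes A$ with action only on the right factor. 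The argument for $\mathcal{L}$ is the mirror image, swapping left and right and using $\delta'=(S\otimes\id)\Delta$.

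\emph{Step 2 (claim 2).} Granted the trivialisation of Step 1, the derived tensor product $\mathcal{R}\lot_A k$ collapses: since $\mathcal{R}$ is free (hence flat) as a right $A$-module, $\mathcal{R}\lot_A k\simeq \mathcal{R}\otimes_A k$, and under $\Phi$ this is $(A\otimes A)\otimes_A k\cong A$ with its residual left $A^e$-action. I would then track the left $A^e$-action through $\Phi$ and check that the surviving action on $A$ is the standard bimodule action, giving the $A$-bilinear quasi-isomorphism $\mathcal{R}\lot_A k\simeq A$; the identity $k\lot_A\mathcal{L}\simeq A$ is symmetric.

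\emph{Step 3 (claim 3, the bimodule duals).} Since $\mathcal{L}$ is free as a left $A^e$-module (the diagonal bimodule restricted along $\delta'$ is free over $A^e$ on one side by construction), $\R\hom_{A^e}(\mathcal{L},A^e)\simeq\hom_{A^e}(\mathcal{L},A^e)$ is computed without resolving, and the result is again a copy of $A^e$ as a vector space carrying a residual $A^e$-$A$-action. The content is to identify this residual action with that of $\mathcal{R}$; this is precisely dual to the freeness isomorphism of Step 1, so I would reuse $\Phi$ (and its $\mathcal{L}$-counterpart) to produce the $A^e$-$A$-bilinear comparison. The second half of claim 3 is the mirror statement.

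\emph{Main obstacle.} The genuine difficulty is entirely bookkeeping with the \emph{two-sided} structures: each of $\mathcal{R}$ and $\mathcal{L}$ carries commuting $A^e$- and $A$-actions, and the freeness/duality isomorphisms must be shown linear for \emph{both} simultaneously, not just for the one-sided action one uses to prove flatness. Keeping the Sweedler indices consistent through $\Phi$, $\delta$, $\delta'$, and the antipode—and in particular verifying that bijectivity of $S$ is exactly what makes $\Phi$ invertible at the chain level—will be the fiddly part; the homological input (flat objects compute derived tensor and Hom without resolution) is routine once the actions are pinned down.
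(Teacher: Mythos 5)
Your overall strategy is the same as the paper's: the paper establishes (1) and (2) by citing the explicit Hopf-theoretic trivialisation in Witherspoon's book (the content of your Steps 1 and 2), and it proves (3) exactly as in your Step 3, by noting that the module being dualised is free of rank one over $A^e$, so that $\R\hom_{A^e}(-,A^e)$ is computed without resolving and one only has to match up the residual twisted action.

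There is, however, a concrete error in the one formula you write down, and it sits at the only point of the lemma where the bijectivity hypothesis is used. The right $A$-action on $\mathcal{R}$ is $(x\otimes y)\cdot c = xc_1\otimes S(c_2)y$; your map $\Phi(x\otimes y)=xS(y_1)\otimes y_2$ is the trivialisation adapted to the \emph{diagonal} action $(x\otimes y)\cdot c = xc_1\otimes yc_2$, and it is not right $A$-linear for the $\delta$-twisted action above: computing $\Phi\bigl((x\otimes y)\cdot c\bigr)$ produces terms involving $S^2(c)$ that cannot be rewritten as $\Phi(x\otimes y)$ acted on in a single tensor factor. Moreover, neither $\Phi$ nor your stated inverse $x\otimes y\mapsto xy_1\otimes y_2$ involves $S^{-1}$, so contrary to your remark these formulas cannot be where bijectivity of the antipode enters. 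A trivialisation that does work is $f(x\otimes y)=(x\otimes 1)\cdot\delta(y)=xy_1\otimes S(y_2)$, from the free right module $A\otimes A$ (right regular action on the second factor) to $\mathcal{R}$; it is right $A$-linear simply because $\delta$ is an algebra map, and its inverse $u\otimes v\mapsto uv_2\otimes S^{-1}(v_1)$ genuinely requires $S^{-1}$ --- this is exactly the role of the hypothesis, and is the content of the lemma of Witherspoon that the paper cites. With that correction, the rest of your outline (flatness collapsing the derived tensor in (2), tracking the surviving $A^e$-action, and the rank-one freeness argument in (3)) goes through as in the paper.
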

\begin{proof}
We will only prove the desired statements for $\mathcal{R}$; the analogous statements for $\mathcal{L}$ then follow by symmetry. Claim (1) follows from the proof of \cite[9.2.9]{witherspoon}, which requires that $S$ be bijective (concretely, $\mathcal{R}$ is the free right $A$-module on the vector space $A$). Claim (2) follows from Claim (1) together with the fact that $\mathcal{R}\otimes_A k\cong A$, which is proved in \cite[9.4.2]{witherspoon} (and does not require any condition on the antipode). For claim (3), since $\mathcal{R}$ is free of rank one as a left $A^e$-module, we see that the $A$-$A^e$-bimodule $\R\hom_{{A^e}}(\mathcal{R},A^e)$ is simply given by $A^e$ where the right action is the regular $A^e$-action and the left action is given through the antipode $S$. But this is precisely $\mathcal{L}$, as desired.
\end{proof}

\begin{rmk}
    Let $A$ be a dg Hopf algebra and $B$ any dg algebra. If $M$ is a $B$-$A^e$-bimodule we put $M^\mathrm{ad}\coloneqq \R\hom_{A^e}(\mathcal{L},M)$, which is a $B$-$A$-bimodule. Observe that $M^\mathrm{ad}\simeq \hom_{A^e}(\mathcal{L},M)$ is simply $M$ but with a twisted $A$-action. The hom-tensor adjunction shows that when $A$ has bijective antipode, there is a natural $B$-$A$-bilinear quasi-isomorphism
$$\R\hom_{A}(k,M^\mathrm{ad})\simeq \R\hom_{A^e}(A, M)$$
comparing the Gorenstein cohomology of $M^\mathrm{ad}$ and the Hochschild cohomology of $M$. This is a dg version of a theorem of Ginzburg and Kumar \cite{gkumar}.
\end{rmk}

\begin{proof}[Proof of Theorem \ref{hopfthm}.]
Suppose first that we are in case (1): i.e.\ $A$ is a dg Hopf algebra with bijective antipode and $A^\circ$ is regular. By \ref{regCYisGor}, if $A$ is CY then it is Gorenstein. For the converse, suppose that $A$ is Gorenstein. Then we have natural $A$-bilinear quasi-isomorphisms
\begin{align*}
    A^\vee &\coloneqq \R\hom_{A^e}(A,A^e)&\\
    &\simeq \R\hom_{A^e}(\mathcal{R}\lot_A k,A^e)&\text{by \ref{adlem}(2)}\\
    &\simeq \R\hom_{A^\circ}(k,\R\hom_{A^e}(\mathcal{R},A^e))&\text{by hom-tensor}\\
    &\simeq \R\hom_{A^\circ}(k,\mathcal{L})&\text{by \ref{adlem}(3)}\\
    &\simeq \R\hom_A(k,A)\lot_A \mathcal{L}&\text{since }A^\circ\text{ is regular}\\
    &\simeq k[-d]\lot_A \mathcal{L}&\text{since }A\text{ is Gorenstein}\\
    &\simeq A[-d]&\text{by \ref{adlem}(2) again}\\
\end{align*}
and hence $A$ is $d$-Calabi--Yau, as desired. For case (2), we proceed similarly: suppose that $A$ is a proper Hopf algebra with bijective antipode. Firstly, note that $A$ is CY if and only if is symmetric, by \ref{cyimpsym}. In particular if $A$ is CY it is Frobenius, and hence Gorenstein by \ref{frobisgor}. For the converse, suppose that $A$ is Gorenstein. Then we have natural $A$-bilinear quasi-isomorphisms
\begin{align*}
    A^*&\simeq (\mathcal{R}\lot_A k)^* & \text{by \ref{adlem}(2)}\\
    &\simeq \R\hom_A(\mathcal{R},k) & \text{by hom-tensor}\\
     &\simeq \R\hom_A(\mathcal{R},\R\hom_A(k,A))[d] & \text{since }A\text{ is Gorenstein}\\
      &\simeq \R\hom_A(\mathcal{R}\lot_Ak,A)[d] & \text{by hom-tensor again}\\
      &\simeq A[d]& \text{by \ref{adlem}(2) again}
\end{align*}
and so $A$ is $d$-symmetric, and hence $d$-CY, as required.
\end{proof}

\begin{rmk}
In fact, one can show that a Gorenstein Hopf algebra $A$ with bijective antipode is Calabi--Yau: first write $k$ as a homotopy colimit of perfect modules to obtain a description of $\R\hom_A(k,A)$ as a homotopy inverse limit. Since this inverse limit is proper by hypothesis, it follows that it is actually equivalent to a finite inverse limit, and in particular it follows that the natural map $\R\hom_A(k,A)\lot_A \mathcal{L} \to \R\hom_{A^\circ}(k,\mathcal{L})$ is a quasi-isomorphism. Now one may run the proof of \ref{hopfthm} in case (1) to conclude that $A$ is CY. The same argument shows that if $A$ is a Calabi--Yau Hopf algebra with bijective antipode such that $\R\hom_A(k,A)$ is proper, then $A$ is Gorenstein. The authors are unaware of any dg Hopf algebra (with bijective antipode or not) which is CY and not Gorenstein.
\end{rmk}

We finish by giving an example. Recall that a Lie algebra $\mathfrak{g}$ has a universal enveloping algebra $U(\mathfrak{g})$, which is an involutive Hopf algebra. We let $CE(\mathfrak{g})$ denote the Chevalley--Eilenberg coalgebra of $\mathfrak{g}$, which computes its Lie algebra homology. In characteristic zero, these two objects are Koszul dual. Finally, recall that a Lie algebra $\mathfrak{g}$ is \textbf{unimodular} if every element $x\in \mathfrak{g}$ satisfies $\mathrm{tr}(\mathrm{ad}(x))=0$.

\begin{prop}\label{liethm}
Let $k$ be a field of characteristic zero and $\mathfrak{g}$ be a finite dimensional Lie algebra over $k$. Then the following are equivalent:
\begin{enumerate}
        \item $\mathfrak{g}$ is unimodular.
        \item $U\mathfrak{g}$ is a Calabi--Yau algebra.
        \item $U\mathfrak{g}$ is a Gorenstein algebra.
        \item $CE(\mathfrak{g})$ is a Frobenius coalgebra.
        \item $CE(\mathfrak{g})$ is a symmetric coalgebra.
    \end{enumerate}
\end{prop}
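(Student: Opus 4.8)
The proposition asserts the equivalence of five conditions on a finite-dimensional Lie algebra $\mathfrak{g}$ in characteristic zero: unimodularity, the Calabi--Yau and Gorenstein properties of $U\mathfrak{g}$, and the Frobenius and symmetric properties of $CE(\mathfrak{g})$. My strategy is to exploit the fact that $(U\mathfrak{g}, CE(\mathfrak{g}))$ is a Koszul duality pair in characteristic zero, so that the coalgebraic conditions translate directly into algebraic ones via the main theorem \ref{mainthm}, reducing everything to statements about the Hopf algebra $U\mathfrak{g}$ which I then dispatch using \ref{hopfthm}.

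First I would establish the Koszul duality and the hypotheses needed to invoke \ref{hopfthm}. Since $\mathfrak{g}$ is finite-dimensional, $U\mathfrak{g}$ is a smooth algebra (it has a finite Koszul--Chevalley--Eilenberg resolution as a bimodule, being an iterated Ore extension / the universal enveloping algebra of a finite-dimensional Lie algebra has global dimension $\dim \mathfrak{g}$), hence in particular $A=U\mathfrak{g}$ and $A^\circ\cong U(\mathfrak{g}^{\mathrm{op}})\cong U\mathfrak{g}$ are regular. Moreover $U\mathfrak{g}$ is involutive, so it has bijective antipode. Thus condition (1) of \ref{hopfthm} is satisfied, and \ref{hopfthm} gives directly that $U\mathfrak{g}$ is $d$-Calabi--Yau if and only if it is $d$-Gorenstein; this is the equivalence of (2) and (3). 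Next, since $CE(\mathfrak{g})$ is the Koszul dual coalgebra of $U\mathfrak{g}$, the main theorem \ref{mainthm} yields: $U\mathfrak{g}$ is $d$-Gorenstein iff $CE(\mathfrak{g})$ is $d$-Frobenius (by \ref{mainthm}(3)), and $U\mathfrak{g}$ is $d$-Calabi--Yau iff $CE(\mathfrak{g})$ is $d$-symmetric (by \ref{mainthm}(2)). This immediately links (3) with (4) and (2) with (5), so that the four conditions (2), (3), (4), (5) are all equivalent, with $d=\dim\mathfrak{g}$.

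It then remains to tie in the honest Lie-theoretic condition (1), unimodularity, and this is the step I expect to be the real content rather than a formal consequence of Koszul duality. The plan is to compute $\R\hom_{U\mathfrak{g}}(k, U\mathfrak{g})$ explicitly. Using the Chevalley--Eilenberg (Koszul) resolution of the trivial module $k$ by free $U\mathfrak{g}$-modules $U\mathfrak{g}\otimes \Lambda^\bullet \mathfrak{g}$, one computes that $\R\hom_{U\mathfrak{g}}(k,U\mathfrak{g})$ is concentrated in degree $d=\dim\mathfrak{g}$ and is isomorphic, as a one-dimensional representation, to $\Lambda^d\mathfrak{g}$ twisted by the adjoint action — i.e. the module $k_\chi[-d]$ where $\chi(x)=\mathrm{tr}(\mathrm{ad}(x))$ is the \emph{modular character} (this is the standard computation underlying Poincaré duality for Lie algebra cohomology). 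Hence $U\mathfrak{g}$ is $d$-Gorenstein, meaning $\R\hom_{U\mathfrak{g}}(k,U\mathfrak{g})\simeq k[-d]$ with the \emph{trivial} (untwisted) module $k$, precisely when $\chi=0$, which is exactly the unimodularity condition $\mathrm{tr}(\mathrm{ad}(x))=0$ for all $x$. This establishes (1) $\iff$ (3) and completes the cycle.

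The main obstacle is the unimodularity computation in the last paragraph: one must carefully track the bimodule (or at least left-module) structure on $\R\hom_{U\mathfrak{g}}(k,U\mathfrak{g})$ and identify the twist by the modular character, being attentive to sign and degree conventions in the Chevalley--Eilenberg complex and to the cohomological indexing used throughout the paper. The formal equivalences among (2)--(5) are essentially immediate given \ref{mainthm} and \ref{hopfthm}; the genuine input is recognizing that the Avramov--Foxby Gorenstein condition for $U\mathfrak{g}$ is, via the classical computation of $\ext^\bullet_{U\mathfrak{g}}(k,U\mathfrak{g})$, literally the vanishing of the modular character. I would cite or reprove this Poincaré-duality computation (e.g. as in the treatment of the modular character / unimodular Lie algebras) rather than grinding through the Koszul resolution in full detail.
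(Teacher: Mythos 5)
Your proposal is correct and follows essentially the same route as the paper: the equivalences $(2)\iff(3)$ via \ref{hopfthm} (using that $U\mathfrak{g}$ is involutive and regular) and $(2)\iff(5)$, $(3)\iff(4)$ via \ref{mainthm} applied to the Koszul duality pair $(U\mathfrak{g},CE(\mathfrak{g}))$ are exactly the paper's argument. The only difference is where unimodularity is attached: the paper cites \cite[2.3]{HOZ} (going back to Hazewinkel) for $(1)\iff(2)$, i.e.\ the bimodule computation of $U\mathfrak{g}^\vee$ as a twist by the modular character, whereas you propose the classical one-sided computation of $\R\hom_{U\mathfrak{g}}(k,U\mathfrak{g})\simeq k_\chi[-d]$ to get $(1)\iff(3)$ directly --- both are valid since $(2)\iff(3)$ is already in hand, and your one-sided version is if anything slightly more elementary.
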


We remark that a quasi-isomorphism $CE(\mathfrak{g})\simeq CE(\mathfrak{g})^*[d]$ implies a Poincar\'e duality statement for the (co)homology of $\mathfrak{g}$; indeed one can view this proposition as a `commutative' version of our characterisation of Poincar\'e duality spaces.

\begin{proof}

Since $\mathfrak{g}$ is finite dimensional, $U(\mathfrak{g})$ has finite global dimension (equal to the dimension of $\mathfrak{g}$). The equivalence $(1) \iff (2)$ now follows from \cite[2.3]{HOZ} (ultimately, this result goes back to Hazewinkel \cite{hazewinkel}). Since $U(\mathfrak{g})$ has finite global dimension, it is regular, and hence $(2) \iff (3)$ follows from \ref{hopfthm}. Since we have  $CE(\mathfrak{g})\simeq B(U(\mathfrak{g}))$, the equivalences $(2) \iff (5)$ and $(3) \iff (4)$ follow from \ref{mainthm}.
\end{proof}

\begin{rmk}
    A more structured version of this example appears as \cite[Theorem 1.3]{HRCY}; since $U(\mathfrak{g})$ is regular one can lift the Poincar\'e duality class to a class in cyclic homology.
\end{rmk}

\begin{rmk}
    We believe that a similar result should hold if $\mathfrak{g}$ is assumed to be a finite dimensional $L_\infty$-algebra (for example, the minimal model of a proper dg Lie algebra). More accurately, we expect $(1)$ and $(3)$ to be equivalent in this setting; since $U(\mathfrak{g})$ need not be regular or proper we may not be able to apply \ref{hopfthm}. One can prove this in restricted settings: for example if $\mathfrak{g}$ is a finite dimensional graded Lie algebra such that $[x,y]=0$ for all $x$ of even degree and $y$ of odd degree, then $U(\mathfrak{g})$ decomposes as the tensor product of a regular algebra $U(\mathfrak{g}^\mathrm{even})$ and a finite dimensional algebra $U(\mathfrak{g}^\mathrm{odd})$. Analysing these pieces separately one can conclude that $U(\mathfrak{g})$ is Gorenstein if and only if $\mathfrak{g}$ is unimodular. To prove the desired claim in generality, one could try to generalise the main result of \cite{hazewinkel} to the $L_\infty$ setting; the techniques of \cite{BLunimod} should be relevant, especially the interpretation of unimodularity in terms of the Berezinian volume form.
\end{rmk}

\begin{rmk}
    A dg Hopf algebra can be thought of as a noncommutative derived group scheme; it would be interesting to interpret the results above geometrically.
\end{rmk}

\subsection{Poincar\'e duality spaces}
The material in this section is inspired by \cite[\S5.1]{BD}, which was itself inspired by \cite{luriesurgery}. Similar results appear in \cite[\S4.2]{HRCY}. Recall that a simplicial set $K$ is \textbf{grouplike} if its homotopy category $hK$ is a groupoid.
\begin{ex}
A Kan complex is grouplike, so in particular the singular simplicial set of a topological space is grouplike. In the sequel we will identify topological spaces and their singular simplicial sets.
\end{ex}
Let $C_\bullet(K)$ be the coalgebra of simplicial chains on $K$; when $K$ is a topological space, $C_\bullet(K)$ is the coalgebra of singular chains on $K$. We will also write $C^\bullet(K)$ for the linear dual of $C_\bullet(K)$, regarded as a plain dg algebra with no pseudocompact structure. Let $G(K)$ denote the Kan loop group of $K$; when $K$ is (the singular simplicial set of) a path connected topological space, $GK$ is homotopy equivalent to the based loop space $\Omega K$ and we will interchange these notions freely. Since $G(K)$ is a simplicial group, $C_\bullet(G(K))$ is an algebra under composition of loops. In fact, $C_\bullet(G(K))$ is an involutive dg Hopf algebra; the coalgebra structure is that of simplicial chains and the antipode is given by reversing loops. Note that when $K$ is connected, $C_\bullet(G(K))$ is connective.

\begin{rmk}\label{moorermk}
    If $X$ is a topological space and one uses Moore loops, then $C_\bullet(\Omega X)$ becomes a dg algebra, and there is a quasi-isomorphism $C_\bullet(\Omega X)\simeq C_\bullet (G\mathrm{Sing}X)$ of dg algebras. In the sequel we will tacitly assume that $C_\bullet(\Omega X)$ is a dg algebra in this manner. 
\end{rmk}

\begin{thm}[{\cite{chlmonoids,rzcubes}}]\label{loopgroup}
    If $K$ is a grouplike simplicial set, then there is a quasi-isomorphism $\Omega C_\bullet(K) \simeq C_\bullet G(K)$.
\end{thm}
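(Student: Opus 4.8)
The plan is to prove Theorem \ref{loopgroup} by exhibiting a natural comparison map between the cobar construction $\Omega C_\bullet(K)$ and the chains on the Kan loop group $C_\bullet G(K)$, and then checking it is a quasi-isomorphism. The cobar construction on the coalgebra of chains is, by its very definition, a model for the based loop space at the level of chains: $\Omega C_\bullet(K)$ has underlying graded vector space the tensor algebra $T(\bar{C}_\bullet(K)[-1])$ on the reduced chains, with differential combining the internal differential and the (desuspended) coproduct. The strategy is to identify this with $C_\bullet G(K)$ as dg algebras, using that both carry natural algebra structures (concatenation of loops on the one side, tensor multiplication on the other).

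First I would recall the construction of the Kan loop group $G(K)$: for a reduced simplicial set $K$, $G(K)_n$ is the free group on the nondegenerate $(n+1)$-simplices of $K$, modulo the relation that degenerate simplices in the last position are trivial. Normalised chains on this simplicial group give an algebra, and there is a classical combinatorial description due to Szczarba of an explicit chain map relating the cobar construction to these loop group chains. The cleanest route is to invoke the two cited references \cite{chlmonoids, rzcubes}: Rivera--Zeinalian and Chuang--Holstein--Lazarev establish precisely that the natural Szczarba-type map $\Omega C_\bullet(K) \to C_\bullet G(K)$ is a quasi-isomorphism (in fact a quasi-isomorphism of dg algebras, or at least of $E_1$-algebras). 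Since the statement as phrased only asserts a quasi-isomorphism, I would cite these results directly and spend the bulk of the argument explaining why the hypotheses apply: that $K$ is grouplike ensures $G(K)$ models the loop space correctly, and one reduces to the reduced (simply based-connected) case by passing to a weakly equivalent reduced simplicial set when $K$ is connected.

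The key steps, in order, are: (i) reduce to the case of a reduced simplicial set, using grouplikeness to replace $K$ by the nerve-type reduced model without changing either side up to quasi-isomorphism; (ii) recall the explicit Szczarba equivalence $\Omega C_\bullet(K) \xrightarrow{\simeq} C_\bullet G(K)$, which is the content of the cited theorems; and (iii) verify naturality so that the equivalence is compatible with the algebra structures, which is what we will need downstream when we treat $C_\bullet \Omega X$ as a dg algebra (see \ref{moorermk}). The main obstacle is purely expository rather than mathematical: since the heavy lifting is done in \cite{chlmonoids, rzcubes}, the subtlety is to state the result in exactly the generality we use — namely for grouplike (not necessarily reduced) simplicial sets — and to be careful about the passage between the Kan loop group $G(K)$ and the Moore loop space $\Omega X$ when $K = \mathrm{Sing}\,X$. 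I expect the cleanest formulation is simply to cite the equivalence and note that grouplikeness is exactly the hypothesis guaranteeing the cobar construction computes the correct loop space homology, rather than reproving the combinatorial chain homotopy equivalence.
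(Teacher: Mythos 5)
Your proposal is correct and matches what the paper does: Theorem \ref{loopgroup} is an imported result, stated with the citations \cite{chlmonoids,rzcubes} and no proof, exactly as you suggest handling it by invoking the Szczarba-type equivalence established there. Your only slight misplacement is in step (i)--(ii): grouplikeness is not needed for $G(K)$ to model the loop space (it always does) but rather for the cobar construction to agree with $C_\bullet G(K)$ on the nose instead of merely up to localisation, a point you do get right in your closing sentence and which the paper records in the remark following the theorem.
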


\begin{rmk}
    For the topologically inclined reader, we emphasise that the dg coalgebra $C_\bullet Y$ depends on a choice of base field $k$, which we suppress from the notation. In particular, if $Y\to Y'$ is a $k$-homology isomorphism between simply connected spaces, then 
$C_\bullet Y \to C_\bullet Y'$ is a weak equivalence.
\end{rmk}

\begin{rmk}
    In general if $K$ is a connected simplicial set then $C_\bullet G(K)$ is a localisation of $\Omega C_\bullet(K)$ \cite[4.4]{chlmonoids}.
\end{rmk}

\begin{ex}
    Let $n>1$ and consider the coalgebra $C_\bullet S^n$ of chains on the $n$-sphere. This coalgebra is quasi-isomorphic to the cosquare-zero extension $k\oplus k[n]$, and since $S^n$ is simply connected it must in fact be weakly equivalent. Hence $C_\bullet S^n$ is a Frobenius coalgebra and its Koszul dual $C_\bullet \Omega S^n$ is a Gorenstein algebra. We will later see that both of these facts follow from the fact that $S^n$ is a Poincar\'e duality space. In fact, $C_\bullet S^1$ is also a Frobenius coalgebra for the same reason.
\end{ex}

If $X$ is a path connected topological space, then the derived category $D(C_\bullet \Omega(X))$ is equivalent to the $\infty$-category of $\infty$-local systems of vector spaces on $X$, since as in \cite{BD} it is identified with the category of $\infty$-functors from $X$ to $D(k)$. Since $X$ is canonically isomorphic to its opposite, the derived category of left $C_\bullet \Omega(X)$-modules is also identified with the category of $\infty$-local systems. One can recover the homology and cohomology of $X$ with values in a local system $M$ via the formulas $$H_\bullet(X,M)\simeq M\lot_Ak_X$$ $$H^\bullet(X,M)\simeq \R\hom_{A^\circ}(k_X,M)$$where $A=C_\bullet \Omega(X)$ and $k_X$ denotes the constant local system on $X$ with value $k$, which corresponds to a choice of augmentation $A\to k$, i.e.\ a basepoint. Since we work with path connected spaces the choice of a basepoint will not be important. In particular, $BA\simeq k\lot_A k$ is identified with $H_\bullet(X,k)$.

\p Say that a topological space is \textbf{finitely dominated} if it is a retract (in the homotopy category) of a finite CW complex. A finitely dominated space is homotopy equivalent to a CW complex \cite[A.11]{hatcher}.

\begin{ex}
A compact topological manifold is finitely dominated; in fact a compact topological manifold is homotopy equivalent to a finite CW complex by Kirby--Siebenmann.
\end{ex}

\begin{rmk}\label{wallfiniteness}
    A finitely dominated space is homotopy equivalent to a finite CW complex precisely when its Wall finiteness obstruction vanishes. In particular, a simply connected finitely dominated space is a finite CW complex.
\end{rmk}

\begin{rmk}
    A necessary condition for a CW complex $X$ to be finitely dominated is that its total integral homology $H_*(X,\Z)$ is a finitely generated abelian group. If $X$ is simply connected then this condition is sufficient \cite[4C.1]{hatcher}.
\end{rmk}

\begin{rmk}
    If $G$ is a group, $BG$ is typically \textit{not} finitely dominated.
\end{rmk}

The relevance of finitely dominated spaces for us will be the following proposition:
\begin{prop}\label{fdsmprop}
    Let $X$ be a path connected finitely dominated topological space. Then $C_\bullet \Omega(X)$ is smooth.
\end{prop}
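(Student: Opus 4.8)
The plan is to deduce smoothness from the Hopf-algebraic structure on $A\coloneqq C_\bullet\Omega X$. Recall that $C_\bullet\Omega X\simeq C_\bullet G(\mathrm{Sing}\,X)$ is an \emph{involutive} dg Hopf algebra, so in particular it has bijective antipode, and the intermediate results of the Hopf subsection (notably \ref{adlem}) apply to it verbatim. The argument then splits into a topological input—finite domination forces $A$ to be \emph{regular}—and a formal upgrade—a Hopf algebra with bijective antipode that is regular is automatically smooth.

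First I would show that $X$ finitely dominated implies that the trivial $A$-module $k$ (equivalently, the constant $\infty$-local system $k_X\in D(A)$) is perfect, i.e. that $A$ is regular. This is the genuine content. Since $X$ is a homotopy retract of a finite CW complex $W$, with maps $i\colon X\to W$ and $r\colon W\to X$ satisfying $ri\simeq\id_X$, it suffices to observe two things: (i) for a finite CW complex $W$ the constant local system $k_W$ is a perfect $C_\bullet\Omega W$-module, because $\R\hom_{C_\bullet\Omega W}(k_W,-)\simeq C^\bullet(W,-)$ is computed by the finite cellular cochain complex and hence commutes with coproducts; and (ii) perfection descends along the retract, by splitting the homotopy idempotent $r^*i^*$ on local systems and noting that compact objects are preserved under retracts of compactly generated categories. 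I would stress here that finite domination is used essentially and cannot be weakened to finite-dimensionality of $H_\bullet(X;k)$: for instance $B\mathbb{Q}$ has finite rational homology, yet $k$ is not perfect over $C_\bullet\Omega(B\mathbb{Q})=k[\mathbb{Q}]$.

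With regularity secured, I would invoke \ref{adlem} to pass to smoothness. Consider the $A^e$-$A$-bimodule $\mathcal{R}$, which is free of rank one as a left $A^e$-module (so $\mathcal{R}\cong A^e$) and satisfies the $A$-bilinear weak equivalence $\mathcal{R}\lot_A k\simeq A$ of \ref{adlem}(2). The triangulated functor $\mathcal{R}\lot_A(-)\colon D(A^\circ)\to D(A^e)$ sends the regular module $A$ to $\mathcal{R}\cong A^e$, hence carries $\per(A^\circ)=\thick_{D(A^\circ)}(A)$ into $\thick_{D(A^e)}(A^e)=\per(A^e)$. Since $A$ is regular and, via the bijective antipode, $A\cong A^\circ$, we have $k\in\per(A^\circ)$; applying the functor gives
$$A\;\simeq\;\mathcal{R}\lot_A k\;\in\;\per(A^e),$$
which is exactly the assertion that $A$ is smooth.

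The main obstacle is the first step: translating the homotopy-theoretic finiteness of $X$ into the purely algebraic statement that $k$ is a perfect $C_\bullet\Omega X$-module. The retract-onto-a-finite-complex argument is clean in outline but requires setting up the restriction and (co)induction formalism for local systems carefully enough to justify that compactness of the constant system descends along the homotopy idempotent. Once regularity is in place, the passage to smoothness is formal and rests entirely on \ref{adlem}.
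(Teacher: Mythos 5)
Your proposal is correct, but it takes a genuinely different route from the paper. The paper's proof is essentially a citation of Brav--Dyckerhoff: $X$ is a homotopy retract of a finite CW complex $Y$, so $C_\bullet\Omega X$ is a homotopy retract of the finite cell algebra $C_\bullet\Omega Y$, hence of finite type in the sense of To\"en--Vaqui\'e, hence smooth; the Hopf structure plays no role there. You instead split the problem into (a) finite domination implies regularity of $A=C_\bullet\Omega X$, proved by exhibiting $\R\hom_A(k_X,-)\simeq C^\bullet(X;-)$ as a natural retract of the coproduct-preserving functor $C^\bullet(W;r^*(-))$, and (b) a purely formal upgrade: since $\mathcal{R}$ is free of rank one over $A^e$ on the left and satisfies $\mathcal{R}\lot_A k\simeq A$ by \ref{adlem}(2), the triangulated functor $\mathcal{R}\lot_A(-)$ carries $\thick(A)$ into $\per(A^e)$ and hence carries the perfect module $k$ to $A$. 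Both steps check out (for (b) one does need $k$ perfect as a \emph{left} module, which your appeal to the bijective antipode, or equally the identification of left and right modules with local systems, supplies). What your route buys is a clean, self-contained statement the paper never isolates -- a dg Hopf algebra with bijective antipode is smooth as soon as it is regular, the dg analogue of the classical fact for group algebras of groups of type $FP$ -- and it makes transparent exactly where finite domination enters. What it costs is that step (a)'s base case, the perfection of $k_W$ over $C_\bullet\Omega W$ for finite $W$, carries essentially the same geometric content as the finite-cell-algebra input in the paper's argument, so the topological work has not been avoided, only relocated from the algebra to the module.
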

\begin{proof}
    This is \cite[5.1]{BD}. The idea is as follows: suppose that $X$ is a homotopy retract of a finite CW complex $Y$, so that $C_\bullet \Omega(X)$ is a homotopy retract of $C_\bullet \Omega(Y)$. This latter algebra acquires a finite cell decomposition from that of $Y$, and this implies that $C_\bullet \Omega(X)$ is a finite type algebra in the sense of \cite{toenvaquie}, and hence smooth.\end{proof}

Let $X$ be a path connected finitely dominated topological space. Then as in \ref{gordudef}, given a class $\theta \in H_d(X,k)$ and an $A$-module $M$ we obtain a cap product map $$-\cap \theta: H^\bullet(X,M) \to H_{d-\bullet}(X,M)$$which agrees with the classical cap product map when $M=k$. Note that the sign conventions occur since we switch between homological and cohomological indexing.

\begin{defn}
    Let $X$ be a path connected finitely dominated topological space. Say that $X$ is a $d$-\textbf{Poincar\'e duality space} if there is a class $\theta \in H_d(X,k)$ such that, for all $\infty$-local systems $M$, the natural map $$-\cap \theta: H^\bullet(X,M) \to H_{d-\bullet}(X,M)$$ is a quasi-isomorphism.
\end{defn}
Note that the above definition is a chain-level version of classical Poincar\'e duality.

\begin{defn}
    Let $X$ be a path connected topological space. Say that $X$ is a $d$-\textbf{Frobenius space} if $C_\bullet X$ is a $d$-Frobenius coalgebra.
\end{defn}
By \ref{mainthm} combined with \ref{loopgroup}, $X$ is a Frobenius space if and only if $C_\bullet \Omega X$ is a Gorenstein algebra. Our main application is the following theorem, which shows that a sufficiently finite space is a Poincar\'e duality space if and only if it is a Frobenius space:

\begin{thm}\label{pdDict}
    Let $X$ be a path connected finitely dominated topological space. The following are equivalent:
    \begin{enumerate}
        \item $X$ is a $d$-Poincar\'e duality space.
        \item $X$ is a $d$-Frobenius space.
        \item $C_\bullet X$ is a $d$-Frobenius coalgebra.
        \item $C_\bullet X$ is a $d$-symmetric coalgebra.
        \item $C_\bullet \Omega X$ is a $d$-Gorenstein algebra.
        \item $C_\bullet \Omega X$ is a $d$-Calabi--Yau algebra.
    \end{enumerate}
\end{thm}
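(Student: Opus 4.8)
The plan is to phrase everything in terms of the Koszul duality pair $(C,A)$ with $A\coloneqq C_\bullet \Omega X$ and $C\coloneqq C_\bullet X$: by \ref{loopgroup} we have $A\simeq \Omega C$, so this is indeed a Koszul duality pair. The structural inputs I would record first are that, since $X$ is path connected and finitely dominated, $A$ is smooth by \ref{fdsmprop}, hence regular by \ref{smreglem}(3); that $A$ is an augmented (via a basepoint), connective dg algebra with $H^0A\cong k$; and that $A$ is an involutive dg Hopf algebra, so it has bijective antipode, the induced isomorphism $A\cong A^\circ$ showing in particular that $A^\circ$ is regular too. These three facts — Koszul duality, smoothness/regularity, and the Hopf structure — are all that the argument consumes.

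Granting this, most of the equivalences are formal. Conditions (2) and (3) coincide by the definition of a Frobenius space. Applying \ref{mainthm}(3) to $(C,A)$ gives that $A$ is $d$-Gorenstein if and only if $C$ is $d$-Frobenius, which is exactly (5)$\iff$(3); and \ref{mainthm}(2) gives that $A$ is $d$-Calabi--Yau if and only if $C$ is $d$-symmetric, which is exactly (6)$\iff$(4). The passage between the one-sided and two-sided conditions on $A$ is supplied by the Hopf structure: as $A$ is a dg Hopf algebra with bijective antipode and $A^\circ$ is regular, \ref{hopfthm} in case (1) shows that $A$ is $d$-Calabi--Yau if and only if it is $d$-Gorenstein, i.e.\ (6)$\iff$(5). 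Thus (2), (3), (4), (5), (6) are all equivalent, organised around the algebraic Gorenstein condition (5).

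It remains to connect the topological statement (1) to (5), and this is the step demanding care. Since $A$ is regular, \ref{gorduprop} tells us that $A$ is $d$-Gorenstein if and only if $A$ \emph{has} $d$-Gorenstein duality in the sense of \ref{gordudef}. I would then match the ingredients of Gorenstein duality with the topological ones: for a local system $M$, regarded as an $A$-module, one has $\hg_M = M\lot_A k \simeq H_\bullet(X,M)$ and $\hg^M = \R\hom_A(k,M)\simeq H^\bullet(X,M)$, while $\hg_k\simeq BA \simeq H_\bullet(X,k)$, so that a Gorenstein-duality class $\theta\in H^{-d}(\hg_k)$ is precisely a fundamental class $\theta\in H_d(X,k)$. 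As the discussion preceding the statement records, the algebraic cap product of \ref{gordudef} is by construction the topological cap product $-\cap\theta\colon H^\bullet(X,M)\to H_{d-\bullet}(X,M)$. With this identification in hand, the assertion that $A$ has $d$-Gorenstein duality is word-for-word the assertion that $X$ is a $d$-Poincar\'e duality space, yielding (1)$\iff$(5) and closing the cycle.

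The main obstacle is the bookkeeping in this final step. One must verify that the derived hom and derived tensor defining Gorenstein cohomology and homology genuinely compute $H^\bullet(X,M)$ and $H_\bullet(X,M)$ for an arbitrary $\infty$-local system, and that the left/right-module discrepancy is harmless — Gorenstein duality is phrased for right modules, whereas cohomology is naturally computed over $A^\circ$. This is where the antipode does a second job: the isomorphism $A\cong A^\circ$ it induces identifies left and right local systems and intertwines the two cap products, so that quantifying over ``all $A$-modules'' in \ref{gordudef} is the same as quantifying over ``all $\infty$-local systems'' in the definition of a Poincar\'e duality space. Everything beyond this identification is a direct appeal to the cited results.
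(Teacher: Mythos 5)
Your proposal is correct and follows essentially the same route as the paper: (2)$\iff$(3) by definition, (3)$\iff$(5) and (4)$\iff$(6) via \ref{mainthm} together with \ref{loopgroup}, (5)$\iff$(6) via the Hopf-algebra theorem \ref{hopfthm} (using smoothness from \ref{fdsmprop}, with the Hopf structure justified by \ref{moorermk} and \ref{hopfrmk}), and (1)$\iff$(5) via \ref{gorduprop}, since the paper's definition of Poincar\'e duality space is phrased exactly through the Gorenstein-duality cap product. Your extra care with the $A\cong A^\circ$ identification coming from the antipode, and with the left/right local-system bookkeeping, matches the paper's remark that $\Omega X$ is canonically equivalent to its opposite.
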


\begin{proof}
The equivalence of (2) and (3) is definitional, and we have already observed that (3) is equivalent to (5). The equivalence of (1) and (5) follows from \ref{gorduprop}, since by definition $X$ is a Poincar\'e duality space if and only if $C_\bullet\Omega X$ has Gorenstein duality. By \ref{hopfrmk} and \ref{moorermk} we may assume that $C_\bullet \Omega X$ is an involutive dg Hopf algebra, which is necessarily regular by \ref{fdsmprop}. So the equivalence of (5) and (6) follows from \ref{hopfthm}. The equivalence of (6) with (4) follows from \ref{mainthm} combined with \ref{loopgroup}.
\end{proof}

\begin{rmk}\label{pdnfdrmk}
Although it is not possible to formulate our notion of Poincar\'e duality space for non-finitely dominated topological spaces, we may still study the other conditions of \ref{pdDict}. Indeed, if $X$ is any path connected topological space then the conditions $(2)$, $(3)$ and $(5)$ are all equivalent. Moreover, $(4)$ is equivalent to $(6)$. 
\end{rmk}

\begin{rmk}
    Let $X$ be any topological space. Note that if $C_\bullet \Omega X$ is $d$-Gorenstein then we obtain vector space isomorphisms $H_i(X)^* \cong H^{d-i}(X)$, and so $X$ satisfies a weak form of Poincar\'e duality.
\end{rmk}

\begin{ex}
    Let $X\coloneqq \mathbb{C}\P^\infty$, so that $\Omega X$ is $S^1$. Then $C_\bullet \Omega X$ is quasi-isomorphic to $k[\epsilon]/\epsilon^2$, which is easily seen to be Gorenstein. Hence $X$ is a Frobenius space. However, $X$ is not a Poincar\'e duality space, since it is not finitely dominated.
\end{ex}

\begin{ex}[Rational homotopy theory]
       Take $k=\Q$. If $X$ is a simply connected space we put $W(X)\coloneqq \pi_*(\Omega X)\otimes \Q$, the Whitehead Lie algebra of $X$. If $U$ denotes the universal enveloping algebra then we have $U(WX) \simeq C_\bullet\Omega X$. If $L$ is a dg Lie algebra, say that $L$ is \textbf{Gorenstein} when $UL$ is. Then a path connected finitely dominated topological space $X$ is a Poincar\'e duality space precisely when $WX$ is a Gorenstein dg Lie algebra. Presumably this is (commutative-Lie) Koszul dual to a Frobenius condition on the minimal Sullivan model of $X$.
\end{ex}

\begin{ex}[Lie groups]
Let $G$ be a compact Lie group. Take $k=\Z/p$ and consider the dg Hopf algebra $C_\bullet G$. If $\pi_0G$ is a finite $p$-group, then $C_\bullet G$ is a Gorenstein algebra \cite[10.2]{DGI} and hence a CY algebra by \ref{hopfthm}. Moreover, since $C_\bullet G$ is proper, it is symmetric by \ref{cyimpsym} (this is well known when $G$ is finite, in which case we have $C_\bullet G \cong kG$). If $BG$ denotes the classifying space of $G$, then we have $BC_\bullet G \simeq C_\bullet BG$ since $\Omega BG \simeq G$ as topological groups. Hence the coalgebra $C_\bullet BG$ is both symmetric and Calabi--Yau. Similar statements can be derived for the $p$-completions $\hat G_p$ and $\hat{BG}_p$ using \cite[10.3]{DGI} and the fact that $\Omega \hat{BG}_p \simeq \hat G_p$ if $G$ is connected. It would be interesting to study similar properties for $p$-compact groups.
\end{ex}

\begin{ex}[String topology]
    
Let $X$ be a Poincar\'e duality space. Then \cite[5.3(3)]{BD} shows the well-known statement, originally due to Jones, that if $LX$ denotes the free loop space of $X$, then there is an $S^1$-equivariant quasi-isomorphism $$\HH_\bullet(C_\bullet \Omega X)\simeq C_\bullet(LX)$$ and Van den Bergh duality now gives a quasi-isomorphism $$\HH^\bullet(C_\bullet \Omega X)\simeq C_\bullet(LX)[-d].$$This recovers some results of \cite{vaintrob} for $X$ aspherical and \cite{ftstring} for $X$ simply connected.

\end{ex}

\begin{rmk}
    Instead of using path connected topological spaces in our above theorems, we could instead use pointed topological spaces and restrict to the path component of the point.
\end{rmk}

\begin{rmk}
    Let $X$ be a path connected space such that $H_\bullet(\Omega X)$ is proper; for example, $BG$ for a finite group $G$. Then the following are equivalent:
    \begin{enumerate}
        \item $X$ is a $d$-Frobenius space.
        \item $C_\bullet X$ is a $d$-Frobenius coalgebra.
        \item $C_\bullet X$ is a $d$-symmetric coalgebra.
        \item $C_\bullet \Omega(X)$ is a $d$-Gorenstein algebra.
        \item $C_\bullet \Omega(X)$ is a $d$-Calabi--Yau algebra.
    \end{enumerate}
    To see this, follow the proof of \ref{pdDict} and use \ref{hopfthm}(2) instead of \ref{hopfthm}(1). We suggest that any of the above equivalent conditions are a replacement for the concept of Poincar\'e duality space for spaces that are not finitely dominated but do have finite loop homology. In this sense, spaces with finite loop homology are `Koszul dual' to finitely dominated spaces.
\end{rmk}

\subsection{Simply connected spaces}
We refine our Poincar\'e duality results when the spaces under consideration are simply connected. Note that a simply connected finitely dominated space is homotopy equivalent to a finite CW complex, since as in \ref{wallfiniteness} its Wall finiteness obstruction necessarily vanishes. Recall from \ref{sccd} the definition of a simply connected coalgebra.

\begin{lem}\label{ahmodel}
    Let $X$ be a simply connected finitely dominated space. Then $C_\bullet X$ is weakly equivalent to a simply connected coalgebra.
\end{lem}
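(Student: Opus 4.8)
The statement to prove is Lemma \ref{ahmodel}: if $X$ is a simply connected finitely dominated space, then $C_\bullet X$ is weakly equivalent to a simply connected coalgebra (in the sense of \ref{sccd}: connective, with $C^0\cong k$ and $C_1\cong 0$). The plan is to exploit the fact that for simply connected spaces, $k$-homology weak equivalences of coalgebras are detected on homology, together with the good finiteness afforded by $X$ being a finite CW complex up to homotopy.

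First I would reduce to the case where $X$ is an honest finite CW complex. By \ref{wallfiniteness} (recalled just before the lemma statement), a simply connected finitely dominated space has vanishing Wall finiteness obstruction, so $X$ is homotopy equivalent to a finite simply connected CW complex; replacing $X$ by this model does not change the weak equivalence class of $C_\bullet X$. Next I would observe that since $X$ is simply connected, $H_0(X,k)\cong k$ and $H_1(X,k)\cong 0$, so the homology coalgebra $H_\bullet(X,k)$ is connective with $H_0\cong k$ and $H_1\cong 0$ — that is, $H_\bullet(X,k)$ is a \emph{discrete} simply connected coalgebra in exactly the sense of \ref{sccd}. The core of the argument is then to produce a coalgebra quasi-isomorphism (hence, for simply connected coalgebras, a weak equivalence) relating $C_\bullet X$ to some model built on its homology.

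For the key step I would invoke the existence of a minimal model: over a field $k$, a connected dg coalgebra admits a quasi-isomorphic minimal (cofibrant) model, and for simply connected coalgebras one can arrange the underlying graded coalgebra so that the degree-zero part is $k$ and the degree-one part vanishes, matching the homology computation above. Concretely, since $X$ is simply connected, one can use the standard transfer/homotopy-theoretic construction (e.g. via the homotopy transfer theorem applied to the contraction of $C_\bullet X$ onto its homology) to build a coalgebra structure on a graded vector space with $C^0=k$, $C_1=0$, together with a coalgebra quasi-isomorphism to $C_\bullet X$. The remark immediately following \ref{sccd} guarantees that quasi-isomorphism and weak equivalence coincide for simply connected coalgebras, so this quasi-isomorphism is the desired weak equivalence.

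The main obstacle I anticipate is the bookkeeping required to ensure the transferred structure is genuinely a (coassociative, counital, conilpotent) dg coalgebra with the prescribed low-degree behaviour, rather than merely an $A_\infty$-coalgebra or a coalgebra-up-to-homotopy. One clean way to sidestep heavy $A_\infty$-machinery is to use finiteness: since $X$ is a finite CW complex, one can take a finite-dimensional (in each degree) simplicial or cellular chain model, truncate or modify it in low degrees using simple connectivity to kill $C_1$ without changing homology, and verify directly that the resulting object is a conilpotent coalgebra weakly equivalent to $C_\bullet X$. The delicate point is maintaining coassociativity while performing the degree-$1$ modification; I expect this to be where the real work lies, with the simple connectivity hypothesis ($H_1=0$, so no degree-one generators are forced) being precisely what makes the modification possible.
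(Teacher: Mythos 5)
Your overall framing is sound --- reduce to a finite CW complex via the vanishing of the Wall obstruction, note that $H_\bullet(X,k)$ has the right shape, and invoke the remark after \ref{sccd} that quasi-isomorphism and weak equivalence agree for simply connected coalgebras --- but the central step is missing rather than merely delicate. Homotopy transfer onto $H_\bullet(X,k)$ produces an $A_\infty$-coalgebra, not a dg coalgebra, and your proposed repair (modify the cellular or simplicial chain coalgebra in degree $1$ and ``verify directly'' coassociativity) is exactly the part that requires a construction: the Alexander--Whitney-type comultiplication on chains uses degree-one chains essentially, and there is no evident coassociative counital structure on a truncation that kills $C_1$. You have correctly located the difficulty but not resolved it, so as written the argument does not go through.

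The paper resolves it by passing through the loop space instead of working with $C_\bullet X$ directly. Take a finite CW model of $X$ with a single $0$-cell and no $1$-cells; the Adams--Hilton model then gives a quasi-isomorphism $C_\bullet\Omega X \simeq (T(V),d)$ with $V$ concentrated in strictly negative cohomological (equivalently, positive homological) degrees, one generator per cell of dimension at least $2$. Applying the bar construction, $B(T(V),d)\simeq BC_\bullet\Omega X\simeq C_\bullet X$ is built from tensor powers of $\overline{T(V)}[1]$, which sits in homological degrees at least $2$, so the resulting coalgebra has $k$ in degree $0$ and nothing in degree $1$ purely for degree reasons. If you want to salvage your transfer-based route, the analogous strictification is $B\Omega$ applied to the transferred $A_\infty$-coalgebra on $H_\bullet(X,k)$, with the same degree count --- but you would then also have to check that the transfer data and the counit of the bar--cobar adjunction assemble into a zigzag of genuine dg coalgebra weak equivalences, which is essentially the bookkeeping you were hoping to avoid.
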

\begin{proof}
    Up to homotopy equivalence we may assume that $X$ is a finite CW complex with one $0$-cell and no $1$-cells. Adams and Hilton \cite{adamshilton} proved that $C_\bullet\Omega X$ is quasi-isomorphic to an algebra of the form $A=(T(V),d)$ where $V$ is a graded vector space concentrated in strictly negative cohomological degrees. Hence $BA\simeq BC_\bullet\Omega X \simeq C_\bullet X$ is a simply connected coalgebra for degree reasons.
\end{proof}

\begin{thm}\label{fhtthm}
    Let $X$ be a simply connected finitely dominated space. Then the following are equivalent:
    {\begin{enumerate}
        \item $X$ is a $d$-Poincar\'e duality space.
        \item $C^\bullet X$ is a $(-d)$-Frobenius algebra.
        \item $C^\bullet X$ is a $(-d)$-symmetric algebra.
        \item $C^\bullet X$ is a $(-d)$-Gorenstein algebra.
        \item $C^\bullet X$ is a $(-d)$-Calabi--Yau algebra.
    \end{enumerate}}
\end{thm}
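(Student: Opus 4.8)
The plan is to deduce everything from Theorem~\ref{pdDict} by transporting its conditions on the chain coalgebra $C_\bullet X$ across linear duality, and then to reconcile the remaining algebra-level conditions using the properness of $C^\bullet X$. First I would record the two structural facts that make the duality results applicable. By Lemma~\ref{ahmodel} the coalgebra $C_\bullet X$ is weakly equivalent to a simply connected coalgebra, and by \ref{fdsmprop} the algebra $C_\bullet \Omega X$ is smooth; applying \ref{kdpropc}(2) to the Koszul duality pair $(C_\bullet X, C_\bullet \Omega X)$ (which is one by \ref{loopgroup}) shows that $C_\bullet X$ is strongly proper, in particular proper. Consequently its linear dual is the plain dg algebra $C^\bullet X = (C_\bullet X)^*$, a proper augmented algebra (the augmentation coming from a basepoint).

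With this in place, \ref{propLD} applies directly: since $C_\bullet X$ is simply connected and strongly proper, it is $d$-Frobenius if and only if $C^\bullet X$ is $(-d)$-Frobenius, and it is $d$-symmetric if and only if $C^\bullet X$ is $(-d)$-symmetric. Combining these two equivalences with Theorem~\ref{pdDict}, which already identifies being a $d$-Poincar\'e duality space with $C_\bullet X$ being a $d$-Frobenius coalgebra and with $C_\bullet X$ being a $d$-symmetric coalgebra, immediately yields $(1)\iff(2)\iff(3)$.

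It remains to fit the Gorenstein and Calabi--Yau conditions into this web, and here I would exploit that $C^\bullet X$ is proper. By \ref{cyimpsym} a proper algebra is symmetric precisely when it is Calabi--Yau, so $(3)\iff(5)$. A bimodule quasi-isomorphism is in particular left $A$-linear, so $(3)\Rightarrow(2)$; and by \ref{frobisgor} an augmented Frobenius algebra is Gorenstein, so $(2)\Rightarrow(4)$. Thus $(1),(2),(3),(5)$ are all equivalent and each implies $(4)$, and the entire theorem reduces to the single implication $(4)\Rightarrow(2)$: that a $(-d)$-Gorenstein structure on $C^\bullet X$ forces it to be $(-d)$-Frobenius.

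This last implication is the main obstacle, and it is essentially the content of the F\'elix--Halperin--Thomas theorem being recovered. My plan is to invoke Goodbody's result \ref{gorimpfrob}, which promotes the two-sided Gorenstein property to the Frobenius property for a finite dimensional local algebra. Two inputs are needed. First, $C^\bullet X$ is homotopy commutative (cochains carry an $E_\infty$, hence homotopy commutative, multiplication), so $C^\bullet X \simeq (C^\bullet X)^\circ$ as algebras and the Gorenstein hypothesis passes to the opposite algebra automatically. Second, one must replace $C^\bullet X$ by a finite dimensional local model: since $X$ is a simply connected finite CW complex (its Wall obstruction vanishes, cf.~\ref{wallfiniteness}), the cohomology $H^\bullet X$ is finite dimensional with $H^0 X\cong k$ and $H^1 X = 0$, so $C^\bullet X$ is a coconnective proper algebra, and such an algebra should admit a finite dimensional model whose degree-zero part is $k$, hence local. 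The most delicate point is precisely this finiteness-and-locality reduction, because the finite-dimensional-model theorem \ref{pcthm} is stated for \emph{connective} algebras and must be adapted to the coconnective $C^\bullet X$ (equivalently, argued on the connective proper coalgebra $C_\bullet X$). Once a finite dimensional local model is secured, \ref{gorimpfrob} applied to it together with its opposite produces the Frobenius property and closes the cycle.
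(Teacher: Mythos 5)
Your overall strategy coincides with the paper's: establish that $C_\bullet X$ is strongly proper (via \ref{fdsmprop} and \ref{kdpropc}) and simply connected (via \ref{ahmodel}), transport \ref{pdDict} across linear duality using \ref{propLD} to get $(1)\iff(2)\iff(3)$, use \ref{cyimpsym} for $(3)\iff(5)$ and \ref{frobisgor} for $(2)\Rightarrow(4)$, and close the loop with \ref{gorimpfrob} for $(4)\Rightarrow(2)$. Up to that last step your argument is correct and is exactly the paper's.

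The one genuine gap is the step you yourself flag as delicate: producing a finite dimensional local model of $C^\bullet X$ so that \ref{gorimpfrob} applies. You propose to adapt \ref{pcthm} to the coconnective setting, but \ref{pcthm} is proved only for connective algebras and nothing in the paper supplies the coconnective analogue; as written, ``such an algebra should admit a finite dimensional model'' is an unproven assertion, so your cycle does not close. The paper avoids this entirely: since $X$ is simply connected and finitely dominated it is homotopy equivalent to a finite CW complex (\ref{wallfiniteness}), and the cellular (or simplicial) cochain algebra of that finite complex is a finite dimensional model of $C^\bullet X$ on the nose -- no truncation or structure theorem is needed. Your observations that homotopy commutativity of cochains handles the ``$A^\circ$ Gorenstein'' hypothesis and that $H^0X\cong k$, $H^1X=0$ handle locality are sensible supplements (the paper leaves these implicit), but you should replace the appeal to a coconnective version of \ref{pcthm} by the direct finite CW model.
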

\begin{proof}
     Since $X$ is finitely dominated, $C_\bullet\Omega X$ is smooth, and hence $C_\bullet X$ is a strongly proper coalgebra by \ref{kdpropc}. Moreover, since $X$ was simply connected, we may assume that $C_\bullet X$ is simply connected by \ref{ahmodel}. Recalling that  $X$ is a $d$-Poincar\'e duality space if and only if $C_\bullet X$ is a $d$-Frobenius (equivalently symmetric) coalgebra (\ref{pdDict}), the equivalence of the first three statements follows from \ref{propLD}, since $C^\bullet X$ is the linear dual of $C_\bullet X$. The implication $(2)\implies(4)$ is \ref{frobisgor}. Since $X$ is homotopy equivalent to a finite CW complex, cellular cohomology gives us a finite dimensional algebra $A$ and a quasi-isomorphism $A\simeq C^\bullet X$. The implication $(4) \implies (2)$ now follows from \ref{gorimpfrob}. Finally, the equivalence of $(3)$ and $(5)$ is \ref{cyimpsym}.
\end{proof}

\begin{ex}\label{scex}
The simply connectedness assumption in the above theorem cannot be dropped. Let $X$ be an acyclic space with nontrivial fundamental group (such spaces can be obtained as the fibre of the Quillen plus construction). Then $C^\bullet X \simeq k$ is a $0$-Frobenius algebra. If $X$ was a $0$-Poincar\'e duality space, then for all $\infty$-local systems $\mathcal{F}$ on $X$ we would have isomorphisms $H^i(X,\mathcal{F})\cong H_{-i}(X,\mathcal{F})$. We will show that this leads to a contradiction. Indeed, let $\rho: k\pi_1(X) \to V$ be a nontrivial representation and let $\mathcal{V}$ be the corresponding local system. Since $H_{-1}(X,\mathcal{V})\cong 0$, we need only show that  $H^1(X,\mathcal{V})$ need not vanish. But the natural t-structure on $D(C_\bullet \Omega X)$ with heart the category of $k\pi_1$-representations yields a natural isomorphism $H^1(X,\mathcal{V})\cong H^1(k\pi_1,V)$ with group cohomology, which need not vanish.
\end{ex}
The above gives a new proof of a result of F\'elix, Halperin, and Thomas on Gorenstein spaces; recall that a topological space is \textbf{Gorenstein} if $C^\bullet X$ is a Gorenstein algebra.
\begin{prop}
{(cf.\ \cite{FHT})}\label{gspthm}
    Let $X$ be a path connected finitely dominated topological space. If $X$ is a Poincar\'e duality space, then $X$ is a Gorenstein space. The converse holds if $X$ is simply connected.
\end{prop}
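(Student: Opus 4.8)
The plan is to reduce both implications to results already in place, using the linear-duality dictionary between the Frobenius coalgebra $C_\bullet X$ and its linear dual algebra $C^\bullet X = (C_\bullet X)^*$.

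For the forward implication I would first unwind the hypothesis via \ref{pdDict}: a path connected finitely dominated $X$ is a $d$-Poincar\'e duality space if and only if $C_\bullet X$ is a $d$-Frobenius coalgebra. The next step is to verify that $C_\bullet X$ is proper, so that the duality statement applies. Since $X$ is finitely dominated, \ref{fdsmprop} gives that $A\coloneqq C_\bullet \Omega X$ is smooth; its Koszul dual coalgebra $C_\bullet X \simeq BA$ (using \ref{loopgroup}) is then strongly proper by \ref{kdpropc}(2), and in particular proper. This is exactly the hypothesis needed for \ref{propLD}(1), which yields that $C^\bullet X = (C_\bullet X)^*$ is a $(-d)$-Frobenius algebra. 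Finally, $C^\bullet X$ is augmented (the basepoint makes $C_\bullet X$ coaugmented, and this dualises to an augmentation $C^\bullet X \to k$), so \ref{frobisgor} upgrades the Frobenius condition to $(-d)$-Gorenstein. Hence $C^\bullet X$ is Gorenstein and $X$ is a Gorenstein space.

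For the converse I would add the hypothesis that $X$ is simply connected and invoke the equivalence of conditions (1) and (4) in \ref{fhtthm} directly: a simply connected finitely dominated $X$ with $C^\bullet X$ Gorenstein, say $(-d)$-Gorenstein for some integer $d$, is a $d$-Poincar\'e duality space. The particular integer $d$ is immaterial, since neither ``Gorenstein space'' nor ``Poincar\'e duality space'' is a graded notion.

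The forward direction is essentially formal once properness of $C_\bullet X$ is established; the one place requiring care is that \ref{propLD}(1) genuinely needs $C_\bullet X$ proper, which is precisely where finite domination is used. The main conceptual obstacle, and the reason the converse must assume simple connectivity, is that the reverse duality statement \ref{propLD}(2) requires $C_\bullet X$ to be simply connected: without this the passage from the Gorenstein condition on $C^\bullet X$ back to the Frobenius condition on $C_\bullet X$ (and hence to Poincar\'e duality) genuinely fails, as the acyclic space with nontrivial fundamental group of \ref{scex} shows.
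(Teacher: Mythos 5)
Your proposal is correct and follows essentially the same route as the paper: establish properness of $C_\bullet X$ via smoothness of $C_\bullet \Omega X$ (from finite domination) and \ref{kdpropc}, apply \ref{propLD} to dualise the Frobenius condition to $C^\bullet X$, upgrade to Gorenstein via \ref{frobisgor}, and deduce the converse from \ref{fhtthm}. The only difference is that you spell out the properness verification and the augmentation point in more detail than the paper, which simply refers back to the proof of \ref{fhtthm}.
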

\begin{proof}
As in the proof of \ref{fhtthm}, $C_\bullet X$ is a proper coalgebra, and hence if $X$ is a Poincar\'e duality space then $C^\bullet X$ is a Frobenius algebra by \ref{propLD}. Moreover, a Frobenius algebra is Gorenstein by \ref{frobisgor}, which proves the first claim. The second claim follows from \ref{fhtthm}.
\end{proof}

\begin{rmk}\label{gsprmk}
Let $X$ be a path connected finitely dominated topological space. Then for the statements of \ref{fhtthm}, statement (1) implies all the others. Statement (2) implies statement (4), and the converse is true if $C^\bullet X$ admits a finite dimensional model. Statements (3) and (5) are equivalent.
\end{rmk}

	\begin{footnotesize}
	\bibliographystyle{alpha}
	\bibliography{references.bib}

\begin{thebibliography}{{Mao}24}

\bibitem[AF92]{afgor}
Luchezar~L. Avramov and Hans-Bj{\o}rn Foxby.
\newblock Locally {G}orenstein homomorphisms.
\newblock {\em Amer. J. Math.}, 114(5):1007--1047, 1992.

\bibitem[AH56]{adamshilton}
J.~F. Adams and P.~J. Hilton.
\newblock On the chain algebra of a loop space.
\newblock {\em Comment. Math. Helv.}, 30:305--330, 1956.

\bibitem[AI08]{AIgorenstein}
Luchezar~L. Avramov and Srikanth~B. Iyengar.
\newblock Gorenstein algebras and {H}ochschild cohomology.
\newblock {\em Michigan Math. J.}, 57:17--35, 2008.
\newblock Special volume in honor of Melvin Hochster.

\bibitem[Aus55]{auslanderglobal}
Maurice Auslander.
\newblock On the dimension of modules and algebras. {III}. {G}lobal dimension.
\newblock {\em Nagoya Math. J.}, 9:67--77, 1955.

\bibitem[Bas63]{ubiquity}
Hyman Bass.
\newblock On the ubiquity of {G}orenstein rings.
\newblock {\em Math. Z.}, 82:8--28, 1963.

\bibitem[BCL18]{bclloc}
C.~Braun, J.~Chuang, and A.~Lazarev.
\newblock Derived localisation of algebras and modules.
\newblock {\em Adv. Math.}, 328:555--622, 2018.

\bibitem[BD19]{BD}
Christopher Brav and Tobias Dyckerhoff.
\newblock Relative {C}alabi-{Y}au structures.
\newblock {\em Compos. Math.}, 155(2):372--412, 2019.

\bibitem[BD21]{BDII}
Christopher Brav and Tobias Dyckerhoff.
\newblock Relative {C}alabi-{Y}au structures {II}: shifted {L}agrangians in the
  moduli of objects.
\newblock {\em Selecta Math. (N.S.)}, 27(4):Paper No. 63, 45, 2021.

\bibitem[BL15]{BLunimod}
C.~Braun and A.~Lazarev.
\newblock Unimodular homotopy algebras and {C}hern-{S}imons theory.
\newblock {\em J. Pure Appl. Algebra}, 219(11):5158--5194, 2015.

\bibitem[BL23]{GKD}
Matt {Booth} and Andrey {Lazarev}.
\newblock {Global Koszul duality}.
\newblock {\em arXiv e-prints}, page arXiv:2304.08409, April 2023.

\bibitem[Boo22]{ddefspt}
Matt Booth.
\newblock The derived deformation theory of a point.
\newblock {\em Math. Z.}, 300(3):3023--3082, 2022.

\bibitem[BvdB03]{bvdb}
A.~Bondal and M.~van~den Bergh.
\newblock Generators and representability of functors in commutative and
  noncommutative geometry.
\newblock {\em Mosc. Math. J.}, 3(1):1--36, 258, 2003.

\bibitem[CHL21]{chlmonoids}
Joe Chuang, Julian Holstein, and Andrey Lazarev.
\newblock Homotopy theory of monoids and derived localization.
\newblock {\em J. Homotopy Relat. Struct.}, 16(2):175--189, 2021.

\bibitem[DGI06]{DGI}
W.~G. Dwyer, J.~P.~C. Greenlees, and S.~Iyengar.
\newblock Duality in algebra and topology.
\newblock {\em Adv. Math.}, 200(2):357--402, 2006.

\bibitem[{Efi}10]{efimovcmp}
Alexander~I. {Efimov}.
\newblock {Formal completion of a category along a subcategory}.
\newblock {\em arXiv e-prints}, page arXiv:1006.4721, June 2010.

\bibitem[Efi20]{efimovhodge}
Alexander~I. Efimov.
\newblock Categorical smooth compactifications and generalized {H}odge-to-de
  {R}ham degeneration.
\newblock {\em Invent. Math.}, 222(2):667--694, 2020.

\bibitem[Eis95]{eisenbud}
David Eisenbud.
\newblock {\em Commutative algebra}, volume 150 of {\em Graduate Texts in
  Mathematics}.
\newblock Springer-Verlag, New York, 1995.
\newblock With a view toward algebraic geometry.

\bibitem[FHT88]{FHT}
Yves F\'{e}lix, Stephen Halperin, and Jean-Claude Thomas.
\newblock Gorenstein spaces.
\newblock {\em Adv. in Math.}, 71(1):92--112, 1988.

\bibitem[FJ03]{fjgorenstein}
Anders Frankild and Peter J{\o}rgensen.
\newblock Gorenstein differential graded algebras.
\newblock {\em Israel J. Math.}, 135:327--353, 2003.

\bibitem[FK18]{fracFK}
David Favero and Tyler~L. Kelly.
\newblock Fractional {C}alabi-{Y}au categories from {L}andau-{G}inzburg models.
\newblock {\em Algebr. Geom.}, 5(5):596--649, 2018.

\bibitem[FT08]{ftstring}
Yves F\'{e}lix and Jean-Claude Thomas.
\newblock Rational {BV}-algebra in string topology.
\newblock {\em Bull. Soc. Math. France}, 136(2):311--327, 2008.

\bibitem[GHL23]{bimodKD}
Ai~{Guan}, Julian {Holstein}, and Andrey {Lazarev}.
\newblock {Hochschild cohomology of the second kind: Koszul duality and Morita
  invariance}.
\newblock {\em arXiv e-prints}, page arXiv:2312.16645, December 2023.

\bibitem[{Gin}06]{ginzburg}
Victor {Ginzburg}.
\newblock {Calabi-Yau algebras}.
\newblock {\em arXiv Mathematics e-prints}, page math/0612139, December 2006.

\bibitem[GK93]{gkumar}
Victor Ginzburg and Shrawan Kumar.
\newblock Cohomology of quantum groups at roots of unity.
\newblock {\em Duke Math. J.}, 69(1):179--198, 1993.

\bibitem[GL21]{GLDII}
Ai~Guan and Andrey Lazarev.
\newblock Koszul duality for compactly generated derived categories of second
  kind.
\newblock {\em J. Noncommut. Geom.}, 15(4):1355--1371, 2021.

\bibitem[{Goo}23]{goodbody}
Isambard {Goodbody}.
\newblock {Reflecting Perfection for Finite Dimensional Differential Graded
  Algebras}.
\newblock {\em arXiv e-prints}, page arXiv:2310.02833, October 2023.

\bibitem[{Goo}24]{Greflex}
Isambard {Goodbody}.
\newblock {Reflexivity and Hochschild Cohomology}.
\newblock {\em arXiv e-prints}, page arXiv:2403.09299, March 2024.

\bibitem[Gra23]{fracG}
Joseph Grant.
\newblock Serre functors and graded categories.
\newblock {\em Algebr. Represent. Theory}, 26(5):2113--2180, 2023.

\bibitem[Gre07]{firststeps}
J.~P.~C. Greenlees.
\newblock First steps in brave new commutative algebra.
\newblock In {\em Interactions between homotopy theory and algebra}, volume 436
  of {\em Contemp. Math.}, pages 239--275. Amer. Math. Soc., Providence, RI,
  2007.

\bibitem[GS20]{gsmorita}
J.~P.~C. Greenlees and Greg Stevenson.
\newblock Morita theory and singularity categories.
\newblock {\em Adv. Math.}, 365:107055, 51, 2020.

\bibitem[Hat02]{hatcher}
Allen Hatcher.
\newblock {\em Algebraic topology}.
\newblock Cambridge University Press, Cambridge, 2002.

\bibitem[Haz70]{hazewinkel}
Michiel Hazewinkel.
\newblock A duality theorem for cohomology of {L}ie algebras.
\newblock {\em Mat. Sb. (N.S.)}, 83(125):639--644, 1970.

\bibitem[HI11]{fracHI}
Martin Herschend and Osamu Iyama.
\newblock {$n$}-representation-finite algebras and twisted fractionally
  {C}alabi-{Y}au algebras.
\newblock {\em Bull. Lond. Math. Soc.}, 43(3):449--466, 2011.

\bibitem[HL22]{HLdg}
Julian {Holstein} and Andrey {Lazarev}.
\newblock {Enriched Koszul duality for dg categories}.
\newblock {\em arXiv e-prints}, page arXiv:2211.08118, November 2022.

\bibitem[HLW23]{HLW}
Yang Han, Xin Liu, and Kai Wang.
\newblock Exact {H}ochschild extensions and deformed {C}alabi-{Y}au
  completions.
\newblock {\em Comm. Algebra}, 51(2):757--778, 2023.

\bibitem[Hoc07]{hochster}
Melvin Hochster.
\newblock {Foundations of Tight Closure Theory}.
\newblock 2007.

\bibitem[HR24]{HRCY}
Julian {Holstein} and Manuel {Rivera}.
\newblock {Koszul duality and Calabi-Yau structures}.
\newblock {\em arXiv e-prints}, page arXiv:2410.03604, October 2024.

\bibitem[HVOZ10]{HOZ}
Ji-Wei He, Fred Van~Oystaeyen, and Yinhuo Zhang.
\newblock Cocommutative {C}alabi-{Y}au {H}opf algebras and deformations.
\newblock {\em J. Algebra}, 324(8):1921--1939, 2010.

\bibitem[IR08]{iyamareiten}
Osamu Iyama and Idun Reiten.
\newblock Fomin-{Z}elevinsky mutation and tilting modules over {C}alabi-{Y}au
  algebras.
\newblock {\em Amer. J. Math.}, 130(4):1087--1149, 2008.

\bibitem[IW14]{iyamawemyss}
Osamu Iyama and Michael Wemyss.
\newblock Maximal modifications and {A}uslander-{R}eiten duality for
  non-isolated singularities.
\newblock {\em Invent. Math.}, 197(3):521--586, 2014.

\bibitem[{Jin}18]{jin}
Haibo {Jin}.
\newblock {Cohen-Macaulay differential graded modules and negative Calabi-Yau
  configurations}.
\newblock {\em arXiv e-prints}, page arXiv:1812.03737, December 2018.

\bibitem[Kel04]{kelpic}
Bernhard Keller.
\newblock Hochschild cohomology and derived {P}icard groups.
\newblock {\em J. Pure Appl. Algebra}, 190(1-3):177--196, 2004.

\bibitem[Kel11]{kellerCYC}
Bernhard Keller.
\newblock Deformed {C}alabi-{Y}au completions.
\newblock {\em J. Reine Angew. Math.}, 654:125--180, 2011.
\newblock With an appendix by Michel Van den Bergh.

\bibitem[KS09]{KS}
M.~Kontsevich and Y.~Soibelman.
\newblock Notes on {$A_\infty$}-algebras, {$A_\infty$}-categories and
  non-commutative geometry.
\newblock In {\em Homological mirror symmetry}, volume 757 of {\em Lecture
  Notes in Phys.}, pages 153--219. Springer, Berlin, 2009.

\bibitem[KS22]{KSreflex}
Alexander {Kuznetsov} and Evgeny {Shinder}.
\newblock {Homologically finite-dimensional objects in triangulated
  categories}.
\newblock {\em arXiv e-prints}, page arXiv:2211.09418, November 2022.

\bibitem[KTV21]{KTV}
Maxim {Kontsevich}, Alex {Takeda}, and Yiannis {Vlassopoulos}.
\newblock {Pre-Calabi-Yau algebras and topological quantum field theories}.
\newblock {\em arXiv e-prints}, page arXiv:2112.14667, December 2021.

\bibitem[KTV23]{KTVII}
Maxim {Kontsevich}, Alex {Takeda}, and Yiannis {Vlassopoulos}.
\newblock {Smooth Calabi-Yau structures and the noncommutative Legendre
  transform}.
\newblock {\em arXiv e-prints}, page arXiv:2301.01567, January 2023.

\bibitem[Kuz19]{fracK}
Alexander Kuznetsov.
\newblock Calabi-{Y}au and fractional {C}alabi-{Y}au categories.
\newblock {\em J. Reine Angew. Math.}, 753:239--267, 2019.

\bibitem[Lam99]{lam}
T.~Y. Lam.
\newblock {\em Lectures on modules and rings}, volume 189 of {\em Graduate
  Texts in Mathematics}.
\newblock Springer-Verlag, New York, 1999.

\bibitem[Lur11a]{luriesurgery}
J.~Lurie.
\newblock Algebraic {L}-theory and surgery.
\newblock 2011.
\newblock Available at \url{https://www.math.ias.edu/~lurie/287x.html}.

\bibitem[Lur11b]{DAGX}
Jacob Lurie.
\newblock {Derived Algebraic Geometry X: Formal Moduli Problems}.
\newblock 2011.

\bibitem[{Mao}24]{mao}
X.~F. {Mao}.
\newblock {Homological properties of homologically smooth connected cochain
  DGAs}.
\newblock {\em arXiv e-prints}, page arXiv:2407.14805, July 2024.

\bibitem[Mat89]{matsumura}
Hideyuki Matsumura.
\newblock {\em Commutative ring theory}, volume~8 of {\em Cambridge Studies in
  Advanced Mathematics}.
\newblock Cambridge University Press, Cambridge, second edition, 1989.
\newblock Translated from the Japanese by M. Reid.

\bibitem[{Opp}24]{opper}
Sebastian {Opper}.
\newblock {Integration of Hochschild cohomology, derived Picard groups and
  uniqueness of lifts}.
\newblock {\em arXiv e-prints}, page arXiv:2405.14448, May 2024.

\bibitem[Orl20]{orlovfd}
Dmitri Orlov.
\newblock Finite-dimensional differential graded algebras and their geometric
  realizations.
\newblock {\em Adv. Math.}, 366:107096, 33, 2020.

\bibitem[{Orl}23]{orlovsmooth}
Dmitri {Orlov}.
\newblock {Smooth DG algebras and twisted tensor product}.
\newblock {\em arXiv e-prints}, page arXiv:2305.19799, May 2023.

\bibitem[Pos10]{PosBook}
Leonid Positselski.
\newblock {\em Homological algebra of semimodules and semicontramodules},
  volume~70 of {\em Instytut Matematyczny Polskiej Akademii Nauk. Monografie
  Matematyczne (New Series)}.
\newblock Birkh\"{a}user/Springer Basel AG, Basel, 2010.
\newblock Semi-infinite homological algebra of associative algebraic
  structures, Appendix C in collaboration with Dmitriy Rumynin; Appendix D in
  collaboration with Sergey Arkhipov.

\bibitem[Pos11]{positselskitwokinds}
Leonid Positselski.
\newblock Two kinds of derived categories, {K}oszul duality, and
  comodule-contramodule correspondence.
\newblock {\em Mem. Amer. Math. Soc.}, 212(996):vi+133, 2011.

\bibitem[Rog21]{fracR}
Baptiste Rognerud.
\newblock The bounded derived categories of the {T}amari lattices are
  fractionally {C}alabi-{Y}au.
\newblock {\em Adv. Math.}, 389:Paper No. 107885, 31, 2021.

\bibitem[RS22]{rsproper}
Theo Raedschelders and Greg Stevenson.
\newblock Proper connective differential graded algebras and their geometric
  realizations.
\newblock {\em Eur. J. Math.}, (suppl. 2):S574--S598, 2022.

\bibitem[RZ03]{rzpic}
Rapha\"{e}l Rouquier and Alexander Zimmermann.
\newblock Picard groups for derived module categories.
\newblock {\em Proc. London Math. Soc. (3)}, 87(1):197--225, 2003.

\bibitem[RZ18]{rzcubes}
Manuel Rivera and Mahmoud Zeinalian.
\newblock Cubical rigidification, the cobar construction and the based loop
  space.
\newblock {\em Algebr. Geom. Topol.}, 18(7):3789--3820, 2018.

\bibitem[{Shk}07]{shklyarov}
D.~{Shklyarov}.
\newblock {On Serre duality for compact homologically smooth DG algebras}.
\newblock {\em arXiv Mathematics e-prints}, page math/0702590, February 2007.

\bibitem[Tak77]{takeuchi}
Mitsuhiro Takeuchi.
\newblock Morita theorems for categories of comodules.
\newblock {\em J. Fac. Sci. Univ. Tokyo Sect. IA Math.}, 24(3):629--644, 1977.

\bibitem[TV07]{toenvaquie}
Bertrand To\"{e}n and Michel Vaqui\'{e}.
\newblock Moduli of objects in dg-categories.
\newblock {\em Ann. Sci. \'{E}cole Norm. Sup. (4)}, 40(3):387--444, 2007.

\bibitem[{Vai}07]{vaintrob}
Dmitry {Vaintrob}.
\newblock {The string topology BV algebra, Hochschild cohomology and the
  Goldman bracket on surfaces}.
\newblock {\em arXiv Mathematics e-prints}, page math/0702859, February 2007.

\bibitem[VdB98]{vdbduality}
Michel Van~den Bergh.
\newblock A relation between {H}ochschild homology and cohomology for
  {G}orenstein rings.
\newblock {\em Proc. Amer. Math. Soc.}, 126(5):1345--1348, 1998.

\bibitem[VdB15]{vdb}
Michel Van~den Bergh.
\newblock Calabi-{Y}au algebras and superpotentials.
\newblock {\em Selecta Math. (N.S.)}, 21(2):555--603, 2015.

\bibitem[Wit19]{witherspoon}
Sarah~J. Witherspoon.
\newblock {\em Hochschild cohomology for algebras}, volume 204 of {\em Graduate
  Studies in Mathematics}.
\newblock American Mathematical Society, Providence, RI, [2019] \copyright
  2019.

\bibitem[Yek04]{yekpic}
Amnon Yekutieli.
\newblock The derived {P}icard group is a locally algebraic group.
\newblock {\em Algebr. Represent. Theory}, 7(1):53--57, 2004.

\end{thebibliography}
\end{footnotesize}

	\end{document}